\numberwithin{equation}{section}
\definecolor{lightgray}{gray}{0.9}
\definecolor{xgray}{gray}{0.8}
\colorlet{conjcolor}{blue!5!white}
\renewenvironment{figure}[1][]{
  \begin{originalfigure}[#1]
    \begin{mdframed}[linecolor=black!0,backgroundcolor=black!1]
}{
    \end{mdframed}
  \end{originalfigure}
}
\newcommand{\ie}{\begin{equation}\begin{aligned}}
\newcommand{\fe}{\end{aligned}\end{equation}}
\newcommand{\tw}{\mathrm{tw}}
\newcommand{\cF}{\mathcal{F}}
\newcommand{\cD}{\mathcal{D}}
\newcommand{\coloredbox}[2]{
    \colorbox{#1}{\parbox[t]{\linewidth}{#2}}
}
\newcommand{\tikzmark}[1]{\tikz[overlay,remember picture] \node (#1) {};}
\NewDocumentCommand{\DrawBoxWide}{s O{}}{%
    \tikz[overlay,remember picture]{
    \IfBooleanTF{#1}{%
        \coordinate (RightPoint) at ($(left |- right)+(\linewidth-\labelsep-\labelwidth,0.0)$);
    }{%
        \coordinate (RightPoint) at (right.east);
    }%
    \draw[red,#2]
      ($(left)+(-\labelwidth,0.9em)$) rectangle
      ($(RightPoint)+(0.2em,-0.3em)$);}
}
\theoremstyle{plain}
\newtheorem{defn}[equation]{Definition}
\newtheorem{thm}[equation]{Theorem}
\newtheorem{prop}[equation]{Proposition}
\newtheorem{fact}[equation]{Fact}
\def\PhysicsFact{Physics Assumption}
\newtheorem{fact?}[equation]{Fact?}
\newtheorem{cor}[equation]{Corollary}
\newtheorem{lem}[equation]{Lemma}
\newtheorem{conjc}[equation]{Conjecture}\newtheorem{claim}[equation]{Claim}
\newtheorem{claim?}[equation]{Claim?}
\theoremstyle{remark}
\newtheorem{rem}[equation]{Remark}
\newtheorem{ex}[equation]{Example}
\newtheorem{notation}[equation]{Notation}
\let\oldendrem\endrem
\def\endrem{\hfill {$\lrcorner$} \oldendrem}
\def\bC{\mathbb{C}}
\def\bH{\mathbb{H}}
\def\cS{\mathcal{S}}
\def\Hom{\mathrm{Hom}}
\def\tr{\mathop{\mathrm{tr}}}
\def\KO{\mathrm{KO}}
\def\KU{\mathrm{KU}}
\def\TJF{\mathrm{TJF}}
\def\MF{\mathrm{MF}}
\def\TMF{\mathrm{TMF}}
\def\tmf{\mathrm{tmf}}
\def\mf{\mathrm{mf}}
\def\pt{\mathrm{pt}}
\def\BO{B\mathrm{O}}
\def\Z{\mathbb{Z}}
\def\MString{\mathrm{MString}}
\def\Q{\mathbb{Q}}
\def\R{\mathbb{R}}
\def\ch{e}
\def\SU{\mathrm{SU}}
\def\cA{\mathcal{A}}
\def\stri{\text{string}}
\def\id{\mathrm{id}}
\def\fr{\mathrm{fr}}
\def\ABS{\mathrm{ABS}}
\def\Ind{\mathrm{Ind}}
\def\Ch{\mathrm{Ch}}
\def\fgt{\mathrm{fgt}}
\def\tr{\mathrm{tr}}
\def\Sp{\mathrm{Sp}}
\def\CP{\mathbb{CP}}
\def\moduli{\mathcal{M}}
\def\res{\mathrm{res}}
\def\Mod{\mathrm{Mod}}
\def\RO{\mathrm{RO}}
\def\Map{\mathrm{Map}}
\def\colim{\mathrm{colim}}
\def\op{\mathrm{op}}
\def\CAlg{\mathrm{CAlg}}
\def\Nm{\mathrm{Nm}}
\def\QCoh{\mathrm{QCoh}}
\def\HP{\mathbb{HP}}
\def\Jac{\mathrm{Jac}}
\def\JF{\mathrm{JF}}
\def\Pic{\mathrm{Pic}}
\def\C{\mathbb{C}}
\def\RO{\mathbf{R}\mathrm{O}}
\def\Spectra{\mathrm{Spectra}}
\def\Ad{\mathrm{Ad}}
\def\Rep{\mathrm{Rep}}
\def\coev{\mathrm{coev}}
\def\triv{\mathrm{triv}}
\def\rank{\mathrm{rank}}
\def\Ell{\mathrm{Ell}}
\def\cEll{\mathcal{E}ll}
\def\EvJF{\mathrm{EJF}}
\def\TEJF{\mathrm{TEJF}}
\def\EJF{\mathrm{EJF}}
\def\jF{\mathrm{jF}}
\def\clas{\text{clas}}
\def\cH{\mathcal{H}}
\def\cS{\mathcal{S}}
\def\sX{\mathsf{X}}
\def\cG{\mathcal{G}}
\def\fraks{\mathfrak{s}}
\def\frako{\mathfrak{o}}
\def\bT{\mathbb{T}}
\def\cL{\mathcal{L}}
\def\cE{\mathcal{E}}
\def\stab{\mathrm{stab}}
\def\ev{\mathrm{ev}}
\def\Res{\mathrm{Res}}
\def\cB{\mathcal{B}}
\def\PT{\mathrm{PT}}
\def\coll{\mathrm{coll}}
\def\lMap{\underline{\Map}}
\def\ind{\mathrm{ind}}
\def\Orb{\mathrm{Orb}}
\def\univ{\mathrm{uinv}}
\def\CHD{\mathrm{CHD}}
\def\frK{\mathfrak{K}}
\def\Eul{\mathrm{Euler}}
\def\cO{\mathcal{O}}
\def\cofib{\mathrm{cofib}}
\def\tw{\mathrm{tw}}
\def\red{\mathrm{red}}
\def\frS{\mathfrak{S}}
\def\cH{\mathcal{H}}
\def\cK{\mathcal{K}}
\def\frC{\mathfrak{C}}
\def\cptLie{\mathrm{cptLie}}
\def\Shv{\mathrm{Shv}}
\def\bfB{\mathbf{B}}
\def\th{\mathrm{th}} 
\def\Th{\mathbf{th}}
\def\frc{\mathfrak{c}}
\def\frb{\mathfrak{b}}
\def\Pic{\mathrm{Pic}}
\def\JacKO{\Jac^{\KO}}
\def\Gm{\mathbb{G}_m}
\def\RString{\mathbf{R}\mathrm{String}}
\def\Gpds{\mathrm{Gpds}}
\let\oldtext\text
\def\text#1{\oldtext{\upshape\mdseries #1}}
\DeclareSymbolFont{yhlargesymbols}{OMX}{yhex}{m}{n} \DeclareMathAccent{\reallywidehat}{\mathord}{yhlargesymbols}{"62}
\DeclareFontFamily{U}{mathx}{}
\DeclareFontShape{U}{mathx}{m}{n}{<-> mathx10}{}
\DeclareSymbolFont{mathx}{U}{mathx}{m}{n}
\DeclareMathAccent{\widehat}{0}{mathx}{"70}
\DeclareMathAccent{\widecheck}{0}{mathx}{"71}
\let\hat\widehat
\let\oldwidehat\widehat
\def\widehat#1{\oldwidehat{#1}{}}
\def\paragraph#1{

\medskip\noindent \textit{#1} --- }
\tikzset{circ/.style = {fill, circle, inner sep = 0, minimum size = 3}}
\newcounter{eta}
\newcounter{nu}
\newenvironment{celldiagram}{
  \setcounter{eta}{0}
  \setcounter{nu}{0}
  \def\n##1{\node [circ] at (0, ##1) {};}
  \def\invert##1{\setcounter{##1}{1 - \the\value{##1}}}
  \def\getside##1{\ifnum\the\value{##1}=0 right\else left\fi}
  \def\two##1{\draw [blue, thick] (0, ##1) to (0, ##1 + 1);}
  \def\eta##1{\draw [red, thick] (0, ##1) to  (0, ##1 + 2);\invert{eta}}
  \def\nu##1{\draw [blue, thick] (0, ##1) to [bend \getside{nu}=50] (0, ##1 + 4);\invert{nu}}
  \def\eightnu##1{\draw [orange, thick] (0, ##1) to [bend \getside{nu}=50] (0, ##1 + 4);\invert{nu}}
}{}
\newcommand{\highlight}[2][conjcolor]{\mathchoice%
	{\colorbox{#1}{$\displaystyle#2$}}%
	{\colorbox{#1}{$\textstyle#2$}}%
	{\colorbox{#1}{$\scriptstyle#2$}}%
	{\colorbox{#1}{$\scriptscriptstyle#2$}}}%
\begin{document}

\title[Topological Elliptic Genera I]{Topological Elliptic Genera I---The mathematical foundation}
\author{Ying-Hsuan Lin}
\author{Mayuko Yamashita}
\date{}
\address{Jefferson Physical Laboratory, Harvard University, Cambridge, MA 02138, USA}
\email{yhlin@alum.mit.edu}
\address{Perimeter Institute for Theoretical Physics, 31 Caroline Street North,
Waterloo, Ontario, Canada, N2L 2Y5
	}
	\email{myamashita@perimeterinstitute.ca}
\def\funding{
	The work of MY is supported by Grant-in-Aid for JSPS KAKENHI Grant Number 20K14307, JST CREST program JPMJCR18T6, and the Simons Collaborations on Global Categorical Symmetries. She would like to thank the Center of Mathematical Sciences and Applications, Harvard University, for support and hospitality, where a large part of the work on this paper was undertaken. YL was supported by the Simons Collaboration Grant on the Non-Perturbative Bootstrap for a portion of this work. This research was supported in part by grant NSF PHY-1748958 to the Kavli Institute for Theoretical Physics (KITP).
	}
\thanks{{\it Acknowledgments}: 
	The authors thank Akira Tominaga for the earlier collaboration on this project, Lennart Meier for the collaboration especially on the equivariant sigma orientation, and Du Pei for suggesting the relation with level-rank dualities. They are grateful to Tilman Bauer, David Gepner, and L.~Meier for sharing their ongoing work. They also thank Michael Hopkins, Theo Jonhson-Freyd, Kaoru Ono, Tatsuki Kuwagaki, and Thomas Schick for helpful discussions. 
	\funding
}

\begin{abstract}
	We construct {\it Topological Elliptic Genera}, homotopy-theoretic refinements of the elliptic genera for $SU$-manifolds and variants including the Witten-Landweber-Ochanine genus. The codomains are genuinely $G$-equivariant Topological Modular Forms developed by Gepner-Meier \cite{GepnerMeier}, twisted by $G$-representations.
	As the first installment of a series of articles on Topological Elliptic Genera, this issue lays the mathematical foundation and discusses immediate applications. Most notably, we deduce an interesting divisibility result for the Euler numbers of $Sp$-manifolds. 
\end{abstract}

\maketitle

\tableofcontents

\section{Introduction}

There is a classical construction of the {\it elliptic genus} for $SU$-manifolds, (e.g., \cite{Witten:1987cg} and \cite{Gritsenko:1999fk}),
which assigns, for each tangential $SU(k)$-manifold $M$ with dimension $\dim_\R M = 2k$, 
\begin{align}\label{eq_intro_Jac_clas}
	\Jac_{\clas}(M) \in \left\{ \mbox{integral Jacobi Forms with weight}=0, \mbox{ index}=k/2 \right\}. 
\end{align}
The formula is given by (see Section \ref{subsec_notations} (\ref{notation_JF}) for the convention of Jacobi Forms)
\begin{align}\label{eq_char_Jac_clas}
	\Jac_{\clas} (M) (y, q) 
		=  y^{k/2} \cdot \int_M \mathrm{Todd}(TM) \wedge \Ch\left( \mathbb{TM}_{q, y}\right) ,
	\end{align}
	where (in the formula below all the tensor/exterior products are over $\C$, )
	\begin{align}
		\mathbb{TM}_{q, y} := \bigotimes_{m \ge 0} \wedge_{-q^{m}y^{-1}}T^*M \otimes \bigotimes_{m \ge 1} \wedge_{-q^{m}y} TM \otimes  \bigotimes_{m \ge 1} \mathrm{Sym}_{q^{m}}T^*M \otimes \bigotimes_{m \ge 1} \mathrm{Sym}_{q^{m}} TM . 
	\end{align}
	For example, in the notation of \cite{Gritsenko:1999fk},
	  \begin{align}
		\Jac_\clas ([K3]) = {2\phi_{0, 1}}, \quad 
		\Jac_\clas ([\mathrm{CY}_3]) = (h^{1,1} - h^{1,2}) \cdot \phi_{0, \frac32},
	\end{align}
	where $\mathrm{CY}_3$ is any Calabi-Yau threefold with Hodge numbers $h^{1,1}$ and $h^{1,2}$. 
	Related constructions include the {\it level-$N$ genera} which produce modular forms with level structures; most notably, the case of $N=2$ is called the {\it Witten-Landweber-Ochanine genus} for spin manifolds \cite{Ochanine}.

The main construction of this paper concerns the {\it topological}, (or {\it spectral}) refinements\footnote{
What we mean by {\it topological refinements} here is analogous to how the (homotopy-theoretic) sigma orientation \cite{ando2010multiplicative} refines the (classical) Witten genera \cite{Witten:1987cg}. The following diagrams illustrate the concept. 
\begin{align}
	\xymatrix{
		MString \ar[rr]^-{\sigma} \ar@{~>}[d]^-{\mbox{\tiny refine} }& \ar@{}[d]|-{\rotatebox[origin=c]{-90}{$\rightsquigarrow$}}^-{\mbox{\tiny refine}}& \TMF  \ar@{~>}[d]^-{\mbox{\tiny refine} }\\
\Omega^{\stri} \ar[rr]^-{\mbox{\tiny Wit}} && \MF
	} \quad 
	\xymatrix{
		MTSU(k) \ar[rr]^-{\Jac_{U(1)_k}} \ar@{~>}[d]^-{\mbox{\tiny refine} } & \ar@{}[d]|-{\rotatebox[origin=c]{-90}{$\rightsquigarrow$}}^-{\mbox{\tiny refine}}& \TJF_k  \ar@{~>}[d]^-{\mbox{\tiny refine} }\\
	\Omega^{SU(k)} \ar[rr]^-{\Jac_\clas} && \JF_k
}
\end{align}
The left square is about the sigma orientation and the right square is about our topological elliptic genera. 
The bottom row consists of {\it classical} objects, namely maps between abelian groups, whereas the top row consists of {\it homotopy-theoretical} objects, namely morphism between spectra. The upper row refines the lower one.

We also remark that the classical notion of elliptic cohomology (i.e., a complex-oriented theory whose associated formal group law comes from an elliptic curve) can also be regarded as ``topological refinements'' of the classical elliptic genera \cite{TMFbook}. From that point of view, what we construct here can be regarded as the {\it universal} topological refinement. 
} of those classical numerical genera; for this reason, we call them the {\it topological elliptic genera}.
This relies heavily on the recent developments \cite{GepnerMeier, LurieElliptic3} in the {\it genuinely} equivariant refinements of the spectrum of {\it Topological Modular Forms}, or $\TMF$\footnote{Our construction is closely related to the work by Ando-French-Ganter \cite{Ando_2008}. As explained in Section \ref{subsec_vs_JacOri}, the constructions in this paper are regarded as {\it genuine} and {\it unstable} versions of their construction. }.

Our exemplary case is the refinement of the classical elliptic genus \eqref{eq_intro_Jac_clas}. We define a morphism of spectra which we call the {\it $U(1)$-topological elliptic genus}, 
\begin{align}\label{eq_intro_Jac_U}
	\Jac_{U(1)_k} &\colon	MTSU(k) \to \TJF_k,
\end{align}
for each nonnegative integer $k$, where the $U(1)$-equivariance can be understood as arising from the complex structure of $SU$-manifolds.  Here, 
\begin{itemize}
	\item $MTSU(k)$ is the bordism spectrum of {\it tangential} $SU(k)$-manifolds. See Section \ref{subsec_thom} for the explanation. 
	\item $\TJF_k$ is a $\TMF$-module spectrum called {\it Topological Jacobi Forms}, realized as genuinely $U(1)$-equivariant twisted $\TMF$. It can be regarded naturally as the topological refinement of Jacobi Forms with index $\frac{k}{2}$ and is investigated in an upcoming paper by Bauer-Meier \cite{BauerMeierTJF}. We collect the necessary facts in Appendix \ref{app_TJF} as a user guide. 
\end{itemize}

The spectrum $\TJF_k$, being a refinement of the module of Jacobi Forms, comes equipped with a map
\begin{align}\label{eq_e_JF_intro}
	e_\JF \colon	\pi_m\TJF_k \to  \left\{ \mbox{integral Jacobi Forms with weight}=m/2-k, \mbox{ index} =k/2\right\} =: \JF_k|_{\deg = m}. 
\end{align}
(here $\JF_k$ is the $\Z$-graded module of integral Jacobi Forms with index $\frac{k}{2}$, whose degree convention is explained in Section \ref{subsec_notations} \eqref{notation_JF}), 
and the topological elliptic genus $\Jac_{U(1)_k}$ refines the classical elliptic genus $\Jac_{\rm clas}$ in the sense that, when applied to the case $m=2k$, we have
\begin{align}\label{eq_clas_vs_top_intro}
	\Jac_\clas(M) = e_\JF \circ \Jac_{U(1)_k}(M) . 
\end{align}

Why do we care about such a topological refinement? Indeed, the refinement gives us nontrivial information that cannot be obtained from the numerical elliptic genus, as follows. 
\begin{enumerate}
	\item The map $e_\JF$ is not injective for general $k$. The kernel consists of torsion elements which are invisible as classical Jacobi Forms.
	For example, we have
	\begin{align}
		\pi_5 \TJF_2 \simeq \Z/2, \quad \pi_7 \TJF_2 = \Z/2. 
	\end{align}
	Accordingly, our topological elliptic genus can detect torsion elements in $\pi_* MTSU(k)$. 
	\item The map $e_\JF$ is not surjective, although it is rationally equivalent. This means that we get divisibility constraints for Jacobi Forms inside the image of $e_{\JF}$. For example, 
	\begin{align}
		\mathrm{Coker}\left( e_\JF \colon \pi_4\TJF_2 \to \JF_2|_{\deg = 4}\right) = \Z\phi_{0,1}/(2\phi_{0,1}),
	\end{align}
    meaning that half of the $K3$ elliptic genus is not in the image (see Remark~\ref{rem_enriques} for further comments). 
	The general non-surjectivity of $e_\JF$, combined with \eqref{eq_clas_vs_top_intro}, implies nontrivial divisibility constraints on the classical elliptic genus and consequently on various characteristic numbers for tangential $SU(k)$-manifolds. We investigate this in Section~\ref{subsec_divisibility}. 
	\item Our topological elliptic genus is {\it unstable}, in the sense that the codomain depends on $k$. 
	The relations among different $k$ are captured by the commutative diagram
	\begin{align}
		\xymatrix{
			MTSU(k)\ar[rr]^-{\Jac_{U(1)_k}}\ar[d]_-{SU(k) \hookrightarrow SU(k+1)} && \TJF_k \ar[d]^-{\stab} \ar@{~>}[r]^-{\pi_m}& \pi_m \TJF_k \ar[d]^-{\stab} \ar[r]^-{e_\JF}& \JF_k|_{\deg=m} \ar@{_{(}->}[d]^-{\phi_{-1, \frac12} \cdot} \\
			MTSU(k+1) \ar[rr]^-{\Jac_{U(1)_{k+1}}}&& \TJF_{k+1} \ar@{~>}[r]^-{\pi_m} & \pi_m \TJF_{k+1} \ar[r]^-{e_\JF} &\JF_{k+1}|_{\deg=m}
		}
	\end{align}
	Interestingly, the third vertical arrow is neither injective nor surjective in general. Rather, it is part of a long exact sequence. 
	This is in contrast to the rightmost vertical arrow, which is injective. 
	Thus, our topological elliptic genus can detect nontrivial $\pi_m MTSU(k)$ elements that vanish in $\pi_m MTSU(\infty) = \pi_m MSU$ and cannot be detected by $\Jac_\clas$. 
	Such an example is explained in Section \ref{subsec_pi5MSp}. 
	\end{enumerate}

The above $U(1)$-topological elliptic genus is just a special case of a more general construction we study in this paper. The most general construction is in Section \ref{subsec_construction}. 
Under the settings listed there, we construct a class of morphisms from certain Thom spectra to $RO(G)$-graded genuinely equivariant $\TMF$, which we generally call {\it topological elliptic genera}. 
Besides $U(1)$, other key cases include\footnote{see the last paragraph of Introduction}
\begin{align}
&	\Jac_{Sp(1)_k} \colon MTSp(k) \to \TMF[kV_{Sp(1)}]^{Sp(1)} =: \TEJF_{2k}, \label{eq_intro_Jac_Sp} \\
&\highlight{\Jac_{O(1)_k} \colon MTSpin(k) \to \TMF[kV_{O(1)}]^{O(1)}. } \label{eq_intro_Jac_O} 
\end{align}

The codomain of \eqref{eq_intro_Jac_Sp}, $\TEJF_{2k}$, is {\it defined} to be the genuinely $Sp(1)$-equivariant twisted $\TMF$, and studied in detail in Appendix \ref{app_TEJF}. We name it the spectrum of {\it Topological Even Jacobi Forms} since, as explained there, it is naturally regarded as refining the following direct summand of $\JF_{2k}$:
\begin{align}
\EvJF_{2k} &:= \{\phi(z, \tau)\in \JF_{2k} \ | \ \phi(z, \tau) = \phi(-z, \tau) \},
\end{align}
\coloredbox{conjcolor}{The morphism \eqref{eq_intro_Jac_O} is a topological refinement of the Witten-Landweber-Ochanine genus \cite{Ochanine}. }

In the remainder of this introduction, we focus on the $Sp(1)$-topological elliptic genus \eqref{eq_intro_Jac_Sp} and illustrate why it tells us interesting things about $Sp$-manifolds beyond the $U(1)$-topological elliptic genus \eqref{eq_intro_Jac_U}. 
Note that, at the classical level, the elliptic genus for $Sp$-manifolds is just the restriction of the assignment \eqref{eq_intro_Jac_clas}, and we obtain no further information. However, after the topological refinement, we detect an interesting difference. The relationship between the $Sp(1)$ and $U(1)$-topological elliptic genera is captured in a commutative diagram
\begin{align}\label{diag_intro_SpvsU}
	\xymatrix{
		MTSp(k) \ar[rr]^-{\Jac_{Sp(1)_k}} \ar[d] && \TEJF_{2k} \ar[d]_-{r} \ar@{~>}[r]^-{\pi_m}& \pi_m \TEJF_{2k} \ar[d]^-{r} \ar[r]^-{e_\EJF}& \EJF_{2k}|_{\deg = m} \ar@{^{(}-_>}[d]^-{= \mbox{ {\tiny or} }0}  \\
		MTSU(2k) \ar[rr]^-{\Jac_{U(1)_{2k}}} && \TJF_{2k} \ar@{~>}[r]^-{\pi_m} & \pi_m \TJF_{2k} \ar[r]^-{e_\JF} & \JF_{2k}|_{\deg = m}
	}
\end{align}

What makes the diagram \eqref{diag_intro_SpvsU} interesting is that the third vertical arrow $r$ is neither injective nor surjective. This is in contrast to the rightmost vertical arrow, which is simply zero or the identity depending on the degrees. 
This means the following: 
\begin{enumerate}[resume]
	\item The genuine $Sp$-topological elliptic genus $\Jac_{Sp(1)_k}$ can detect (necessarily torsion) elements in the tangential $Sp$-bordism groups that vanish in $SU$-bordism groups. 
	Examples of such elements are given in Section \ref{subsec_pi5MSp}. 
	\item For $m \equiv 0$ (mod $4$), although the rightmost vertical arrow is an isomorphism, the map
	\begin{align}
		\mathrm{im}\left(e_\EJF \colon \pi_m \TEJF_{2k} \to  \EJF_{2k}|_{\deg = m}\right) \hookrightarrow 	\mathrm{im}\left(e_\JF \colon \pi_m \TJF_{2k} \to  \JF_{2k}|_{\deg = m}\right) 
	\end{align}
	is generally a proper inclusion. This implies nontrivial divisibility constraints on the classical elliptic genera and the associated characteristic classes for $Sp$-manifolds. We investigate this in \ref{subsec_divisibility}.
\end{enumerate}

This paper lays the basics of topological elliptic genera and discusses immediate applications. A notable application is to the divisibility of Euler numbers, such as the following result for tangential $Sp$-manifolds: 
\begin{thm}[{Theorem \ref{thm_divisibility_constraints_concrete} (1)}]
		For any closed strict\footnote{See Definition \ref{def_strict_str}.} tangential $Sp(k)$-manifold $M_{4k}$ of real dimension $4k$, its Euler number satisfies
		\begin{align}
			\left. \frac{24}{\gcd(24, k)} \,\right|\, \Eul(M_{4k}). 
		\end{align}
	\end{thm}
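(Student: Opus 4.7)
The plan is to apply the $Sp(1)$-topological elliptic genus constructed in this paper and then track the Euler number through the comparison with the classical Jacobi form. Applying $\Jac_{Sp(1)_k}$ to $[M]\in\pi_{4k}MTSp(k)$ yields $\alpha:=\Jac_{Sp(1)_k}([M])\in\pi_{4k}\TEJF_{2k}$. On the classical side, the Hirzebruch $\chi_y$-formula, applied to the constant term in $q$, gives $\Jac_\clas(M)(y=1,q=0)=\chi_{-1}(M)=\Eul(M)$ for any strict tangential $Sp(k)$-manifold $M$ (viewed via $Sp(k)\hookrightarrow SU(2k)$). Combined with the $Sp$-analog of \eqref{eq_clas_vs_top_intro}, namely $\Jac_\clas(M)=e_\EJF(\alpha)$ (the top row of~\eqref{diag_intro_SpvsU} composed with $e_\EJF$), this gives
\[
\Eul(M)=\mathrm{ev}_{(y=1,\,q=0)}\bigl(e_\EJF(\alpha)\bigr).
\]
The theorem then reduces to the intrinsic claim that $\mathrm{ev}_{(1,0)}\circ e_\EJF\colon\pi_{4k}\TEJF_{2k}\to\Z$ has image contained in $\frac{24}{\gcd(24,k)}\Z$, equivalently $k\cdot\Eul(M)\equiv 0\pmod{24}$ for every such $M$.

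For this intrinsic claim, I plan to lift $\mathrm{ev}_{(1,0)}$ to a spectrum-level composite using the description $\TEJF_{2k}=\TMF[kV_{Sp(1)}]^{Sp(1)}$ from Appendix~\ref{app_TEJF}. The evaluation at $y=1$ corresponds to restriction along $\{e\}\hookrightarrow Sp(1)$, which trivializes the equivariant twist $kV_{Sp(1)}$ as a rank-$4k$ real bundle, producing a map $\TEJF_{2k}\to\Sigma^{4k}\TMF$. The evaluation at $q=0$ is implemented via the Tate-curve augmentation $\TMF\to\KO$ (at the level of $\pi_*$, the constant-$q$ coefficient). Composing yields $\pi_{4k}\TEJF_{2k}\to\pi_0\KO=\Z$, sending $\alpha\mapsto\Eul(M)$ by construction.

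The divisibility is then extracted from the interaction between the $\Z/24$-torsion of $\pi_3\TMF$ (the image of the stable Hopf class $\nu$) and the index-$k$ twist. Via the Euler-class cofiber sequence for $V_{Sp(1)}$-twisted $\TMF^{Sp(1)}$, the specialization map picks up a factor of the $k$-fold Euler class $e(V_{Sp(1)})^k$; this Euler class is precisely the one exhibiting $\nu$ as a $24$-torsion element in equivariant $\TMF$. Taking the $k$-th power corresponds, on $\Z/24$-valued output, to multiplication by $k$ on the canonical generator, so the image is the preimage in $\Z$ of $k\cdot(\Z/24)=\gcd(24,k)\cdot\Z/24$, which is exactly $\frac{24}{\gcd(24,k)}\Z$.

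The main technical obstacle is the last step, namely rigorously identifying the $k$-fold Euler-class insertion with multiplication by $k$ on the $\Z/24$-generator of $\pi_3\TMF$. This requires the detailed structure of the Euler cofiber sequences relating $\TEJF_{2k}$ across $k$, as developed in Appendix~\ref{app_TEJF}, together with the identification of the image of $e(V_{Sp(1)})$ in low-degree $\TMF$-homotopy, both of which are structural inputs from the Bauer-Meier work.
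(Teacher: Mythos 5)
Your overall strategy is the same as the paper's: push $[M]$ through $\Jac_{Sp(1)_k}$ into $\pi_{4k}\TEJF_{2k}$, observe that the Euler number is recovered by the restriction to the trivial group, and then extract divisibility from the cell structure of $\TEJF_{2k}$ as a $\TMF$-module. The paper's route is via Corollary~\ref{cor_Jac_res_Euler} (which shows $\res_{Sp(1)}^e\circ\Jac_{Sp(1)_k}[M,\psi]=\Eul(M)\in\Z\subset\pi_0\TMF$ directly), then Proposition~\ref{prop_d_im} ($\mathrm{im}(\res_{Sp(1)}^e)\cap\Z=d_{Sp}(k)\cdot\Z$, read off the stabilization-restriction fiber sequence), and finally Proposition~\ref{prop_y(k)_app} ($\frac{24}{\gcd(k,24)}\mid d_{Sp}(k)$).

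There are two places where your proposal is shakier than the paper's argument. First, the detour through evaluation at $q=0$ (the Tate-curve augmentation $\TMF\to\KO$) is unnecessary and is actually dangerous for the divisibility step: the $\Z/24$ torsion you need comes from $\nu\in\pi_3\TMF$, but $\pi_3\KO=0$, so $\nu$ dies in $\KO$. If you factor through $\KO$ \emph{before} running the fiber-sequence argument, the obstruction you are counting on disappears. The paper avoids this by intersecting the image of $\res_{Sp(1)}^e$ with $\Z\subset\pi_0\TMF$ inside $\pi_0\TMF$ itself — no $\KO$ and no $q=0$ needed. You only need Corollary~\ref{cor_Jac_res_Euler} to know the Euler number sits in $\Z\subset\pi_0\TMF$; $\ev_{q=0}$ adds nothing and can break the argument.

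Second, the mechanism by which "$k$" enters is not really a $k$-fold power of the Euler class. The $k$-fold power $\chi(V_{Sp(1)})^k$ is the $k$-fold \emph{stabilization} map $\TEJF_0\to\TEJF_{2k}$, i.e., the inclusion of the \emph{bottom} cell of $\HP^{k+1}$; whereas what controls $d_{Sp}(k)$ is the attaching map $y(k)$ of the \emph{top} cell, which under $\res_{Sp(1)}^e$ becomes the stable attaching map $\widetilde y(k):S^{4k}\to\HP^k\to S^{4(k-1)}$ whose homotopy class is the classical input $k\nu\in\pi_3 S$ (Fact~\ref{fact_HP_attaching}). These two maps are related by the self-duality of Proposition~\ref{prop_selfdual_stabres}, but "taking the $k$-th power of $\chi(V_{Sp(1)})$" does not literally produce multiplication by $k$ on $\nu$ — indeed $\nu^k\neq k\nu$ in $\pi_*S$. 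To close the gap you should replace the Euler-class heuristic with Proposition~\ref{prop_Sp(1)} ($\TEJF_{2k}\simeq\TMF\otimes\HP^{k+1}[-4]$) and Fact~\ref{fact_HP_attaching}, which together give $\res_{Sp(1)}^e(y(k))=k\nu$ and hence the required lower bound on the order of $y(k)$.
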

This comes from an elementary analysis of $\TEJF_{2k}$, together with the classical relation between $\Jac_\clas$ and Euler numbers. As we explain in Section \ref{subsec_divisibility}, this strictly refines the divisibility constraints obtained by classical numerical methods. The base case of $k=1$ for $Sp(1) = SU(2)$-manifolds gives divisibility by 24, which is saturated by the Euler number of a $K3$ surface.

Section \ref{sec_duality} discusses an interesting byproduct of our main construction, the {\it level-rank duality} in equivariant $\TMF$. 
The definition of the $U(1)_k$-topological elliptic genus uses a $\TMF$-module morphism 
\begin{align}\label{eq_intro_coev}
	\TMF \to \TJF_k \otimes_\TMF \TMF[\overline{V}_{SU(k)}]^{SU(k)}, 
\end{align}
where $\TMF[\overline{V}_{SU(k)}]^{SU(k)}$ is the $SU(k)$-equivariant $\TMF$ with fundamental (``level $1$'') twist. It turns out that the morphism \eqref{eq_intro_coev} exhibits $\TJF_k$ as the {\it dual} of $\TMF[\overline{V}_{SU(k)}]^{SU(k)}$ in $\Mod_\TMF$ (in the categorical sense). 
More generally, we find the following dualities (Theorems~\ref{thm_duality_Sp},~\ref{thm_duality_U_SU}), 
\begin{align}
	\TMF[kV_{Sp(n)}]^{Sp(n)} &\stackrel{\mbox{\tiny dual} }{\longleftrightarrow}	\TMF[n\overline{V}_{Sp(k)}]^{Sp(k)} \\
	\TMF[kV_{U(n)}]^{U(n)} &\stackrel{\mbox{\tiny dual  }}{\longleftrightarrow}	\TMF[n\overline{V}_{SU(k)}]^{SU(k)}  \quad \mbox{in }\Mod_\TMF. 
\end{align}
This coincides with the {\it level-rank duality} \cite{frenkel2006representations,nakanishi1992level} in affine Lie algebras and conformal field theory. 
Such an agreement is naturally expected in the context of the Segal-Stolz-Teichner proposal (see Remark \ref{rem_levelrank}). 

The authors plan to explore this topological elliptic genera in a series of papers. This is the first part of the series, where we lay the basics of the theory. In Part II \cite{LinYamashita2} of the series, we plan to discuss the physical interpretations. In further volumes, we plan to explore further examples and applications. 

The paper is organized as follows. After the preliminary Section \ref{sec_preliminary}, in Section \ref{sec_jacobi} we give the general definition and basics of topological elliptic genera. In Section \ref{sec_ex}, we introduce an important class of our construction, the $U$, $Sp$ \colorbox{conjcolor}{and $O$-}topological elliptic genera. We will see that these families of topological elliptic genera organize into a unified picture, and we refer to them as the {\it trio}. 
Section \ref{sec_char_formula} gives the characteristic class formula for the equivariant Modular Forms associated with our topological elliptic genera. Section \ref{sec_duality} discusses the level-rank duality. Finally, in Section \ref{sec_application} we discuss immediate applications of our construction, including the divisibility of Euler numbers mentioned above. The contents of Sections \ref{sec_char_formula}, \ref{sec_duality}, and \ref{sec_application} 
can be read independently of each other, and the reader may find it useful to read in their preferred order.

Appendices \ref{app_TJF} and \ref{app_TEJF} are about the basics of $\TJF$ ($=U(1)$-equivariant twisted $\TMF$) and $\TEJF$($=Sp(1)$-equivariant twisted $\TMF$), respectively. 
The authors believe these spectra are of independent interest and hope that the self-contained appendices contribute to future studies. 
The content of Appendix \ref{app_TJF} is contained in an upcoming work by Bauer-Meier \cite{BauerMeierTJF}, so the authors claim no originality of the content. On the other hand, the content of Appendix \ref{app_TEJF} is a new result of this paper. 

Appendix \ref{sec_app_toymodel} explains a toy model of the main body of this paper, where we replace $\TMF$ with $\KO$, resulting in {\it topological $\Gm$-genera}. Although the contents of that section are not used in the main body, the authors hope they serve as a warm-up to the main part. 

We conclude the introduction with an important remark. This paper relies on the equivariant refinement of the sigma orientation. As explained in Section \ref{subsec_stringstr}, currently, we have partially established the equivariant sigma orientation for a nice class of compact Lie groups, but not for all compact Lie groups. 
In this paper, we derive mathematical results based on the current status (Fact \ref{fact_sigma}). However, we would also like to present the results we can get once we assume the full establishment of the equivariant sigma orientation, \colorbox{conjcolor}{Conjecture \ref{conj_sigma}}; this gives us a complete and unified picture of our topological elliptic genera. Therefore, in this paper, we put \colorbox{conjcolor}{shaded backgrounds} on the statements and proofs that depend on \colorbox{conjcolor}{Conjecture \ref{conj_sigma}}. The rest of the contents are based on the current status and are completely valid.

\subsection{Notations and conventions}\label{subsec_notations}

\begin{enumerate}
\item $\Spectra$ denotes the stable $\infty$-category of spectra, $\cS$ the $\infty$-category of spaces, and $\cS_*$ that of pointed spaces. $S \in \Spectra$ the sphere spectrum, and $\cptLie$ the category of compact Lie groups and continuous homomorphisms. 
$e \in \cptLie$ denotes the trivial group. 

\item We denote by $\eta \in \pi_1 S$ and $\nu \in \pi_3 S $ the ({\it integral}, not $2$-local) generators of $\pi_1 S \simeq \Z/2$ and $\pi_3 S \simeq \Z/24$ respectively, which we choose to be represented by the Lie group manifolds $U(1)$ and $SU(2)$, respectively.

\item The notations on $G$-equivariant homotopy theory are summarized in Section \ref{subsec_preliminary_SpG}. Among others, we note that $E^G$ denotes the {\it genuine} $G$-fixed point spectrum of a genuine $G$-spectrum $E \in \Spectra^G$. 
\item \label{notation_BG} For $G \in \cptLie$, we dentote by $\bfB G$ the topological stack $\bfB G := [*//G]$. On the other hand, $BG \in \cS$ denotes the classifying space. 
Let $\Rep_O(G)$ denote the {\it groupoid} of orthogonal representation of $G$ and isomorphisms, and $\RO(G)$ denote those of virtual orthogonal representations. 
\item Given an element $\tau \in \RO(G)$, we denote by $S^\tau \in \Spectra^G$ the virtual representation sphere spectrum, and denote
    \begin{align}\label{notation_E[tau]}
        E[\tau] := E \otimes S^\tau \in \Spectra^G. 
    \end{align}
    In particular, we write, for any $E \in \Spectra$ and any integer $n \in \Z$,
    \begin{align}
    	E[n] := E[n\underline{\R}]= \Sigma^n E. 
    \end{align}
    \item For an element $\tau \in \RO(G)$, we define
    \begin{align}\label{eq_red_dim_rep}
    \overline{\tau} := \tau - \dim(\tau) \cdot 1 \in \RO(G)
    \end{align}
     where $1 = \underline{\R} \in \RO(G)$ is the class of the one-dimensional trivial representation. Similarly, for a real virtual vector bundle $\theta$ over a topological space $X$, we denote
     \begin{align}\label{eq_red_dim_vecbdle}
     	\overline{\theta} := \theta - \rank(\theta) \cdot \underline{\R}, 
     \end{align}
     where $\underline{\R}$ denotes the trivial real vector bundle over $X$; when $X = BG$, this agrees with the previous meaning of $\underline{\R}$.
    \item \label{notation_chi}
    For a real $G$-representation $V$, we define
    \begin{align}
        \chi(V) \in \Map(S^0, S^V)^{G}
    \end{align}
    to be the unique nontrivial $G$-equivariant map sending $0 \mapsto 0$ and $\infty \mapsto \infty$. We also use the same symbol to mean the $G$-equivariant map
    \begin{align}\label{eq_def_chiV}
        \chi(V) := \id_E \otimes \chi_V \colon E \to E \otimes S^V = E[V]
    \end{align}
    for any $G$-equivariant spectrum $E$. 
    The homotopy class of $\chi(V)$ is called the {\it Euler class} for the representation $V$, and we also denote it by the same symbol $\chi(V) \in \pi_0 E[V]^G$.
    
    \item For $G= \cG(n)$ with $\cG$ being one of $U, SU, O, Sp, Spin, SO$, we denote by $V_{G} \in \Rep_O(G)$ its fundamental (a.k.a., defining, or vector) representation. 
    
    \item For a space $X$, we denote by $X \to P^nX $ the $n$-th Postnikov truncation, and by $X\langle n \rangle \to X$ the $n$-connected cover. 
    In particular, the Whitehead towers of $BU$ and $BO$ are in low degrees related as follows. 
    \begin{align}\label{diag_Whitehead}
    	\xymatrix{
    	BU\langle 6 \rangle \ar[r]  \ar[d] & BO\langle 8 \rangle = BString  \ar[d] \\
    	BU \langle 4 \rangle = BSU \ar[r] \ar[d] & BO\langle 4 \rangle =BSpin  \ar[d] \\
    	BU \ar[r] & BO\langle 2 \rangle = BSO   \ar[d] \\
    	 & BO. 
    	}
    \end{align}
    
    \item \label{notation_tw} For an element $\tau \in \RO(G)$, we denote by $\tw(\tau)$ the map
    \begin{align}
    	\tw(\tau) := \dim \tau + \left(BG \xrightarrow{\overline{\tau}} BO \to P^4BO \right) \in \Z \times \Map(BG, P^4BO). 
    \end{align} 
    We also abuse the notation to denote by $\tw(\tau)$ its homotopy class in $\Z \times [BG, P^4BO]$. The notation comes from the fact that $\tw(\tau)$ is understood as twists of $G$-equivariant $\TMF$ associated to $\tau \in \RO(G)$, as explained in Section \ref{subsubsec_twist}. 
    \item \label{prelim_compatible} In a symmetric monoidal category $(\mathcal{C}, \otimes)$, suppose we have objects $a, b, c, d, x$ and morphisms $f \colon x \to a \otimes b$, $g \colon x \to c \otimes d$, $h \colon a \to c$ and $k \colon d \to b$. We say that the diagram
    \begin{align}
        \xymatrix{
        x \ar[r]^-{f} \ar[rd]_-{g}& a \ar[d]^-{h} \ar@{}[r]|{\otimes}& b \\
        & c \ar@{}[r]|{\otimes}& d \ar[u]^-{k}
        }
    \end{align}
    is {\it compatible} if the square
    \begin{align}\label{diag_compatible_square}
    \xymatrix{
    x \ar[r]^-{f} \ar[d]^-{g} & a \otimes b \ar[d]^-{h \otimes \id_b} \\
    c \otimes d \ar[r]^-{\id_c \otimes k} & c \otimes b
    }
    \end{align}
    commutes. 
    \item \label{notation_Thom} Given a space $X$ with a real vector bundle $\theta$, we denote the associated Thom spectrum by
    \begin{align}
        X^\theta := \Sigma^{\infty} \mathrm{Thom}(\theta \to X) \in  \mathrm{Spectra}. 
    \end{align}
    More generally, this notation allows $\theta$ to be a virtual vector bundle, e.g., \cite{ABGThomSpectra}. 
    
    \item \label{notation_MT} In this article, it is important to distinguish between tangential and normal bordism Thom spectra. Given a space $\mathcal{B}$ with a map $f \colon \mathcal{B} \to BO$, 
    \begin{align}
        M(\mathcal{B}, f) &:= \mathcal{B}^{f}, \\
        MT(\mathcal{B}, f) &:= \mathcal{B}^{-f}, 
    \end{align}
    where we identify $f$ with a virtual real vector bundle of rank $0$ over $\mathcal{B}$. The spectra $M(\mathcal{B}, f)$ and $MT(\mathcal{B}, f)$ classify the bordism (co)homology theories of manifolds with {\it normal} and {\it tangential} $(\mathcal{B}, f)$-structures. The details are explained in Section \ref{subsec_thom}. 
    When $f$ is canonically understood, we often omit it from the notation and write, e.g., $MSU(k)$ and $MTSU(m)$. 
  
    \item For an $E_\infty$ ring spectrum $R$, we denote by $u \colon S \to R$ the unit map. 
    \item \label{notation_dual} Let $R$ be an $E_\infty$ ring spectrum. For a dualizable object $x \in \Mod_R$, we denote by $D_R(x)$ its dual in $\Mod_R$. In this article, we mostly use this notation for $R=\TMF$, so we adopt the shorthand $D:= D_{\TMF}$. 
    \item \label{notation_deg} For a $\Z$-graded abelian group $A$ and an integer $m$, we denote by $A|_{\deg=m}$ the degree-$m$ component of $A$. 
    \item  We use the following convention on modular forms. 
	We denote by $$\MF := \Z[c_4, c_6, \Delta, \Delta^{-1}]/(c_4^3 - c_6^2 - 1728\Delta)$$ the ring of weakly-holomorphic integral modular forms (i.e., holomorphic away from the cusps and having integral Fourier coefficients in the variable $q =\exp(2\pi i \tau)$). 
	    In the text, we capitalize ``Modular Forms'' to mean weakly holomorphic modular forms. 
        We put the  $\Z$-graded ring structure so that $ \MF|_{\deg = m}$ consists of those of weight $\frac{m}{2}$. This way we have a canonical map
        \begin{align}
            e_{\MF} \colon \pi_{m} \TMF  \to  \MF|_{\deg = m}. 
        \end{align}
        {\it Holomorphic} modular forms (holomorphic also at the cusps) figure in Section \ref{subsubsec_divisibility_naive}. We denote by $$\mf := \Z[c_4, c_6, \Delta]/(c_4^3-c_6^2 - 1728\Delta)$$ the corresponding graded ring.
    \item \label{notation_JF} We use the convention on Jacobi forms following, e.g., \cite{dabholkar2014quantum,Gristenko2020382}. 
    We denote by $\mathfrak{H} := \{\tau \in \C \mid \mathrm{Im}(\tau ) > 0 \}$ the upper half space of the complex plane. 
    For each $k \in \Z_{\ge 0}$ and $w \in \Z$, consider holomorphic functions of $(z, \tau) \in \C \times \mathfrak{H}$ satisfying the transformation properties (c.f., Definition \ref{def_Loo}),
    \begin{align}
    \phi\left(\frac{a\tau + b}{c \tau + d}, \frac{z}{c\tau + d} \right) &= (c\tau + d)^w e^{\frac{\pi i k cz^2}{c\tau + d}} \phi(\tau, z),  \\
    \phi(\tau, z + \lambda \tau + \mu) &= e^{-\pi i  k(\lambda ^2 \tau + 2 \lambda z) } \phi(\tau, z)  
    \end{align}
    for all $\begin{pmatrix}
    	a & b \\ c & d 
    \end{pmatrix} \in SL(2, \Z) $ and $(\lambda, \mu ) \in \Z^2$, and having Fourier expansions
        \ie 
            \phi(q, y) = \sum_{r \in \Z + \frac{k}{2}} \sum_{n \ge N} c(n, r) q^n y^r 
        \fe
        where $(q, y) = (\exp(2\pi i \tau), \exp(2\pi i z))$ for some integer $N$. 
        \begin{itemize}
            \item Such functions are called {\it weakly holomorphic} Jacobi forms of index $\frac{k}{2}$ and weight $w$. We mostly deal with this type of Jacobi forms in this paper. 
            \item If $c(n, r) \neq 0$ only when $n \ge 0$, then such functions are called {\it weak} Jacobi forms. This type of Jacobi forms only appear in Section \ref{subsubsec_divisibility_naive}.
            \item In addition, if $c(n, r) \neq 0$ only when $r^2 \ge 4kn$, then such functions are called {\it holomorphic} Jacobi forms. But in this paper we do not talk about this type of Jacobi forms.
            \item If all $c(n,r) \in \Z$, we add the adjective {\it integral} in all the above cases.
        \end{itemize}
        In the text, we capitalize the first letters in ``Jacobi Forms'' to mean {\it weakly holomorphic Jacobi forms}, and denote by $\JF_k$ the set of all integral Jacobi Forms with index $\frac{k}{2}$. We put the $\Z$-grading on $\JF_k$ so that $\JF_k|_{\deg = m}$ consists of Jacobi Forms with weight $w = -k+\frac{m}{2}$. This makes $\JF_k$ a $\Z$-graded module over the $\Z$-graded ring $\MF$. As will be recalled in Section \ref{subsec_JF_app}, we have a canonical map
        \begin{align}
            e_{\JF} \colon \pi_{m}\TJF_k = \pi_m \Gamma(\mathcal{E}; \mathcal{O}_{\mathcal{E}}(ke)) \to    \JF_k|_{\deg = m}. 
        \end{align}
        
        {\it Weak} Jacobi forms figure in Section \ref{subsubsec_divisibility_naive}. We denote by $\jF_k$ the $\mf$-submodule consisting weak Jacobi forms, i.e.,
        \begin{align}
			\jF_k := \JF_k \cap \Z((y))[[q]]. 
        \end{align}
    \item \label{notation_a} For notational ease, we write
    \begin{align}\label{eq_notation_a}
    	a = \phi_{-1, \frac12} = 	\frac{\theta_{11}(z, q)}{\eta^3(q)}  
    	=(e^{\pi i z} - e^{-\pi i z}) \prod_{m \ge 1} \frac{(1-q^m e^{2\pi i z})(1-q^me^{-2\pi i z})}{(1-q^m)^2}. 
    \end{align}
    This is an element in $\JF_1|_{\deg = 0}$ and a generator of the $\Z$-graded ring $\oplus_k \JF_k$ of Jacobi Forms \eqref{eq_JF_generators}; the notation $\phi_{-1, \frac12}$ is employed in \cite{Gritsenko:1999fk}. 
\end{enumerate}

\section{Preliminaries}\label{sec_preliminary}

\subsection{Generalities on genuinely equivariant spectra}
\label{subsec_preliminary_SpG}

Equivariant stable homotopy theory is an expansive subject, and there are various realizations of the equivariant stable homotopy category, e.g., those based on orthogonal spectra and those based on orbispaces. We refer to \cite[Appendix C]{GepnerMeier} for a nice account of those formulations and relations. 
However, in this paper, we only need the basic structure of the equivariant stable homotopy category, and this section is aimed at giving a minimal account of what we need in this paper and setting up the notation. 
Practically, we employ the definition $\mathrm{Spectra}^G := \mathrm{Sp}_{\mathcal{U}}^G$ in \cite[Definition C.1]{GepnerMeier}, and call it the $\infty$-category of genuinely $G$-equivariant spectra. This is based on orbispaces, but it was shown in \cite[Appendix C.2]{GepnerMeier} that they are equivalent to the more classical definition based on orthogonal spectra. 

Let $\mathcal{S}^G_*$ be the $\infty$-category of pointed $G$-spaces and $G$-equivariant maps, where equivalence is given by maps $f \colon X \to Y$ that induce a weak equivalence on the fixed points $f \colon X^{H} \simeq Y^{H}$ for all subgroups $H \subset G$. $\cS^G$ is a symmetric monoidal category with the smash product $\wedge$. 
In particular, we have $S^V \in \cS^G_*$ for all orthogonal representations $V \in \Rep_O(G)$. 
Informally speaking, the stable $\infty$-category $\Spectra^G$ is obtained by formally inverting the operation $S^V \wedge - $ on $\cS^G$. The symmetric monoidal structure $\wedge$ in $\cS^G_*$ extends to a symmetric monoidal structure on $\Spectra^G$, which we denote by $\otimes$. 
In particular, we have a symmetric monoidal functor
\begin{align}
	\Sigma^\infty \colon \cS^G_* \to \Spectra^G,
\end{align}
which preserves colimits\footnote{However, it does not preserve limits. So a {\it cofiber} sequence $X \to Y \to Z$ in $\cS^G_*$ produces a fiber=cofiber sequence
\begin{align}
	E \otimes X \to & E \otimes Y \to E \otimes  Z, \\
	\lMap_G( Z, E) \to & \lMap_G( Y, E) \to \lMap_G( X, E)
\end{align}
in $\Spectra^G$ for each $E \in \Spectra^G$, but we could not have started from a fiber sequence in $\cS^G_*$. 
}. We abuse the notation to denote $E \otimes X := E \otimes \Sigma^\infty X$ for $E \in \Spectra^G$ and $X \in \cS^G_*$. 
The category $\Spectra^G$ has internal homs, which we denote by $$\underline{\Map}_G(X, Y ) \in \Spectra^G$$ for $X, Y \in \Spectra^G. $

There are several notions of fixed point spectra for an equivariant spectrum $E \in \Spectra^G$, and in this paper, we use the {\it genuine} fixed point spectra, denoted by $E^G \in \Spectra$. This assignment gives a functor of stable $\infty$-categories
\begin{align}
	(-)^G \colon \Spectra^G \to \Spectra, \quad E \to E^G. 
\end{align} 

From this, we get the classical notion of $\RO(G)$-graded equivariant (co)homology groups as follows. 
For a virtual orthogonal representation $\tau \in \RO(G)$, we denote by $S^\tau \in \Spectra^G$ the virtual representation sphere spectrum.\footnote{In particular, we abuse the notation to denote $\Sigma^\infty S^V \in \Spectra^G$ by $S^V$ when $V$ is a orthogonal representation. }
Given another genuinely $G$-equivariant spectrum $X \in \Spectra^G$, its $G$-equivariant $E$-cohomology groups and homology groups with degree $\tau \in \RO(G)$ are defined as
\begin{align}
	E^\tau_G(X) &:= \pi_{0}\underline{\Map}_G(X, E \otimes S^\tau)^G = \pi_0 \underline{\Map}_G(X, E[\tau])^G , \\
	E_\tau^G(X)& := \pi_0 (X \otimes E\otimes S^{-\tau})^G = \pi_0(X \otimes E[-\tau])^G , 
\end{align}
respectively, where we have employed the notation $E[\tau] := E \otimes S^\tau \in \Spectra^G$ as in \eqref{notation_E[tau]}. 

Another important ingredient is the {\it norm map} between homotopy orbit and genuine fixed points. Let us denote by $\Ad G \in \Rep_O(G)$ the adjoint representation of $G$. For each $E \in \Spectra^G$, the norm map is a morphism in $\Spectra$ defined as
\begin{align}\label{eq_norm_equivariant}
	\Nm \colon E_{hG} \simeq (EG_+ \otimes E [-\Ad G])^G \xrightarrow{EG_+ \to S^0} E[-\Ad G]^G, 
\end{align}
where $E_{hG}$ is the homotopy orbit spectrum and the first equivalence is the Adams isomorphism. 

Finally, let us introduce notions related to the change of groups. Given a homomorphism of compact Lie groups $f \colon H \to G$, we have a restriction functor
\begin{align}
	\res_f \colon \Spectra^G \to \Spectra^H. 
\end{align}

Moreover, if $f$ is an inclusion of a subgroup $f \colon H \hookrightarrow G$, we often denote the restriction by $\res_G^H$. In this case, $\res_G^H$ admits both the left and right adjoints. We denote the left adjoint by $\ind_H^G$ and also use the suggestive notation $\ind_H^G E = E \wedge_H G_+$. By the Wirthm\"uller isomorphism, we get the transfer map (only along an inclusion of closed subgroups!) 
\begin{align}\label{eq_tr_incl}
	\tr_H^G \colon \left(\res_G^H (E) [-\Ad H]\right)^H \to E[-\Ad G]^G 
\end{align}

\subsection{Equivariant $\TMF$ and their twists}\label{subsec_preliminary_TMF_G}

In this subsection, we summarize the theory of genuinely equivariant elliptic cohomology developed by Gepner and Meier \cite{GepnerMeier}. 
They refine elliptic cohomology theory, in particular $\TMF$, to a globally equivariant spectrum, in the sense that $\TMF$ is refined to objects in $\Spectra^{G}$ for all compact Lie groups $G$ all at once, functorially in $G$. 
First, we briefly summarize their construction in Section \ref{subsubsec_GepnerMeier}, and then we relate it with the more elementary complex analytic story in Section \ref{subsubsec_TMF_C}. 

\begin{rem}
	 The Gepner-Meier work is based on spectral algebraic geometry, so Section \ref{subsubsec_GepnerMeier} below necessarily involves that language. However, we do not assume the reader to have {\it any} knowledge of spectral algebraic geometry at all; all we need in this paper is the consequence of the Gepner-Meier construction, that we obtain a genuinely equivariant refinement of $\TMF$ with nice dualizability properties, as summarized below. 
\end{rem}

\subsubsection{The construction of Gepner-Meier \cite{GepnerMeier}}\label{subsubsec_GepnerMeier}
For details of the following content, we refer to the original paper \cite{GepnerMeier}. 
As developed in the works of Lurie \cite{LurieElliptic1,LurieElliptic2,LurieElliptic3}, spectral algebraic geometry gives a conceptual framework of elliptic cohomology. 
Given a preoriented spectral elliptic curve $ \mathcal{E} \to \moduli$ over a spectral Deligne-Mumford stack $\moduli$ (the term ``spectral algebraic'' is henceforth often omitted), the associated {\it elliptic spectrum} is simply defined as 
\begin{align}\label{eq_SAG_ellcoh}
	R_{\cE} := \Gamma(\moduli; \mathcal{O} ) \in \CAlg, 
\end{align}
the global section of the structure sheaf of the moduli $\moduli$. 
In particular, if we apply it to the universal elliptic curve $\cE_{\univ} \to \moduli_\univ$, we get the spectrum of Topological Modular Forms, $\TMF:= R_{\cE_{\univ}} = \Gamma(\moduli_\univ; \mathcal{O})$. 

Gepner and Meier's work refines the elliptic spectrum \eqref{eq_SAG_ellcoh} into a globally equivariant $E_\infty$-spectrum, as follows. Their main construction is the {\it equivariant elliptic cohomology functor }
\begin{align}\label{eq_Ell_GM}
    \Ell \colon \mathcal{S}_{\rm Orb} \to \mathrm{Shv}(\moduli), 
\end{align}
for each $\cE \to \moduli$, where $\cS_{\rm Orb}$ is the category of orbispaces regarded as a setting of globally equivariant homotopy theory. 
The category $\cS_{\rm Orb}$ includes the object $\bfB G = [*//G]$ (see Section \ref{subsec_notations} \eqref{notation_BG}) for each compact Lie group $G$, and the functor \eqref{eq_Ell_GM} is defined so that $\Ell(\bfB G)$ is regarded as a spectral algebraic counterpart of the complex analytic moduli stack $\moduli_\C^G$ (see \eqref{eq_MG_C} below) of flat $G$-bundles on dual elliptic curves; namely, we have a canonical identification $\Ell(\bfB A) \simeq \Hom(\hat{A}, \cE) $ for each compact abelian Lie group $A$ with its Pontryagin dual $\hat{A}$, so in particular
\begin{align}\label{eq_EllBU(1)=E}
	\Ell(\bfB U(1)) \simeq \mathcal{E} , \quad  \Ell(\bfB C_n) \simeq \cE[n]
\end{align}
where $\cE[n] \subset \cE$ is the $n$-torsion of elliptic curves, and the functor \eqref{eq_Ell_GM} is given by the left Kan extension from the above cases. 

For each compact Lie group $G$, we have the Yoneda inclusion functor $\cS^G_* \to \cS_\Orb{}_{/\bfB G}$. Precomposing this with the functor $\Ell$, we get a colimit-preserving functor
\begin{align}\label{eq_EllG}
    \widetilde{\cEll}_G \colon \mathcal{S}^G_* \to \QCoh(\Ell(\bfB G))^\op.  
\end{align}
We further compose with the functor $\Gamma$ taking the global sections to get a colimit-preserving functor
\begin{align}\label{eq_GammaEllG}
\Gamma	\widetilde{\cEll}_G \colon \mathcal{S}^G_* \to \Spectra^\op, \quad X \mapsto \Gamma(\Ell(\bfB G); \widetilde{\cEll}_G(X)) \simeq \Gamma(\moduli;  \Ell(X //G)).  
\end{align}
Furthermore, they show that the functor \eqref{eq_GammaEllG} is represented by a genuine $G$-spectrum, also denoted by $R_\cE \in \Spectra^G$ in a way that is functorial in $G$. This means that we have canonical identifications
\begin{align}\label{eq_def_TMFcoh}
	\lMap_G(X, R_{\cE})^G \simeq  \Gamma(\Ell(\bfB G); \widetilde{\cEll}_G(X)) \simeq \Gamma(\moduli; \Ell(X//G)), 
\end{align}
for each $X \in \cS^G_*$ so that the equivariant cohomology group is identified as
\ie
    R_{\cE, G}^*(X) \simeq \pi_{-*}\Gamma(\Ell(\bfB G); \widetilde{\cEll}_G(X)) \simeq \pi_{-*} \Gamma(\moduli;  \Ell(X//G)).
\fe
In particular, we have
\begin{align}
	(R_{\cE})^G \simeq \Gamma(\Ell(\bfB G); \mathcal{O}_{\Ell(\bfB G)}) \simeq \Gamma(\moduli; \Ell(\bfB G)). 
\end{align}
This gives the desired globally equivariant refinement of $R_\cE$ in \eqref{eq_Ell_GM}.

For each orthogonal representation $V \in \Rep_O(G)$ of $G$, we set
\begin{align}
    \cL(V) := \widetilde{\cEll}_G(S^V) \in \Pic(\Ell(\bfB G)) := \QCoh(\Ell(\bfB G))^\times, 
\end{align}
which is shown to be the {\it invertible} elements in $\QCoh(\Ell(\bfB G))$. This allows us to more generally denote, for each virtual representation $V=W_1-W_2 \in \RO(G)$ with $W_1, W_2\in\Rep_O(G)$, 
\begin{align}
    \cL(V) := \cL(W_1) \otimes \cL(W_2)^{-1} . 
\end{align}
We get 
\begin{align}\label{eq_def_ROGTMF}
   \TMF[V]^G :=  (\TMF \otimes S^V)^G = \TMF(S^{-V})^G = \Gamma(\Ell(\bfB G); \cL(-V)). 
\end{align}
If $G, H \in \cptLie$ with $V_G \in \RO(G)$ and $V_H \in \RO(H)$, we have an isomorphism of $\TMF$-modules \cite{GepnerMeierNEW}, 
\begin{align}\label{eq_TMF_tensor}
	\TMF\left[\res_{G}^{G \times H} V_G \oplus \res_{H}^{G\times H} V_H \right]^{G \times H} \simeq \TMF[V_G]^{G} \otimes_\TMF \TMF[V_H]^H. 
\end{align}

\begin{rem}\label{rem_tensor_flat}
	The isomorphism \eqref{eq_TMF_tensor} does not necessarily mean that we have the corresponding tensor product decomposition of homotopy groups. Rather, the computation of the homotopy group of \eqref{eq_TMF_tensor} involves the Kunneth spectral sequence. 
	But {\it rationally}, it is often true that $\pi_* \TMF_\Q[V_G]^G$ is flat over $\pi_* \TMF_\Q = \MF_\Q$. 
	This is true for $\TMF_\Q[V]^{U(1)^n}$ for arbitrary $n$ and $V \in \RO(U(1)^n)$, as well as for $\TMF_\Q[kV_{U(n)}]^{U(n)}$ and $\TMF_\Q[kV_{SU(n)}]^{SU(n)}$ for arbitrary $n$ and $k$, for example\footnote{
		The outline of the proof is as follows. 
	The $G= U(1)$-case follows because $\pi_* \TJF_k \otimes \Q$ is free over $\MF_\Q$ for any $k$. This implies by \eqref{eq_tensor_flat} that the $\TMF_\Q$-module of the form $\TMF_\Q[W_1 \oplus W_2 \oplus \cdots \oplus W_n]^{U(1)^n}$ also satisfies this property where $W_i \in \RO(U(1))$. For general $V \in \RO(U(1))$, we use Fact \ref{fact_sigma} and the classification of bilinear forms on $\Z^n$ by Witt groups to reduce the previous diagonal case. 
	The case for $G=SU(n)$ and $G=U(n)$ follows from the $U(1)^n$-case by taking the Weyl group invariant part. 
	}. 
	In such cases, we have
	\begin{align}\label{eq_tensor_flat}
		\pi_* \TMF_\Q \left[\res_{G}^{G \times H} V_G \oplus \res_{H}^{G\times H} V_H \right]^{G \times H} \simeq
		\pi_* \TMF_\Q[V_G]^{G} \otimes_{\MF_\Q} \pi_* \TMF_\Q[V_H]^H \\
		\mbox{if }	\pi_* \TMF_\Q[V_G]^{G} \mbox{ or } \pi_* \TMF_\Q[V_H]^H \mbox{ is flat over }\MF_\Q \notag
	\end{align}
\end{rem}

\begin{ex}[$G=U(1)$: Topological Jacobi Forms]\label{ex_TJF_preliminary}
	The case of $G=U(1)$ is fundamental, and plays an important role in this paper. It is called {\it Topological Jacobi Forms} and studied in detail in an upcoming paper by Bauer-Meier \cite{BauerMeierTJF}, and we have summarized the necessary results in Appendix~\ref{app_TJF}. In this paper, we employ the definition (Definition \ref{def_TJF}) that, for each integer $k$, 
\begin{align}
	\TJF_{k} := \TMF[kV_{U(1)}]^{U(1)} \simeq   \Gamma(\cE; \cO_{\cE}(ke)), 
\end{align}
where we have used $\Ell(\bfB U(1)) \simeq \cE$ \eqref{eq_EllBU(1)=E} and the fact that $\cL(-kV_{U(1)}) \simeq \cO_\cE(ke)= \cO_\cE(e)^{\otimes k} \in \QCoh(\cE)^\times$, where $\cO_{\cE}(e)$ is the (SAG-version of the) sheaf of meromorphic functions on $\cE$ having pole of order at most $1$ at the zero section $e \colon \moduli \to \cE$. 
As explained below and in more detail in Appendix \ref{app_TJF}, $\TJF_k$ is regarded as a spectral refinement of the module of integral Jacobi Forms of index $k/2$. 
\end{ex}

\begin{ex}[$G=Sp(1)$: Topological Even Jacobi Forms]\label{ex_TEJF_preliminary}
	The case of $G=Sp(1)$ is also of particular importance for us. The twisted $Sp(1)$-equivariant $\TMF$ is surprisingly nicely understood, and we give a detailed account in Appendix \ref{app_TEJF}. 
	We employ the notation (Definition \ref{def_TEJF_app})
	\begin{align}
		\TEJF_{2k} := \TMF[kV_{Sp(1)}]^{Sp(1)}
	\end{align}
	for each $k \in \Z$ and call it {\it Topological Even Jacobi Forms}, by the reason explained in Example \ref{ex_EJF} below and in more detail in Appendix \ref{app_TEJF}. 
\end{ex}

An important feature of the genuinely equivariant $\TMF$ is the following dualizability statement: 
\begin{fact}[{Dualizability of $\TMF^G$ \cite{GepnerMeierNEW}}]\label{fact_dualizability_TMF}
	For any compact Lie group $G$, $\TMF^G$ is dualizable in $\Mod_\TMF$, with its dual (see Section \ref{subsec_notations} \eqref{notation_dual}) canonically identified as 
	$D(\TMF^G) \simeq \TMF[-\Ad (G)]^G$.  
\end{fact}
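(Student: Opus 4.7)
The plan is to derive both the dualizability and the identification of the dual from the spectral algebraic geometry setup of Section \ref{subsubsec_GepnerMeier}, by applying Grothendieck--Serre duality to the structure map $p\colon \Ell(\bfB G) \to \moduli$.

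First, I would invoke \eqref{eq_def_TMFcoh} and \eqref{eq_def_ROGTMF} to rewrite both sides as global sections on $\Ell(\bfB G)$:
\begin{align*}
\TMF^G \simeq \Gamma(\Ell(\bfB G); \cO_{\Ell(\bfB G)}), \qquad \TMF[-\Ad G]^G \simeq \Gamma(\Ell(\bfB G); \cL(\Ad G)).
\end{align*}
Thus it suffices to prove that $p_* \cO_{\Ell(\bfB G)}$ is dualizable as a $\TMF$-module and to identify its dual with $p_* \cL(\Ad G)$.

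Next, I would argue, following \cite{GepnerMeier}, that $p$ is proper as a consequence of the compactness of $G$; this is the crucial geometric input. Granted properness, $p_*$ preserves perfect complexes and admits a right adjoint $p^!$ of the form $p^!(-) \simeq p^*(-) \otimes \omega_p$, where $\omega_p$ denotes the relative dualizing sheaf. Grothendieck--Serre duality then yields, for any perfect complex $\cF$ on $\Ell(\bfB G)$,
\begin{align*}
D(p_*\cF) \simeq p_*(\cF^\vee \otimes \omega_p).
\end{align*}
Applied to $\cF = \cO_{\Ell(\bfB G)}$, this reduces the claim to the single identification $\omega_p \simeq \cL(\Ad G)$.

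The main obstacle is precisely this last identification, which is where the adjoint representation must enter naturally. The expected route is via the cotangent complex: classically one has $L_{\bfB G/\pt} \simeq \Lie(G)^\vee[-1]$, and the functor $\widetilde{\cEll}_G$ of \eqref{eq_EllG} should transport this to a description of the relative cotangent of $\Ell(\bfB G) \to \moduli$ governed by $\widetilde{\cEll}_G(\Ad G) = \cL(\Ad G)$, since $\Lie(G)$ is $\Ad G$ as a $G$-representation. Taking the determinant line and bookkeeping the degree shift against the relative dimension then produces $\omega_p \simeq \cL(\Ad G)$. Carrying this out rigorously in the spectral setting---especially when $G$ is disconnected, so that $\Ell(\bfB G)$ is assembled from components indexed by conjugacy classes of commuting pairs---is the technical heart, for which I would appeal to \cite{GepnerMeierNEW}.
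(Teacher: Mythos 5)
The paper does not prove this statement: it is labeled a \emph{Fact} and attributed to the forthcoming work \cite{GepnerMeierNEW}, with no argument supplied in the text, so there is no proof of the paper's own to compare your attempt against. On its own terms, your sketch is a plausible reconstruction of the SAG-style argument one would give, and its two pillars are exactly the points that need to be established: properness of $p\colon \Ell(\bfB G)\to\moduli$ (clear for a torus $T$, where $\Ell(\bfB T)\simeq\Hom(\hat{T},\cE)$ is a fiber product of elliptic curves; for general $G$ one appeals to the Kan-extension construction and the description $\moduli_\C^G\simeq\moduli_\C^T/W$ of \eqref{eq_moduliG=moduliT/W}), together with the identification $\omega_p\simeq\cL(\Ad G)$.

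The one place where your proposal is not yet a proof is the step that transports $L_{\bfB G/\pt}\simeq\mathfrak{g}^\vee[-1]$ to an identification of $\omega_p$. The functor $\widetilde{\cEll}_G$ of \eqref{eq_EllG} is defined on pointed $G$-spaces and sends representation spheres to invertible sheaves; it does not act directly on cotangent complexes of stacks, so one needs a separate deformation-theoretic computation identifying the relative cotangent complex of $\Ell(\bfB G)\to\moduli$ and then extracting the determinant with the correct degree shift against $\dim G$. You correctly flag this as the technical heart and defer it to \cite{GepnerMeierNEW}, which is the appropriate thing to do given the state of the literature. As a sanity check supporting the expected answer: for $G=T$ a torus, $\Ad T$ is trivial so $\cL(\Ad T)\simeq\cO$, matching the triviality of the relative dualizing sheaf of a self-product of elliptic curves over $\moduli$; for nonabelian $G$, the nontrivial twist by $\cL(\Ad G)$ reflects the root spaces.
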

We remark that this is a special feature of equivariant $\TMF$; indeed, for example in the case of genuinely equivariant $\KU$-theory (with the usual equivariance), this dualizability does not hold. 
This allows us to define, for {\it any} homomorphism $f \colon G \to H$ of compact Lie groups, the {\it transfer map along $f$}, 
\begin{align}\label{eq_tr_f}
\tr_f \colon	\TMF[-\Ad(G)]^G \to \TMF[-\Ad(H)]^H 
\end{align}
to be the dual of the restriction map 
$\res_f \colon \TMF^H \to \TMF^G$. 
This extends the transfer map along inclusions $G \hookrightarrow H$ in \eqref{eq_tr_incl}, which exists for any genuinely $H$-equivariant spectra. The existence of this general transfer is a special feature of the genuinely equivariant $\TMF$\footnote{As we will explain in detail in Part II of this series of the papers, these transfer maps should correspond to {\it gauging} in quantum field theories.}. 

We also note that, for every $V \in \RO(G)$, $\TMF[V]^G$ is also dualizable in $\Mod_\TMF$ whose dual is identified as
\begin{align}\label{eq_twisted_dual}
	D(\TMF[V]^G) \simeq \TMF[-V-\Ad(G)]^G, 
\end{align}
by the coevaluation being 
\begin{align}\label{eq_coev_duality_TMF}
	\TMF[V]^G \otimes \TMF[-V -\Ad(G)]^G \xrightarrow{\rm multi} \TMF[-\Ad(G)]^G \xrightarrow{\tr_G^e} \TMF. 
\end{align}

Finally, let us remark on the Atiyah-Segal completion in this context. We have an adjunction (upper=left adjoint)
\begin{align}\label{diag_adj_Orb}
	\xymatrix{
		|\bullet| \colon	\cS_\Orb \ar@/^7pt/[rrr]^-{\sX \mapsto |\sX|}_-{\perp} &&& \mathcal{S}  \ar@/^7pt/[lll]^{ \Map(\bullet, Y) \leftarrow Y} \colon y
	}
\end{align}
For example, we have $|\bfB G| \simeq BG$ for $G \in \cptLie$. 
We recover the usual $\TMF$-cohomology from the equivariant elliptic cohomology functor $\Ell$ in \eqref{eq_Ell_GM} by the fact that the following diagram commutes: 
\begin{align}\label{diag_Ell_TMF}
	\xymatrix@C=5em{
	\cS \ar[r]^-{y} \ar[rrd]_-{Y \mapsto \lMap(\Sigma^\infty Y, \TMF)\quad \quad }& \cS_\Orb \ar[r]^-{\Ell} &\Shv(\moduli) \ar[d]^-{\simeq}_-{\Gamma}\\
	 &&  \Mod_\TMF
}
\end{align}
Let us denote the unit of the adjunction \eqref{diag_adj_Orb} by
\begin{align}\label{eq_u_adj}
	u_\sX \colon \sX \to y(|\sX|) 
\end{align}
for each $\sX \in \cS_\Orb$. Then we get, for each pointed $G$-space $X \in \cS_*^G$, the map
\begin{align}\label{eq_AS}
\zeta &\colon \lMap_{G}(\Sigma^\infty X, \TMF)^G \simeq	\Gamma(\moduli, \Ell(X//G)) \\
& \xrightarrow{u_{X//G}} \Gamma(\moduli, \Ell(y(|X//G|))) \stackrel{\eqref{diag_Ell_TMF}}{\simeq} \lMap(\Sigma^\infty X \wedge_G EG_+, \TMF) \simeq  \lMap_{G}(\Sigma^\infty X, \TMF)^{hG}
\end{align}
This coincides with the canonical map from the genuine to homotopy fixed points and is regarded as a generalization of the Atiyah-Segal completion map. In particular, we get the following map of the homotopy groups.
\begin{align}
	\zeta \colon \TMF_G^*(X) \to \TMF^*(X \wedge_G EG_+). 
\end{align}

\subsubsection{Specialization to elliptic curves over $\C$}\label{subsubsec_TMF_C}
Let $ \moduli_\C$ denote the classical Deligne-Mumford stack of elliptic curves over $\C$, and $p \colon \cE_\C \to \moduli_\C$ denote the universal elliptic curve over it. We use the usual identification (where we use the notation $\mathfrak{H} := \{\tau \mid \mathrm{Im}(\tau) > 0\}$)
\begin{align}\label{eq_ell_coord}
    \moduli_\C \simeq \mathfrak{H}//SL_2(\Z),  \quad 
    \mathcal{E}_\C \simeq (\C\times \mathfrak{H}) // (\Z^2 \rtimes SL_2(\Z) ). 
\end{align}
For $G$ connected and $\pi_1 G$ torsion-free, we have an identification \cite{GepnerMeierNEW}, 
\begin{align}\label{eq_MG_C}
   \moduli_\C^G \simeq \Ell(\bfB G)^{{\heartsuit}}_\C, 
\end{align}
where $\moduli_\C^G$ is the moduli stack of flat $G$-bundles over the dual elliptic curve $\mathcal{E}^\vee_\C$, and $\Ell(\bfB G)^{{\heartsuit}}_\C$ is the underlying Deligne-Mumford stack of $\Ell(\bfB G)$ after taking $\C$-points. So a virtual representation $V \in \RO(G)$ produces a line bundle $\cL_\C(-V) := \cL(-V)_\C^\heartsuit \in \Pic(\moduli_\C^G)$. 
By the functoriality of the Gepner-Meier construction, we have a canonical map 
\begin{align}\label{eq_evC}
	\red_\C \colon \pi_\bullet \TMF[V]^{G} \to \Gamma(\moduli_\C^G; \cL_\C(-V) \otimes p^*\omega^{\bullet/2}). 
\end{align}

In the case where $G$ is connected and $\pi_1 G$ is torsion-free\footnote{This condition is sufficient for the identification \eqref{eq_moduliG=moduliT/W} to hold. }, the right hand side of \eqref{eq_evC} can be nicely understood in terms of multivalued Jacobi Forms as follows. 
For each compact connected {\it abelian} Lie group $T$, we have a canonical identification
\begin{align}
    \moduli_\C^{T} \simeq \mathcal{E}_\C \times_\Z \Hom(S^1, T) , 
\end{align}
and an identification $T \simeq U(1)^r$ gives the corresponding identification $\moduli_\C^T \simeq (\mathcal{E}_\C )^{\times r}$ where the product is taken over $\moduli_\C$. 
Furthermore, for each connected compact Lie group $G$ with $\pi_1 G$ being torsion free, choosing a maximal torus $T \subset G$ with the Weyl group $W$, we have a canonical identification
\begin{align}\label{eq_moduliG=moduliT/W}
    \moduli_\C^G \simeq \moduli_\C^{T}/W  \quad \mbox{ for }G \mbox{ connected, }\pi_1 G \mbox{ torsion-free}
\end{align}
This allows us to identify sections of sheaves over $\moduli^G_\C$ in terms of those over $\moduli_\C^{T}$. In particular, given $V \in \RO(G)$ we have a canonical identification
\begin{align}\label{eq_G_MF}
    \Gamma(\moduli_\C^G; \cL_\C(-V)) \simeq \Gamma(\moduli_\C^T; \cL_\C(-\res_G^T V))^W. 
\end{align}
In this setting, 
The line bundle $\mathcal{L}_\C(V)$ is related to the line bundles constructed by Looijenga and its generalization \cite{gukov2025newapproach31dimensionaltqfts}: 
\begin{defn}[{Looijenga's line bundle $\mathcal{A}(\xi)$ \cite{gukov2025newapproach31dimensionaltqfts}}]\label{def_Loo}
	\begin{enumerate}
		\item For each nonnegative integer $r$, we have a canonical bijection
		\begin{align}\label{eq_twist_bil}
			[BU(1)^r, P^4BO] \simeq \left\lbrace b(-, -) \colon \Z^r \times \Z^r \to \Z : \mbox{symmetric bilinear form} \right\rbrace 
		\end{align}
		\item For each element $\xi \in [BU(1)^r, P^4 BO]$ we define the Looijenga's line bundle $\mathcal{A}(\xi)$ over $\moduli_\C^{U(1)^r} \simeq \cE^{\times r}_\C = (\C^{\times r} \times \mathfrak{H})//((\Z^2)^r \rtimes SL_2(\Z))$ by	\begin{align}\label{eq_cA_def}
			\mathcal{A}(\xi) := \C \times (\C^{\times r} \times \mathfrak{H}) //  ((\Z^r)^2 \rtimes SL_2(\Z)), 
		\end{align}
		where $\mathfrak{H}$ is the upper half plane and the action is given by (we use the coordinates $z=(z_1, z_2, \cdots, z_r) \in \C^r$, $\tau \in \mathfrak{H}$ and $u \in \C$)
		\begin{align}
			A \cdot (u, z, \tau) &= \left( e^{\pi i (c(c\tau + d)^{-1}\xi (z, z)) }u , (c\tau + d)^{-1}z, \frac{a\tau + b}{c\tau + d} \right) , \\
			(m_1, m_2) \cdot (u, z, \tau) &= \left( e^{-2\pi i (\xi(z, m_1) + \frac12 \xi(m_1, m_1))} u, z + m_1 + m_2, \tau \right) , 
		\end{align}
		for each $A = \begin{pmatrix}
			a & b \\
			c&d
		\end{pmatrix} \in SL_2(\Z)$ and $(m_1, m_2) \in (\Z^r)^2$. 
		Here we have denoted by $\xi(-, -) \colon \C^r \times \C^r \to \C$ the $\C$-linear extension of the symmetric bilinear form on $\Z^{r}$ corresponding to $\xi$ by the bijection \eqref{eq_twist_bil}. 
		\item 
		More generally, let $G$ be a connected compact Lie group with $\pi_1 G$ torsion free. Choose a maximal torus $\iota \colon U(1)^r \hookrightarrow G$ with the Weyl group $W$, and identify $\moduli_\C^G \simeq \moduli_\C^T/W$. 
		Given an element $\xi \in [BG, P^4 BO]$, we define the line bundle $\mathcal{A}(\xi)$ over $\moduli_\C^G $ by the following: 
		Consider the line bundle $\mathcal{A}(\iota^* \xi)$ over $\moduli_{\C}^{U(1)^{r}}$ constructed in (2), and observe that the $W$-action on $\moduli_{\C}^{U(1)^{r}}$ naturally lifts to $\mathcal{A}(\iota^* \xi)$. Thus it descends to a line bundle $\mathcal{A}(\xi)/W$ on $\moduli_\C^{G}$, which we denote by $\cA(\xi)$.\footnote{We note that the definition does not depend on the choice of the maximal torus, in the sense that, any two different maximal torus are conjugate to each other, and a choice of a conjugating element associates isomorphism of the line bundle constructed.}
		\item We also extend the notation to denote, given $n + \xi := (n, \xi) \in \Z \times [BG, P^4\BO]$, 
		\begin{align}
			\cA(n + \xi) := \cA(\xi) \otimes p^*\omega^{-\frac{n}{2}}, 
		\end{align}
		where $p \colon \moduli_\C^G \to \moduli_\C$ is the projection and $\omega$ is the cotangent sheaf on $\moduli_\C$. 
		For a virtual representation, $V \in \RO(G)$ we denote $\cA(V) := \cA(\tw(V))$, where $\tw(V)$ is defined in Section \ref{subsec_notations} \eqref{notation_tw}
	\end{enumerate}
\end{defn}
This means that a holomorphic section $\phi \in \Gamma(\cE_\C^{\times r}, \cA(\xi))$ can be written as a multivariable function $\phi(z_1, \cdots, z_r, \tau)$ with $(z_1, \cdots, z_r, \tau) \in \C^r\times \mathfrak{H}$ and the transformation rule induced by \eqref{eq_cA_def} + the weight factor appearing in the definition of Modular Forms.
We also use the coordinate $(y_1, y_2, \cdots, y_r, q)$ with $y_a := e^{2 \pi i z_a}$ and $q := e^{2\pi i \tau}$ interchangeably. 
A holomorphic section $ \phi \in \Gamma(\moduli_\C^G; \cA(\xi)) \simeq \Gamma(\cE_\C^{\times r}; \cA(\iota^*\xi))^W$ is expressed as those $\phi(\bm{z}, \tau)$ that are additionally invariant under the action of $W$. 

\begin{defn}[{multi-variable Jacobi Forms and $G$-equivariant Modular Forms}]\label{def_JF_MFG}
	\begin{enumerate}
		\item Let $r$ be a positive integer. Given a class $\xi  \in \Z \times [BU(1)^r, P^4BO]$, we define a $\Z$-graded $\MF$-module $\MF[{\xi}]^{U(1)^r}$ by setting
		\begin{align}
 \left( 	\MF[{{\xi}}]^{U(1)^r}\right)_{\deg=m}:=  \Gamma(\cE_\C; \cA(-m +\xi)) \cap \Z((y_1, y_2, \cdots, y_r, q)), 
		\end{align}
		for each $m\in\Z$. Here we have used the coordinates $y_a := e^{2 \pi i z_a}$ and $q := e^{2\pi i \tau}$ as above. 
		In the case $r=1$, we also denote
		\begin{align}\label{eq_def_JFk}
			\JF_k :=  \MF[\tw(kV_{U(1)})]^{U(1)} = \MF[2k + k\xi_{U(1)}]^{U(1)}, 
		\end{align}
		where $\xi_{U(1)} \in [BU(1), P^4 BO] \simeq \Z$ is the generator represented by the (normalized) fundamental representation $\overline{V}_{U(1)}$. 
		Following the usual convention, we call an element in $\JF_k|_{\deg=m} =\left( \MF[k \xi_{U(1)}]^{U(1)}\right) _{\deg = m-2k}$ an {\it integral Jacobi Form of index $\frac{k}{2}$ and weight $\frac{m}{2}-k$}. 
		
		\item Let $G$ be a compact connected Lie group with $\pi_1 G$ torsion-free. Choose a maximal torus $\iota \colon U(1)^r \hookrightarrow G$ with the Weyl group $W$. 
		Given an element $\xi \in [BG, P^4 BO]$, we define a $\Z$-graded $\MF$-module $\MF[{\xi}]^{G}$ by setting
		\begin{align}
\MF[\xi]^G := \left( \MF[\iota^* \xi]^{U(1)^r} \right)^W \subset \Gamma(\moduli_\C^G; \cA(\xi)),   
		\end{align}
		where $\left( -\right)^W$ means the $W$-invariant part.  
		For $V \in \RO(G)$, we also denote $\MF[V]^G := \MF[\tw(V)]^G$. 
		We call an element in $\MF[\xi]^G$ an integral $G$-equivariant $\xi$-twisted Modular Form. 
	\end{enumerate}
\end{defn}

The relation between $\cL_\C(-V)$ and $\cA(V)$ is the following. 
\begin{lem}[{\cite{ando2010circleequivariant} and \cite{gukov2025newapproach31dimensionaltqfts}}]\label{lem_AvsL}
For each compact connected $G$ with $\pi_1 G$ torsion-free and $V \in \RO(G)$, we have an isomorphism
	\begin{align}\label{eq_theta_V}
	\Phi_V \cdot \colon	\mathcal{L}_\C(-V) \simeq \cA(V) \mbox{ in } \Pic(\moduli_\C^G), 
	\end{align}
	equivalently an invertible holomorphic section $\Phi_V \in \Gamma(\moduli_\C^G; \cL_\C(V) \otimes \cA(V))^\times$, characterized by the following properties: 
	\begin{itemize}
		\item Functorial in $(G, V)$. 
		\item compatible with the monoidal structure in $\RO(G)$.  
		\item In the case $G=U(1)$ and $V = V_{U(1)}$, the section $\Phi_{V_{U(1)}} \in \Gamma(\cE_\C; \cO_{\cE_\C}(-e) \otimes \cA(V_{U(1)}))^\times$ is given by the Jacobi theta function as
		\begin{align}
			\Phi_{V_{U(1)}} = a = \phi_{-1, \frac12} = 	\frac{\theta_{11}(z, q)}{\eta^3(q)}  
			=(e^{\pi i z} - e^{-\pi i z}) \prod_{m \ge 1} \frac{(1-q^m e^{2\pi i z})(1-q^me^{-2\pi i z})}{(1-q^m)^2}. 
		\end{align}
		Here the notation $a$ follows our shorthand notation introduced in \eqref{eq_notation_a}. 
	\end{itemize}
\end{lem}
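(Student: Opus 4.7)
The plan is to construct $\Phi_V$ by a three-step reduction: first establish the base case $(G,V) = (U(1), V_{U(1)})$ by direct computation with the Jacobi theta function, then extend to arbitrary representations of tori $U(1)^r$ using the required functoriality and monoidal compatibility, and finally descend from the maximal torus to a general connected $G$ with $\pi_1 G$ torsion-free by Weyl invariance. Uniqueness will then follow because the three characterizing conditions propagate the base-case value all the way up.

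For the base case, I would verify that $\Phi_{V_{U(1)}} := \theta_{11}(z,\tau)/\eta^3(\tau)$ defines a nowhere-vanishing section of $\cO_{\cE_\C}(-e) \otimes \cA(V_{U(1)})$ over $\cE_\C$. This is a classical computation: the quasi-periodicity of $\theta_{11}/\eta^3$ under $\Z\tau + \Z$ and its modular transformation under $SL_2(\Z)$ exhibit it as weight $-1$ and index $\tfrac12$, which is precisely the data $\tw(V_{U(1)}) = 1 + \overline{V}_{U(1)} \in \Z \times [BU(1), P^4 BO]$ defining $\cA(V_{U(1)})$; and the simple zero of $\theta_{11}$ at $z=0$, with no other zeros or poles modulo the lattice, cancels exactly the vanishing order built into $\cL_\C(V_{U(1)}) \simeq \cO_{\cE_\C}(-e)$, so that the section is invertible everywhere.

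Next, any $V \in \Rep_O(U(1)^r)$ decomposes into a direct sum of one-dimensional characters $\chi_n = f_n^* V_{U(1)}$, where $f_n \colon U(1)^r \to U(1)$ is the homomorphism associated to $n \in \Z^r$. Functoriality forces $\Phi_{\chi_n} = f_n^* \Phi_{V_{U(1)}}$, while monoidal compatibility forces $\Phi_{V \oplus V'} = \Phi_V \cdot \Phi_{V'}$; hence for $V = \bigoplus_i \chi_{n_i}$ I define $\Phi_V := \prod_i f_{n_i}^* \Phi_{V_{U(1)}}$. One then checks that the product of quasi-periodicity and modular multipliers of the individual factors assembles into the single multiplier associated to the symmetric bilinear form $\xi = \sum_i n_i n_i^\top$ on $\Z^r$, and that $\xi$ agrees with $\tw(V)$ under the bijection \eqref{eq_twist_bil}; the weights and index data track similarly, placing $\Phi_V$ in $\Gamma(\moduli_\C^T; \cL_\C(V) \otimes \cA(V))^\times$.

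Finally, for connected compact $G$ with $\pi_1 G$ torsion-free, I choose a maximal torus $\iota \colon T \hookrightarrow G$ and define $\Phi_V$ as the torus-level section associated to $\iota^* V$. Since the Weyl group $W$ acts on $\iota^* V$ simply by permuting weights, the product $\prod_i f_{n_i}^* \Phi_{V_{U(1)}}$ is manifestly $W$-invariant (reindexing does not change the value of a product), so it descends to a nowhere-vanishing section of $\cL_\C(V) \otimes \cA(V)$ over $\moduli_\C^G \simeq \moduli_\C^T/W$. Independence from the choice of maximal torus follows from conjugacy of maximal tori together with the naturality of the construction in the data $(G,V)$. The main obstacle is the bookkeeping in the second step: one must confirm that the product of the individual theta-function multipliers for distinct weights correctly assembles into the single quadratic-form multiplier defining $\cA(V)$, but once the exponents are written out this reduces to the algebraic identity $\sum_i (n_i \cdot z)^2 = \xi(z,z)$ with $\xi = \sum_i n_i n_i^\top$, making the verification routine though notationally heavy.
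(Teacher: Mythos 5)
The paper does not prove this lemma; it is stated as a recollection with citations to \cite{ando2010circleequivariant} and \cite{GukovKrushkalMekerPei}, and those references carry the actual arguments. Your three-step strategy—base case for $(U(1), V_{U(1)})$, forced extension to tori via functoriality and monoidality, descent along the Weyl quotient—is in fact the standard proof that appears in those sources, so the comparison here is against the literature rather than against an in-paper argument, and on that basis your outline is the right one.

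Two small points worth tightening. First, a real representation of a torus decomposes into copies of the trivial one-dimensional representation together with two-dimensional real summands corresponding to nontrivial complex characters $\chi_n$, not literally into ``one-dimensional characters''; you should also say a word about $\Phi_{\underline{\R}}$, which must be the constant section $1$ so that the convention $\cA(n+\xi) = \cA(\xi)\otimes p^*\omega^{-n/2}$ matches the shift built into $\cL_\C$ by trivial summands, and then pass to virtual representations by inverting. Second, the key computational step you defer—that the product of the individual $\theta_{11}/\eta^3$ multipliers reassembles into the cocycle defining $\cA(\xi)$ with $\xi = \sum_i n_i n_i^\top$, and that this $\xi$ is $\tw(V)$ under the bijection \eqref{eq_twist_bil}—is exactly the content one must import from \cite{ando2010circleequivariant}; it is correct and boils down to the quadratic identity you state plus the identification of $\tfrac{p_1}{2}$ of $\overline{V}$ with the quadratic form $\sum_i (n_i \cdot z)^2$ on $H^2(BT;\Z)$. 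With those caveats made explicit, the argument is sound and mirrors the cited references.
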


The map \eqref{eq_evC} factors through integral $G$-equivariant Modular Forms as
\begin{align}\label{eq_character}
		\red_\C \colon \pi_\bullet \TMF[V]^{G} \xrightarrow{e} \left( \MF[V]^G\right)  |_{\deg = \bullet} \subset  \Gamma(\moduli_\C^G; \cA(V) \otimes p^*\omega^{\bullet/2})  \stackrel{\Phi_V}{\simeq} 
		 \Gamma(\moduli_\C^G; \cL_\C(-V) \otimes p^*\omega^{\bullet/2}). 
\end{align}
We call the first map $e$ as the {\it $G$-equivariant character map}. 

\begin{rem}[{The relation between Euler class $\chi(V) \in \TMF[V]^G$ and $\Phi_V$}]\label{rem_PhiV=chiV}
	In the case where $V \in \Rep_O(G)$, i.e., $V$ is not virtual but a genuine representation, we have a natural map $\cL_\C(V) \to \cO_{\moduli_\C^G}$ in $\QCoh(\moduli_\C^G)$ by applying the $G$-equivariant elliptic cohomology functor \eqref{eq_EllG} to the map $\chi(V) \colon S^0 \hookrightarrow S^V$. 
	We abuse the notation to also denote by $\Phi_V \in \MF[V]^G \subset \Gamma(\moduli_\C^G; \cA(V))$ the section corresponding to the composition
	\begin{align}
		\cO_{\moduli_\C^G} \xrightarrow{} \cL_\C(-V) \stackrel{\eqref{eq_theta_V}}{\simeq} \cA(V)
	\end{align}
	For example, we regard $\Phi_{V_{U(1)}} = a \in \pi_0 \JF_1$. In general, $\Phi_V$ is essentially the generalization of the Theta functions studied in \cite{ando2010circleequivariant}.  
	This means that the $G$-equivariant Euler class $\chi(V) \in \pi_0 \TMF[V]^G$ in \eqref{eq_def_chiV} satisfies
	\begin{align}\label{eq_Phi_V=chiV}
		e(\chi(V)) = \Phi_V. 
	\end{align}
	The Euler class $\chi(V)$ is of particular importance in our paper. Physically, it is supposed to correspond to ``$G$-symmetric $V$-valued Majorana fermions''. 
\end{rem}

\begin{ex}[$G=U(n)$]\label{ex_U(n)_JF}
	In the case of $G=U(1)$, the character map \eqref{eq_character} becomes
	\begin{align}
		e_\JF \colon \pi_\bullet \TJF_k \to   \JF_k|_{\deg=\bullet}, 
	\end{align}
	which allows us to regard $\TJF_k$ as spectral refinement of $\JF_k$ as promised in Example \ref{ex_TJF_preliminary}. 
	More generally, for $G=U(n)$, we use the standard diagonal maximal torus $U(1)^n \xhookrightarrow{\rm diag} U(n)$ with the Weyl group $W=\Sigma_n$, the symmetric group permuting the factors. So a $U(n)$-equivariant Modular Forms are expressed as $n$-variable Jacobi Forms $\phi(z_1, \cdots, z_n, \tau)$ which are symmetric in $z_i$. 
	For any nonnegative integer $k$, we have (see \eqref{eq_tensor_flat})
	\begin{align}\label{eq_MFU(n)}
		\MF[kV_{U(n)}]^{U(n)} \otimes \Q  = \left( \bigotimes^{\MF_\Q}_{1 \le i \le n}\JF_k \otimes \Q \right)^{\Sigma_n} 
	\end{align}
	where the tensor product is formed over $\MF$. 
\end{ex}

\begin{ex}[$G=Sp(1)$: Even Jacobi Forms]\label{ex_EJF}
In the case of $G = Sp(1) = SU(2)$, we choose a maximal torus $T = U(1) \subset Sp(1)$. Then the Weyl group $W = \Z/2$ acts on $\moduli_\C^T \simeq \mathcal{E}_\C^\vee$ by the inverse involution of abelian varieties; in terms of the coordinate $(z, \tau) \in \C \times \mathfrak{H}$, the involution becomes $(z, \tau) \mapsto (-z, \tau)$. Thus the $SU(2)$-equivariant Modular Forms are identified as the Jacobi Forms that are {\it even} in $z$; so we employ the following notation: 
\begin{align}
\EJF_{2k} &:= \MF[kV_{Sp(1)}]^{Sp(1)} = \{\phi(z, \tau)\in  \JF_{2k} \ | \ \phi(z, \tau) = \phi(-z, \tau) \}. 
\end{align}
See Appendix \ref{app_TEJF} for more detailed descriptions. 
The $Sp(1)$-equivariant character map \eqref{eq_character} becomes
\begin{align}\label{eq_char_EJF}
	e_\EJF \colon \pi_\bullet \TEJF_{2k} \stackrel{\mbox{\tiny Def. }\ref{def_TEJF_app} }{:=} \pi_\bullet \TMF[kV_{Sp(1)}]^{Sp(1)} \to \EJF_{2k}|_{\deg=\bullet}, 
\end{align}
verifying our notation $\TEJF_{2k}$. 
\end{ex}

\begin{ex}[$G=Sp(n)$]\label{ex_Sp(n)}
	More generally, in the case of $G=Sp(n)$, we choose the maximal torus to be $U(1)^n \xhookrightarrow{\rm diag} U(n) \hookrightarrow Sp(n)$, the image of the standard maximal torus of $U(n)$ under the canonical inclusion $U(n) \hookrightarrow Sp(n)$. 
	Then the Weyl group is $W = (\Z/2)^n \rtimes \Sigma_n$, where each $\Z/2$ flips the sign of the coordinate $z_i \mapsto -z_i$ and $\Sigma_n$ permutes the factors. 
    Hence, $Sp(n)$-equivariant Modular Forms are regarded as $U(n)$-equivariant Modular Forms that are even in each variable $z_i$. 
\end{ex}

\begin{ex}[$G=SU(n)$]\label{ex_SU(n)_JF}
	For $G=SU(n)$, we follow the conventional approach that, rather than using the maximal torus of $SU(n)$, we first regard $SU(n) \subset U(n)$ and use the maximal torus $U(1)^{n} \hookrightarrow U(n)$ to identify
	\begin{align}
	\moduli_\C^{SU(n)} = \moduli_\C^{U(n)} \cap \{z_1 + z_2 + \cdots + z_n = 0\}. 
	\end{align}
	This means that we have (see \eqref{eq_MFU(n)})
	\begin{align}
	\MF[kV_{SU(n)}]^{SU(n)} \otimes \Q =\left( \frac{\bigotimes^{\MF_\Q}_{1 \le i \le n} \JF_k \otimes \Q}{(x_1+x_2+\cdots +x_n)} \right)^{\Sigma_n}
	\end{align}
\end{ex}

\subsection{Genuinely equivariant refinement of the sigma orientation}\label{subsec_genuine_sigma}

In \cite{ando2010multiplicative}, an $E_\infty$ ring map
\begin{align}\label{eq_AHR}
    \sigma \colon \MString \to \TMF
\end{align}
was constructed and called the {\it sigma orientation} of $\TMF$. 
In this article, we use an equivariant refinement of the sigma orientation which we now explain. 
In order to state it, first let us set the notation. Let $f \colon \mathcal{B} \to BO$ be a continuous map. Given a compact Lie group $G$ with a virtual representation $V \in \RO(G)$, a {\it $(\mathcal{B}, f)$-structure} $\fraks$ on $V$ is a lift of the classifying map $\overline{V} \colon BG \to BO$ to $\mathcal{B}$ along $f$\footnote{It is equivalent to the stable $(\cB,f)$-structure on the associated virtual vector bundle $EG \times_G V \to BG$ in the sense of Definition \ref{def_Bfstr}. }. We are particularly interested in {\it string structures}, which is classified by the map $\varrho \colon BString = BO \langle 8 \rangle \to BO$. 

First, recall the Thom isomorphism in $\TMF$ induced by the usual sigma orientation. Consider the following map, 
\begin{align}\label{eq_sigma_clas}
 \th \colon \cS_{/BO} \xrightarrow{} \Mod_\TMF, \quad (\theta \colon X \to BO) \mapsto \lMap(X^\theta , \TMF). 
\end{align}
The sigma orientation \eqref{eq_AHR} induces a natural isomorphism, also denoted by $\sigma$, in the following diagram, 
\begin{align}\label{diag_sigma_htpy}
	\xymatrix@C=10em{
	\cS_{/BString} \ar[r]^-{\fgt = (BString \to \pt)_*} \ar[d]^-{\varrho_*} & \cS \ar[d]^-{X \mapsto \lMap(\Sigma^\infty X_+ , \TMF)}  \ar@{}[ld]|{\rotatebox[origin=c]{210}{$\Longrightarrow$}}_-{\sigma}^-{\simeq} \\
\cS_{/BO} \ar[r]^-{\th}_-{\eqref{eq_sigma_clas}} & \Mod_\TMF. 
	}
\end{align}
This homotopy is equivalent to the data of the functorial assignment of the Thom isomorphism for string-oriented vector bundles. 

Now we introduce our formulation of the sigma orientation for the genuinely equivariant setting. 
For each compact Lie group $G$, recall that we have defined $\RO(G)$ to be the {\it groupoid} consisting of virtual orthogonal $G$-representations and isomorphisms. Let $\RO^{d=0}(G)$ denote the full subgroupoid consisting of those with virtual dimension $0$. Now define the groupoid $\RString^{d=0}(G)$ to be the pullback,
\begin{align}
	\xymatrix{
		\RString^{d=0}(G) \ar[r] \ar[d]^{\varrho} & \Map(BG, BString) \ar[d] \\
		\RO^{d=0}(G) \ar[r] & \Map(BG, BO). 
	}
\end{align}
This gives us functors
\begin{align}
\RString^{d=0}, \RO^{d=0}  \colon \cptLie^\op \to \Gpds, 
\end{align}
where $\Gpds$ is the category of groupoids. 
For any subcategory $\frC \subset \cptLie$, we can perform the Grothendieck construction, 
\begin{align}
	\int_\frC \RO^{d=0}, \quad  \int_\frC \RString^{d=0} \in \mathrm{Cat} 
\end{align}
The former is the groupoid whose objects are pairs $(G, V)$ with $G \in \frC$ and $V \in \RO^{d=0}(G)$, and morphism $(G, V) \to (H, W)$ consists of a pair $(f, \psi)$ where $f \colon G \to H$ is a group homomorphism in $\frC$ and $\psi \colon V \simeq \res_f W $ in $\RO^{d=0}(G)$. 
The latter is the groupoid whose objects are triples $(G, V ,\fraks)$ where $\fraks$ is a string structure on $V \in \RO^{d=0}(G)$, and morphisms are $(f, \psi)$ as above where $\psi$ is required to be compatible with the string structures. 

\begin{defn}[{A sigma orientation on a subcategory $\frC \subset \cptLie$}]\label{def_sigma_cat}
	Let $\frC \subset \cptLie$ be a subcategory. 
	A {\it sigma orientation} on $\frC$ is a natural isomorphism $\widetilde{\sigma}$ in the following diagram of categories, 
	\begin{align}\label{eq_tildesigma_def}
		\xymatrix@C=10em{
			\int_\frC \RString^{d=0} \ar[r]^-{(G, V, \fraks ) \mapsto G} \ar[d]^-{\varrho} & \frC \ar[d]^-{G \mapsto \TMF^G = \Gamma(\Ell(BG), \cO)}  \ar@{}[ld]|{\rotatebox[origin=c]{210}{$\Longrightarrow$}}_-{\widetilde\sigma \quad }^-{\simeq} \\
				\int_\frC \RO^{d=0} \ar[r]^-{\Th}_-{\eqref{eq_Th}} & \mathrm{Ho}(\Mod_\TMF). 
		}
		\end{align}
			where $\Th$ is defined by
			\begin{align}\label{eq_Th}
				\Th \colon\int_\frC \RO \xrightarrow{} \Mod_\TMF, \quad (G, {V}) \mapsto \Gamma(\Ell(\bfB G), \cL(V)) \simeq \Gamma(\moduli, \Ell(S^V //G)). 
			\end{align}
We require it to be compatible with the natural isomorphism $\sigma$ in \eqref{diag_sigma_htpy} via the Atiyah-Segal completion map $\zeta$ in \eqref{eq_AS}. More precisely, this condition is stated as follows. Consider the following diagram, 
		\begin{align}\label{diag_big_sigma}
			\xymatrix{
				\cS_{/BString} \ar[rrr]^-{\fgt } \ar[ddd]^-{\varrho_*} & &\ar@{}[dl]|\circlearrowright& \cS  \ar@/^3ex/[ddl]^-{X \mapsto \lMap(\Sigma^\infty X_+ , \TMF)}   \\ \ar@{}[rd]|\circlearrowright
				&	\int_\frC \RString^{d=0} \ar[r]^-{} \ar[d]^-{\varrho} \ar[lu]|-{(G, V, \fraks) \mapsto (BG \xrightarrow{V, \fraks}BString)} & \frC \ar@{}[r]|{\rotatebox[origin=c]{210}{$\Longrightarrow$}}_-{\zeta} \ar[d]^-{ }  \ar@{}[ld]|{\rotatebox[origin=c]{210}{$\Longrightarrow$}}_-{\widetilde\sigma}^-{ \simeq} \ar[ru]^-{G \mapsto BG}& \\
				&\int_\frC	\RO^{d=0} \ar@{}[d]|{\rotatebox[origin=c]{210}{$\Longrightarrow$}}^-{\zeta} \ar[r]^-{\Th} \ar[ld]|-{(G, V) \mapsto (BG \xrightarrow{V} BO)}& \mathrm{Ho}(\Mod_\TMF) & \\
				\cS_{/BO} \ar@/_3ex/[rru]_-{\th}&&& . 
			}
		\end{align}
		Here the middle square is \eqref{eq_tildesigma_def},
		 and the top and left square canonically commutes. The remaining two triangles are not commutative but equipped with the natural transformation by \eqref{eq_AS} as indicated. 
		We require that, the natural transformation between the two outer compositions $\int_\frC \RString^{d=0}\to \mathrm{Ho}(\Mod_\TMF )$, obtained by composing the natural transformations in \eqref{diag_big_sigma}, conicides with the natural isomorphism obtained by composing the leftup arrow in \eqref{diag_big_sigma} with the natural isomorphism $\sigma$ in \eqref{diag_sigma_htpy}. 
	\end{defn}

\begin{rem}\label{rem_sigma_concrete}
The data of sigma orientation in the Definition \ref{def_sigma_cat} can be concretely understood as follows.	For each element $G \in \frS$ and each virtual representation $V \in \RO(G)$ equipped with a string structure $\fraks$ on $V$, the equivalence of $G$-equivariant $\TMF$-module spectra, 
	\begin{align}
		\sigma(V, \fraks) \colon \TMF[\overline{V}] \simeq \TMF. 
	\end{align}
	is assigned (up to homotopy), and this assignment satisfies the following. 
	\begin{enumerate}
		\item functoriality in $G \in \frS$.
		\item compatiblility with the monoidal structure in $\RO(G)$. 
		\item compatibility with the usual Thom isomorphism induced by the sigma orientation after the Atiyah-Segal completion. \label{compatibility_AS_sigma}
	\end{enumerate}

\end{rem}

The following statement is proved in an upcoming paper \cite{MeierYamashita} by L. Meier and the second author of this article. 
\begin{fact}[{\cite{MeierYamashita}}]\label{fact_sigma}
	There exists a full subcategory $\frS \subset \cptLie$ with a preferred string orientation (in the sense of Definition \ref{def_sigma_cat}), which satisfies
	\begin{enumerate}
	\item $\frS$ contains $U(1)^n$, $SU(n)$, $Sp(n)$ and $U(n)$ for all $n$. 
	\item $\frS$ is closed under taking finite products. 
		\end{enumerate}
	
\end{fact}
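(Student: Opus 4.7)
The plan is to construct the natural isomorphism $\widetilde{\sigma}$ first on tori, then extend to connected groups with torsion-free $\pi_1$ via descent along the maximal torus, and finally verify the coherence conditions listed in Remark \ref{rem_sigma_concrete}. The defining property of $\widetilde{\sigma}(V, \fraks)$ is a trivialization of the line bundle $\cL(V)$ over $\Ell(\bfB G)$ determined functorially by the string datum on $V$; the crux of the argument is producing such trivializations coherently in $(G, V, \fraks)$.

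First I would treat $G = U(1)^n$. Here Gepner--Meier identify $\Ell(\bfB U(1)^n) \simeq \cE^{\times n}$ and the line bundle $\cL(V)$ is, up to a twist by $p^*\omega^{\dim V/2}$, isomorphic to Looijenga's line bundle $\cA(\tw(V))$ via an invertible section $\Phi_V$ whose product form is dictated by Lemma \ref{lem_AvsL}. A string structure on $V$ amounts to a trivialization of $\tw(\overline V)$ (i.e., a presentation of the associated symmetric bilinear form on $\Z^n$ as the boundary of a cochain), which under the explicit description \eqref{eq_cA_def} produces a canonical nowhere-vanishing section of $\cA(\tw(\overline V))$ built from Jacobi theta functions. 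Combined with $\Phi_{\overline V}$ this yields a trivialization of $\cL(\overline V)$. This step is essentially the classical Ando--Hopkins--Strickland/AHR story transported into the Gepner--Meier framework; I would invoke Lurie's preorientation-theoretic construction of the sigma orientation for formal groups coming from preoriented elliptic curves to avoid redoing it by hand, and then check that the spectral trivialization reduces to the theta-function description after base-change to $\C$.

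Next I would extend to a connected compact Lie group $G$ with $\pi_1 G$ torsion-free (including $SU(n)$, $Sp(n)$, $U(n)$). Choosing a maximal torus $\iota \colon T \hookrightarrow G$ with Weyl group $W$, we have the identification \eqref{eq_moduliG=moduliT/W} at the level of underlying stacks, and the corresponding fact $\Ell(\bfB G) \simeq \Ell(\bfB T)/W$ on the spectral side. Given a string structure $\fraks$ on a $G$-representation $V$, restriction along $\iota$ gives a string structure $\iota^*\fraks$ on $\iota^* V$ and hence a trivialization $\widetilde{\sigma}(\iota^* V, \iota^* \fraks)$ from the torus case. The key step here is to upgrade this to a $W$-equivariant trivialization, so that it descends to $\Ell(\bfB G)$: this should follow because the trivialization is obtained from the string datum functorially in $T$-representations and isomorphisms, and the action of $W$ on $\iota^* V$ preserves $\iota^* \fraks$ by construction, so that the trivializations at different Weyl chamber images differ by isomorphisms coming from $W$ itself. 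This Weyl-invariance step, and the attendant $\infty$-categorical coherence, is where I expect the main technical difficulty to lie.

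Finally I would verify the three coherence conditions of Remark \ref{rem_sigma_concrete}. Functoriality in $\frS$ and monoidality in $V$ reduce, via the torus reduction, to the corresponding statements on tori, which are built into the Jacobi-theta formula. Compatibility with the Atiyah--Segal map $\zeta$ of \eqref{eq_AS} and the classical AHR orientation \eqref{diag_sigma_htpy} is the condition pinning down the construction: one checks that after completing at the augmentation ideal of the representation ring, the trivialization produced on $\Ell(\bfB G)$ matches the Borel-equivariant Thom class built from $\sigma$ applied to $EG \times_G V \to BG$. Closure under finite products is automatic once one has a multiplicative assignment and monoidality, since $\Ell(\bfB(G \times H)) \simeq \Ell(\bfB G) \times_{\moduli} \Ell(\bfB H)$ and line bundles and their trivializations behave correspondingly. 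The subcategory $\frS$ is then defined to be the full subcategory generated (under finite products) by the groups in (1); the verification of (1) is the content of the torus and torsion-free-$\pi_1$ steps above, and (2) is built in by construction.
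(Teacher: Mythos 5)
The paper does not prove Fact~\ref{fact_sigma}; it is stated as a ``Fact'' with a citation to the forthcoming work \cite{MeierYamashita}, so there is no in-paper proof to compare your proposal against. Your high-level strategy---trivialize $\cL(\overline{V})$ over $\Ell(\bfB T)$ for tori via the Looijenga/theta-function model, then descend along the maximal torus for $G$ connected with $\pi_1 G$ torsion-free, and finally close under finite products---is the natural one and is consistent with the machinery the paper itself sets up (Lemma~\ref{lem_AvsL}, the identification \eqref{eq_moduliG=moduliT/W}, and the Grothendieck-construction framing of Definition~\ref{def_sigma_cat}).

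That said, you correctly flag, and then leave entirely unresolved, the actual crux. The identification $\moduli_\C^G \simeq \moduli_\C^T/W$ in \eqref{eq_moduliG=moduliT/W} is a statement about classical $\C$-points; to run the descent you need the corresponding spectral statement $\Ell(\bfB G) \simeq \Ell(\bfB T)/W$, a genuine $W$-equivariant structure on the torus trivialization that assembles into an actual descent datum, and a check that the resulting trivialization over $\Ell(\bfB G)$ is functorial in $(G, V, \fraks)$ and compatible with the Atiyah--Segal map $\zeta$ as the diagram \eqref{diag_big_sigma} demands. None of this is supplied, and it is presumably the entire content of \cite{MeierYamashita}. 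Your proposal is a reasonable roadmap but not a proof, and it would be more accurate to present it as such. One mitigating point you could have exploited: the target of $\widetilde\sigma$ in \eqref{eq_tildesigma_def} is $\mathrm{Ho}(\Mod_\TMF)$, not $\Mod_\TMF$, so the required coherence is ``natural transformation up to homotopy'' rather than fully $\infty$-categorical, which makes the Weyl-descent problem less daunting than your last paragraph suggests.
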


The authors expect the following conjecture to be true.

\begin{conjc}[{Conjecture on equivariant sigma orientation}]\label{conj_sigma}
	There exists a sigma orientation on the whole category $\cptLie$ (in the sense of Definition \ref{def_sigma_cat}). Moreover, there is a preferred choice of sigma orientation, which restricts to the sigma orientation on $\frS$ supplied by Fact \ref{fact_sigma}. 
\end{conjc}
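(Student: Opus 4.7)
The plan is to extend the orientation from $\frS$ to the whole of $\cptLie$ by a combination of Weyl-type descent from maximal tori, finite descent along central quotients, and descent along component groups. The structural theorem of compact Lie groups organizes $\cptLie$ as follows: every $G$ fits in an extension $G_0 \hookrightarrow G \twoheadrightarrow \pi_0 G$ with $G_0$ connected and $\pi_0 G$ finite, and every connected $G_0$ is a finite central quotient of $T' \times \widetilde{H}$, where $T'$ is a torus and $\widetilde{H}$ is simply connected semisimple. Since $\frS$ already contains every torus $U(1)^r$, the strategy is to descend successively using these building blocks.

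First, I would construct the orientation for every connected $G$ with $\pi_1 G$ torsion-free. For such $G$, choose a maximal torus $\iota \colon T = U(1)^r \hookrightarrow G$ with finite Weyl group $W = N_G(T)/T$. The spectral refinement of \eqref{eq_moduliG=moduliT/W} yields $\Ell(\bfB G) \simeq \Ell(\bfB T)/W$, so $\TMF[V]^G$ is identified with the $W$-invariants of $\TMF[\iota^* V]^T$ in an appropriate sense. Given $V \in \RO(G)$ with string structure $\fraks$, the preferred orientation on $T \in \frS$ supplied by Fact \ref{fact_sigma} furnishes a Thom isomorphism $\widetilde{\sigma}(\iota^* V, \iota^* \fraks) \colon \TMF[\overline{\iota^* V}]^T \simeq \TMF^T$; if this isomorphism is $W$-equivariant, descent produces the desired $\TMF[\overline{V}]^G \simeq \TMF^G$. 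Second, extend to general connected $G$ via the universal cover $\widetilde{G} \to G$ with finite central kernel, applying finite descent from the torsion-free case. Third, extend to arbitrary compact Lie $G$ via $G_0 \hookrightarrow G \twoheadrightarrow \pi_0 G$ by descent along the finite component group $\pi_0 G$. Compatibility with Atiyah-Segal completion (the outer diagram in \eqref{diag_big_sigma}) propagates automatically along each descent step from its validity on $\frS$, by functoriality of $\zeta$.

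The main obstacle is the $W$-equivariance asserted in the first step. Over $\C$ it is manifest: by Lemma \ref{lem_AvsL} the trivialization $\Phi_{\overline{V}}$ of $\cL_\C(-\overline{V})$ is built from the Looijenga datum $\tw(\overline{V})$, which vanishes when $V$ carries a string structure and is $W$-invariant by construction, so $\Phi_{\overline{V}}$ is Weyl-invariant. Lifting this to the spectrum level requires that the Meier-Yamashita sigma orientation on $T \in \frS$ be compatible with the Weyl conjugation coming from any ambient $G$ containing $T$. For classical $G$ this should follow from functoriality of the preferred orientation applied to $T \hookrightarrow U(r)$ and $T \hookrightarrow Sp(r)$, whose Weyl groups already generate those of all classical Lie groups. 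For spinor and exceptional groups the Weyl groups are more intricate, and I would likely need an alternative route, reducing Step 1 directly to $\frS$ by embedding $G \hookrightarrow U(N)$ faithfully and carefully tracking the relationship between $\RO(G)$-twists and the restrictions of $\RO(U(N))$-twists under $\res_{U(N)}^G$. Executing either route rigorously depends on detailed input from the forthcoming Meier-Yamashita paper, and may also require strengthening Fact \ref{fact_sigma} to enlarge $\frS$ enough for the descent to close.
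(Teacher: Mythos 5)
The paper does \emph{not} prove this statement: it is explicitly labeled a Conjecture (note the shaded \texttt{conjc} environment), and the surrounding text makes clear that the authors have only established the equivariant sigma orientation on the subcategory $\frS \subset \cptLie$ supplied by Fact~\ref{fact_sigma}, defer the general case to forthcoming work of Meier--Yamashita, and shade every downstream statement that depends on the conjecture. There is consequently no proof in the paper against which your attempt can be compared; whether or not your sketch works, it is not a rederivation of an argument that exists.

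Turning to the sketch itself: you have correctly identified the two structural axes along which one would try to propagate the orientation outward from $\frS$ --- Weyl descent from a maximal torus, then finite descent along central quotients and component groups --- and you have correctly localized the crux in $W$-equivariance. But the proposal is a plan, not a proof, and each step you leave open harbors a genuine obstruction rather than a routine verification. The identification $\moduli_\C^G \simeq \moduli_\C^T/W$ in~\eqref{eq_moduliG=moduliT/W} is stated only for connected $G$ with $\pi_1 G$ torsion-free and is an assertion about classical Deligne--Mumford stacks over $\C$; promoting it to an equivalence at the spectral level, and then to a statement about $\RO(G)$-twisted Thom isomorphisms that is \emph{coherent} in the triple $(G,V,\fraks)$ as Definition~\ref{def_sigma_cat} demands, is not a formal consequence of $\Phi_{\overline V}$ being Weyl-invariant over $\C$. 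Your second and third stages are harder still: descent along a finite central quotient $\widetilde{G}\to G$, or along $G_0 \hookrightarrow G \twoheadrightarrow \pi_0 G$, would require the genuine $G$-fixed points to be recoverable from genuine $\widetilde{G}$- or $G_0$-fixed points by a limit, but genuine fixed points of different groups are \emph{not} related by naive descent --- that is precisely why the Gepner--Meier construction proceeds by left Kan extension from abelian groups rather than by building down from tori. Finally, your fallback route via a faithful embedding $G\hookrightarrow U(N)$ needs every object of $\RString^{d=0}(G)$ to lift along restriction to an object of $\RString^{d=0}(U(N))$, which requires both that the twist $\overline V$ extend to $U(N)$ and that the extension still admit a compatible string structure; neither is automatic, and you have given no mechanism to control the lifting obstruction. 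You are candid that closing these gaps depends on unavailable input, and that candor is appropriate: as written, none of the three descent steps is established, which is exactly why the authors state the result as a conjecture rather than a theorem.
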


The formalism of this paper works once for all we fix a sigma-oriented subcategory $\frC \subset \cptLie$. We are basically based on the subcategory $\frS \subset \cptLie$ with the sigma orientation given in Fact \ref{fact_sigma}, and derive mathematical results at the current status. However, we also would like to present the results we can get once we assume the whole establishment of the equivariant sigma orientation, Conjecture \ref{conj_sigma}; if we assume that, we can get rid of technical restrictions and get a complete and unified picture of our topological elliptic genera. Therefore, in this paper we put \colorbox{conjcolor}{shaded backgrounds} on the statements and proofs which depend on Conjecture \ref{conj_sigma}. 

\begin{rem}
	The authors believe that the difficulties which are currently preventing us from fully establishing the equivariant sigma orientation is only technical, and Conjecture \ref{conj_sigma} should be eventually proved. We plan to update this article as we progress on the equivariant sigma orientation, and hoping that we completely remove \colorbox{conjcolor}{the shade} soon. 
\end{rem}

\subsubsection{A remark on $\RO(G)$-grading versus $G$-equivariant twists}\label{subsubsec_twist}

In general, for genuinely $G$-equivariant commutative ring spectrum, the $\RO(G)$-grading is naturally regarded as special cases of $G$-equivariant twists generally classified by $\Pic(\Mod_E)$. Namely, we have the map
\begin{align}
	\RO(G) \to \Pic(\Mod_E) , \quad \tau \mapsto E \otimes S^\tau. 
\end{align}
In the case of $E=\TMF$, non-equivariantly we have a map \cite{ando2010twists}
\begin{align}
P^4 BO \to BGL_1(\TMF), 
\end{align}
which allows us to twist $\TMF$-comomology by a map to $P^4 BO$. 
It is widely expected, from mathematical point of views \cite{lurie2009survey} as well as physical point of views \cite[Appendix A]{tachikawa2023anderson} \cite{LinYamashita2}, that the twists by $P^4 BO$ canonically refines to the twists of genuinely equivariant $\TMF$. More precisely we expect that there is a map $\mathfrak{t}_G \colon \Map(BG, P^4 BO) \to \Pic(\Ell(\bfB G))$, functorial in $G$, which makes the following diagram commute
\begin{align}\label{diag_twist}
	\xymatrix@C=7em{
\RO(G) \ar[r]^-{\tau \mapsto \cL(V)} \ar[d]^-{\tw} & \Pic(\Ell(\bfB G)) \ar[d]^-{u_{\bfB G}} \\
\Z \times \Map(BG, P^4 BO) \ar@{.>}[ru]^-{\mathfrak{t}_G} \ar[r]_-{\mbox{\tiny \cite{ando2010twists}}} & \Pic(\Ell(BG)) \simeq \Map(BG, \Pic(\TMF))
	}
\end{align}
Note that this claim is stronger than Conjecture \ref{conj_sigma}; indeed, Conjecture \ref{conj_sigma} follows by the commutativity of the diagram \eqref{diag_twist}, but the existence of the map $\mathfrak{t}_G$ implies that we can twist genuinely equivariant $\TMF$ by maps $BG \to P^4 BO$ which does not come from $\RO(G)$. 

Then, a natural question is how much of the expected twists come from $\RO(G)$-grading. Fortunately, for $G = \Z/p, U(1)^n, U(n), SU(n), Sp(n), O(n), SO(n), Spin(n)$, the map
\begin{align}\label{eq_RO_tw}
	\tw \colon \RO(G) \to \Z \times[BG, P^4 BO]
\end{align}
is surjective, so all the expected twists are realized by $\RO(G)$-gradings up to equivalence. 
On the other hand, for example in the case $G = E_8$, the map \eqref{eq_RO_tw} is known to be non-surjective.

\subsection{On tangential and normal Thom spectra}\label{subsec_thom}

In this article, it is important to distinguish {\it tangential} and {\it normal} bordism Thom spectra, and also to distinguish {\it stable} and {\it strict $=$ unstable} structures, as we now explain. For a detailed account, we refer to \cite[Section 6.6]{FreedLecture}. 
We follow the notation in Section \ref{subsec_notations} \eqref{notation_Thom} to denote by $X^\theta$ the Thom spectrum associated to a virtual vector bundle $\theta$ over a space $X$. 
As written in (\ref{notation_MT}) there, for a map $f \colon \cB \to BO$ which is regarded as a virtual vector bundle with rank $0$, we denote
\begin{align}
	M(\cB, f) := \cB^f, \quad MT(\cB, f) := M(\cB, -f) = \cB^{-f}, 
\end{align}
and call them the {\it normal Thom spectrum} and the {\it tangential Thom spectrum}, respectively. These notations are justified below. 
When $\cB$ is of the form $\cB = BH$ with a compact Lie group $H$, we also use the conventional notation 
\begin{align}
	M(G, f) := M(BG, f), \quad MT(G, f) := MT(BG, f) . 
\end{align}

We employ the following general definition of {\it stable} tangential and normal structures. 

\begin{defn}[{stable ($\cB, f$)-structures and bordism groups}]\label{def_Bfstr}
	Suppose we are given a space $\cB$ with a map $f \colon \cB \to BO$.  
	\begin{itemize}
		\item For a space $X$ with a virtual vector bundle $\theta$, a stable {\it $(\cB, f)$-structure} $\fraks$ on $\theta$ is a map of spectra\footnote{Note that giving a map $X^\theta \to \cB^f$ is equivalent to giving a map $X^{-\theta} \to \cB^{-f}$ since $BO$ is an infinity loop space. \label{footnote_BO}}, 
	\begin{align}
			\fraks \colon X^{\overline{\theta}} \to \cB^{f} :=M(\cB, f). 
		\end{align}
		\item For a manifold $M$ with tangent bundle $TM$, 
		\begin{itemize}
			\item a {\it stable tangential $(\cB, f)$-structure} is a $(\cB, f)$-structure on $TM$. 
			\item a {\it stable normal $(\cB, f)$-structure} is a stable $(\cB, -f)$-structure on $TM$, equivalently a $(\cB, f)$-structure on $(-TM)$ (see footnote \ref{footnote_BO}). 
		\end{itemize}
		\item We denote by $\Omega^{(\cB, f)}_m$ the bordism group of closed $m$-dimensional manifolds with {\it stable tangential} $(\cB, f)$-structures, and by $\Omega^{(\cB, f)^\perp}_m$ the bordism group of those manifolds with {\it stable normal} $(\cB, f)$-structures. 
	\end{itemize}
\end{defn}

We also utilize the notion of {\it strict $=$ unstable} structures. 
We employ the following definition. 
\begin{defn}[{strict $(\cB(d), f)$-structures}]\label{def_strict_str}
	Let $n$ be a nonnegative integer, and suppose we are given a space $\cB(d)$ with a Serre fibration $f \colon \cB(d) \to BO(d)$. 
	\begin{itemize}
		\item For a space $X$ with a vector bundle $\theta$ of real rank $n$, a {\it strict} $(\cB(d), f)$-structure $\fraks$ on $\theta$ is a map $\fraks \colon X \to \cB(d)$ which makes the following diagram commute. 
		\begin{align}
			\xymatrix{
				& \cB(d) \ar[d]^-{f} \\
				X \ar@{.>}[ru]^-{\fraks} \ar[r]_-{\overline{\theta}} & BO(d) . 
			}
		\end{align}
		\item For a manifold $M$ with tangent bundle $TM$, A {\it strict tangential $(\cB(d), f)$-structure} is a $(\cB(d), f)$-structure on $TM$. \footnote{The existence of a strict tangential $(\cB(d)), f)$-structure in particular implies that $\dim_\R M = d$. }
	\end{itemize}
\end{defn}
Of course, a strict $(\cB(d), f)$-structure canonically induces a stable $(\cB(d), f)$-structure, where we abuse the notation to denote by $f$ the composition $\cB(d) \xrightarrow{f} BO(d) \xrightarrow{n \to \infty} BO$. 

An important class of structures in this paper are those of the form $(\cB(d), f) = (BH, {V})$, where $H$ is a compact Lie group and $V \in \Rep_O(H)$ is a real representation of dimension $d$, with induced map $\overline{V} \colon BH \to BO(d)$. 
In this case, unpacking the above definition, we can concretely understand the {\it stable} tangential and normal structures as follows. Let $M$ be an $m$-dimensional manifold. 
\begin{itemize}
	\item A {\it stable} tangential $(BH, V)$-structure on $M$ is represented by a pair $(P, \psi)$, where $P$ is a principal $H$-bundle over $M$, and $\psi$ is an isomorphism of vector bundles over $M$, 
	\begin{align}\label{eq_prelim_tangential}
		\psi \colon TM \oplus \underline{\R}^N \simeq (P \times_H V) \oplus \underline{\R}^{m+N-d}
	\end{align}
	where $N$ is a large enough integer. 
	\item A {\it stable }normal $(BH, V)$-structure on $M$ is represented by a pair $(P, \psi)$, where $P$ is a principal $H$-bundle over $M$, and $\psi$ is an isomorphism of vector bundles over $M$, 
	\begin{align}
	\psi \colon	TM \oplus (P \times_H V) \oplus \underline{\R}^N \simeq   \underline{\R}^{m+d+N}. 
	\end{align}
	where $N$ is a large enough integer. 
	\item A {\it strict} tangential $(BH, V)$-structure on $M$ exists only when $m=d$, and is represented by a pair $(P, \psi)$, where $P$ is a principal $H$-bundle over $M$, and $\psi$ is an isomorphism of vector bundles over $M$, 
	\begin{align}
		\psi \colon TM  \simeq (P \times_H V) . 
	\end{align}
\end{itemize}

Notice that, by the above definition, it makes sense to talk about a stable tangential $SU(k)$-structure on an $m$-dimensional manifold with $2k < m$.

Important cases of $(H, V \in \Rep_O(H))$ come in series, $\{(\cH(k), V_{\cH(k)})\}_{k}$, where examples include $\cH=U, SU, O, Sp, Spin$ with their {\it fundamental} representations. For these cases, we simply call a tangential/normal $(B\cH(k), \overline{V}_{\cH(k)})$-structure a tangential/normal $\cH(k)$-structure, respectively, and denote
\begin{align}
	M\cH(k) := M(\cH(k), \overline{V}_{\cH(k)}) = B\cH(k)^{\overline{V}_{\cH(k)}}, ~~ MT\cH(k) := MT(\cH(k), \overline{V}_{\cH(k)}) = B\cH(k)^{-\overline{V}_{\cH(k)}}.
\end{align}
These representations stably restrict to each other by the inclusions $\cH(k) \subset \cH(k+1)$, so that we have the stabilization sequences,
\begin{align}\label{seq_stab}
\cdots \xrightarrow{\stab} M\cH(k-1) \xrightarrow{\stab} &M\cH(k) \xrightarrow{\stab} M\cH(k+1)  \xrightarrow{\stab} \cdots \\
	\cdots \xrightarrow{\stab} MT\cH(k-1) \xrightarrow{\stab} &MT\cH(k) \xrightarrow{\stab} MT\cH(k+1)  \xrightarrow{\stab} \cdots
\end{align}
and a tangential/normal $\cH(k)$-structure canonically induces a tangential/normal $H(k')$-structure for $k' > k$, respectively. 
More generally we also use the stabilization sequence
\begin{align}\label{eq_stab_MTH_general}
    \cdots \xrightarrow{\stab} MT(\cH(k-1), n\overline{V}_{\cH(k-1)}) \xrightarrow{\stab} &MT(\cH(k), n\overline{V}_{\cH(k)}) \xrightarrow{\stab} MT(\cH(k+1), n\overline{V}_{\cH(k+1)})  \xrightarrow{\stab} \cdots
\end{align}
for each integer $n \in \Z$. 
Taking the colimit of the above stablilization sequences, we define
\begin{align}
	M\cH := \varinjlim_k M\cH(k), \quad MT\cH := \varinjlim_k MT\cH(k). 
\end{align}
and call the corresponding structures {\it $H$-structures}. 

\begin{rem}[stable versus unstable]\label{rem_stable_vs_unstable}
	The word ``stable'' needs to be taken with care, since there are two distinct senses of stability here. The notion of stability in Definition \ref{def_Bfstr} has nothing to do with the stabilizing sequence \eqref{eq_stab_MTH_general}. In other words, although the structure classified by $MT\cH(k)$ or $M\cH(k)$ could be regarded as {\it unstable} in the sense that we are not taking colimit of the stabilization sequence \eqref{seq_stab}, it is stable in the sense of Definition \ref{def_strict_str}.
This distinction is very important for us, since our main construction, the topological elliptic genus, is of the form, e.g., \eqref{eq_intro_Jac_U}
	\begin{align}
		\Jac_{U(1)_k} &\colon	MTSU(k) \to \TJF_k,
	\end{align}
	defined for each $k$, and detects sensitively the information that is lost after stabilization $k \to \infty$. 
\end{rem} 

\begin{rem}\label{rem_tangential_normal}
	It is important to distinguish tangential and normal structures. Typically, we have
	\begin{align}\label{eq_tang_nor_stab}
		M\cH \simeq MT\cH \quad \mbox{(for many cases)}
	\end{align}
	after stabilizing $k \to \infty$. This is the case for the examples listed above. However, we do have counterexamples, such as $MPin^+ \simeq MTPin^- $. 
	Moreover, it is important for us that, even if we have \eqref{eq_tang_nor_stab} after stabilization, we have
	\begin{align}
		M\cH(k) \not\simeq MT\cH(k) \quad \mbox{(for almost all cases!)}
	\end{align}
	for finite $k$. In fact, there is no natural map between $M\cH(k)$ and $MTH(k')$ for any pair $(k, k')$. 
\end{rem}

\begin{ex}\label{ex_spin_Sk}
Consider the manifold $S^k$ for an integer $k \ge 2$. On $S^k$, we can consider
\begin{itemize}
\item The stable tangential framing (i.e., the stable $(\cB, f) = (\pt, 0)$-structure) $\fraks_{\rm BB}^{\fr} := (P=\underline{e}, \psi_{\rm BB})$, commonly called the ``blackboard framing'', where $\psi_{\rm BB} \colon TS^k \oplus \underline\R \simeq  \underline{\R}^{k+1} $ is given by the standard embedding $S^k \hookrightarrow \R^{k+1}$. We have
\begin{align}
	[S^k , \fraks^\fr_{\rm BB}] = 0 \in \Omega^{fr}_k \simeq \pi_k S. 
\end{align} 
This stable tangential framing induces a stable tangential $(\cB, f)$-structure for any $(\cB, f)$ by the unit map $S \to MT(\cB, f)$. In particular, we get the {\it stable} $Spin(k)$-structure on $S^k$, which we denote by $\fraks_{\rm BB}^{Spin(k)}$. 
\item The strict tangential $Spin(k)$-structure which we denote by $\fraks^{Spin(k)}_{\rm str} = (P_{\rm str}, \psi_{\rm str})$. Here, we put the orientation of $S^k$ to coincide with the one induced by the blackboard one to get the strict tangential $SO(k)$-structure, and lift it uniquely to a strict tangential $Spin(k)$-structure using the fact that $\pi_1 S^k = \{*\}$. 
\end{itemize}
It is important to note that we have
\begin{align}
 [S^k, \fraks^{Spin(k)}_{\rm str}] \neq	[S^k, \fraks^{Spin(k)}_{\rm BB}] =0 \in \Omega_{k}^{Spin(k)}
\end{align}
Indeed, as a principal $Spin(k)$-bundle, $P_{\rm str}$ is not isomorphic to the trivial one. 
On the other hand, after the stabilization, we have
\begin{align}
\stab\left(  [S^k, \fraks^{Spin(k)}_{\rm str}]\right)  =  \stab \left(  [S^k, \fraks^{Spin(k)}_{\rm BB}] \right)   = 0\in \Omega_k^{Spin} \simeq \pi_k MTSpin \simeq \pi_k MSpin. 
\end{align}
We will come back to this example in Remark \ref{rem_strict_Euler}. 
\end{ex}

Now let us recall the Pontryagin-Thom isomorphism in this context. 
Given a closed manifold $M$, the {\it Pontryagin-Thom collapse map} is the map of spectra
\begin{align}\label{eq_def_coll}
	\coll \colon S \to M^{-TM}, 
\end{align}
which is defined by embedding $M$ into $\R^N$ for large enough $N$ and collapsing the complement of a tubular neighborhood. 
If furthermore $M$ is equipped with a stable {\it tangential} $(\cB,f)$-structure $\fraks$, we compose
\begin{align}\label{eq_PT_tangential}
	S \xrightarrow{\coll} M^{-TM} \xrightarrow{\fraks} \cB^{-f}[-m] := MT(\cB, f)[-m], 
\end{align}
to get an element in $\pi_m MT(\cB, f)$. 
On the other hand, if $M$ is equipped with a {\it normal} $(\cB, f)$-structure $\fraks^\perp$, we compose (see footnote \ref{footnote_BO})
\begin{align}\label{eq_PT_normal}
	S \xrightarrow{\coll} M^{-TM} \xrightarrow{\fraks^\perp} \cB^{f}[-m] := MT(\cB, f)[-m], 
\end{align}
to get an element in $\pi_m M(\cB, f)$. 

\begin{fact}[{Pontryagin-Thom isomorphism}]\label{fact_PT}
	The above procedure, called the Pontryagin-Thom construction, gives isomorphisms
	\begin{align}
\PT \colon	\Omega_m^{(\cB, f)} &\simeq 	\pi_m MT(\cB, f),  \quad [M, \fraks] \mapsto \eqref{eq_PT_tangential} \\
\PT \colon	 \Omega_m^{(\cB, f)^\perp} &\simeq	\pi_m M(\cB, f), \quad [M, \fraks^\perp] \mapsto \eqref{eq_PT_normal} . 
	\end{align}
\end{fact}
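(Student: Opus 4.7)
The plan is to prove this classical Pontryagin--Thom correspondence by the standard transversality argument adapted to stable $(\cB, f)$-structures. I treat the tangential case in detail; the normal case is completely parallel after replacing $-TM$ by $TM$ and $\cB^{-f}$ by $\cB^{f}$.

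First I would verify that $\PT$ is well-defined on bordism classes. Given a tangential $(\cB, f)$-bordism $(W, \fraks_W)$ between $(M, \fraks)$ and $(M', \fraks')$, a proper embedding $W \hookrightarrow \R^{m+N} \times [0,1]$ together with a tubular neighbourhood produces a collapse map $I_+ \wedge S \to W^{-TW}$ restricting at the endpoints to the collapse maps of $M$ and $M'$; composing with $\fraks_W \colon W^{-TW} \to \cB^{-f}[-m]$ then yields a homotopy witnessing $\PT(M, \fraks) = \PT(M', \fraks')$ in $\pi_m MT(\cB, f)$. Additivity under disjoint union is immediate, so $\PT$ factors through $\Omega_m^{(\cB, f)}$.

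Next I would construct the inverse by transversality. Choose $N$ large enough that the composition $\cB \xrightarrow{f} BO$ factors through some $\cB(N) \to BO(N)$ classifying a rank-$N$ bundle $\xi \to \cB(N)$, and represent a class in $\pi_m MT(\cB, f) = \pi_m \cB^{-f}$ by a pointed map $\alpha \colon S^{m+N} \to \mathrm{Thom}(\xi^\perp)$, where $\xi^\perp$ is the orthogonal complement of $\xi$ in the trivial rank-$(m+N)$ bundle over $\cB(N)$. By Thom's transversality theorem, homotope $\alpha$ to be smooth and transverse to the zero section $\cB(N) \subset \mathrm{Thom}(\xi^\perp)$. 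The preimage $M := \alpha^{-1}(\cB(N))$ is then a closed smooth $m$-manifold embedded in $S^{m+N} \setminus \{\infty\} = \R^{m+N}$, whose normal bundle is canonically isomorphic to $\alpha^*\xi^\perp$. The orthogonal splitting $TM \oplus \alpha^*\xi^\perp \simeq \underline{\R}^{m+N}$ identifies the stable class of $TM$ with the pullback of $-\xi^\perp$ along the map $M \to \cB(N) \to \cB$, equipping $M$ with a canonical stable tangential $(\cB, f)$-structure.

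Finally I would verify that the two constructions are mutually inverse modulo bordism. Starting from $(M, \fraks)$, the Pontryagin--Thom collapse \eqref{eq_def_coll} is by construction built from the Whitney embedding $M \hookrightarrow \R^{m+N}$ together with its tubular neighbourhood, so transversality with the zero section tautologically recovers $M$ with its given structure (the embedding being unique up to isotopy for $N$ sufficiently large by general position). Conversely, starting from $\alpha$ transverse to the zero section, the collapse construction on the tubular neighbourhood of $M = \alpha^{-1}(\cB(N))$ reproduces $\alpha$ up to homotopy. Homotopies $\alpha_t$ made transverse relative to the endpoints produce bordisms of structured manifolds, so the assignment descends to an inverse of $\PT$. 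The main obstacle is the bookkeeping between the virtual bundle $-f$ and its finite-rank representatives $\xi^\perp$: one must take $N$ large enough simultaneously for Whitney embedding and for the factorisation $\cB \to BO(N)$, and verify independence under the stabilisation maps of \eqref{seq_stab}. This is routine but technical, and a detailed account is given in \cite[Section 6.6]{FreedLecture}.
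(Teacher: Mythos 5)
The paper does not give its own proof of this statement; it is labeled a \emph{Fact} and the text points the reader to \cite[Section 6.6]{FreedLecture} for details (the same reference you cite at the end). Your sketch is a correct outline of the standard Pontryagin--Thom transversality argument—well-definedness via a bordism $W$ and its collapse, the inverse via transversality to the zero section of $\xi^\perp$, and mutual inversion up to bordism—and faithfully reproduces what the cited source does, so the two are consistent.
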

This justifies the terminology introduced in Section \ref{subsec_notations} \eqref{notation_MT}.

\section{The definitions of topological elliptic genera}
\label{sec_jacobi}
In this section we introduce our main construction, the {\it topological elliptic genera}. As explained in Introduction, we produce a class of maps of the form (here $G, H$ are compact Lie groups, $\tau_G \in \RO(G)$, $\tau_H \in \RO(H)$, and $\cD$ is the appropriate data explained in Section \ref{subsec_construction})
\begin{align}
	\Jac_{\cD} \colon MT(H, \tau_H) \to \TMF[\tau_G]^G
\end{align}
which refine the classical elliptic genera for $SU$-manifolds as well as the Witten-Landweber-Ochanine genus for Spin manifolds, and generalizes them further. 
This section is organized as follows. In Section \ref{subsec_JacU1}, as a warm-up to illustrate our ideas, we explain the construction in the most basic case $\Jac_{U(1)_k}$, which refines the classical elliptic genera $\Jac_\clas$ \eqref{eq_intro_Jac_clas}. 
Then in Section \ref{subsec_construction} we introduce the general construction. 

\subsection{The $U(1)$-topological elliptic genus $\Jac_{U(1)_k} \colon MTSU(k) \to \TJF_k$}\label{subsec_JacU1}

Here we introduce the construction of {\it $U(1)$-topological elliptic genus}, which is a map of spectra
\begin{align}
    \Jac_{U(1)_k} \colon MTSU(k) \to \TMF[kV_{U(1)}]^{U(1)}\simeq \TJF_k. 
\end{align}
Here $V_{U(1)}$ denotes the fundamental representation of $U(1)$. 
The left hand side is the tangential $SU(k)$-bordism spectrum in Section \ref{subsec_thom}, and the right hand side is the spectrum of Topological Jacobi Forms with index $\frac{k}{2}$, explained in detail in Appendix \ref{app_TJF}. 

Let us set the notation:
We denote by $V_{SU(k)}$ and $V_{U(k)}$ the fundamental complex representations of the indicated group. They are of real rank $2k$, but it is important that we can, and do, canonically regard them as complex representations of rank $k$. 
Let us consider the following representation of $U(1) \times SU(k)$ of real dimension $2n$, 
\begin{align}
    V_\phi := V_{U(1)} \otimes_\C V_{SU(k)} \in \Rep_O(U(1) \times SU(k)). 
\end{align}

The following proposition is crucial for our main construction. 
\begin{prop}\label{prop_key_JacU1}
    The virtual representation
    \begin{align}
        \overline{V}_{U(1)} \otimes_\C \overline{V}_{SU(k)} = (V_{U(1)} - \underline{\C}) \otimes_\C (V_{SU(k)} - k\underline{\C}) \in \RO(U(1) \times SU(k))
    \end{align}
    has a $BU\langle 6 \rangle$-structure $\mathfrak{s}$ (see \eqref{diag_Whitehead}, in particular it induces a string structure), and it is unique up to homotopy. 
\end{prop}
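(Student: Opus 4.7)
The plan is classical obstruction theory along the Whitehead tower of $BU$. Since $\overline{V}_{U(1)} \otimes_\C \overline{V}_{SU(k)}$ is a $\Z$-linear combination of complex representations, its classifying map factors as $BU(1) \times BSU(k) \to BU \to BO$. Because $H^5(K(\Z,2);\Z) = 0$, the Postnikov truncation $P^4 BU$ splits as $K(\Z,2) \times K(\Z,4)$, with the two projections classified by $c_1$ and $c_2$. A lift to the $5$-connected cover $BU\langle 6 \rangle$ therefore exists if and only if both $c_1$ and $c_2$ of the virtual representation vanish integrally on the base.

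To verify this vanishing, I would use the Chern character. Writing $x := c_1(V_{U(1)})$ and letting $y_1, \ldots, y_k$ denote the Chern roots of $V_{SU(k)}$ (so $\sum_i y_i = 0$), one finds that $\Ch(\overline{V}_{U(1)}) = e^x - 1$ starts in cohomological degree $2$, while $\Ch(\overline{V}_{SU(k)}) = \sum_i e^{y_i} - k$ starts in degree $4$ (the degree-$2$ part being $\sum_i y_i = 0$). The product $\Ch(\overline{V}_{U(1)} \otimes_\C \overline{V}_{SU(k)})$ therefore starts in degree $6$, so $\Ch_1 = \Ch_2 = 0$, which gives $c_1 = 0$ and then $c_2 = 0$ rationally. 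Since $H^{\ast}(BU(1) \times BSU(k); \Z) \simeq \Z[x, c_2, \ldots, c_k]$ is torsion-free, the vanishing upgrades to integral cohomology, establishing existence of the lift.

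For uniqueness up to homotopy, the set of lifts of a fixed map through $BU\langle 6 \rangle \to BU$ is a torsor over $[BU(1) \times BSU(k),\, \Omega P^4 BU] \simeq H^1 \oplus H^3$ with $\Z$-coefficients. Both groups vanish, because $BU(1) = \CP^\infty$ and $BSU(k)$ have integral cohomology concentrated in even degrees (and the Künneth formula preserves this). Hence the lift is unique. The only potentially delicate point is confirming the \emph{integral} (as opposed to rational) vanishing of $c_1$ and $c_2$, but this follows immediately from torsion-freeness of the cohomology of the base; no serious obstacle arises in the argument.
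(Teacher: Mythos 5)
Your proof is correct and follows the same obstruction-theoretic strategy as the paper's proof: verify $c_1 = c_2 = 0$ of the virtual representation for existence, and use vanishing of odd-degree cohomology of the base for uniqueness. You supply detail the paper elides — the Chern-character computation that $\Ch$ of the product begins in degree $6$, upgraded to integral vanishing of $c_1, c_2$ via torsion-freeness of $H^*(BU(1)\times BSU(k);\Z)$ — and your identification of the indeterminacy group as $H^1 \oplus H^3$ is the correct one: the two Moore--Postnikov stages of $BU\langle 6\rangle \to BU$ (killing $\pi_2$ and $\pi_4$) contribute indeterminacy in degrees $1$ and $3$, whereas the paper's reference to $H^3$ and $H^5$ appears to be a slip. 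Since all odd integral cohomology of $BU(1)\times BSU(k)$ vanishes, the hypotheses hold either way and the conclusion is unaffected.
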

\begin{proof}
    There exists $BU\langle 6 \rangle$-structure because $c_i(\overline{V}_{U(1)} \otimes_\C \overline{V}_{SU(k)}) = 0$ for $i=1, 2$. 
    Moreover, since $H^{i}(BU(1) \times BSU(k); \Z) = 0$ for $i = 3, 5$, the choice of such a lift is unique up to homotopy. 
\end{proof}

\begin{rem}\label{rem_ku_connective}
    As explained in \cite{BauerYamashitaTJF}, another more conceptual definition of the $BU \langle 6 \rangle$-structure in Proposition~\ref{prop_key_JacU1} is to deduce from the multiplication on $\mathrm{ku}\langle\bullet\rangle$, 
\[
\mathrm{ku}\langle 2\rangle \wedge \mathrm{ku} \langle 4 \rangle \to \mathrm{ku} \langle 6 \rangle,
\]
    where $\mathrm{ku} \langle n \rangle$ denote the $n$-connective cover of $\mathrm{ku}$. 
\end{rem}

Let us denote
\begin{align}\label{def_Theta_U(1)}
    \Theta := \overline{V}_{U(1)} \otimes_\C \overline{V}_{SU(k)} . 
\end{align}
By Proposition \ref{prop_key_JacU1} and the equivariant sigma orientation (Fact \ref{fact_sigma}), we get an equivalence of $U(1) \times SU(k)$-equivariant $\TMF$-module spectra,
     \begin{align}
         \sigma(\Theta, \fraks)\colon \TMF[\Theta] \simeq \TMF . 
     \end{align}
Combining with the following equivalence in $\RO(U(1) \times SU(k))$,
$$\Theta \simeq  V_\phi - k \cdot \res_{U(1)}^{U(1) \times SU(k)}(V_{U(1)} ) - \res_{SU(k)}^{U(1) \times SU(k)}(\overline{V}_{SU(k)}), $$ we get the following equivalence of $\TMF$-modules, also denoted by the same symbol, 
\begin{align}
   \sigma(\Theta, \fraks) \colon \TMF[V_\phi]^{U(1) \times SU(k)} &\simeq \TMF[kV_{U(1)}]^{U(1)} \otimes_\TMF \TMF[\overline{V}_{SU(k)}]^{SU(k)}, \\
   &= \TJF_k \otimes_\TMF \TMF[\overline{V}_{SU(k)}]^{SU(k)}. 
\end{align}

The following is our main construction. 
\begin{defn}[{The coevaluation map $\mathcal{F}_{U(1)_k}$}]\label{def_F_U1}
    We define a morphism in $\Mod_\TMF$,
    \begin{align}\label{eq_F_U1_coev}
        \mathcal{F}_{U(1)_k} \colon \TMF \to \TJF_k \otimes_\TMF \TMF[\overline{V}_{SU(k)}]^{SU(k)},
    \end{align}
    to be the following composition. 
    \begin{align}
         \mathcal{F}_{U(1)_k}  
   \colon \TMF
    \xrightarrow{\chi(V_\phi) \cdot} \TMF[V_\phi]^{U(1) \times SU(k)}  \stackrel{\sigma(\Theta, \fraks)}{\simeq} \TJF_k \otimes_\TMF \TMF[\overline{V}_{SU(k)}]^{SU(k)} . 
    \end{align}
    Using the dualizability result \eqref{eq_twisted_dual} the $\TMF$-linear dual to $\TMF[\overline{V}_{SU(k)}]^{SU(k)}$ is canonically identified with $\TMF[-\overline{V}_{SU(k)}-\Ad(SU(k))]^{SU(k)}$. Thus the morphism \eqref{eq_F_U1_coev} is equivalently regarded as the following morphism,
    \begin{align}
        \mathcal{F}'_{U(1)_k} \colon \TMF[-\overline{V}_{SU(k)}-\Ad(SU(k))]^{SU(k)} \to \TJF_k
    \end{align}
\end{defn}

\begin{defn}[{The topological elliptic genus $\Jac_{U(1)_k}$}]\label{def_Jac_U(1)}
We define $\Jac_{U(1)_k}$ to be the composition
\begin{align}\label{eq_another_Jac}
\Jac_{U(1)_k} &\colon MTSU(k) = BSU(k)^{-\overline{V}_{SU(k)}} \simeq (S^{-\overline{V}_{SU(k)}})_{hSU(k)}\\
& \xrightarrow{u}  \TMF [-\overline{V}_{SU(k)}]_{hSU(k)}\\
&\xrightarrow{\Nm} \TMF[-\overline{V}_{SU(k)} - \Ad_{SU(k)}]^{SU(k)} \\
&\xrightarrow{\mathcal{F}'_{U(1)_k} } \TJF_k, 
\end{align}
where $u \colon S \to \TMF$ is the unit map. 
\end{defn}

An alternative definition is available as follows. 
\begin{prop}[{Alternative definition of $\Jac_{{U(1)}_k}$}]\label{prop_alternative_Jac}
	Consider the following map in $\Spectra^{U(1)}$: 
	\begin{align}\label{eq_alternative_1_U(1)}
		MTSU(k) = BSU(k)^{-\overline{V}_{SU(k)}} \xhookrightarrow{\chi(V_\phi) \cdot } BSU(k)^{V_\phi-\overline{V}_{SU(k)}}. 
	\end{align}
	Here, $MTSU(k)$ is regarded as a spectrum with trivial $U(1)$-equivariance, and $V_\phi=V_{U(1)} \otimes_\C V_{SU(k)}$ is regarded as a $U(1)$-equivariant vector bundle over $BSU(k)$. The map is given by the inclusion of the zero section of $V_\phi$. 
	After tensoring with $\TMF \in \Spectra^{U(1)}$, we get, again in $\Spectra^{U(1)}$, 
	\begin{align}\label{eq_alternative_2_U(1)}
	\eqref{eq_alternative_1_U(1)} \xrightarrow{u \otimes \id}	 \TMF \otimes BSU(k)^{V_\phi-\overline{V}_{SU(k)}} \stackrel{\sigma(\Theta, \fraks)}{\simeq}  \TMF \otimes BSU(k)_+ \otimes S^{kV_{U(1)}}, 
	\end{align} 
	by the $U(1)$-equivariant sigma orientation,
	since the virtual vector bundle $\Theta = \overline{V}_{U(1)} \otimes_\C \overline{V}_{SU(k)}$, regarded as a $U(1)$-equivariant virtual vector bundle over $BSU(k)$, is equipped with a $U(1)$-equivariant $BU\langle 6 \rangle$-structure $\fraks$ by Proposition \ref{prop_key_JacU1}. 
	Take the genuine $U(1)$-fixed point of the composition of \eqref{eq_alternative_1_U(1)} and \eqref{eq_alternative_2_U(1)}, and further consider the following: 
	\begin{align}\label{eq_alternative_3_U(1)}
		\xymatrix@C=5em{
		MTSU(k) \ar[rd]_-{\Jac_{U(1)_k}} \ar[r]^-{\eqref{eq_alternative_2_U(1)}  \circ \eqref{eq_alternative_1_U(1)} }  & \left( \TMF \otimes BSU(k)_+ \otimes S^{kV_{U(1)}}\right)^{U(1)}
		\ar[d]^-{(BSU(k) \to \pt)_*}  \\
		&\TMF[kV_{U(1)}]^{U(1)} =\TJF_k. 
		}
	\end{align}
	We claim that the diagram \eqref{eq_alternative_3_U(1)} commutes; i.e, we can take the composition in that diagram as an alternative definition of $\Jac_{U(1)_k}$. 
\end{prop}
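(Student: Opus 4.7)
The plan is to recognize that both constructions are built from the same $(U(1) \times SU(k))$-equivariant ingredients: multiplication by the Euler class $\chi(V_\phi)$ and the sigma trivialization $\sigma(\Theta, \fraks)\colon \TMF[\Theta] \simeq \TMF$ supplied by Proposition~\ref{prop_key_JacU1} and Fact~\ref{fact_sigma}. The two definitions should then differ only in the order in which the ``$SU(k)$-integration'' and the passage to $U(1)$-genuine fixed points are performed.

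First I would unpack $\mathcal{F}'_{U(1)_k}$ explicitly. By Fact~\ref{fact_dualizability_TMF} and the formula \eqref{eq_coev_duality_TMF} for the evaluation pairing, the dual of the coevaluation $\mathcal{F}_{U(1)_k}$ in Definition~\ref{def_F_U1} equals
\begin{align*}
\mathcal{F}'_{U(1)_k}\colon \TMF[-\overline{V}_{SU(k)} - \Ad_{SU(k)}]^{SU(k)}
&\xrightarrow{\chi(V_\phi)} \TMF[V_\phi - \overline{V}_{SU(k)} - \Ad_{SU(k)}]^{U(1) \times SU(k)} \\
&\xrightarrow{\sigma(\Theta, \fraks)} \TJF_k \otimes_\TMF \TMF[-\Ad_{SU(k)}]^{SU(k)} \xrightarrow{\id \otimes \tr_{SU(k)}^e} \TJF_k.
\end{align*}
Prepending the $u$ and $\Nm$ from Definition~\ref{def_Jac_U(1)}, the original $\Jac_{U(1)_k}$ becomes the single composite of $u$, $\Nm$, $\chi(V_\phi)$, $\sigma(\Theta, \fraks)$, and $\tr_{SU(k)}^e$, in that order.

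On the alternative side, I would identify $MTSU(k) \simeq (S^{-\overline{V}_{SU(k)}})_{hSU(k)}$ viewed as trivially $U(1)$-equivariant. The maps \eqref{eq_alternative_1_U(1)} and \eqref{eq_alternative_2_U(1)} then apply the \emph{same} $\chi(V_\phi)$ and $\sigma(\Theta, \fraks)$, but directly on the $SU(k)$-homotopy orbits with ambient $U(1)$-equivariance. Since $SU(k)$ acts trivially on $\TMF[kV_{U(1)}]$, one lands in $\TMF[kV_{U(1)}] \otimes BSU(k)_+ \in \Spectra^{U(1)}$; passing to $U(1)$-genuine fixed points via the adjunction then gives $\TJF_k \otimes BSU(k)_+$, and $(BSU(k) \to \pt)_*$ collapses the $BSU(k)_+$ factor via the augmentation $\epsilon \colon BSU(k)_+ \to S^0$. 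The entire proof is thereby reduced to the single compatibility
\begin{equation*}
\tr_{SU(k)}^e \circ \Nm \;=\; \epsilon \otimes \id_{\TJF_k}
\end{equation*}
on $(\TMF[kV_{U(1)}])_{hSU(k)}$ after taking $U(1)$-fixed points. This is precisely the defining property of $\tr_{SU(k)}^e$ from Fact~\ref{fact_dualizability_TMF}, expressing that the composite ``Adams isomorphism followed by transfer'' realizes the canonical push-forward from $BSU(k)_+$ to $S^0$.

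The main obstacle is coherence under change of group: one must verify that the $SU(k)$-dualizability structure underlying $\tr_{SU(k)}^e$ and the Adams isomorphism underlying $\Nm$ both lift functorially to the product $U(1) \times SU(k)$, so that they commute with the ambient $U(1)$-equivariance and with the maps $\chi(V_\phi)$, $\sigma(\Theta, \fraks)$. This is essentially a naturality statement within the globally equivariant framework of Gepner-Meier~\cite{GepnerMeier} recalled in Section~\ref{subsec_preliminary_TMF_G}, and I expect it to amount mainly to careful bookkeeping rather than any conceptual difficulty.
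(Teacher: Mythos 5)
Your proposal is correct and amounts to exactly the definition-unwinding that the paper compresses into one sentence (``This directly follows from the definition of $\Jac_{U(1)_k}$''). You correctly identify that, after commuting the unit $u$ and the Euler class $\chi(V_\phi)$, and using naturality of the norm map against the $SU(k)$-equivariant maps $\chi(V_\phi)$ and $\sigma(\Theta,\fraks)$, everything reduces to the compatibility $\tr_{SU(k)}^e \circ \Nm = \epsilon$, which is also what the paper silently invokes in Remark~\ref{rem_triv_Jac}.

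One small caveat: you attribute $\tr_{SU(k)}^e \circ \Nm = \epsilon$ to being ``precisely the defining property of $\tr_{SU(k)}^e$ from Fact~\ref{fact_dualizability_TMF}.'' That overstates what Fact~\ref{fact_dualizability_TMF} literally gives: it supplies only the duality $D(\TMF^G)\simeq \TMF[-\Ad G]^G$, and $\tr_G^e$ is then \emph{defined} as the $\Mod_\TMF$-dual of $\res_e^G$. The statement that this dual-of-restriction transfer, precomposed with the Adams-isomorphism norm map, recovers the augmentation $BSU(k)_+ \to S^0$ is a genuine compatibility between the Gepner--Meier duality datum and the Wirthm\"uller/Adams isomorphism, not a tautology. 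It is an easy and expected compatibility, and the paper itself takes it for granted in the commuting square of Remark~\ref{rem_triv_Jac}, so your reduction is still to the same unproved-but-standard ingredient the paper relies on --- just label it as a lemma rather than a definition.
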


\begin{proof}
	This directly follows from the definition of $\Jac_{U(1)_k}$. 
\end{proof}

\begin{rem}\label{rem_alternative_pro_con}
	Notice that the alternative definition of $\Jac_{U(1)_k}$ in Proposition \ref{prop_alternative_Jac} only use genuine equivariance with respect to $U(1)$ and not to $SU(k)$. Moreover, we do not use the dualizability of the genuinely equivariant $\TMF$. Nevertheless, we employ Definition \ref{def_Jac_U(1)} as the main definition because the coevaluation map $\cF_{U(1)_k}$ (Definition \ref{def_F_U1}) is essential in the level-rank duality we will explore in Section \ref{sec_duality}. 
\end{rem}

\begin{rem}[Geometric description of $\Jac_{U(1)_k}$]\label{rem_geom_JacU(1)}
	Having the alternative definition of $\Jac_{U(1)_k}$ in Proposition \ref{prop_alternative_Jac} at hand, we can easily get the following geometric description of the composition
	\begin{align}
	\Jac_{U(1)_k} \circ \PT \colon	\Omega_m^{SU(k)} \stackrel{\PT}{\simeq} \pi_m MTSU(k) \xrightarrow{\Jac_{U(1)_k}} \pi_m\TJF_k
	\end{align}
	as follows. Recall (Section \ref{subsec_thom}) that a class in $\Omega_m^{SU(k)}$ is represented by a data $(M, P, \psi)$ of closed $m$-dimensional manifold $M$ and a stable tangential $SU(k)$-structure $(P, \psi)$ on $M$. 
	Given such an $(M, P, \psi)$, denote by $V_P := P \times_{SU(k)}V_{SU(k)}$ be the associated bundle to the principal $SU(k)$-bundle $P$, and
	consider the following map of $U(1)$-equivariant Thom spectra, 
	\begin{align}\label{eq_geometric_Jac}
		S^{2k-m} \xrightarrow{\coll} \Sigma^{2k-m}M^{-TM} \stackrel{\psi}{\simeq} M^{-V_P} \xhookrightarrow{\chi(V_P \otimes_\C V_{U(1)})} M^{ V_P \otimes_\C V_{U(1)}- V_P }, 
	\end{align}
	where we are equipping $V_P$ with a trivial $U(1)$-action, and $V_P \otimes_\C V_{U(1)}$ is isomorphic to $V_P$ as a vector bundle but equipped with the nontrivial $U(1)$-action. 
	The first map in \eqref{eq_geometric_Jac} is the Pontryagin-Thom collapse map in \eqref{eq_def_coll}, and
	he last map is the inclusion of the zero section of $V_P \otimes_\C V_{U(1)}$. 
	Note that the following $U(1)$-equivariant virtual vector bundle over $M$, 
	\begin{align}
		\Theta_{P} :=\overline{V}_P \otimes_\C \overline{V}_{U(1)} = V_P \otimes_\C V_{U(1)} - V_P- k\overline{V}_{U(1)} 
	\end{align}
	is equipped with a $U(1)$-equivariant $BU\langle 6 \rangle$-structure $\fraks$ by using the $SU(k)$-structure on $V$ and Proposition \ref{prop_key_JacU1}. 
	Thus we have the Thom isomorphism in $U(1)$-equivariant $\TMF$-homology, 
	\begin{align}\label{eq_geometric_Jac_Theta}
	\TMF_*^{U(1)}(M^{V_P \otimes_\C V_{U(1)}- V_P }) \stackrel{\sigma(\Theta_P, \fraks)}{\simeq}  \TMF^{U(1)}_{*+2k}(M_+ \wedge S^{kV_{U(1)}}). 
	\end{align}
	We get the composition
	\begin{align}\label{eq_geometric_Jac_2}
		 \pi_0 \TMF^{U(1)} \xrightarrow{\eqref{eq_geometric_Jac_Theta} \circ \eqref{eq_geometric_Jac}} \TMF^{U(1)}_{(m-2k)+2k}(M_+ \wedge S^{kV_{U(1)}}) \\
			\xrightarrow{(M \to \pt)_*} \TMF^{U(1)}_{m}(S^{kV_{U(1)}}) = \pi_m \TJF_k
	\end{align}
It directly follows from Proposition \ref{prop_alternative_Jac} that we have
\begin{claim}
	The unit $1 \in \pi_0 \TMF^{U(1)} $ maps to $\Jac_{U(1)_k}[M, P, \psi] \in \pi_m \TJF_k$ by the composition \eqref{eq_geometric_Jac_2}. 
\end{claim}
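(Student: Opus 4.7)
The plan is to verify the claim by unwinding the alternative definition of $\Jac_{U(1)_k}$ given in Proposition \ref{prop_alternative_Jac} and invoking naturality of each ingredient under the classifying map $f_P \colon M \to BSU(k)$ of the principal $SU(k)$-bundle $P$.

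First, I would describe $\PT[M, P, \psi]$ concretely as (a suitable suspension of) the composition
\begin{align*}
S \xrightarrow{\coll} M^{-TM} \xrightarrow{\psi_*} M^{-V_P} \xrightarrow{(f_P)_*} BSU(k)^{-\overline{V}_{SU(k)}} = MTSU(k),
\end{align*}
using the stable trivialization $\psi$ of $TM - V_P$ to identify the middle map. Postcomposing with the alternative definition \eqref{eq_alternative_1_U(1)}--\eqref{eq_alternative_3_U(1)}, naturality of the Euler class $\chi(V_\phi)$ under the $U(1)$-equivariant map $f_P$ (where $M$ and $V_P$ carry the trivial $U(1)$-action while $V_\phi$ and $V_P\otimes_\C V_{U(1)}$ carry the scalar one) identifies the first stage with the zero-section inclusion $M^{-V_P} \hookrightarrow M^{V_P\otimes_\C V_{U(1)} - V_P}$ appearing in \eqref{eq_geometric_Jac}.

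Next, I would invoke naturality and functoriality of the $U(1)$-equivariant sigma orientation (Fact \ref{fact_sigma}, applicable because $U(1) \times SU(k) \in \frS$). The virtual bundle $\Theta_P = \overline V_P \otimes_\C \overline V_{U(1)}$ over $M$ is the pullback $f_P^* \Theta$, and its $BU\langle 6 \rangle$-structure $\fraks_P$ is the pullback of $\fraks$ by the uniqueness clause of Proposition \ref{prop_key_JacU1}. Hence $\sigma(\Theta, \fraks)$ pulls back to $\sigma(\Theta_P, \fraks_P)$, matching the Thom equivalence \eqref{eq_geometric_Jac_Theta} on the $M$-side. Finally, factoring the push-forward as $(BSU(k) \to \pt)_* = (M \to \pt)_* \circ (f_P)_*$ assembles the two sides into precisely the composition \eqref{eq_geometric_Jac_2} applied to the unit $1 \in \pi_0 \TMF^{U(1)}$.

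The hard part is essentially bookkeeping: one must carefully track the suspension/degree shifts and ensure that the naturality of the equivariant sigma orientation, stated in Fact \ref{fact_sigma} for virtual representation bundles over $BG$, applies when $f_P$ is a map from a manifold $M$ rather than from a classifying space. This is handled by viewing the $U(1)$-equivariant sigma orientation on $BSU(k)$ as functorial in $SU(k)$-spaces, so that pullback along $f_P$ yields the desired compatibility after passing to genuine $U(1)$-fixed points.
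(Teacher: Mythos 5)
Your proposal is correct and follows the same route the paper takes: the paper simply asserts that the claim ``directly follows from Proposition \ref{prop_alternative_Jac},'' and you have spelled out what that direct consequence amounts to---expressing $\PT[M,P,\psi]$ via the classifying map $f_P$, pulling back the Euler class and the $U(1)$-equivariant sigma orientation (using the uniqueness clause of Proposition \ref{prop_key_JacU1} to identify the string structures), and factoring the pushforward through $M \to BSU(k) \to \pt$. This is exactly the naturality bookkeeping the paper leaves implicit.
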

\end{rem}

The topological elliptic genus $\Jac_{U(1)_k}$ has the following functoriality in increasing $k$. 

\begin{prop}
The following diagram commutes. 
    \begin{align}
        \xymatrix{
MTSU(k-1) \ar[d]^-{(SU(k-1) \hookrightarrow SU(k))_*}_-{\stab} \ar[rrr]^-{\Jac_{U(1)_{k-1}}} &&& \TJF_{k-1}  \ar[d]^-{\chi(V_{U(1)}) \cdot}\\
        MTSU(k)  \ar[rrr]^-{\Jac_{U(1)_k}}&&& \TJF_{k}
        }
    \end{align}

\end{prop}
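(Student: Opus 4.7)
The plan is to reduce everything to a chase in $\Spectra^{U(1)}$ using the alternative description of $\Jac_{U(1)_k}$ supplied by Proposition \ref{prop_alternative_Jac}. That description expresses the topological elliptic genus as the composition of (i) multiplication by the Euler class of $V_\phi^{(k)} := V_{U(1)} \otimes_\C V_{SU(k)}$ on the Thom spectrum $MTSU(k)$, (ii) the $U(1)$-equivariant sigma orientation attached to the canonical $BU\langle 6\rangle$-structure $\fraks_k$ on $\Theta_k := \overline{V}_{U(1)} \otimes_\C \overline{V}_{SU(k)}$, and (iii) pushforward along $BSU(k) \to \pt$ after passing to $U(1)$-genuine fixed points. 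This packaging makes both vertical maps in the diagram transparent.

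The key representation-theoretic input is that under the inclusion $\iota \colon SU(k-1) \hookrightarrow SU(k)$ one has $\iota^* V_{SU(k)} \simeq V_{SU(k-1)} \oplus \underline{\C}$, hence $\iota^* \overline{V}_{SU(k)} \simeq \overline{V}_{SU(k-1)}$, and as $U(1)\times SU(k-1)$-representations
\begin{align}
\iota^* V_\phi^{(k)} \simeq V_\phi^{(k-1)} \oplus V_{U(1)}, \qquad \iota^* \Theta_k \simeq \Theta_{k-1}.
\end{align}
By the uniqueness clause of Proposition \ref{prop_key_JacU1}, the pullback of $\fraks_k$ is homotopic to the chosen $\fraks_{k-1}$, so the sigma-orientation equivalences $\sigma(\Theta_k, \fraks_k)$ and $\sigma(\Theta_{k-1}, \fraks_{k-1})$ are compatible along $\iota$.

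Combining these with the virtual-bundle identities $V_\phi^{(k)} - \overline{V}_{SU(k)} \simeq \Theta_k + kV_{U(1)}$ and $V_\phi^{(k-1)} - \overline{V}_{SU(k-1)} \simeq \Theta_{k-1} + (k-1)V_{U(1)}$, I would trace both routes to a common map $MTSU(k-1) \to \TMF \otimes BSU(k)_+ \otimes S^{kV_{U(1)}}$ in $\Spectra^{U(1)}$, followed by $U(1)$-fixed points and the pushforward $BSU(k) \to \pt$. The route $\stab$ then $\Jac_{U(1)_k}$ applies $\chi(V_\phi^{(k)})$ and then $\sigma(\Theta_k, \fraks_k)$; by naturality of Euler classes and the compatibility above, this factors as $\chi(V_\phi^{(k-1)}) \cdot \chi(V_{U(1)})$ followed by $\sigma(\Theta_{k-1}, \fraks_{k-1})$, smashed with $\iota \colon BSU(k-1) \hookrightarrow BSU(k)$. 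The route $\Jac_{U(1)_{k-1}}$ then $\chi(V_{U(1)}) \cdot$ applies $\chi(V_\phi^{(k-1)})$, then $\sigma(\Theta_{k-1}, \fraks_{k-1})$, pushes forward along $BSU(k-1) \to \pt$ into $\TJF_{k-1}$, and finally multiplies by $\chi(V_{U(1)})$; the latter is $\TMF$-linear, induced by $\chi(V_{U(1)}) \colon S^{(k-1)V_{U(1)}} \hookrightarrow S^{kV_{U(1)}}$, so it commutes past the pushforward $BSU(k-1) \to \pt$ and matches the first route.

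The main obstacle is the coherence of the sigma orientations across $k$, namely that $\sigma(\Theta_k, \fraks_k)$ restricts under $\iota$ to $\sigma(\Theta_{k-1}, \fraks_{k-1})$ in a homotopically controlled way. This is handled by Proposition \ref{prop_key_JacU1} together with the functoriality in $G \in \frS$ built into Fact \ref{fact_sigma}, valid here since $U(1) \times SU(k)$ and $U(1) \times SU(k-1)$ both lie in $\frS$. Beyond this point the remaining work is formal manipulation of Euler classes and pushforwards of $\TMF$-module spectra.
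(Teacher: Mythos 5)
Your proof is correct, but it takes a genuinely different route from the paper. The paper's own proof of this statement is a one-line reduction to the more general Proposition \ref{prop_Jac_different_nk}, which covers compatibility with both the stabilization and restriction internal structure maps for all $(n,k)$; that proposition is in turn deduced from Proposition \ref{prop_F_different_nk}, which is a diagram chase for the coevaluation maps $\cF_{\cG(n)_k} \colon \TMF \to \TMF[\tau_G]^G \otimes_\TMF \TMF[\tau_H]^H$ carried out entirely at the level of genuine $G\times H$-equivariant $\TMF$-modules, using the dualizability of equivariant $\TMF$ from Fact \ref{fact_dualizability_TMF}. You instead work directly from the alternative description of $\Jac_{U(1)_k}$ given in Proposition \ref{prop_alternative_Jac}, which only invokes genuine $U(1)$-equivariance (treating $SU(k)$ merely through the Thom spectrum $BSU(k)^{V_\phi - \overline{V}_{SU(k)}}$) and requires no dualizability; your commutativity proof is then a chase in $\Spectra^{U(1)}$ through the zero-section inclusions, the $U(1)$-equivariant sigma orientation, and the pushforward along $BSU(k-1) \to BSU(k) \to \pt$. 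Both arguments hinge on exactly the same representation-theoretic inputs---$\iota^*\overline{V}_{SU(k)} \simeq \overline{V}_{SU(k-1)}$, $\iota^* V_\phi^{(k)} \simeq V_\phi^{(k-1)}\oplus V_{U(1)}$, hence $\iota^*\Theta_k \simeq \Theta_{k-1}$, and the homotopy-uniqueness of the $BU\langle 6\rangle$-structure guaranteeing compatibility of $\sigma(\Theta_k,\fraks_k)$ and $\sigma(\Theta_{k-1},\fraks_{k-1})$ along $\iota$. Your route has the advantage of being more concrete and of avoiding both the dualizability input and the coevaluation-map formalism (exactly the advantage the authors flag in Remark \ref{rem_alternative_pro_con}); the paper's route has the advantage of proving the stabilization \emph{and} restriction compatibilities simultaneously and uniformly across the whole trio $(U,SU)$, $(Sp,Sp)$, $(O,Spin)$, which it needs for the level-rank duality in Section \ref{sec_duality} and for Corollary \ref{cor_Jac_res_Euler}. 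One point you should be careful to spell out in a fully rigorous write-up: the ``$U(1)$-equivariant sigma orientation of the bundle $\Theta$ over $BSU(k)$'' used in Proposition \ref{prop_alternative_Jac} is derived from the genuine $U(1)\times SU(k)$-equivariant sigma orientation of Fact \ref{fact_sigma} by passing to $SU(k)$-homotopy orbits; the compatibility under $\iota$ that you assert then requires both the functoriality in $G\in\frS$ from Fact \ref{fact_sigma} and the naturality of the homotopy-orbit construction, but this is the same level of coherence the paper itself silently uses.
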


\begin{proof}
    This is a special case of Proposition \ref{prop_Jac_different_nk}. 
\end{proof}

\subsection{The general construction}\label{subsec_construction}

The idea in the construction of the topological elliptic genus in the last subsection works quite generally. 
Here we explain the construction in the most general setting. 
Assume we are given a set of data as follows, symbolically denoted by $\mathcal{D}$. 
\begin{itemize}
    \item Fix compact Lie groups $G$ and $H$ contained in the subcategory $\frS \subset \mathrm{cptLie}$ in Fact \ref{fact_sigma} \colorbox{conjcolor}{(or simply $G, H \in \cptLie$, if we assume Conjecture \ref{conj_sigma}; see the last paragraph of Section \ref{subsec_stringstr})}, together with $\tau_G \in \RO(G)$ and $\tau_H \in \RO(H)$. 
\item Fix an integer $d$ and a group homomorphism $\phi \colon G \times H \to O(d)$. We denote the corresponding $d$-dimensional orthogonal representation by $V_\phi \in \Rep_{O}(G \times H)$. 
    \item We assume that $\dim \tau_H = 0$ and $d = \dim \tau_G$\footnote{This assumption is technical. In general, we can just add trivial representations to $\tau_G$ or $\tau_H$ to reduce to this case. }. 
    \item We fix a string structure $\mathfrak{s}$ on the virtual representation
    \begin{align}\label{eq_general_stringrep}
        \Theta_{\cD} := V_\phi - \res_G^{G \times H}(\tau_G)- \res_H^{G \times H}(\tau_H) \in \RO(G \times H),
    \end{align}
    i.e., we assume that the composition
    \begin{align}
        BG \times BH \xrightarrow{\Theta_\cD} BO \to P^4 BO
    \end{align}
    is nullhomotopic and $\mathfrak{s}$ is a choice of its nullhomotopy. 
\end{itemize}
By Fact \ref{fact_sigma} on the equivariant sigma orientation, the string structure $\mathfrak{s}$ induces an equivalence of $G \times H$-equivariant $\TMF$-module spectra, 
    \begin{align}\label{eq_general_assumption}
      \sigma(\Theta_\cD, \fraks) \colon \TMF[\Theta_\cD]  \simeq  \TMF
    \end{align}
    This induces the following equivalence in $\Mod_\TMF$ also denoted by the same symbol, 
    \begin{align}\label{eq_sigma_s}
        \sigma(\Theta_\cD, \fraks)\colon \TMF[V_\phi]^{G \times H} \simeq \TMF[\tau_G ]^G \otimes_\TMF \TMF[\tau_H]^H . 
    \end{align}

\begin{ex}\label{ex_JacU(1)}
    To recover the construction in the last subsection, we set $G:=U(1)$, $H:=SU(k)$ with $\tau_G := kV_{U(1)}$, $\tau_H :=\overline{V}_{SU(k)} =V_{SU(k)} - k\underline{\C} $ and $V_\phi:=V_{U(1)} \otimes_\C V_{SU(k)}$, and the equivalence \eqref{eq_general_assumption} is given by Proposition \ref{prop_key_JacU1}.  
\end{ex}

In this general setting, we construct a map of spectra
\begin{align}
    \Jac_{\mathcal{D}}\colon MT(H, \tau_H) \to \TMF[\tau_G]^G
\end{align}
as follows. 

\begin{defn}[{$\mathcal{F}_{\mathcal{D}}$}]\label{def_FD}
We define a morphism in $\Mod_\TMF$,
    \begin{align}\label{eq_F_general_coev}
        \mathcal{F}_{\mathcal{D}} \colon \TMF \to \TMF[\tau_H]^H \otimes_\TMF \TMF[\tau_G]^{G},
    \end{align}
    to be the following composition. 
 \begin{align}
         \mathcal{F}_{\mathcal{D}}  
   \colon \TMF
   \xrightarrow{\chi(V_\phi) \cdot} \TMF[V_\phi]^{G \times H} 
  \stackrel{\sigma(\Theta_\cD, \fraks)}{\simeq}\TMF[\tau_G ]^G \otimes_\TMF \TMF[\tau_H]^H . 
    \end{align}
    The last step uses \eqref{eq_sigma_s}. 
    Using the dualizability result \eqref{eq_twisted_dual}, the $\TMF$-linear dual to $\TMF[\tau_H]^H$ is canonically identified with $\TMF[-\tau_H-\Ad(H)]^H$. Thus the morphism \eqref{eq_F_U1_coev} is equivalently regarded as the following morphism,
    \begin{align}\label{eq_F'_general}
        \mathcal{F}'_{\mathcal{D}} \colon \TMF[-\tau_H-\Ad(H)]^H \to \TMF[\tau_G]^G
    \end{align}    
\end{defn}

\begin{rem}
    For some examples of $\mathcal{D}$ we give in Section \ref{sec_ex} (including Example \ref{ex_JacU(1)} above), we prove in Section \ref{sec_duality} that the morphism $\mathcal{F}'_{\mathcal{D}}$ provides a $\TMF$-module duality isomorphism
\begin{align}
D(\TMF[\tau_G]^G) \simeq \TMF[\tau_H]^H, 
\end{align}
which corresponds to the level-rank duality in physics. But the definition of topological elliptic genus below does NOT use the fact that it is an isomorphism, but only uses the morphism \eqref{eq_F'_general} (which is not in general an isomorphism). 
\end{rem}

\begin{defn}[{The topological elliptic genus $\Jac_{\mathcal{D}}$}]\label{def_general_Jac}
    In the above settings, we define $\Jac_{\mathcal{D}}$ to be the composition
    \begin{align}\label{eq_general_Jac}
\Jac_{\mathcal{D}} &\colon MT(H, \tau_H) = BH^{-\tau_H} \simeq (S^{-\tau_H})_{hH}\\
& \xrightarrow{u} \TMF[-\tau_H]_{hH}\\
&\xrightarrow[\eqref{eq_norm_equivariant}]{\Nm} \TMF[-\tau_H - \Ad(H)]^{H} \label{line1} \\
&\xrightarrow{\mathcal{F}'_{\mathcal{D}}} \TMF[\tau_G]^{G}, 
\end{align}
where $u \colon S \to \TMF$ is the unit map. 
\end{defn}

\begin{rem}[{Alternative definitions and geometric descriptions}]\label{rem_alternative_general}
	Recall that in the case of $\Jac_{U(1)_k}$ we have explained in Proposition \ref{prop_alternative_Jac} and Remark \ref{rem_geom_JacU(1)} that an alternative definition and the corresponding geometric description for $\Jac_{U(1)_k}$ are available. 
	In this general case here, we also have an analogous re-phrasing of the definition which only uses genuine $G$-equivariance and not using genuine $H$-equivariance nor the dualizability of equivariant $\TMF$. We also get the corresponding geometric description. 
	We leave the details to the reader. 
	
\end{rem}

\begin{rem}\label{rem_triv_Jac}
    Let us remark what happens if we take trivial choices of representations. We will see that the associated topological elliptic genus are something trivial. 
    Let $G$ and $H$ be compact Lie groups, $d=0$, $\tau_G=0$ and $\tau_H=0$. Then we have a trivial choice of the string orientation $\mathfrak{s}$ in \eqref{eq_general_stringrep}. Let us denote those data as $\mathcal{D}_\triv$. 
    
    Then, the map $\mathcal{F}_{\mathcal{D}_\triv}$ in Definition \ref{def_FD} factors as
    \begin{align}
        \mathcal{F}_{\mathcal{D}_\triv} \colon \TMF = \TMF \otimes_\TMF \TMF \xrightarrow{\res_e^{G} \otimes \res_e^H} \TMF^G \otimes_\TMF \TMF^H. 
    \end{align}
    So the map in \eqref{eq_F'_general} factors as
    \begin{align}
        \mathcal{F}'_{\mathcal{D}_\triv} \colon \TMF[-\Ad(H)]^H \xrightarrow{\tr_H^e} \TMF \xrightarrow{\res_e^G} \TMF^G.  
    \end{align}
    Now notice that the following diagram commutes, 
    \begin{align}
        \xymatrix{
        \Sigma^\infty BH_+ \ar[r]^-{u \otimes \id} \ar[d]^-{(H \to e)_*} & \TMF \otimes BH_+ \ar[r]^-{\Nm} & \TMF[-\Ad(H)]^H \ar[d]^-{\tr_H^e} \\
        S \ar[rr]^-{u} && \TMF, 
        }
    \end{align}
    Then the resulting topological elliptic genus in Definition \ref{def_general_Jac} just becomes the composition
    \begin{align}\label{eq_trivial_Jac}
        \Jac_{\mathcal{D}_\triv} \colon MT(H, 0) = \Sigma^\infty BH_+ \xrightarrow{(H \to e)_*} S \xrightarrow{u} \TMF \xrightarrow{\res_e^G} \TMF^G. 
    \end{align}
\end{rem}

In the next subsection, we will see further examples of this construction. 

\subsubsection{Functoriality}\label{subsubsec_functoriality}

Here we discuss an easy functoriality of the construction above. Suppose we have two sets of data $\mathcal{D} = (G, H, \tau_G, \tau_H, V_\phi, \mathfrak{s})$ and $\mathcal{D}' = (G', H', \tau_{G'}, \tau_{H'}, V_{\phi'}, \mathfrak{s}')$ as above. 
Assume that $d = \dim_\R V_\phi = \dim_\R V_{\phi'}$. 
We define a {\it morphism} 
\begin{align}
    \alpha \colon \mathcal{D} \to \mathcal{D}'
\end{align}
to consist of the following data: 
\begin{itemize}
    \item Group homomorphisms (note the directions!)
    \begin{align}
        \alpha_G &\colon G' \to G \\
        \alpha_H &\colon H \to H' .  
    \end{align}
    \item Equivalences of (virtual) representations, 
    \begin{align}\label{eq_functoriality_data}
        \alpha_{\tau_G} \colon \res_{\alpha_G}(\tau_G) &\simeq \tau_{G'} \mbox{ in }\RO(G'), \\
        \alpha_{\tau_H} \colon \res_{\alpha_H}(\tau_{H'}) &\simeq \tau_H  \mbox{ in }\RO(H), \\
        \alpha_{\phi} \colon \res_{\alpha_G \times \id_H}(V_\phi) &\simeq \res_{\id_{G'} \times \alpha_H}(V_\phi) \mbox{ in }\Rep_{O(d)}(G' \times H). 
    \end{align}
    \item An equivalence 
    \begin{align}
        \alpha_{\mathfrak{s}} \colon \res_{\alpha_G \times \id_H} (\mathfrak{s}) \simeq  \res_{\id_{G'} \times \alpha_H} (\mathfrak{s}')
    \end{align}
    of string structures on 
    \begin{align}
   \res_{\alpha_G \times \id_H} (\Theta_{\cD}) \simeq \res_{\id_{G'} \times \alpha_H}( \Theta_{\cD'}) \in \RO(G' \times H). 
    \end{align}
\end{itemize}

\begin{prop}\label{prop_functoriality}
    If we have a morphism $\alpha \colon \mathcal{D} \to \mathcal{D}'$ as above, the following statements hold. 
    \begin{enumerate}
        \item The maps $\cF_\cD$ and $\cF_\cD'$ are compatible in the sense that the following diagram commutes,
\begin{align}
    \xymatrix{
  \TMF \ar[rr]^-{\cF_\cD} \ar[d]^-{\cF_{\cD'}} && \TMF[\tau_H]^H \otimes_\TMF \TMF[\tau_G]^{G} \ar[d]^-{\id \otimes \res_{\alpha_G}} \\
    \TMF[\tau_{H'}]^{H'} \otimes_\TMF \TMF[\tau_{G'}]^{G'} \ar[rr]^-{\res_{\alpha_H} \otimes \id} & &
    \TMF[\tau_H]^H \otimes_\TMF \TMF[\tau_{G'}]^{G'}
    }
\end{align}
   \item The topological elliptic genera $\Jac_{\cD}$ and $\Jac_{\cD'}$ are compatible in the sense that the following diagram commute. 
    \begin{align}
    \xymatrix{
          MT(H, \tau_H) \ar[d]^-{\alpha_{\tau_H} \circ \alpha_H} \ar[r]^-{\Jac_{\mathcal{D}}} & \TMF[\tau_G]^G \ar[d]^-{\alpha_{\tau_G} \circ \res_{\alpha_G}}\\
         MT(H', \tau_{H'})\ar[r]^-{\Jac_{\mathcal{D'}}} & \TMF[\tau_{G'}]^{G'}. 
        }
    \end{align}

    \end{enumerate}
\end{prop}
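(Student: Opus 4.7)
The plan is to unpack the two compatibility squares into smaller squares built out of the constituent morphisms of $\mathcal{F}_\mathcal{D}$ and $\Jac_\mathcal{D}$, and then verify each of them by invoking a naturality property that is already available. Throughout, I treat $\alpha_G$ as an arrow into $G$ and $\alpha_H$ as an arrow out of $H$, so that restriction along $\alpha_G$ carries $G$-equivariant $\TMF$ to $G'$-equivariant $\TMF$, while restriction along $\alpha_H$ carries $H'$-equivariant $\TMF$ to $H$-equivariant $\TMF$.

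\emph{Step 1: Part (1).} By Definition~\ref{def_FD}, $\mathcal{F}_\mathcal{D}$ is the composition of the Euler-class map $\chi(V_\phi)\cdot$ followed by the sigma-orientation equivalence $\sigma(\Theta_\mathcal{D},\mathfrak{s})$. I would factor the square in part~(1) as the horizontal pasting of the corresponding two squares. The first square expresses the naturality of the Euler class: restricting $\chi(V_\phi)$ along $\alpha_G\times\id_H$ and then using $\alpha_\phi$ to identify $\res_{\alpha_G\times\id_H}V_\phi\simeq \res_{\id_{G'}\times\alpha_H}V_{\phi'}$ yields $\chi(V_{\phi'})$, which is immediate from the naturality statement in \eqref{eq_def_chiV}. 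The second square asserts that $\sigma(\Theta_\mathcal{D},\mathfrak{s})$ and $\sigma(\Theta_{\mathcal{D}'},\mathfrak{s}')$ are compatible under the restrictions $\res_{\alpha_G\times\id_H}$ and $\res_{\id_{G'}\times\alpha_H}$, which is exactly the functoriality of the equivariant sigma orientation (Fact~\ref{fact_sigma}, cf.\ item~(1) of Remark~\ref{rem_sigma_concrete}) applied to the morphism in $\int_{\cptLie}\RString^{d=0}$ determined by $(\alpha_G\times\id_H,\alpha_\phi,\alpha_\mathfrak{s})$ and by $(\id_{G'}\times\alpha_H,\alpha_\phi,\alpha_\mathfrak{s})$. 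Pasting gives part~(1).

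\emph{Step 2: Pass from $\mathcal{F}_\mathcal{D}$ to $\mathcal{F}'_\mathcal{D}$.} Dualizing the $\TMF[\tau_H]^H$-factor in Part~(1), using Fact~\ref{fact_dualizability_TMF} and \eqref{eq_twisted_dual}, I would check that the dual of the restriction $\res_{\alpha_H}\colon\TMF[\tau_{H'}]^{H'}\to\TMF[\tau_H]^H$ is precisely the transfer map $\tr_{\alpha_H}\colon\TMF[-\tau_H-\Ad(H)]^H\to\TMF[-\tau_{H'}-\Ad(H')]^{H'}$ of \eqref{eq_tr_f} (tensored with the twist). This yields the dualized compatibility square
\begin{equation*}
\begin{aligned}
\xymatrix@C=3em{
\TMF[-\tau_H-\Ad(H)]^H \ar[r]^-{\mathcal{F}'_\mathcal{D}} \ar[d]_-{\tr_{\alpha_H}} & \TMF[\tau_G]^G \ar[d]^-{\alpha_{\tau_G}\circ\res_{\alpha_G}} \\
\TMF[-\tau_{H'}-\Ad(H')]^{H'} \ar[r]^-{\mathcal{F}'_{\mathcal{D}'}} & \TMF[\tau_{G'}]^{G'} .
}
\end{aligned}
\end{equation*}

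\emph{Step 3: Part (2).} Finally, I would insert the preceding square at the end of the definition~\eqref{eq_general_Jac} of $\Jac_\mathcal{D}$ and check that the remaining three pieces also behave naturally under $(\alpha_H,\alpha_{\tau_H})$. The identification $MT(H,\tau_H)\simeq (S^{-\tau_H})_{hH}$ is natural in $(H,\tau_H)$ essentially by definition of the tangential Thom spectrum in Section~\ref{subsec_thom}; the unit $u\colon S\to \TMF$ is tautologically natural; and the norm map~\eqref{eq_norm_equivariant} is natural in the pair $(H,\tau_H)$, in the sense that $\Nm$ intertwines the map on homotopy orbits induced by $\alpha_H$ with the transfer $\tr_{\alpha_H}$ on genuine fixed points. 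Stringing these squares together gives part~(2).

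\emph{Main obstacle.} The only nontrivial point is identifying the dual of $\res_{\alpha_H}$ with the transfer $\tr_{\alpha_H}$ in the twisted setting and, concurrently, checking that the norm map intertwines the homotopy-orbit functoriality with this transfer. Both assertions are built into the Gepner--Meier framework once $\TMF^H$ and its twists are known to be dualizable in $\Mod_\TMF$ (Fact~\ref{fact_dualizability_TMF} and~\eqref{eq_twisted_dual}), together with the general construction of the transfer along an arbitrary homomorphism in~\eqref{eq_tr_f}; the rest is bookkeeping with the equivalences $\alpha_{\tau_G}$, $\alpha_{\tau_H}$, $\alpha_\phi$, and $\alpha_\mathfrak{s}$.
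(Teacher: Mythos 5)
Your proof is correct and takes essentially the same approach as the paper's (very terse) proof, which simply invokes ``functoriality of the Euler classes and the isomorphism of string structures'' for part (1) and declares part (2) a direct consequence. Your expansion correctly identifies the decomposition into the Euler-class-naturality square and the sigma-orientation-naturality square, the dualization step to get the compatibility of $\mathcal{F}'_{\mathcal{D}}$, and the remaining naturality of the unit, norm, and Thom-spectrum identifications; the point you flag about the norm map intertwining homotopy-orbit functoriality with the transfer $\tr_{\alpha_H}$ is the one nontrivial lemma implicit in the paper's ``direct consequence,'' and your appeal to the dualizability of $\TMF^H$ in $\Mod_\TMF$ is exactly the right way to discharge it.
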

\begin{proof}
(1) follows from the functoriality of the Euler classes and the isomorphism of string structures. 
(2) is a direct consequence of (1). 
\end{proof}

\section{Examples: The trio of $U$-$Sp$ \colorbox{conjcolor}{and $O$}-topological elliptic genera}\label{sec_ex}
In this section, we introduce a {\it trio} of examples---$(U, SU)$, $(Sp, Sp)$, \colorbox{conjcolor}{$(O, Spin)$}---where the general construction of Section \ref{subsec_construction} applies. Those classes come in families. 
\begin{defn}[{The topological elliptic genera $\Jac_{U(n)_k}$,  $\Jac_{Sp(n)_k}$ \colorbox{conjcolor}{and $\Jac_{O(n)_k}$}}]\label{def_Jac_trio}
We define the morphisms
\begin{align}
	& ~\Jac_{U(n)_k} \colon MT(SU(k), n\overline{V}_{SU(k)}) \to \TMF[kV_{U(n)}]^{U(n)}, \label{eq_def_JacU(n)}\\
	& \Jac_{Sp(n)_k} \colon MT(Sp(k), n\overline{V}_{Sp(k)} )\to \TMF[kV_{Sp(n)}]^{Sp(n)}, \label{eq_def_JacSp(n)} \\
    & \highlight{\Jac_{O(n)_k} \colon MT(Spin(k), n\overline{V}_{Spin(k)})\to \TMF[kV_{O(n)}]^{O(n)}  .} 
	\label{eq_def_JacO(n)}
\end{align}
for each $k, n \in \Z_{\ge 1}$, by applying the general construction to the following data. Here, for each group $K$ appearing below, the notation $V_{K} \in \RO(K)$ denotes the fundamental (a.k.a. defining, or vector) representation. 

\begin{itemize}
	\item For  \eqref{eq_def_JacU(n)}, the data $\cD =U(n)_k$ consists of
	\begin{align}
		G:=U(n), \  H:= SU(k), \ \tau_G := kV_{U(n)}, \ \tau_H := n\overline{V}_{SU(k)}, \ V_\phi := V_{U(n)} \otimes_\C V_{SU(k)} 
	\end{align} 
	so that $\Theta_{U(n)_k} =\overline{V}_{U(n)} \otimes_{\C} \overline{V}_{SU(k)}\in\RO(U(n) \times SU(k))$\footnote{We compute
	\begin{align}
		\Theta_{U(n)_k} &= V_\phi - \res_G^{G \times H}(\tau_G) - \res_H^{G \times H}(\tau_H) \\
		& =  V_{U(n)} \otimes_\C V_{SU(k)} - V_{U(n)}\otimes_\C k\underline{\C} - n\underline{\C} \otimes_{\C} (V_{SU(k)}-k\underline{\C})  \\
		& =\overline{V}_{U(n)} \otimes_{\C} \overline{V}_{SU(k)}.
	\end{align}
	Similar computations replacing $\C$ with $\R$ and $\bH$ produce the corresponding formulas for $\Theta_{O(n)_k}$ and $\Theta_{Sp(n)_k}$. \label{footnote_Theta}
	}, with its string structure obtained by Proposition \ref{prop_key_U(n)} below. 
	\item For \eqref{eq_def_JacSp(n)}, the data $\cD = Sp(n)_k$ consists of 
	\begin{align}\label{def_dataSp}
		G:=Sp(n), \ H:= Sp(k), \ \tau_G := kV_{Sp(n)}, \ \tau_H := n\overline{V}_{Sp(k)}, \ V_\phi := V_{Sp(n)} \otimes_\bH V^*_{Sp(k)}
	\end{align}
	Here $V_{Sp(k)}^*$ denotes the quarternionic dual representation so that $(g, h) \in Sp(n) \times_\bH Sp(k)$ acts on $V_\phi$ by $w \otimes_\bH v^* \mapsto g w \otimes v^* h^*$.\footnote{Note that $V_\phi$ is no longer a quaternionic representation, but just a real representation. This corresponds to the standard homomorphism $\phi \colon Sp(n) \times Sp(k) \to SO(4nk)$. } Since $V_{Sp(k)}^* \simeq V_{Sp(k)}$ in the {\it orthogonal} representation ring $\RO(Sp(k))$, the same computation as footnote \ref{footnote_Theta} is valid,
	so that $\Theta_{Sp(n)_k} =  \overline{V}_{Sp(n)}  \otimes_\bH \overline{V^*}_{Sp(k)}\in\RO(Sp(n) \times Sp(k))$, with its string structure obtained by Proposition \ref{prop_key_Sp(n)} below. 
	
\item \hspace{-6pt} \tikzmark{left} \hspace{-3pt} For \eqref{eq_def_JacO(n)}, the data $\cD =O(n)_k$ consists of
	\begin{align}
		G:=O(n), \ H:= Spin(k), \  \tau_G := kV_{O(n)}, \ \tau_H := n\overline{V}_{Spin(k)}, \ V_\phi := V_{O(n)} \otimes_\R V_{Spin(k)} 
	\end{align}
	so that $\Theta_{O(n)_k} =  \overline{V}_{O(n)}  \otimes_\R \overline{V}_{Spin(k)} \in \RO(O(n) \times Spin(k))$, 
	with its string structure obtained by Proposition \ref{prop_key_spin} below. \hfill \tikzmark{right}

\end{itemize}
\DrawBoxWide[thick, white, fill=blue, fill opacity=0.05]

\end{defn}

\begin{notation}
In the text, we generally refer to $\Jac_{U(n)_k}$, $\Jac_{Sp(n)_k}$ \colorbox{conjcolor}{and $\Jac_{O(n)_k}$} as the $U$-,$Sp$, \colorbox{conjcolor}{and $O$-}topological elliptic genera, respectively. When we want to specify $n$, we also use the term ``$U(n)$-topological elliptic genera'', and so on. 
\end{notation}

The particularly important case is $n=1$. We get $U(1)$, $Sp(1)$ \colorbox{conjcolor}{and $O(1)$}-topological elliptic genera from the familiar tangential bordism spectra, 
\begin{align}
	& \Jac_{U(1)_k} \colon MTSU(k) \to \TMF[kV_{U(1)}]^{U(1)}\simeq \TJF_k  \label{eq_def_JacU(1)}\\
&	\Jac_{Sp(1)_k} \colon MTSp(k)\to  \TMF[kV_{Sp(1)}]^{Sp(1)} := \TEJF_{2k} \label{eq_def_JacSp(1)} \\
&\highlight{	\Jac_{O(1)_k} \colon MTSpin(k)\to \TMF[kV_{O(1)}]^{O(1)} } , \label{eq_def_JacO(1)}
\end{align}

Also, it is important that we have obtained the {\it coevaluation maps}
	\begin{align}\label{eq_coev_trio}
		\mathcal{F}_{\cG(n)_k} \colon \TMF\to \TMF[kV_{\cG(n)}]^{\cG(n)} \otimes_\TMF \TMF[n\overline{V}_{\cH(k)}]^{\cH(k)}
	\end{align}
which have been used in Definition \ref{def_general_Jac} of the topological elliptic genera. This is the subject of Section \ref{sec_duality}: In the cases of $(\cG, \cH) = (U, SU) $ and $(Sp, Sp)$, we show that the above coevaluation map exhibits the duality between $\TMF[kV_{\cG(n)}]^{\cG(n)}$ and $\TMF[n\overline{V}_{\cH(k)}]^{\cH(k)}$ in $\Mod_\TMF$, which reflects the {\it level-rank duality} in physics.

The rest of this section is organized as follows. In Section \ref{subsec_stringstr}, we complete Definition \ref{def_Jac_trio} by showing the existence of a canonical choice of string structures on $\Theta_{\cG(n)_k}$ above. 
Then, in Section \ref{subsec_structure_trio} we explain the relations among $\Jac_{\cG(n)_k}$ for different $(\cG, \cH)$ and for different $(n, k)$, to illustrate that the trio of topological elliptic genera are organized in one coherent picture. 

\subsection{The string structures on $\Theta_{\cG(n)_k}$}\label{subsec_stringstr}

\begin{prop}\label{prop_key_U(n)}
	The virtual representation
	\begin{align}\label{eq_key_U(n)}
		\Theta_{U(n), SU(k)} = \overline{V}_{U(n)} \otimes_{\C} \overline{V}_{SU(k)}\in\RO(U(n) \times SU(k))
	\end{align}
	has a $BU\langle 6\rangle$-structure $\mathfrak{s}_{U, SU}$, and it is unique up to homotopy. This induces a string structure by \eqref{diag_Whitehead}. 
\end{prop}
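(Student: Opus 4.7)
The plan is to run the same obstruction-theoretic argument as in Proposition~\ref{prop_key_JacU1}, but with $BU(1)$ replaced by $BU(n)$. The classifying map of $\Theta_{U(n), SU(k)}$ is a map $BU(n) \times BSU(k) \to BU$, and a $BU\langle 6 \rangle$-structure is by definition a lift through the $5$-connected cover $BU\langle 6 \rangle \to BU$. Existence of such a lift is obstructed by the integral Chern classes $c_1 \in H^2$ and $c_2 \in H^4$ of the underlying virtual bundle, and once those vanish, the set of homotopy classes of lifts is a torsor over $H^1(X;\Z) \oplus H^3(X;\Z)$ with $X = BU(n) \times BSU(k)$. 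Since $H^*(BU(n);\Z)$ and $H^*(BSU(k);\Z)$ are concentrated in even degrees, the K\"unneth theorem immediately gives $H^1(X;\Z) = H^3(X;\Z) = 0$, which settles uniqueness.

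For the two existence-obstruction computations, I would use the fact that $\Theta$ is a tensor product of rank-zero virtual complex bundles, so its Chern character splits as
\[
\ch(\Theta) = \ch(\overline{V}_{U(n)}) \cdot \ch(\overline{V}_{SU(k)}).
\]
Each factor has no degree-$0$ piece, so the product starts in degree $\ge 4$; thus $\ch_1(\Theta) = 0$, and the degree-$4$ piece equals $c_1(V_{U(n)}) \cdot c_1(V_{SU(k)})$. Since $V_{SU(k)}$ is an $SU$-representation, $c_1(V_{SU(k)}) = 0$, hence $\ch_2(\Theta) = 0$ as well. Using $c_1 = \ch_1$ and $c_2 = \tfrac{1}{2}c_1^2 - \ch_2$, we conclude $c_1(\Theta) = c_2(\Theta) = 0$, which furnishes the desired $BU\langle 6 \rangle$-lift. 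The induced string structure is then obtained by composing with the canonical map $BU\langle 6 \rangle \to BString$ in the diagram~\eqref{diag_Whitehead}.

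There is no real obstacle here; the argument is essentially the $n=1$ case verbatim. The only point that deserves a sentence of emphasis is that the vanishing of $c_2(\Theta)$ depends crucially on the $SU$-structure on the second factor (we do \emph{not} have $c_2(\overline V_{U(n)}) = 0$ for $n \ge 2$, but the product with $c_1(V_{SU(k)}) = 0$ still vanishes). This is also what forces the choice of $H = SU(k)$ rather than $U(k)$ in the data $\mathcal{D} = U(n)_k$ of Definition~\ref{def_Jac_trio}.
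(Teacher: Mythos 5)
Your proof is correct and follows essentially the same obstruction-theoretic route as the paper, which simply states that the argument is ``exactly parallel'' to that of Proposition~\ref{prop_key_JacU1} (existence from $c_1=c_2=0$, uniqueness from the vanishing of the odd integral cohomology of $BU(n)\times BSU(k)$). The Chern-character calculation you supply fills in the detail the paper leaves implicit and correctly isolates the role of $c_1(V_{SU(k)})=0$.
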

\begin{proof}
	The proof is exactly parallel to that of Proposition~\ref{prop_key_JacU1} (Also see Remark~\ref{rem_ku_connective}).  
\end{proof}

\begin{prop}\label{prop_key_Sp(n)}
	The virtual representation 
	\begin{align}\label{eq_key_Sp}
		\Theta_{Sp(n), Sp(k)} = \overline{V}_{Sp(n)}  \otimes_\bH \overline{V^*}_{Sp(k)}\in\RO(Sp(n) \times Sp(k))
	\end{align}
	has a string structure $\mathfrak{s}_{Sp, Sp}$, and it is unique up to homotopy. 
\end{prop}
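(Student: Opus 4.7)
The plan parallels Proposition~\ref{prop_key_U(n)}, but works with real string obstructions instead of complex $BU\langle 6\rangle$-obstructions. Since each of $BSp(n)$ and $BSp(k)$ is $3$-connected, so is $BSp(n)\times BSp(k)$, and the cohomology $H^i(BSp(n)\times BSp(k);\Z)$ vanishes for $i\in\{1,2,3\}$. Consequently the Stiefel--Whitney obstructions $w_1, w_2$ to a spin lift of $\Theta_{Sp(n),Sp(k)}$ are automatically zero, so $\Theta_{Sp(n),Sp(k)}$ has a canonical spin structure; and once this spin structure is fixed, uniqueness of the string lift up to homotopy is immediate, since the set of such lifts is a torsor over $H^3(BSp(n)\times BSp(k);\Z)=0$ via the fibration $K(\Z,3)\to BString\to BSpin$. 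The entire content is thus the vanishing of the string obstruction $p_1(\Theta_{Sp(n),Sp(k)})/2\in H^4(BSp(n)\times BSp(k);\Z)\simeq \Z\lambda_{Sp(n)}\oplus\Z\lambda_{Sp(k)}$, where $\lambda_G:=p_1(V_G)/2$; since this target group is torsion-free, it suffices to prove $p_1(\Theta_{Sp(n),Sp(k)})=0$.

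The key computation is the identity
\[
p_1\bigl(V_{Sp(n)}\otimes_\bH V^*_{Sp(k)}\bigr) \;=\; k\cdot p_1(V_{Sp(n)}) \,+\, n\cdot p_1(V_{Sp(k)})\quad\text{in }H^4(BSp(n)\times BSp(k);\Z),
\]
which I would verify by restricting to the product of maximal tori $U(1)^n\times U(1)^k$ with standard generators $x_i, y_j$. A direct weight computation at the torus identifies the complexification with the external tensor product of the underlying complex representations, $V_\phi \otimes_\R \C \simeq V^\C_{Sp(n)} \otimes_\C V^\C_{Sp(k)}$; this is also visible conceptually because both $V^\C_{Sp(n)}$ and $V^\C_{Sp(k)}$ are quaternionic, so their complex tensor product carries a natural real structure of real dimension $4nk = \dim_\R V_\phi$, and the $n=k=1$ case (pullback of the defining representation of $SO(4)$ along the double cover $Sp(1)\times Sp(1) = Spin(4)\to SO(4)$) pins down which real rep it is. The Chern character of the tensor product is $(\sum_i(e^{x_i}+e^{-x_i}))(\sum_j(e^{y_j}+e^{-y_j}))$, whose degree-four part is $2k\sum_i x_i^2+2n\sum_j y_j^2$. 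Because the first Chern class of the complexification of any real bundle vanishes, this degree-four Chern character is precisely $p_1(V_\phi)$; comparing with $p_1(V_{Sp(n)})=2\sum_i x_i^2$ and the analogous formula for $V_{Sp(k)}$ yields the asserted identity.

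Given this identity, the expansion $\Theta_{Sp(n),Sp(k)} = V_\phi - kV_{Sp(n)} - nV_{Sp(k)} + 4nk\,\underline{\R}$ together with additivity of $p_1$ forces $p_1(\Theta_{Sp(n),Sp(k)})=0$, completing existence. The main (mild) hurdle is getting the real/quaternionic bookkeeping right for the identification $V_\phi\otimes_\R\C \simeq V^\C_{Sp(n)}\otimes_\C V^\C_{Sp(k)}$; the cohomological part of the proof is then a one-line character computation.
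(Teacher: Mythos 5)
Your proof is correct and follows essentially the same outline as the paper's: spin structures come for free since $H^1 = H^2 = 0$, uniqueness from $H^3 = 0$, and the core is identifying the complexification $(V\otimes_\bH W)\otimes_\R\C \simeq V\otimes_\C W$ and then checking the degree-$4$ obstruction vanishes. The one real difference is in that last step: the paper observes that, with $H^4(BSp(n)\times BSp(k);\Z)$ torsion-free, $p_1/2$ is measured by $c_2$ of the complexification, restricts $\overline V_{Sp(n)}\otimes_\C\overline{V^*}_{Sp(k)}$ along $Sp(n)\times Sp(k)\to U(2n)\times SU(2k)$, and cites Proposition~\ref{prop_key_U(n)} (whose proof was just ``$c_i=0$ for $i=1,2$''), so no new computation is needed. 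You instead compute $p_1(V_\phi) = k\,p_1(V_{Sp(n)}) + n\,p_1(V_{Sp(k)})$ directly by a Chern-character weight count at the maximal torus and then invoke additivity of $p_1$ on $\Theta = V_\phi - kV_{Sp(n)} - nV_{Sp(k)} + 4nk\,\underline\R$. Both are fine; the paper's route is shorter since it reuses the earlier proposition, while yours is more self-contained and makes the cancellation explicit. (One tiny nitpick on presentation: you invoke additivity of $p_1$, which for real bundles only holds modulo $2$-torsion; since the target here is torsion-free you can drop the mod-$2$ caveat, but you might say so explicitly rather than letting the reader worry.)
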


\begin{proof}
	Since $H^i(BSp(n) \times BSp(k); \Z) = 0$ for $i=1, 2$, we get a spin structure automatically. We have $H^4(BSp(n) \times BSp(k); \Z) \simeq \Z \oplus \Z$, so the string obstruction class $p_1/2$ for the representation in question is measured by $c_2$ after complexification. Now we have the following canonical identification for any pair of symplectic vector bundles $V$ and $W$ over a space $X$, 
	\begin{align}
		(V \otimes_{\bH} W) \otimes_\R \C \simeq V \otimes_{\C} W, 
	\end{align}
	where on the right hand side we used the underlying complex structures of $V$ and $W$. This means that
	\begin{align}
		c_2((V \otimes_{\bH} W) \otimes_\R \C) = c_2(V \otimes_{\C} W). 
	\end{align}
	Since forgetting symplectic structure to complex structure gives the map $Sp(n) \to SU(2n)$, we get $c_2( \overline{V}_{Sp(n)}  \otimes_\C \overline{V^*}_{Sp(k)}) = 0$ by Proposition \ref{prop_key_U(n)}. Thus we have a string structure as desired. 
	The uniqueness follows from $H^3(BSp(n) \times BSp(k); \Z) = 0$. 
\end{proof}

In order to state the proposition regarding the string orientation of $\Theta_{O(n)_k}$, we need a little preparation. 
Consider the following group homomorphisms, 
\begin{align}
	\alpha_G &\colon O(n) \hookrightarrow U(n), \\
	\beta_H &\colon SU\left(\lfloor k/2\rfloor \right) \hookrightarrow Spin(2\lfloor k/2\rfloor) \hookrightarrow Spin(k), 
\end{align}
where $\alpha_G$ is induced by $\R \hookrightarrow \C$, and $\beta_H$ is induced by forgetting the complex structure of $\C^{\lfloor k/2\rfloor}$ to regard it as the real vector space $\R^{2\lfloor k/2\rfloor}$, and the second arrow is nontrivial only for $k$ odd. 
Then we can easily verify that
\begin{lem}\label{lem_res_O_U}
	We have the following canonical isomorphism in $\RO\left(O(n) \times SU\left(\lfloor k/2\rfloor \right)\right)$,  
	\begin{align}\label{eq_res_O_U}
		\res_{\id \times \beta_H} \left( \overline{V}_{O(n)}  \otimes_\R \overline{V}_{Spin(k)} \right)
		\simeq \res_{\alpha_G \times \id}\left( \overline{V}_{U(n)}  \otimes_\C \overline{V}_{SU\left(\lfloor k/2\rfloor \right)} \right). 
	\end{align}
\end{lem}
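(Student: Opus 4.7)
My plan is to exhibit both sides of \eqref{eq_res_O_U} as the common virtual representation $\overline{V}_{O(n)} \otimes_\R r(\overline{V}_{SU(\lfloor k/2\rfloor)})$ in $\RO(O(n) \times SU(\lfloor k/2\rfloor))$, where $r$ denotes realification (forgetting the complex structure) and $c$ denotes complexification $V \mapsto V \otimes_\R \C$. The key natural identification, which I will invoke twice, is
\begin{align}
c(V) \otimes_\C W \;\simeq\; V \otimes_\R r(W)
\end{align}
for a real vector space $V$ and a complex vector space $W$; both sides identify canonically with $V \otimes_\R W$ as a real vector space, and the two apparent complex structures (one from $\C$, one from $W$) coincide. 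Naturality in $V$ and $W$ makes the identification equivariant for independent group actions on the two tensor factors.

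For the right-hand side, the restriction along $\alpha_G \colon O(n) \hookrightarrow U(n)$ gives $\res_{\alpha_G} V_{U(n)} \simeq c(V_{O(n)})$. Since complexification commutes with the dimension subtraction $(-)\mapsto\overline{(-)}$, we obtain $\res_{\alpha_G}\overline{V}_{U(n)} \simeq c(\overline{V}_{O(n)})$ in $\RO$. Applying the tensor identity above with $V = \overline{V}_{O(n)}$ and $W = \overline{V}_{SU(\lfloor k/2\rfloor)}$ then expresses the right-hand side as the desired common form.

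For the left-hand side, set $m := \lfloor k/2\rfloor$. By construction $\beta_H$ factors as $SU(m) \hookrightarrow Spin(2m) \hookrightarrow Spin(k)$. Because $SU(m)$ is simply connected, the standard inclusion $SU(m) \hookrightarrow SO(2m)$ (as stabilizer of the standard complex structure on $\R^{2m}$) lifts uniquely through the double cover $Spin(2m) \to SO(2m)$, giving the canonical identification $V_{Spin(2m)}|_{SU(m)} \simeq r(V_{SU(m)})$. When $k = 2m+1$, the inclusion $Spin(2m) \hookrightarrow Spin(2m+1)$ yields $V_{Spin(2m+1)}|_{Spin(2m)} \simeq V_{Spin(2m)} \oplus \underline{\R}$. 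In both parities, subtracting $\dim V_{Spin(k)} \cdot \underline{\R} = k\underline{\R}$ absorbs the trivial $\underline{\R}$-summand (when present), leaving $\res_{\beta_H}\overline{V}_{Spin(k)} \simeq r(\overline{V}_{SU(m)})$ in $\RO$. Tensoring on the outside with $\overline{V}_{O(n)}$ over $\R$ then recovers the common form.

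I do not foresee any serious obstacle: the whole argument is book-keeping using naturality of $c$ and $r$ together with the defining relations among the classical groups. The only mild subtlety is the odd-$k$ case, where one must confirm that the extra $\underline{\R}$-summand disappears after passing to the reduced representation; this is immediate from $\overline{\tau} = \tau - \dim(\tau)\underline{\R}$.
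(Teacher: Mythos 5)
Your proof is correct, and since the paper dismisses this lemma with ``we can easily verify that,'' your write-up is in fact the natural verification the authors had in mind. The two ingredients you use---the adjunction-style identity $c(V) \otimes_\C W \simeq V \otimes_\R r(W)$ and the restriction identities $\res_{\alpha_G} V_{U(n)} \simeq c(V_{O(n)})$ and $\res_{\beta_H} V_{Spin(k)} \simeq r(V_{SU(\lfloor k/2\rfloor)}) \oplus (\text{trivial summand for odd } k)$---are exactly the expected ones, and your observation that the extra $\underline{\R}$ in odd rank is killed by passing to the reduced representation $\overline{\tau} = \tau - \dim(\tau)\,\underline{\R}$ is the one small point worth spelling out. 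One cosmetic remark: when you write $c(V) \otimes_\C W \simeq V \otimes_\R r(W)$ for $V = \overline{V}_{O(n)}$ a \emph{virtual} representation, it is worth noting (as you implicitly do) that this extends bilinearly from honest representations, and that in the paper's convention $\overline{V}_{U(n)} \otimes_\C \overline{V}_{SU(m)}$ is parsed as $(V_{U(n)} - n\underline{\C}) \otimes_\C (V_{SU(m)} - m\underline{\C})$ so that the complex tensor product makes sense; your computation $\res_{\alpha_G} \overline{V}_{U(n)} \simeq c(\overline{V}_{O(n)})$ is consistent with this once one identifies $n\underline{\C}$ with $c(n\underline{\R})$.
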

The virtual representation appearing on the right hand side of \eqref{eq_res_O_U} is equipped with a string structure $\fraks_{U, SU}$ by Proposition \ref{prop_key_U(n)}. 
Now we can state the proposition for the string structure on $\Theta_{O(n), Spin(k)}$. 

\begin{prop}\label{prop_key_spin}
	The virtual representation
	\begin{align}
		\Theta_{O(n), Spin(k)} = \overline{V}_{O(n)}  \otimes_\R \overline{V}_{Spin(k)} \in \RO(O(n) \times Spin(k)) 
	\end{align}
	admits a string structure, and there is, up to homotopy, a unique choice $\fraks_{O, Spin}$ which admits the following equivalence of string structures when restricted to $O(n) \times Spin(k)$,
	\begin{align}
		\res_{\id \times \beta_H}(\fraks_{O, Spin}) \simeq\res_{\alpha_G \times \id}(\fraks_{U, SU}). 
	\end{align}
	Here we are using Lemma \ref{lem_res_O_U}, and the string structure $\fraks_{U, SU}$ on $\Theta_{U(n), SU(\lfloor k/2 \rfloor)}$ is the one in Proposition \ref{prop_key_U(n)}. 
\end{prop}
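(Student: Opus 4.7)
The plan is to verify both (a) that $\Theta_{O(n), Spin(k)}$ admits some string structure, and (b) that there is, up to homotopy, a unique such string structure compatible with $\fraks_{U, SU}$ via Lemma~\ref{lem_res_O_U}. I proceed directly by obstruction theory, aiming to show the vanishing of the primary obstructions $w_1, w_2, p_1/2$ for $\Theta_{O(n), Spin(k)}$ and the bijectivity of the relevant restriction on $H^3$, which parametrizes the torsor of homotopy classes of string structures.

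For (a), the vanishing of $w_1$ and $w_2$ is elementary: $w_1(\Theta) = 0$ because $\Theta$ has virtual rank zero, and the standard tensor-product formula for $w_2$, applied to the virtual bundle $\overline{V}_{O(n)} \otimes_\R \overline{V}_{Spin(k)}$, collapses to $w_2(\Theta) = w_1(V_{O(n)}) \cdot w_1(V_{Spin(k)})$ in $H^2(BO(n) \times BSpin(k); \Z/2)$; the second factor vanishes since $V_{Spin(k)}$ is oriented. Rational vanishing of $p_1(\Theta)$ follows by the same complexification trick used in Proposition~\ref{prop_key_U(n)}: we reduce to $c_2$ of a tensor product of virtual complex bundles of rank zero, which is zero. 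For integral vanishing, the crucial observation is that the pullback of $\Theta$ along either axial inclusion $BO(n) \times \{*\} \hookrightarrow BO(n) \times BSpin(k)$ or $\{*\} \times BSpin(k) \hookrightarrow BO(n) \times BSpin(k)$ is identically the zero virtual bundle, so $p_1(\Theta)$ has vanishing image under both axial pullbacks. For $k \geq 3$, $BSpin(k)$ is $3$-connected, so the Künneth formula gives $H^4(BO(n) \times BSpin(k); \Z) \cong H^4(BO(n); \Z) \oplus H^4(BSpin(k); \Z)$ with no cross terms or $\mathrm{Tor}$ contributions; combined with the two axial vanishings, this forces $p_1(\Theta) = 0$ integrally, so $p_1/2 = 0$ once $w_2 = 0$ makes this class well-defined.

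For (b), the set of homotopy classes of string structures on $\Theta_{O(n), Spin(k)}$ is a torsor over $H^3(BO(n) \times BSpin(k); \Z)$, since the homotopy fiber of $BString \to BO$ is $K(\Z, 3)$. Under the same $3$-connectivity hypothesis on $BSpin(k)$ (and noting $BSU(\lfloor k/2 \rfloor)$ is always $3$-connected, as $SU(0)$ and $SU(1)$ are trivial), Künneth yields $H^3(BO(n) \times BSpin(k); \Z) \cong H^3(BO(n); \Z) \cong H^3(BO(n) \times BSU(\lfloor k/2 \rfloor); \Z)$ with the restriction map the identity. Hence restriction along $\id \times B\beta_H$ induces a bijection on torsors, and the unique element matching $\res_{\alpha_G \times \id}\fraks_{U, SU}$ supplies the desired $\fraks_{O, Spin}$.

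The main obstacle is the integral vanishing of $p_1/2$, where the axial-restrictions-plus-Künneth argument works cleanly only when $BSpin(k)$ is $3$-connected. The very small cases $k = 1, 2$ fall outside this hypothesis and require separate verification by direct characteristic-class computations on $BO(n) \times B\Z/2$ or $BO(n) \times BU(1)$; however these are degenerate situations in which $SU(\lfloor k/2 \rfloor)$ is trivial and the compatibility condition with $\fraks_{U, SU}$ becomes vacuous.
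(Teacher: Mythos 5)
Your proposal takes a genuinely different route than the paper's: the paper proves uniqueness by asserting that $\id\times\beta_H\colon BO(n)\times BSU(k')\to BO(n)\times BSpin(2k')$ is $5$-connected (which makes the obstruction-theoretic transfer essentially automatic), whereas you compute cohomology directly via K\"unneth and axial restrictions. Your axial-restriction argument for the integral vanishing of the string class on $BO(n)\times BSpin(k)$ (for $k\ge 3$) is a nice, self-contained alternative to the paper's terse ``check $p_1/2$''. However there are two substantive errors in your write-up, one of which affects the uniqueness argument.

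First, a minor point: the justification ``$w_1(\Theta)=0$ because $\Theta$ has virtual rank zero'' is wrong as a principle. A rank-$0$ virtual bundle can very well have $w_1\ne 0$ (take $L-\underline{\R}$ for a nonorientable line bundle $L$). In the present case $w_1(\Theta)=0$ does hold, but for the same reason as your $w_2$ computation --- the tensor-product formula applied to two rank-$0$ factors, one of which has $w_1=0$ --- not by dimension count.

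Second, and more seriously: the homotopy fiber of $BString\to BO$ is \emph{not} $K(\Z,3)$. You are conflating this with the fiber of $BString\to BSpin$, which is $K(\Z,3)$. In the paper's framework (Definition~\ref{def_sigma_cat}) a string structure is a lift against $BString\to BO$, whose fiber is $\Omega P^4 BO$, with $\pi_0=\Z/2$, $\pi_1=\Z/2$, $\pi_2=0$, $\pi_3=\Z$. Consequently the set of homotopy classes of string structures is a torsor over $[X,\Omega P^4 BO]$, which mixes $H^0(X;\Z/2)$, $H^1(X;\Z/2)$, and $H^3(X;\Z)$, not $H^3$ alone. Here $H^1(BO(n)\times BSpin(k);\Z/2)\cong\Z/2$ is nonzero, so this is a genuine extra ingredient. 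Fortunately the same K\"unneth computation you run for $H^3$ also shows $H^0$ and $H^1$ with $\Z/2$-coefficients restrict isomorphically along $\id\times B\beta_H$ when $k\ge 3$, so the conclusion survives; but your stated argument does not address these lower-degree contributions and needs that supplement. (Note also that for $k=2,3$ the compatibility condition with $\fraks_{U,SU}$ is not actually vacuous: $SU(1)=e$, but the restriction of $\fraks_{U,SU}$ to $BO(n)$ is a specific, if trivial, string structure, so the constraint still says something.)

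One further remark: the paper's $5$-connectivity claim is itself delicate for small $k'$ (the homotopy fiber of $BSU(k')\to BSpin(2k')$ is $S^1$ for $k'=1$ and $S^3$ for $k'=2$, giving only $1$- and $3$-connected maps), so you are right to be uneasy about the low-rank cases; your explicit exclusion of $k=1,2$ and acknowledgment that they need separate verification is a reasonable and honest stance, though it should be extended to a careful treatment of $k\le 5$ if one wants to close all cases by this method.
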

\begin{proof}
	The existence of string structures follows by checking the vanishing of $\frac{p_1}{2}$. 
	The second claim follows by the fact that the map
	\begin{align}
		BO(n) \times BSU(k') \xrightarrow{\id \times \beta_H} BO(n) \times BSpin(2k')
	\end{align}
	for any $k' \ge 1$ is $5$-connected, so that giving a string structure on $\Theta_{O(n), Spin(k)}$ is equivalent to giving a string structure on $ \res_{\id \times \beta_H}(\Theta_{O(n), Spin(k)})$. 
\end{proof}

\subsection{Structures of the trio}\label{subsec_structure_trio}

Now we explain the relations among the trio of topological elliptic genera we have constructed, unifying the above constructions into a coherent picture. There are the {\it external} structure relating different $(\cG, \cH)$, and the {\it internal} structure relating different $(n, k)$. 

\subsubsection{External structure: change of $(\cG, \cH)$}
Recall we have set up the notion of {\it morphisms} between the defining data of the general topological elliptic genera in Section \ref{subsubsec_functoriality}. 
We have a natural choice of morphisms
\begin{align}\label{eq_mor_trio}
&\alpha_{Sp}^U 	 \colon Sp(n)_k = (Sp(n), Sp(k), \cdots ) \to U(n)_{2k} = (U(n), SU(2k), \cdots) \\
& \highlight{	\alpha_{U}^O  \colon U(n)_{k} = (U(n), SU(k), \cdots) \to  O(n)_{2k}= (O(n), Spin(2k), \cdots)}
\end{align}
(where we abbreviated rest of the data by ``$\cdots$''), given by the group homomorphisms
\begin{align}
	 \alpha_G \colon U(n) \hookrightarrow Sp(n), \quad
	\alpha_H \colon Sp(k) \hookrightarrow SU(2k), \quad \mbox{ for }\alpha_{Sp}^U \label{eq_alphaSpU}\\
	\highlight{ \alpha_G \colon O(n) \hookrightarrow U(n), \quad 
	\alpha_H \colon SU(k) \hookrightarrow Spin(2k), \quad \mbox{ for }\alpha_{U}^O } \label{eq_alphaU0}
\end{align}
It is easy to complete the remaining ingredients listed in Section \ref{subsubsec_functoriality}, to get morphisms \eqref{eq_mor_trio}. \colorbox{conjcolor}{Note that the string structure in the data $(O, Spin)$ is chosen so that we get a morphism $\alpha_{U}^O$ above. }
From the above morphisms, we get the following maps, which we call the {\it external structure maps}, in the domains and codomains of the topological elliptic genera, 
\begin{align}
&MT(Sp(k), n\overline{V}_{Sp(k)}) \xrightarrow{(Sp(k) \hookrightarrow SU(2k))_*} MT(SU(2k), n\overline{V}_{SU(2k)}), \\
&	MT(SU(k), n\overline{V}_{U(k)}) \xrightarrow{(SU(k) \hookrightarrow Spin(2k))_*}MT(Spin(2k), n\overline{V}_{Spin(2k)}),
\end{align}

\begin{align}
&	\TMF[kV_{Sp(n)}]^{Sp(n)} \xrightarrow{\res^{U(n)}_{Sp(n)}} \TMF[2kV_{U(n)}]^{U(n)}, \\
&	\TMF[kV_{U(n)}]^{U(n)} \xrightarrow{\res^{O(n)}_{U(n)}} \TMF[2kV_{O(n)}]^{O(n)}
\end{align}

By Proposition \ref{prop_functoriality}, we see that our topological elliptic genera are compatible with the above structure maps, as follows. 

\begin{prop}[{Compatibility of $\Jac_{\cG(n)_k}$ for different $(\cG, \cH)$}]\label{prop_Jac_compatibility_external}
	The $U$, $Sp$ \colorbox{conjcolor}{and $O$}-topological elliptic genera are compatible in the sense that the following diagrams commute. 
		\begin{align}
		\xymatrix{
			MT(Sp(k), n\overline{V}_{Sp(k)} )\ar[d]^-{(Sp(k) \hookrightarrow SU(2k))_*} \ar[rr]^-{\Jac_{Sp(n)_k}} & &\TMF[kV_{Sp(n)}]^{Sp(n)} \ar[d]^-{\res^{U(n)}_{Sp(n)}}\\
			MT(SU(2k), n\overline{V}_{SU(2k)} )\ar[rr]^-{\Jac_{U(n)_{2k}}} & &\TMF[2kV_{U(n)}]^{U(n)}. 
		}
	\end{align}
	\begin{align}\label{diag_SU_O}
		\highlight{\begin{gathered}
		\xymatrix{
			MT(SU(k), n\overline{V}_{SU(k)}) \ar[d]^-{(SU(k) \hookrightarrow Spin(2k))_*} \ar[rr]^-{\Jac_{U(n)_k}} & &\TMF[kV_{U(n)}]^{U(n)} \ar[d]^-{\res^{O(n)}_{U(n)}}\\
			MT(Spin(2k), n\overline{V}_{Spin(2k)} )\ar[rr]^-{\Jac_{O(n)_k}} & &\TMF[2kV_{O(n)}]^{O(n)}. 
		}
		\end{gathered}}
	\end{align}

\end{prop}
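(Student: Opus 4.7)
The plan is to derive both commutative diagrams as immediate consequences of Proposition~\ref{prop_functoriality}(2) applied to the morphisms $\alpha_{Sp}^U$ and $\alpha_{U}^O$ of defining data introduced in \eqref{eq_mor_trio}. What remains is to supply the auxiliary data of a morphism as in Section~\ref{subsubsec_functoriality}: the equivalences $\alpha_{\tau_G}$, $\alpha_{\tau_H}$, $\alpha_\phi$ of (virtual) representations and the equivalence $\alpha_{\fraks}$ of string structures.

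The representation-level equivalences are routine. For $\alpha_{Sp}^U$, one uses that $\res_{U(n)}^{Sp(n)}(V_{Sp(n)}) \simeq V_{U(n)} \oplus V_{U(n)}^{*}$ as a complex representation, hence $2V_{U(n)}$ as a real one, and dually that $\res_{Sp(k)}^{SU(2k)}(V_{SU(2k)}) \simeq V_{Sp(k)}$ as a real representation of dimension $4k$. These assemble into $\alpha_{\tau_G}$, $\alpha_{\tau_H}$, $\alpha_\phi$ and induce the needed isomorphism $\res_{\alpha_G \times \id}(\Theta_{U(n)_{2k}}) \simeq \res_{\id \times \alpha_H}(\Theta_{Sp(n)_k})$ in $\RO(U(n) \times Sp(k))$. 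The parallel calculation for $\alpha_U^O$ proceeds via $\res_{O(n)}^{U(n)}(V_{U(n)}) \simeq 2V_{O(n)}$ and $\res_{SU(k)}^{Spin(2k)}(V_{Spin(2k)}) \simeq V_{SU(k)}$ as real representations.

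The main step---and the only place requiring care---is to supply $\alpha_{\fraks}$. For $\alpha_{U}^O$, this is exactly the defining property of $\fraks_{O, Spin}$ from Proposition~\ref{prop_key_spin}: with $k$ there replaced by $2k$, so that $\beta_H$ becomes our $\alpha_H \colon SU(k) \hookrightarrow Spin(2k)$, the string structure $\fraks_{O, Spin}$ was chosen so that $\res_{\alpha_G \times \id}(\fraks_{O, Spin}) \simeq \res_{\id \times \alpha_H}(\fraks_{U, SU})$. For $\alpha_{Sp}^U$, both $\res_{\alpha_G \times \id}(\fraks_{U, SU})$ and $\res_{\id \times \alpha_H}(\fraks_{Sp, Sp})$ are string structures on one and the same underlying virtual representation in $\RO(U(n) \times Sp(k))$. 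Since $H^3(BU(n) \times BSp(k); \Z) = 0$---both factors have torsion-free cohomology concentrated in even degrees---the space of string structures on any given class is either empty or canonically path-connected, and a preferred homotopy between the two restrictions is obtained this way.

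With the morphism data in hand, the two commutative diagrams follow mechanically from Proposition~\ref{prop_functoriality}(2); the only potential obstacle is the bookkeeping of string-structure compatibility just described, and not any further homotopy-theoretic input. Note that \eqref{diag_SU_O} additionally invokes the equivariant sigma orientation for $O(n) \times Spin(2k)$, which lies outside the subcategory $\frS \subset \cptLie$ of Fact~\ref{fact_sigma}, so this half of the proof is conditional on Conjecture~\ref{conj_sigma}, consistent with the shading of the corresponding diagram in the statement.
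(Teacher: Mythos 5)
Your proof is correct and follows the same route the paper takes: the commutativity of both diagrams is an immediate application of Proposition~\ref{prop_functoriality}(2) to the morphisms $\alpha_{Sp}^U$ and $\alpha_U^O$ of \eqref{eq_mor_trio}, whose construction the paper simply declares ``easy to complete.'' You have filled in the one nontrivial piece of bookkeeping --- supplying $\alpha_{\fraks}$ --- in the intended way: for $\alpha_U^O$ it is the defining property of $\fraks_{O,Spin}$ in Proposition~\ref{prop_key_spin}, and for $\alpha_{Sp}^U$ it follows from $H^3(BU(n)\times BSp(k);\Z)=0$, which forces the two restricted string structures on the common underlying class in $\RO(U(n)\times Sp(k))$ to agree up to homotopy.
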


\subsubsection{Internal structure: Change of $(n, k)$}\label{subsubsec_internal}
Now we introduce the {\it internal} structures in the trio, which relates different pairs of parameters $(n, k)$. In this case we fix $(\cG, \cH)$ to be any one of $(SU, U)$, $(Sp, Sp)$ \colorbox{conjcolor}{and $(Spin, O)$}. Set $N=2, 4, \colorbox{conjcolor}{1}$ in each case, respectively. 
In contrast to the previous structure maps, the internal structure maps relating different $(n, k)$ do NOT come from morphisms of the defining data in Section \ref{subsubsec_functoriality}. 
The internal structure maps here relates the parameters as shown in the following (non-commutative) diagram, 
\begin{align}
	&
	\xymatrix{
		& \ar@{~>}[d]^-{\res}& \ar@{~>}[d]^-{\res}& \ar@{~>}[d]^-{\res}& \\
\ar@{~>}[r]^-{\stab} & 	(k-1, n) \ar@{~>}[r]^-{\stab}_-{\substack{\phantom{X}\\\text{fib seq}}} \ar@{~>}[d]^-{\res}   & (k, n) \ar@{~>}[r]^-{\stab}_-{\substack{\phantom{X}\\\phantom{X}~~\text{fib seq}}} \ar@{~>}[d]^-{\res} & (k+1, n) \ar@{~>}[r]^-{\stab} \ar@{~>}[d]^-{\res} &\cdots \\
\ar@{~>}[r]^-{\stab} & 	(k-1, n-1)\ar@{~>}[r]^-{\stab}_-{\substack{\phantom{X}\\\text{fib seq}}}  \ar@{~>}[d]^-{\res} & (k, n-1) \ar@{~>}[r]^-{\stab} _-{\substack{\phantom{X}\\\phantom{X}~~\text{fib seq}}}  \ar@{~>}[d]^-{\res}& (k+1, n-1) \ar@{~>}[r]^-{\stab}  \ar@{~>}[d]^-{\res}& \cdots \\
&&&&
	}
	&
	\hspace{-4.74in}
	\raisebox{-0.61in}{
	\scalebox{1}[0.7]{
    \xymatrix{
		\phantom{(k-1,n)} \ar@/^27pt/@{~>}[rd] & \phantom{(k,n)} \ar@/^25pt/@{~>}[rd] & \phantom{(k+1,n)}
		\\
		\phantom{(k-1,n-1)} & \phantom{(k,n-1)} & \phantom{(k+1,n-1)} &
	}
	}
	}
	&
	\hspace{-4.11in}
	\raisebox{-1.21in}{
	\scalebox{1}[0.7]{
    \xymatrix{
		\phantom{(k-1,n-1)} \ar@/^27pt/@{~>}[rd] & \phantom{(k,n-1)} \ar@/^25pt/@{~>}[rd] & \phantom{(k+1,n-1)}
		\\
		\phantom{(k-1,n)} & \phantom{(k,n)} & \phantom{(k+1,n)} &
	}
	}
	}
\end{align}
and each $(k-1, n) \xrightarrow{\stab} (k, n) \xrightarrow{\res} (k, n-1)$ forms a fiber sequence of corresponding equivariant twisted $\TMF$ and of tangential Thom spectra, as we will see below. 

\begin{rem}\label{rem_internal_status}
	We do NOT use equivariant sigma orientation (Section \ref{subsec_stringstr}) for definition of the internal structures on equivariant $\TMF$ and the bordism spectra, so the contents from below until Remark \eqref{rem_Landweber_Novikov} does NOT rely on Fact \ref{fact_sigma} nor Conjecture \ref{conj_sigma}. So in particular we can apply Propositions \ref{prop_stab_res_TMF}, \ref{prop_stab_res_MT} and \ref{prop_geom_res} to $\cK = O, Spin$, WITHOUT assuming \colorbox{conjcolor}{Conjecture \ref{conj_sigma}}. 
\end{rem}

\paragraph{The internal structure in equivariant $\TMF$}

First, let us introduce the structure maps in the equivariant $\TMF$s appearing in the trio. Let $\cK$ be any one of $U, SU, Sp, O, Spin$, where we set $N = 2, 2, 4, 1, 1$, respectively. 
For each pair of integers $i \ge 1 $ and $j \in \Z$ (in the case of $\cK=SU, Spin$, we impose $i \ge 2$), consider the maps
\begin{align}
 \chi(V_{\cK(i)}) \cdot & \colon \TMF[(j-1)V_{\cK(i)}]^{\cK(i)} \to \TMF[jV_{\cK(i)}]^{\cK(i)}, \label{eq_TMF_stab}\\
 \res_{\cK(i)}^{\cK(i-1)} &\colon \TMF[jV_{\cK(i)}]^{\cK(i)} \to \TMF[jV_{\cK(i-1)} + Nj]^{\cK(i-1)} \label{eq_TMF_res}
\end{align}
which we call the {\it internal structure maps} in the trio of equivariant $\TMF$. We often call the maps \eqref{eq_TMF_stab} and \eqref{eq_TMF_res} {\it stabilization} and {\it restriction}, respectively. 

\begin{prop}[{The stabilization-restriction fiber sequence of equivariant $\TMF$ }]\label{prop_stab_res_TMF}\footnote{The authors thank Lennart Meier for noting this lemma. }
	Let $\cK$ be any one of $U, SU, Sp, O, Spin$\footnote{See Remark \ref{rem_internal_status}.}, where we set $N = 2, 2, 4, 1, 1$, respectively. Let $i \ge 1 $ (in the case $\cK=SU, Spin$ we impose $i \ge 2$) and $j \in \Z$. 
	The maps \eqref{eq_TMF_stab} and \eqref{eq_TMF_res} form a fiber sequence of $\TMF$-module spectra, 
	\begin{align}\label{seq_SU_induction}
		\TMF[(j-1)V_{\cK(i)}]^{\cK(i)} \xrightarrow[\stab]{ \chi(V_{\cK(i)}) \cdot }\TMF[jV_{\cK(i)}]^{\cK(i)} \xrightarrow[\res]{ \res_{\cK(i)}^{\cK(i-1)} }\TMF[jV_{\cK(i-1)} + Nj]^{\cK(i-1)} .
	\end{align}
\end{prop}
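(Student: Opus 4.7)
The plan is to realize the sequence as arising from the standard unit-disk/unit-sphere cofiber sequence for the defining representation $V_{\cK(i)}$, after applying $\lMap(-, \TMF[jV_{\cK(i)}])^{\cK(i)}$. For each of the five families, $\cK(i)$ acts transitively on the unit sphere of $V_{\cK(i)}$, with stabilizer of any chosen unit vector equal to $\cK(i-1)$ under the standard inclusion (for $\cK = Spin$, the action factors through $SO(i)$ and $Spin(i-1) = \pi^{-1}(SO(i-1))$). Hence $S(V_{\cK(i)}) \simeq \cK(i)/\cK(i-1)$ as pointed $\cK(i)$-spaces, so the standard cofiber sequence $S(V)_+ \to D(V)_+ \simeq S^0 \to S^V$ in $\cS_*^{\cK(i)}$ becomes
\begin{equation*}
(\cK(i)/\cK(i-1))_+ \longrightarrow S^0 \xrightarrow{\chi(V_{\cK(i)})} S^{V_{\cK(i)}}.
\end{equation*}

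Next, I apply the stable contravariant functor $\lMap(-, \TMF[jV_{\cK(i)}])^{\cK(i)}$ to this cofiber sequence, obtaining a fiber sequence in $\Mod_\TMF$. The term coming from $S^{V_{\cK(i)}}$ identifies with $\TMF[(j-1)V_{\cK(i)}]^{\cK(i)}$ by invertibility of the representation sphere, and the induced map to $\TMF[jV_{\cK(i)}]^{\cK(i)}$ is precisely multiplication by $\chi(V_{\cK(i)})$. The term coming from $(\cK(i)/\cK(i-1))_+$ is identified, via $(\cK(i)/\cK(i-1))_+ \simeq \ind_{\cK(i-1)}^{\cK(i)} S^0$ and the $\ind \dashv \res$ adjunction, with $\TMF[jV_{\cK(i)}]^{\cK(i-1)}$, and one verifies that the induced map from $\TMF[jV_{\cK(i)}]^{\cK(i)}$ is exactly $\res_{\cK(i)}^{\cK(i-1)}$ by tracing through the counit of the adjunction.

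Finally, I decompose $V_{\cK(i)}|_{\cK(i-1)}$ under the standard embedding: the complement of the invariant subspace is a trivial summand whose real dimension equals one copy of the underlying division algebra, giving $N = 1, 2, 2, 1, 4$ for $\cK = O, U, SU, Spin, Sp$ respectively. Hence $V_{\cK(i)}|_{\cK(i-1)} \simeq V_{\cK(i-1)} \oplus N\underline{\R}$, so that $\TMF[jV_{\cK(i)}]^{\cK(i-1)} \simeq \TMF[jV_{\cK(i-1)} + Nj]^{\cK(i-1)}$, matching \eqref{seq_SU_induction}. I do not foresee a serious obstacle: the only point that truly deserves care is identifying the second map with $\res$, which is formal once one unwinds the $\ind \dashv \res$ counit, but it is the step that should be written out precisely.
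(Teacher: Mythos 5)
Your proposal is correct and follows essentially the same route as the paper's proof: identify $S(V_{\cK(i)}) \simeq \cK(i)/\cK(i-1)$, take the cofiber sequence $(\cK(i)/\cK(i-1))_+ \to S^0 \to S^{V_{\cK(i)}}$, and apply the $\TMF$-valued mapping functor, using $\Ind \dashv \Res$ (equivalently, the paper's identity $\Ind_H^G \circ \Res_G^H(X) \simeq (G/H)_+ \otimes X$) together with the restriction $V_{\cK(i)}|_{\cK(i-1)} \simeq V_{\cK(i-1)} \oplus N\underline{\R}$. The only cosmetic difference is that you fold the $jV_{\cK(i)}$-twist into the target $\TMF[jV_{\cK(i)}]$ rather than first smashing the cofiber sequence with $S^{-jV_{\cK(i)}}$, but these are interchangeable by invertibility of representation spheres.
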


\begin{rem}\label{rem_internal_general}
	This proposition applies to any $\RO(\cK(i))$-graded spectrum, and not just $\TMF$. 
\end{rem}

\begin{proof}[Proof of Proposition \ref{prop_stab_res_TMF}]
	For each integer $i$ in that range,
	the homogeneous space $\cK(i)/ \cK(i-1)$ is identified, as a $\cK(i)$-space, with the unit sphere $S(V_{\cK(i)})$ of the fundamental representation. 
	Thus we have a cofiber sequence of pointed $\cK(i)$-spaces, 
	\begin{align}
		\cK(i)/\cK(i-1)_+ \to S^0 \xrightarrow{\chi(V_{\cK(i)})} S^{V_{\cK(i)}}. 
	\end{align}
	For any integer $j$, wedging with $S^{jV_{\cK(i)}}$ gives
	\begin{align}\label{eq_res_stab_proof}
		\Ind_{\cK(i-1)}^{\cK(i)}\left( S^{-jV_{\cK(i-1)} - Nj}\right) \simeq \cK(i)/\cK(i-1)_+ \otimes S^{-jV_{\cK(i)}} \to S^{-jV_{\cK(i)}} \xrightarrow{\chi(V_{\cK(i)}) \wedge \id} S^{(-j+1)V_{\cK(i)}}. 
	\end{align}
	Here, the first isomorphism used the following general fact: for any inclusion $H \subset G$ between compact Lie groups and any $G$-spectrum $X$, we have an isomorphism of $G$-spectra, 
	\begin{align}\label{eq_IndRes}
		\Ind_{H}^G \circ \Res_G^H (X) \simeq (G/H)_+ \otimes X. 
	\end{align}
	Applying $\lMap_{\cK(i)}(-, \TMF)^{\cK(i)}$ to this, we get the fiber sequence \eqref{seq_SU_induction}. 
\end{proof}

Here, let us make an interesting observation that the stabilization-restriction fiber sequence in Proposition \ref{prop_stab_res_TMF} is {\it self-dual} in the following sense: 
\begin{prop}[The self-duality of stabilization-restriction fiber sequences]\label{prop_selfdual_stabres}
	In the setting of Proposition \ref{prop_stab_res_TMF}, the following diagram commutes. 
	\begin{align}\label{diag_duality_stabres}
		\xymatrix{
				\TMF[jV_{\cK(i-1)} + Nj-1]^{\cK(i-1)} \ar[d] \ar[r]^-\simeq & D\left(\TMF[-jV_{\cK(i-1)} -Ni-1 - \Ad(\cK(i-1))]^{\cK(i)}\right) \ar[d]^-{D(\res)} \\
		\TMF[(j-1)V_{\cK(i)}]^{\cK(i)}\ar[d]^-{\stab}_-{\chi(V_{\cK(i)})\cdot } \ar[r]^-\simeq   &D\left(\TMF[-(j-1)V_{\cK(i)} - \Ad(\cK(i))]^{\cK(i)}\right) \ar[d]^-{D(\stab)}_-{D(\chi(V_{\cK(i)})\cdot)} \\
		\TMF[jV_{\cK(i)}]^{\cK(i)}  \ar[d]^-{\res} \ar[r]^-\simeq  &  D\left(\TMF[-jV_{\cK(i)} - \Ad(\cK(i))]^{\cK(i)}\right) \ar[d] \\
	\TMF[jV_{\cK(i-1)} + Nj]^{\cK(i-1)} \ar[r]^-\simeq  & D\left(\TMF[-jV_{\cK(i-1)} -Ni - \Ad(\cK(i-1))]^{\cK(i-1)}\right) 
		}
	\end{align}
Here both columns are fiber sequences of $\TMF$-modules in Proposition \ref{prop_stab_res_TMF}. $D$ denotes the dual in $\Mod_\TMF$, and we are using the dualizability result in \eqref{eq_twisted_dual}. 
In particular, the connecting map in the stabilization-restriction fiber sequence (the topleft vertical arrow in \eqref{diag_duality_stabres}) is identified with the dual to the restriction map, i.e., the transfer map 
\begin{align}\label{eq_tr_cofib}
	\tr_{\cK(i-1)}^{\cK(i)} \colon \TMF[jV_{\cK(i-1)} + Nj-1]^{\cK(i-1)} \to 	\TMF[(j-1)V_{\cK(i)}]^{\cK(i)}. 
\end{align}
\end{prop}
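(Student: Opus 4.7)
The plan is to verify each of the three squares in the diagram \eqref{diag_duality_stabres} separately, after first identifying every horizontal arrow with the canonical duality isomorphism from \eqref{eq_twisted_dual} whose evaluation pairing is the multiplication-plus-transfer map \eqref{eq_coev_duality_TMF}. The middle square expresses the self-duality of Euler class multiplication: regarding $\chi(V_{\cK(i)}) \in \pi_0 \TMF[V_{\cK(i)}]^{\cK(i)}$ as a homogeneous element of the graded commutative ring $\bigoplus_{W \in \RO(\cK(i))} \pi_* \TMF[W]^{\cK(i)}$, the identity
\begin{align*}
	(\chi(V_{\cK(i)}) \cdot x) \cdot y \;=\; x \cdot (\chi(V_{\cK(i)}) \cdot y)
\end{align*}
holds in $\TMF[-\Ad(\cK(i))]^{\cK(i)}$ for any $x \in \TMF[(j-1)V_{\cK(i)}]^{\cK(i)}$ and $y \in \TMF[-jV_{\cK(i)} - \Ad(\cK(i))]^{\cK(i)}$. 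Composing with $\tr_{\cK(i)}^e$ then matches the two evaluation pairings realizing the duality, so the middle square commutes.

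For the bottom square, by the defining identity \eqref{eq_tr_f} the transfer $\tr_{\cK(i-1)}^{\cK(i)}$ is the $\TMF$-linear dual of $\res_{\cK(i)}^{\cK(i-1)}$. With the duality isomorphism \eqref{eq_twisted_dual} realized via the coevaluation \eqref{eq_coev_duality_TMF}, this identification amounts to the Frobenius reciprocity formula
\begin{align*}
\tr^e_{\cK(i-1)}\bigl(\res_{\cK(i)}^{\cK(i-1)}(x) \cdot y\bigr) \;=\; \tr^e_{\cK(i)}\bigl(x \cdot \tr_{\cK(i-1)}^{\cK(i)}(y)\bigr),
\end{align*}
which identifies the unlabeled right-hand vertical arrow with a transfer map. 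The top square is then formal: both columns are fiber sequences---the left one by Proposition \ref{prop_stab_res_TMF}, and the right one because $D = D_\TMF$ is exact on the dualizable subcategory---and once the middle and bottom squares commute, the top one commutes as well. In particular, the connecting map of the stabilization-restriction fiber sequence is identified with the dual of the restriction, namely $\tr_{\cK(i-1)}^{\cK(i)}$, yielding \eqref{eq_tr_cofib}.

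The main technical point is bridging the two avatars of the transfer appearing in the paper: the Wirthm\"uller transfer for inclusions \eqref{eq_tr_incl} and the general categorical transfer \eqref{eq_tr_f} defined as the dual of restriction. On homotopy groups their compatibility is classical Frobenius reciprocity, but to run the argument above one needs them to agree as morphisms in $\Mod_\TMF$, which reduces to checking that the duality isomorphism \eqref{eq_twisted_dual} is the one arising from the specific coevaluation \eqref{eq_coev_duality_TMF}. This coherence is a standard consequence of the $E_\infty$-structure on $\TMF$ together with the Adams/Wirthm\"uller isomorphism used in Section \ref{subsec_preliminary_SpG}, and is really the only genuinely equivariant-homotopical input needed beyond Proposition \ref{prop_stab_res_TMF}.
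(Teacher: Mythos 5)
Your proposal is correct and rests on the same core observation as the paper's proof: the commutativity of the middle square reduces to the self-duality of multiplication by $\chi(V_{\cK(i)})$, which follows from associativity and (homotopy) commutativity once the duality data is packaged via the coevaluation \eqref{eq_coev_duality_TMF}. Where you differ from the paper is in how much you check beyond that. The paper asserts that, because Proposition~\ref{prop_stab_res_TMF} identifies the cofiber of the stabilization map with the restriction map, the middle square alone determines the whole diagram and nothing more need be verified. You instead verify the bottom square directly via the Frobenius reciprocity formula (compatibility of the evaluation pairing with $\res$ and $\tr$), and then deduce the top square formally from the middle and bottom ones plus the exactness of $D$ on dualizable objects. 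Your version is more explicit; the paper's terser argument implicitly relies on exactly the coherence you spell out, namely that the fixed duality isomorphisms (not just \emph{some} induced isomorphism of cofibers) are compatible with both multiplication by Euler classes and with restriction/transfer. Your closing remark on reconciling the Wirthm\"uller transfer \eqref{eq_tr_incl} with the categorical transfer \eqref{eq_tr_f} as morphisms in $\Mod_\TMF$ --- rather than merely on homotopy groups --- is a genuine subtlety that the paper's proof elides, and it is the right point to flag as the essential equivariant-homotopical input.
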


\begin{proof}
	Since we have identified the fiber of stabilization map as the restriction map in Proposition \ref{prop_stab_res_TMF}, it is enough that the middle square in \eqref{diag_duality_stabres} commutes. But this follows from the fact that the multiplication by an element in $\TMF[V_{\cK(i)}]^{\cK(i)}$ is a self-dual operation, since the coevaluation map of the duality data is provided by \eqref{eq_coev_duality_TMF}. 
\end{proof}

We get the diagram consisting of the structure maps, 
\begin{align}
	\scalebox{0.85}{
	\xymatrix@C=0.5em{
	\ar[r]^-{\stab} &	\TMF[(j-1)V_{\cK(i)}]^{\cK(i)}  \ar[r]^-{\stab} \ar[d]^-{\res}   & \TMF[jV_{\cK(i)}]^{\cK(i)}  \ar[r]^-{\stab} \ar[d]^-{\res} & \TMF[(j+1)V_{\cK(i)}]^{\cK(i)}  \ar[d]^-{\res} \ar[r]^-{\stab} &  \\
	&	\TMF[(j-1)(V_{\cK(i-1)} + N)]^{\cK(i-1)} & \TMF[j(V_{\cK(i-1)} + N)]^{\cK(i-1)}  & \TMF[(j+1)(V_{\cK(i-1)} + N)]^{\cK(i-1)} &
	}
	}
\end{align}
where each pair of consecutive horizontal and vertical arrows form a fiber sequence. 
Particularly important cases are the following.

\begin{ex}[$\TJF$]\label{ex_TJF_building}
Setting $\cK=U$ and $i = 1$ we get (here $\stab := \chi(V_{U(1)}) \cdot$)
	\begin{align}\label{eq_building_seq_TJF}
	\xymatrix{
		\TJF_{-1} \ar[r]^-{\stab} \ar@{=}[d]&	\TJF_0 \ar[r]^-{\stab} \ar[d]^-{\res_{U(1)}^e}& \TJF_1 \ar[r]^-{\stab}  \ar[d]^-{\res_{U(1)}^e}& \TJF_2 \ar[r]^-{\stab} \ar[d]^-{\res_{U(1)}^e}& \TJF_3 \ar[r]^-{\stab}  \ar[d]^-{\res_{U(1)}^e}& \cdots \\
		\TMF[1]&	\TMF &	\TMF[2]   & \TMF[4] & \TMF[6]   & \cdots
	}, 
\end{align}
where each pair of consecutive horizontal and virtical arrows form a fiber sequence
\begin{align}\label{seq_res_stab_TJF}
	\TJF_{k-1} \xrightarrow{\stab} \TJF_{k} \xrightarrow{\res_{U(1)}^e} \TMF[2k]. 
\end{align}
This fiber sequence is regarded as constructing $\TJF_{k}$ by attaching a single $2k$-dimensional $\TMF$-cell to $\TJF_{k-1}$. 
The sequence \eqref{eq_building_seq_TJF} is regarded as building $\TJF_{k}$ by starting from $\TJF_{1}\simeq \TMF$ (see Appendix Section \ref{subsec_cell_TJF_app}) and attaching even dimensional $\TMF$-cells one by one. 
We also employ the notation
\begin{align}\label{eq_TJFinfty}
	\TJF_\infty = \colim_k \left(\cdots \xrightarrow{\stab} \TJF_k \xrightarrow{\stab} \TJF_{k+1} \xrightarrow{\stab} \cdots \right) . 
\end{align}
For more on $\TJF$, see Appendix \ref{app_TJF}. 
\end{ex}

\begin{ex}[$\TEJF$]\label{ex_TEJF_building}
Similarly, in the case of $\cK=Sp$ and $i=1$, recalling our definition (Definition \ref{def_TEJF_app}) that $\TEJF_{2k} := \TMF[kV_{Sp(1)}]^{Sp(1)}$, we get (in this case, we set $\stab := \chi(V_{Sp(1)})$)
	\begin{align}\label{eq_building_seq_TEJF}
	\xymatrix{
		\TEJF_0 \ar[r]^-{\stab} \ar[d]^-{\res_{Sp(1)}^e}_-{\simeq}& \TEJF_2 \ar[r]^-{\stab}  \ar[d]^-{\res_{Sp(1)}^e}& \TEJF_4 \ar[r]^-{\stab} \ar[d]^-{\res_{Sp(1)}^e}& \TEJF_6 \ar[r]^-{\stab}  \ar[d]^-{\res_{Sp(1)}^e}& \cdots \\
		\TMF &	\TMF[4]   & \TMF[8] & \TMF[12]   & \cdots
	}, 
\end{align}
where each consecutive pair of horizontal and virtical arrows form a fiber sequence
\begin{align}\label{seq_res_stab_TEJF}
	\TEJF_{2k-2} \xrightarrow{\stab} \TEJF_{2k} \xrightarrow{\res_{Sp(1)}^e} \TMF[4k]. 
\end{align}
Here the equivalence $\TEJF_0 = \TMF^{Sp(1)} \simeq \TMF$ as indicated by the first vertical arrow in \eqref{eq_building_seq_TEJF} is the consequence of Fact \ref{factSU} below. 
This fiber sequence is regarded as constructing $\TEJF_{2k}$ by attaching a single $4k$-dimensional $\TMF$-cell to $\TEJF_{2k-2}$. 
The sequence \eqref{eq_building_seq_TEJF} is regarded as building $\TEJF_{2k}$ by starting from $\TEJF_{0} \simeq \TMF$ and attaching $4k$-dimensional $\TMF$-cells one by one. 
We study $\TEJF$ in more detail in Appendix \ref{app_TEJF}. We show, in Proposition \ref{prop_Sp(1)}, that we have (note that we are using $\HP^{k+1}$, NOT $\HP^{k+1}_+$)
\begin{align}
	\TEJF_{2k}\simeq \TMF \otimes \HP^{k+1}[-4], 
\end{align}
and the stabilization sequence \eqref{eq_building_seq_TEJF} is identified as the cell-attaching sequence of $\HP^k$. 
We also use the notation
\begin{align}\label{eq_TEJFinfty}
	\TEJF_\infty :=  \colim_k \left(\cdots \xrightarrow{\stab} \TEJF_{2k} \xrightarrow{\stab} \TEJF_{2k+2} \xrightarrow{\stab} \cdots \right) . 
\end{align}
For more on $\TEJF$, see Appendix \ref{app_TEJF}. 
\end{ex}

\paragraph{The internal structure in tangential Thom spectra}

Next, we introduce the internal structure maps in the tangential Thom spectra. We continue to set $\cK$ be any one of $U, SU, Sp, O, Spin$, where we set $N = 2, 2, 4, 1, 1$, respectively, and $i \in \Z_{\ge 1}$ ($i \ge 2 $ for $G=SU, Spin$), $j \in \Z$ as before. We consider
\begin{align}
\stab   &\colon	MT(\cK(i-1),j\overline{V}_{\cK(i-1)} )  \to MT(\cK(i),j\overline{V}_{\cK(i)} ), \label{eq_internal_dom_1}\\
\chi(V_{\cK(i)}) \cdot& \colon MT(\cK(i),j\overline{V}_{\cK(i)} ) \to MT(\cK(i),(j-1)\overline{V}_{\cK(i)} )[Ni]  \label{eq_internal_dom_2}
\end{align}
and call them the {\it internal structure maps} in the trio of tangential Thom spectra. 
Here, the map \eqref{eq_internal_dom_1} is the stabilization map \eqref{eq_stab_MTH_general} induced by the inclusion $\cK(i-1) \hookrightarrow \cK(i)$, and \eqref{eq_internal_dom_2} is the composition
\begin{align}\label{eq_def_res_MT}
\chi(V_{\cK(i)})\cdot&\colon MT(\cK(i),j\overline{V}_{\cK(i)} )
 \simeq (S^{-j\overline{V}_{\cK(i)}})_{h\cK(i)}  \\
&  \xrightarrow{\chi(V_{\cK(i)} ) \cdot}
(S^{-j\overline{V}_{\cK(i)}+V_{\cK(i)}})_{h\cK(i)} \simeq MT(\cK(i),(j-1)\overline{V}_{\cK(i)} )[Ni]  . \notag
\end{align}
By the analogy with the $\TMF$-case, we call \eqref{eq_internal_dom_2} as {\it restriction map} in the tangential Thom spectra in the trio. 
Geometric meaning of this map is explained after the next proposition.

Exactly similarly to Proposition \ref{prop_stab_res_TMF}, we get 
\begin{prop}[{The stabilization-restriction fiber sequence of tangential Thom spectra\footnote{See Remark \ref{rem_internal_status}.}}]\label{prop_stab_res_MT}
	In the setting above, the maps \eqref{eq_internal_dom_1} and \eqref{eq_internal_dom_2} form a fiber sequence
	\begin{align}
		MT(\cK(i-1),j\overline{V}_{\cK(i-1)} ) \xrightarrow{\stab} MT(\cK(i),j\overline{V}_{\cK(i)} ) \xrightarrow[\res]{\chi(V_{\cK(i)})} MT(\cK(i),(j-1)\overline{V}_{\cK(i)} )[Ni]  . 
	\end{align}
\end{prop}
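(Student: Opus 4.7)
The plan is to mirror the proof of Proposition \ref{prop_stab_res_TMF} essentially verbatim, replacing the contravariant functor $\lMap_{\cK(i)}(-,\TMF)^{\cK(i)}$ with the covariant homotopy-orbit functor $(-)_{h\cK(i)} \colon \Spectra^{\cK(i)} \to \Spectra$, which still preserves (co)fiber sequences because it is a colimit.

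First I would reuse the identification $\cK(i)/\cK(i-1) \simeq S(V_{\cK(i)})$ as pointed $\cK(i)$-spaces to get the same cofiber sequence in $\cS^{\cK(i)}_*$,
\begin{align*}
\cK(i)/\cK(i-1)_+ \to S^0 \xrightarrow{\chi(V_{\cK(i)})} S^{V_{\cK(i)}},
\end{align*}
and then smash with the $\cK(i)$-spectrum $S^{-j\overline{V}_{\cK(i)}}$. Using $V_{\cK(i)} \simeq \overline{V}_{\cK(i)} + Ni$ in $\RO(\cK(i))$, the resulting cofiber sequence of $\cK(i)$-spectra is
\begin{align*}
\cK(i)/\cK(i-1)_+ \otimes S^{-j\overline{V}_{\cK(i)}} \to S^{-j\overline{V}_{\cK(i)}} \xrightarrow{\chi(V_{\cK(i)}) \wedge \id} S^{-(j-1)\overline{V}_{\cK(i)} + Ni}.
\end{align*}

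Next I would apply $(-)_{h\cK(i)}$. The middle and right terms become, by the definition of the tangential Thom spectrum used in \eqref{eq_def_res_MT}, the spectra $MT(\cK(i), j\overline{V}_{\cK(i)})$ and $MT(\cK(i), (j-1)\overline{V}_{\cK(i)})[Ni]$, and the induced second map is manifestly the restriction map \eqref{eq_internal_dom_2}. For the leftmost term, I would combine the Wirthm\"uller-style identity \eqref{eq_IndRes}, $\cK(i)/\cK(i-1)_+ \otimes Y \simeq \Ind_{\cK(i-1)}^{\cK(i)} \Res^{\cK(i-1)}_{\cK(i)} Y$, with the adjoint identity $(\Ind_{\cK(i-1)}^{\cK(i)} Z)_{h\cK(i)} \simeq Z_{h\cK(i-1)}$ and the fact that $\Res^{\cK(i-1)}_{\cK(i)} \overline{V}_{\cK(i)} \simeq \overline{V}_{\cK(i-1)}$ in $\RO(\cK(i-1))$, to obtain
\begin{align*}
\bigl(\cK(i)/\cK(i-1)_+ \otimes S^{-j\overline{V}_{\cK(i)}}\bigr)_{h\cK(i)} \simeq \bigl(S^{-j\overline{V}_{\cK(i-1)}}\bigr)_{h\cK(i-1)} \simeq MT(\cK(i-1), j\overline{V}_{\cK(i-1)}).
\end{align*}

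Finally, I would check that the induced first map coincides with the stabilization map \eqref{eq_internal_dom_1}. This follows from the naturality of homotopy orbits with respect to the subgroup inclusion $\cK(i-1) \hookrightarrow \cK(i)$ and the observation that the counit map $\cK(i)/\cK(i-1)_+ \to S^0$ corresponds under the equivalences above to the unit of the restriction--induction adjunction, which is precisely what produces the functorial stabilization map on Thom spectra. The only real obstacle is a bit of bookkeeping to match the various identifications with the canonical definitions of stabilization and Euler-class multiplication; no new input beyond the ingredients already used in Proposition \ref{prop_stab_res_TMF} is required (in particular, as noted in Remark \ref{rem_internal_status}, no appeal to Fact \ref{fact_sigma} or Conjecture \ref{conj_sigma} is needed).
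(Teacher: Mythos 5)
Your proof is correct and follows exactly the paper's argument, which applies $(-)_{h\cK(i)}$ to the cofiber sequence \eqref{eq_res_stab_proof} from the proof of Proposition \ref{prop_stab_res_TMF}; you have simply spelled out the identifications the paper leaves implicit. Smashing with $S^{-j\overline{V}_{\cK(i)}}$ rather than $S^{-jV_{\cK(i)}}$ is a harmless normalization that lets you read off the tangential Thom spectra directly.
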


\begin{proof}
The proof is exactly similar to that of Proposition \ref{prop_stab_res_TMF}. In this case, we apply $(-)_{h\cK(i)}$ to the sequence \eqref{eq_res_stab_proof} to get the result. 
\end{proof}

Thus we get the diagram consisting of the structure maps, 

\begin{align}\label{diag_building_MT}
	\scalebox{0.7}{
	\xymatrix@C=1em{
		\ar[r]^-{\stab} &MT(\cK(i-1),j\overline{V}_{\cK(i-1)} ) \ar[r]^-{\stab} \ar[d]^-{\res}_-{\chi(V_{\cK(i-1)})\cdot}   & MT(\cK(i),j\overline{V}_{\cK(i)} ) \ar[r]^-{\stab} \ar[d]^-{\res}_-{\chi(V_{\cK(i)})\cdot}   & MT(\cK(i+1),j\overline{V}_{\cK(i+1)} )\ar[d]^-{\res}_-{\chi(V_{\cK(i+1)})\cdot}   \ar[r]^-{\stab} &  \\
		&	MT(\cK(i-1),(j-1)\overline{V}_{\cK(i-1)} )[N(i-1)]  & MT(\cK(i),(j-1)\overline{V}_{\cK(i)} )[Ni]  & MT(\cK(i+1),(j-1)\overline{V}_{\cK(i+1)} )[N(i+1)] &
	}
}
\end{align}
where each pair of consecutive horizontal and vertical arrows form a fiber sequence. 
Now we explain the geometric meaning of those structure maps. By the Pontryagin-Thom isomorphism in Fact \ref{fact_PT}, applying $\pi_m$ to \eqref{eq_internal_dom_1} and \eqref{eq_internal_dom_2}, we get the maps in the tangential bordism groups, 
\begin{align}
\stab &\colon \Omega_m^{\left( \cK(i-1),j\overline{V}_{\cK(i-1)} \right) } \to \Omega_m^{\left( \cK(i),j\overline{V}_{\cK(i)} \right) } , \label{eq_geom_stab}\\
\res = \chi(V_{\cK(i)}) \cdot &\colon\Omega_m^{\left( \cK(i),j\overline{V}_{\cK(i)} \right) } \to \Omega_{m-Ni}^{\left( \cK(i),(j-1)\overline{V}_{\cK(i)} \right) } \label{eq_geom_res}
\end{align}
The geometric meaning of the stabilization map \eqref{eq_geom_stab} should be clear: a tangential $(\cK(i-1),j\overline{V}_{\cK(i-1)} )$-structure canonically induces a tangential $ (\cK(i),j\overline{V}_{\cK(i)} )$-structure by the inclusion $\cK(i-1) \hookrightarrow \cK(i)$. 
On the other hand, the restriction map \eqref{eq_geom_stab} is the interesting one, nicely explained as follows. 
By \eqref{eq_prelim_tangential}, an element of the tangential bordism group $ \Omega_m^{\left( \cK(i),j\overline{V}_{\cK(i)} \right) } $ is represented by a triple $(M, P, \psi)$, where $M$ is a closed $m$-dimensional manifold, $P$ is a principal $\cK(i)$-bundle and $\psi$ is an isomorphism of vector bundles over $M$, 
\begin{align}
\psi \colon TM \oplus \underline{\R}^L &\simeq (P \times_{\cK(i)} V_{\cK(i)}^{\oplus j}) \oplus \underline{\R}^{m+L-Nij} \\
&= V_P^{(1)} \oplus V_P^{(2)} \oplus V_P^{(3)} \oplus \cdots \oplus V_P^{(j)} \oplus \underline{\R}^{m+L-Nij}. 
\end{align}
with $L \ge 0$ a large enough integer; we also and denoted $V_P := P \times_{\cK(i)} V_{\cK(i)}$, and each $V_P^{(\bullet)}$ is a copy of $V_P$.  
Given such $(M, P, \psi)$, let us take a transverse section $s \in C^\infty(M; V_P^{(j)})$ of the $j$-th copy of $V_P$ in the splitting. Then, by the transversality, the zero locus $M' :=s^{-1}(0) \subset M$ is a smooth closed manifold of dimension $(m-Ni)$ with an isomorphism
\begin{align}
	\psi|_{TM'} \colon TM' \oplus \underline{\R}^L &\simeq V_P^{(1)} \oplus V_P^{(2)} \oplus V_P^{(3)} \oplus \cdots \oplus V_P^{(j-1)} \oplus \underline{\R}^{m+L-Nij}. 
\end{align}
This equips $M'$ with a tangential $\left( \cK(i),(j-1)\overline{V}_{\cK(i)} \right) $-structure.

\begin{prop}\footnote{See Remark \ref{rem_internal_status}.}\label{prop_geom_res}
	The map \eqref{eq_geom_res} is given by
	\begin{align}
 \chi(V_{\cK(i)}) \cdot \colon \left([M, P, \psi] \in  \Omega_m^{\left( \cK(i),j\overline{V}_{\cK(i)} \right) }\right) \mapsto \left([M',P_{M'}, \psi|_{TM'} ] \in \Omega_{m-Ni}^{\left( \cK(i),(j-1)\overline{V}_{\cK(i)} \right) } \right) , 
	\end{align}
	where the right hand side is the element just explained above. 
\end{prop}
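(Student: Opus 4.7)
The strategy is to unwind the definition of the restriction map \eqref{eq_def_res_MT} at the level of Thom spectra, and then apply Thom transversality to identify multiplication by the Euler class with taking the zero locus of a transverse section. This is a straightforward application of the standard principle that, for a vector bundle $E \to X$, the Thom-level map $X^{-TX} \to X^{E-TX}$ induced by the zero section represents intersection with the zero set of a transverse section of $E$.

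First, I would represent the class $[M, P, \psi] \in \Omega_m^{(\cK(i), j\overline{V}_{\cK(i)})}$ by its Pontryagin-Thom map (Fact \ref{fact_PT}). The isomorphism $\psi$ identifies $-TM$ stably with $-\bigoplus_{a=1}^{j} V_P^{(a)}$, producing a factorization
\ie
S \xrightarrow{\coll} M^{-TM} \xrightarrow{\psi} M^{-\bigoplus_{a=1}^{j} V_P^{(a)}} \xrightarrow{(M \to B\cK(i))_*} B\cK(i)^{-j\overline{V}_{\cK(i)}}[-m].
\fe
Next, I would observe that the map $\chi(V_{\cK(i)}) \cdot$ on $B\cK(i)^{-j\overline{V}_{\cK(i)}}$ is the Thomification of the zero-section map $S^0 \to S^{V_{\cK(i)}}$ over $B\cK(i)$. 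Pulling this map back along the classifying map $M \to B\cK(i)$ and precomposing with the above factorization, the composite $\chi(V_{\cK(i)}) \cdot \circ \PT[M,P,\psi]$ becomes
\ie
S \xrightarrow{\coll} M^{-TM} \xrightarrow{\psi} M^{-\bigoplus_{a=1}^{j} V_P^{(a)}} \xrightarrow{\chi(V_P^{(j)})} M^{V_P^{(j)}-\bigoplus_{a=1}^{j} V_P^{(a)}} \simeq M^{-\bigoplus_{a=1}^{j-1} V_P^{(a)}}
\fe
followed by the classifying map to $B\cK(i)^{-(j-1)\overline{V}_{\cK(i)}}[-m+Ni]$.

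The third step is the geometric input. By Thom transversality, for any transverse section $s$ of $V_P^{(j)} \to M$ with zero locus $M' = s^{-1}(0)$, the composition
\ie
S \xrightarrow{\coll} M^{-TM} \xrightarrow{\chi(V_P^{(j)})\cdot \psi} M^{V_P^{(j)} - TM}
\fe
factors, up to homotopy, through the Pontryagin-Thom collapse $S \xrightarrow{\coll} M'^{-TM'}$ for $M'$, using the canonical identification $TM|_{M'} \simeq TM' \oplus V_P^{(j)}|_{M'}$ that follows from transversality. Combining this factorization with the remaining composition above, and tracking the induced splitting of $\psi|_{TM'}$, one obtains exactly the Pontryagin-Thom representative of $[M', P|_{M'}, \psi|_{TM'}] \in \Omega_{m-Ni}^{(\cK(i), (j-1)\overline{V}_{\cK(i)})}$.

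The main obstacle is the bookkeeping of stable isomorphisms: one must verify that the tangential structure $\psi|_{TM'}$ obtained by restricting $\psi$ and canceling the $V_P^{(j)}$ summand really matches the structure produced by the Thom-transversality factorization above. This is where the splitting of $\psi$ plays an essential role, but it is in the end a routine naturality check; the substantive content of the proposition is entirely the Thom transversality identification and the Pontryagin-Thom dictionary.
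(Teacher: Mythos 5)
Your proposal is correct and follows essentially the same route as the paper's proof, which simply states that the result is "the direct consequence of applying the Pontryagin--Thom construction to the map \eqref{eq_def_res_MT}." You have usefully filled in the unstated details — the Thom-level identification of $\chi(V_{\cK(i)})\cdot$ with the zero-section inclusion, the Thom transversality step, and the bookkeeping of the stable tangential structure on the zero locus — but no new idea or alternative strategy is involved.
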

\begin{proof}
	This is the direct consequence of applying the Pontryagin-Thom construction to the map \eqref{eq_def_res_MT}. 
\end{proof}

\begin{ex}[The case of $j=1$]\label{ex_res_MTH}
	The case of $j=1$ is of particular importance for us, especially in relation to Euler numbers and topological elliptic genera (Corollary \ref{cor_Jac_res_Euler} below).
	In this case, the restriction map in \eqref{eq_internal_dom_2} becomes
	\begin{align}\label{eq_res_MTG_j=1}
	 \chi(V_{\cK(i)}) \cdot \colon MT\cK(i) \to S[Ni],  
	\end{align}
	resulting in the map of bordism groups in Proposition \ref{prop_geom_res}
	\begin{align}\label{eq_generalized_Euler}
	\chi(V_{\cK(i)}) \cdot \colon \Omega_{m}^{\cK(i)} \to \Omega_{m-Ni}^{fr}, 
	\end{align}
	where $\Omega_*^\fr$ is the stably framed bordism group. In particular, if we set $m=Ni$, we have
	\begin{claim}\footnote{See Remark \ref{rem_internal_status}.}\label{claim_Euler}
		Let $M$ be a closed manifold with a \emph{strict} tangential $\cK(i)$-structure $\psi$ (Definition \ref{def_strict_str}---so that in particular $\dim_\R M = Ni$). 
		Then the map \eqref{eq_generalized_Euler} for $m=Ni$, 
		\begin{align}
		 \chi(V_{\cK(i)}) \cdot \colon \Omega_{Ni}^{\cK(i)} \to \Omega_{0}^{fr}=\Z, 
		\end{align}
		maps the element $[M, \psi]$ to its Euler number $\Eul(M) \in \Z$. 
	\end{claim}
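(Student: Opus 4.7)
The plan is to combine Proposition \ref{prop_geom_res} applied at $j=1$ with the classical Poincar\'e--Hopf theorem. Given $[M,P,\psi] \in \Omega_{Ni}^{\cK(i)}$ with $\psi$ a \emph{strict} tangential $\cK(i)$-structure, Proposition \ref{prop_geom_res} identifies the image under $\chi(V_{\cK(i)})\cdot$ with the framed bordism class of $M' := s^{-1}(0)$, where $s$ is a transverse section of the associated bundle $V_P := P \times_{\cK(i)} V_{\cK(i)}$. Because $\psi$ is strict, it is an actual (not merely stable) bundle isomorphism $TM \simeq V_P$; thus $s$ corresponds via $\psi$ to a vector field $X := \psi^{-1}\circ s$ on $M$ with non-degenerate zeros, and $M' = X^{-1}(0)$ is a finite set whose cardinality we wish to count with signs.

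Next, I would identify the framed bordism class $[M',\psi|_{TM'}] \in \Omega_0^{fr} \simeq \Z$ with a signed count of points. At each zero $p \in M'$, transversality implies that $dX_p \colon T_p M \to T_p M$ is a linear isomorphism. Unpacking the definition of the restriction map in \eqref{eq_def_res_MT} and the induced tangential structure on the zero locus, one sees that the framing inherited on $\{p\}$ arises from the canonical splitting $TM|_p = T_pM' \oplus N_{M'/M,p} = N_{M'/M,p}$ together with the transversality isomorphism $dX_p \colon N_{M'/M,p} \xrightarrow{\simeq} V_P|_p$ composed with $\psi_p^{-1}$. Consequently, the framed class of $\{p\}$ equals $\mathrm{sgn}\det(dX_p)$, i.e.\ the Poincar\'e--Hopf local index $\mathrm{ind}_p(X)$. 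Summing over zeros and invoking Poincar\'e--Hopf yields
\begin{align}
\chi(V_{\cK(i)}) \cdot [M,\psi] \;=\; \sum_{p \in M'} \mathrm{ind}_p(X) \;=\; \Eul(M),
\end{align}
as claimed.

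The main obstacle is the local sign-convention check in the previous paragraph: one must verify that the stably framed class of $\{p\}$ inherited through $\psi|_{TM'}$ really matches $\mathrm{sgn}\det(dX_p)$ rather than its negative, once a reference sign is fixed. This is an elementary but nontrivial linear-algebra bookkeeping, and it is the only step that is not a direct invocation of Proposition \ref{prop_geom_res} or Poincar\'e--Hopf. A cleaner alternative that bypasses most of this bookkeeping would be to identify $\chi(V_{\cK(i)})\cdot$ abstractly with the universal Euler-class map $MT\cK(i) \to S[Ni]$ used in the homotopical proof of Poincar\'e--Hopf (cf.\ the Becker--Gottlieb transfer formula), from which the statement follows by naturality; in either route, the conceptual content is that the strict identification $TM \simeq V_P$ converts the restriction operation into the counting of zeros of a vector field.
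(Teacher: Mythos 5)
Your argument is the same as the paper's: the proof there simply invokes Proposition~\ref{prop_geom_res} at $j=1$, notes that strictness turns a transverse section of $V_P$ into a vector field on $M$, and identifies the resulting framed $0$-manifold count with the Poincar\'e--Hopf index sum. Your extra care about the local sign convention and the Becker--Gottlieb alternative are reasonable elaborations of the bookkeeping, but the route is identical.
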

	\begin{proof}
		This is a direct consequence of Proposition \ref{prop_geom_res}. The procedure in that proposition, applied to this case, produces the formula expressing the Euler number of $M$ in terms of vanishing points of generic vector fields. 
	\end{proof}
	
	\begin{rem}\label{rem_strict_Euler}
		The strictness assumption in Claim \ref{claim_Euler} is essential. Indeed, recall Example \ref{ex_spin_Sk}, where we intrduced two distinct tangential $Spin(k)$-structures on $S^k$: the one is the {\it stable} tangential $Spin(k)$-structure $\fraks_{\rm BB}^{Spin(k)}$ which is given by the blackboard framing, and the other is the {\it strict} tangential $Spin(k)$-structure $\fraks_{\rm str}^{Spin(k)}$. 
		
		Let $k$ be an even integer. We already know that $\Eul(S^k) = 2$. So Claim \ref{claim_Euler} applied here implies that
		\begin{align}
			\chi(V_{Spin(k)}) \cdot [S^k, \fraks_{\rm str}^{Spin(k)}] = 2 \in \Omega_0^\fr = \Z. 
		\end{align}
		On the other hand, since we already know that $[S^k, \fraks_{\rm BB}^{Spin(k)}] = 0 \in \Omega_{2k}^{Spin(k)}$, we have
		\begin{align}
				\chi(V_{Spin(k)}) \cdot [S^k, \fraks_{\rm str}^{Spin(k)}] = 0. 
		\end{align}
		This is not a contradiction, since we have $[S^k, \fraks_{\rm str}^{Spin(k)}] \neq [S^k, \fraks_{\rm BB}^{Spin(k)}] $ in $\Omega^{Spin(k)}_k$. 
		However, after stabilization those two tangential $Spin$-structures become bordant to each other. This example shows that our topological Elliptic genera are sensitive to {\it unstable} information. 
	\end{rem}
\end{ex}

\begin{rem}\label{rem_Landweber_Novikov}
	The restriction map \eqref{eq_geom_res} can be regarded as a variant of the {\it Landweber-Novikov operations} \cite{LandweberOperation}, \cite{NovikovCobordism}  on bordism homology theories. In general, for a multiplicative $B\cK \to BO$, given a map of the form  $\gamma \colon \Sigma^\infty B\cK_+ \to M\cK[d]$, by the universal Thom isomorphism for $\cK$-bundles, we can canonically associate an $M\cK$-module morphism $lf(\gamma) \colon M\cK \to M\cK[d]$, which is called the Landweber-Novikov operation associated to $\gamma$. 
	
	One concrete relation which we will use in our analysis of examples in Section~\ref{subsec_pi5MSp} is the following, concerning the case of $j=1$ explained in Example \ref{ex_res_MTH} above. Let $\cK$ be one of $U, SU, O, Spin, Sp$. 
	Denote by $\overline{e}_i \colon \Sigma^\infty B\cK_+ \to M\cK[Ni]$ the characteristic class which assigns
	\begin{align}
		\overline{e}_i (\xi) = e_i (-\xi) \in M\cK^{Ni}(X), 
	\end{align}
	for an $\cK$-vector bundle $\xi$ over $X$, where $\{e_l\}_{l=1}^\infty$ is the restriction of the standard generators $\{E_l\}_{l=1}^{\infty}$ of the $MU$, $MO$, $MSp$-characteristic classes in, e.g., \cite[(4.1)]{LandweberOperation}. 
	
	\begin{claim}\label{claim_lf_vs_euler}
		The following diagram commutes. 
		\begin{align}
			\xymatrix{
				MT\cK(i)  \ar[d]_-{\stab} \ar[rr]^-{\res:= \chi(V_{H(i)})\cdot}_-{\eqref{eq_res_MTG_j=1}} && S[Ni] \ar[d]^-{u} \\
				MT\cK \simeq M\cK \ar[rr]^-{lf({\overline{e}_i})}  && M\cK[Ni].
			}
		\end{align}
	\end{claim}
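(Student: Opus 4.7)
The plan is to apply the $M\cK$-cohomology Thom isomorphism to both sides of the diagram and thereby reduce the claim to an identification of characteristic classes on $B\cK(i)$.

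First I would use that the tautological $\cK(i)$-bundle $V_{\cK(i)}$ is canonically $M\cK$-oriented for each $\cK \in \{U, SU, O, Spin, Sp\}$, giving the Thom isomorphism
\begin{align}
\phi \colon M\cK^{Ni}(B\cK(i)) \xrightarrow{\cong} M\cK^{Ni}(MT\cK(i)) = M\cK^{Ni}(B\cK(i)^{-\overline{V}_{\cK(i)}}),
\end{align}
under which the stabilization $\stab \colon MT\cK(i) \to M\cK$ corresponds to $1 \in M\cK^0(B\cK(i))$.

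Next I would identify each composition of the diagram as an element of $M\cK^{Ni}(B\cK(i))$ via $\phi^{-1}$. The top composition $u \circ (\chi(V_{\cK(i)}) \cdot)$ is, by the definition of the restriction map in \eqref{eq_def_res_MT}, multiplication by the universal $\cK(i)$-Euler class on the Thom spectrum followed by the augmentation $B\cK(i)_+ \to S^0$ and $u$, and so corresponds to $\chi(V_{\cK(i)}) \in M\cK^{Ni}(B\cK(i))$. The bottom composition invokes the Landweber-Novikov operation $lf(\overline{e}_i)$, which by construction (via the universal Thom iso $M\cK \simeq B\cK^{V_\infty}$) corresponds under $M\cK^{Ni}(M\cK) \cong M\cK^{Ni}(B\cK)$ to the characteristic class $\overline{e}_i$ itself; precomposing with $\stab$ pulls this back along the canonical map $B\cK(i) \to B\cK$, yielding $\overline{e}_i|_{B\cK(i)} \in M\cK^{Ni}(B\cK(i))$.

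The commutativity of the diagram thus reduces to the identity of characteristic classes
\begin{align}
\overline{e}_i|_{B\cK(i)} = \chi(V_{\cK(i)}) \quad \text{in } M\cK^{Ni}(B\cK(i)),
\end{align}
which is the classical statement that the top Landweber-Novikov generator, evaluated on a $\cK$-bundle of the appropriate rank, equals the $\cK$-Euler class. By the splitting principle, $\overline{e}_i$ corresponds (after accounting for the $\overline{e}_i(\xi) = e_i(-\xi)$ convention) to the top elementary symmetric function in the formal roots of the universal bundle, which on the rank-$Ni$ tautological bundle over $B\cK(i)$ is the product of all roots, i.e., the top Chern/Pontryagin/symplectic class. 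The main and only nontrivial obstacle is the careful bookkeeping of signs and normalization conventions between the generators $\{E_l\}$ of \cite{LandweberOperation} and our $\chi(V_{\cK(i)})$; beyond this, the argument is a direct diagram chase via the Thom isomorphism.
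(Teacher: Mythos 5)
Your Thom-isomorphism strategy is a reasonable way to make the paper's terse ``compare the definitions'' precise, and the identification of the top composite with $\chi(V_{\cK(i)}) \in M\cK^{Ni}(B\cK(i))$ is correct. The gap is in the bottom composite. Your Thom isomorphism $\phi$ is the one for the bundle $-\overline{V}_{\cK(i)}$ over $B\cK(i)$, and $\stab$ is the $M\cK$-Thom class of that bundle; but the universal Thom isomorphism through which $lf(\overline{e}_i)$ is defined is the one for $+\overline{V}_\cK$ over $B\cK$, since $M\cK = B\cK^{\overline{V}_\cK}$. Because these two bundles differ by a sign, the map $B\cK(i) \to B\cK$ does \emph{not} intertwine the two Thom isomorphisms, so ``precomposing with $\stab$ pulls this back along $B\cK(i) \to B\cK$, yielding $\overline{e}_i|_{B\cK(i)}$'' is not right. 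What $lf(\overline{e}_i)$ does to the Thom class of a bundle $\xi$ is multiply it by $\overline{e}_i(\xi)$, so
\begin{align}
lf(\overline{e}_i)\bigl(\stab\bigr) = \stab \cdot \overline{e}_i\bigl(-\overline{V}_{\cK(i)}\bigr) = \stab \cdot e_i\bigl(V_{\cK(i)}\bigr),
\end{align}
and hence $\phi^{-1}\bigl(lf(\overline{e}_i)\circ\stab\bigr) = e_i(V_{\cK(i)})$ --- the bar drops. The identity you reduce to, $\overline{e}_i(V_{\cK(i)}) = e_i(-V_{\cK(i)}) = \chi(V_{\cK(i)})$, is already false for $\cK=U$, $i=1$: there $e_1(-V_{U(1)})$ is the formal-group inverse of $c_1^{MU} = \chi(V_{U(1)})$. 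The correct target identity is $e_i(V_{\cK(i)}) = \chi(V_{\cK(i)})$, which is what your splitting-principle calculation actually proves. The convention $\overline{e}_i(\xi) := e_i(-\xi)$ of Remark \ref{rem_Landweber_Novikov} exists precisely so that $lf(\overline{e}_i)$ applied to the tangential Thom class yields $e_i$ of the tangent bundle rather than of the normal one; in other words, the ``bookkeeping of signs'' you defer in the last sentence is not a peripheral normalization but the entire content of the claim, and as written your reduction lands on a false statement.
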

	The proof is straightforward by comparing the definitions of two horizontal arrows. 
\end{rem}

\paragraph{The compatibility of the topological elliptic genera with the internal structure maps}

Now we proceed to show that our topological elliptic genera are compatible with the internal structure maps introduced above. 

\begin{prop}[{Compatibility of the topological elliptic genera with the internal structure maps}]\label{prop_Jac_different_nk}
	Let $(\cG, \cH)$ be any one of $(U, SU), (Sp, Sp) \highlight{, (O, Spin)}$. 
	The following diagram commutes. 
	\begin{align}
		\xymatrix{
			MT(\cH(k-1),n\overline{V}_{\cH(k-1)} )  \ar[d]^-{\stab} \ar[rrr]^-{\Jac_{\cG(n)_{k-1}}} & &&\TMF[(k-1)V_{\cG(n)}]^{\cG(n)} \ar[d]^-{\chi(V_{\cG(n)}) \cdot}_-{\stab} \\
			MT(\cH(k),n\overline{V}_{\cH(k)} )   \ar[d]^-{\chi(V_{\cH(k)}) \cdot}_-{\res} \ar[rrr]^-{\Jac_{\cG(n)_k}} &&& \TMF[kV_{\cG(n)}]^{\cG(n)} \ar[d]^-{\res_{\cG(n)}^{\cG(n-1)}} \\
			MT(\cH(k),(n-1)\overline{V}_{\cH(k)} ) [Nk] \ar[rrr]^-{\Jac_{\cG(n-1)_k}} & &&\TMF[kV_{\cG(n-1)}+Nk]^{\cG(n-1)}
		}
	\end{align}
\end{prop}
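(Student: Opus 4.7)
The plan is to verify both squares first at the level of the coevaluation maps $\mathcal{F}_\cD$ from Definition~\ref{def_FD}, and then propagate the identities through $\Jac_\cD = \mathcal{F}'_\cD \circ \Nm \circ u$ by dualization combined with naturality of the norm and Euler-class multiplication. First I would compute how the defining data restricts along the relevant subgroup inclusions. For the top square, restricting along $\cH(k-1) \hookrightarrow \cH(k)$ splits off a one-dimensional trivial summand (over the appropriate field $\R$, $\C$, or $\bH$) from $V_{\cH(k)}$, yielding $\res V_\phi^{(k)} \simeq V_\phi^{(k-1)} \oplus V_{\cG(n)}$ and $\res n\overline{V}_{\cH(k)} \simeq n\overline{V}_{\cH(k-1)}$; similarly for the bottom square, restricting along $\cG(n-1) \hookrightarrow \cG(n)$ gives $\res V_\phi^{(n)} \simeq V_\phi^{(n-1)} \oplus V_{\cH(k)}$ and $\res kV_{\cG(n)} \simeq kV_{\cG(n-1)} + Nk$. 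A short virtual-dimension bookkeeping then shows $\res \Theta_\cD \simeq \Theta_{\cD'}$ for the restricted data in both cases, and the uniqueness-of-string-structures statements underlying Propositions~\ref{prop_key_U(n)},~\ref{prop_key_Sp(n)},~\ref{prop_key_spin}---together with the compatibility clause singling out $\fraks_{O, Spin}$ in Proposition~\ref{prop_key_spin}---ensure that the chosen string structures also restrict compatibly.

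Combining these restriction formulas with the multiplicativity of the Euler class, $\chi(V \oplus W) = \chi(V)\chi(W)$, and the functoriality of the equivariant sigma orientation (Fact~\ref{fact_sigma}), I would deduce the two coevaluation-level identities
\begin{align}
(\id \otimes \res_{\cH(k)}^{\cH(k-1)}) \circ \mathcal{F}_{\cD_k} &\simeq (\chi(V_{\cG(n)}) \otimes \id) \circ \mathcal{F}_{\cD_{k-1}}, \\
(\res_{\cG(n)}^{\cG(n-1)} \otimes \id) \circ \mathcal{F}_{\cD_n} &\simeq (\id \otimes \chi(V_{\cH(k)})) \circ \mathcal{F}_{\cD_{n-1}} .
\end{align}
Proposition~\ref{prop_selfdual_stabres} identifies the $\Mod_\TMF$-dual of the restriction $\res_{\cH(k)}^{\cH(k-1)}$ with the transfer $\tr_{\cH(k-1)}^{\cH(k)}$, and the dual of an Euler-class multiplication with itself (up to the corresponding degree shift), so these identities translate directly into the $\mathcal{F}'$-level identities
\begin{align}
\mathcal{F}'_{\cD_k} \circ \tr_{\cH(k-1)}^{\cH(k)} &\simeq \chi(V_{\cG(n)}) \circ \mathcal{F}'_{\cD_{k-1}}, \\
\res_{\cG(n)}^{\cG(n-1)} \circ \mathcal{F}'_{\cD_n} &\simeq \mathcal{F}'_{\cD_{n-1}}[Nk] \circ \chi(V_{\cH(k)}) .
\end{align}

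Finally I would compose with $\Nm \circ u$ to pass to $\Jac_\cD$. The top square then follows from the standard naturality $\Nm_{\cH(k)} \circ \stab_h \simeq \tr_{\cH(k-1)}^{\cH(k)} \circ \Nm_{\cH(k-1)}$ of the norm map under subgroup inclusion (a form of the Wirthm\"uller compatibility underlying~\eqref{eq_tr_incl}), together with the obvious fact that the unit map $u$ is natural. For the bottom square the $\cH(k)$-equivariance does not change, and it suffices to observe that Euler-class multiplication commutes with $\Nm \circ u$ in the evident way, producing the desired $[Nk]$-shifted square. The main obstacle I anticipate is the careful bookkeeping of the degree shifts $[Nk]$ arising from $\dim_\R V_{\cH(k)}$, and ensuring that the string-structure uniqueness arguments yield strict identifications after restriction; the $(O, Spin)$ case in particular is tuned through Proposition~\ref{prop_key_spin} precisely so that the argument goes through uniformly with the $(U, SU)$ family.
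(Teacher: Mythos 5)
Your proposal is correct and follows essentially the same two-step strategy as the paper: the paper isolates the coevaluation-level compatibility as Proposition~\ref{prop_F_different_nk} (proved by precisely the sort of restriction-of-data and Euler-class/sigma-orientation diagram chase you sketch), and then Proposition~\ref{prop_Jac_different_nk} is derived from it by naturality of the norm and unit maps under the definition $\Jac_\cD = \mathcal{F}'_\cD \circ \Nm \circ u$. The only cosmetic difference is that you phrase the passage to the $\mathcal{F}'$-level identities via duality (citing Proposition~\ref{prop_selfdual_stabres}), whereas the paper packages this as the equivalence between "compatibility" of \eqref{diag_different_nk} and commutativity of \eqref{diag_F'_different_nk} — but these are the same observation.
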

The compatibility with the stabilization maps immediately implies, for example, that the $U(1)$-and $Sp(1)$-Jacobi orientations stabilize to give the maps (see \eqref{eq_TJFinfty} and \eqref{eq_TEJFinfty})
\begin{align}
	\Jac_{U(1)_\infty} \colon MTSU(\infty) \simeq  MSU \to \TJF_\infty, \label{eq_JacU(1)_infty}\\
	\Jac_{Sp(1)_\infty}\colon MTSp(\infty) \simeq  MSp \to \TEJF_\infty. \label{eq_JacSp(1)_infty}
\end{align}

Before proving Proposition \ref{prop_Jac_different_nk}, we deduce an important corollary of this proposition, which relates Euler numbers and topological elliptic genera. This is important in Section \ref{subsec_divisibility}, where we deduce interesting divisibility results of Euler numbers by way of our topological elliptic genera. 

\begin{cor}[{The restriction of $\Jac_{G(1)}$ is the Euler number}]\label{cor_Jac_res_Euler}
	Let $(\cG, \cH)$ be any one of $(U, SU)$, $(Sp, Sp)$ \colorbox{conjcolor}{, $(O, Spin)$}, and $k$ be a positive integer. The following diagram commutes. 
	\begin{align}
		\xymatrix{
			MT\cH(k)   \ar[d]^-{\chi(V_{\cH(k)}) \cdot}_-{\eqref{eq_res_MTG_j=1}} \ar[rrr]^-{\Jac_{G(1)_k}} &&& \TMF[kV_{G(1)}]^{G(1)} \ar[d]^-{\res_{G(1)}^{e}} \\
			S[Nk] \ar[rrr]^-{u} & &&\TMF[Nk]
		}
	\end{align}
	In particular, if $M$ is a closed manifold with a \emph{strict} tangential $\cH(k)$-structure $\psi$ (Definition \ref{def_strict_str}---so that in particular $\dim_\R M = Nk$), the composition
	\begin{align}\label{eq_Jac_res}
		 \Omega_{Nk}^{\cH(k)} \stackrel{\PT}{\simeq} \pi_{Nk}MT\cH(k) \xrightarrow{\Jac_{G(1)_k}}\TMF[kV_{G(1)}]^{G(1)}
		  \xrightarrow{\res_{G(1)}^e}  \pi_0 \TMF. 
	\end{align}
	sends the class $	[M, \psi] \in  \Omega_{Nk}^{\cH(k)}$ to the Euler number $\Eul(M) \in \Z = \pi_0S \xhookrightarrow{u} \pi_0\TMF$. 
\end{cor}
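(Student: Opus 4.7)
The plan is to deduce this corollary as the $n=1$ specialization of Proposition \ref{prop_Jac_different_nk}. Setting $n=1$ in that proposition produces a compatibility square whose bottom row involves the map $\Jac_{G(0)_k}$, where $G(0) = e$ is the trivial group. The defining data then has $\tau_G = kV_{G(0)} = 0$, $\tau_H = 0 \cdot \overline{V}_{\cH(k)} = 0$, and $V_\phi = 0$, which is exactly the trivial-data scenario treated in Remark~\ref{rem_triv_Jac}. That remark identifies $\Jac_{G(0)_k}$ as the composite
\begin{align*}
MT(\cH(k), 0) \simeq \Sigma^\infty B\cH(k)_+ \xrightarrow{(\cH(k) \to e)_*} S \xrightarrow{u} \TMF.
\end{align*}

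First I would write down the $n=1$ instance of Proposition \ref{prop_Jac_different_nk} and factor its bottom row through $S[Nk]$ using the formula above. The left vertical arrow of that specialized diagram is $\chi(V_{\cH(k)})\cdot \colon MT\cH(k) \to MT(\cH(k), 0)[Nk] = \Sigma^\infty B\cH(k)_+[Nk]$, and its further composition with $(\cH(k) \to e)_*$ gives precisely the map $\chi(V_{\cH(k)})\cdot \colon MT\cH(k) \to S[Nk]$ of \eqref{eq_res_MTG_j=1}. The bottom edge then becomes the unit $u \colon S[Nk] \to \TMF[Nk]$, while the right vertical arrow is $\res_{G(1)}^{e}$ as required. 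This is exactly the square asserted by the corollary.

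For the geometric claim, I would apply $\pi_{Nk}$ to the left vertical composite and invoke Claim~\ref{claim_Euler}: under the Pontryagin-Thom isomorphism $\pi_{Nk}MT\cH(k) \simeq \Omega_{Nk}^{\cH(k)}$, the map $\chi(V_{\cH(k)})\cdot$ sends the class of a closed $Nk$-manifold $M$ equipped with a \emph{strict} tangential $\cH(k)$-structure $\psi$ to its Euler number $\Eul(M) \in \Omega_0^\fr \simeq \Z \simeq \pi_0 S$. Post-composing with $u_\ast \colon \pi_0 S \to \pi_0 \TMF$ then lodges $\Eul(M)$ inside $\pi_0 \TMF$ via the unit of $\TMF$, which is the desired conclusion.

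There is essentially no obstacle here, since the corollary is a purely formal consequence of machinery already in place: Proposition \ref{prop_Jac_different_nk}, Remark \ref{rem_triv_Jac}, Claim \ref{claim_Euler}, and the Pontryagin-Thom identification (Fact \ref{fact_PT}). The only point requiring a small verification is that Proposition \ref{prop_Jac_different_nk} continues to make sense at the boundary case $n - 1 = 0$, where $\cG(0) = e$ is the trivial group; this amounts to checking that the general construction of Section \ref{subsec_construction} applies with a trivial choice of $G$, which is precisely what Remark \ref{rem_triv_Jac} records.
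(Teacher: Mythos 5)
Your proposal is correct and follows the same plan as the paper: specialize Proposition \ref{prop_Jac_different_nk} to $n=1$, identify $\Jac_{G(0)_k}$ as the unit composite via Remark \ref{rem_triv_Jac}, and then evaluate the left vertical arrow geometrically. One small difference worth noting: for the geometric statement, the paper's proof cites Claim \ref{claim_lf_vs_euler} (the Landweber--Novikov comparison), whereas you cite Claim \ref{claim_Euler}, which states directly that $\chi(V_{\cK(i)})\cdot$ carries a \emph{strict} tangential $\cK(i)$-manifold class to its Euler number in $\Omega_0^{\fr}\simeq\Z$. Your citation is in fact the more direct route — Claim \ref{claim_Euler} is exactly what is needed after post-composition with $u_*\colon\pi_0 S\to\pi_0\TMF$, while Claim \ref{claim_lf_vs_euler} gives the same conclusion only after passing through the Landweber--Novikov identification and the known formula for $e_i$ — and I would regard it as the cleaner reference. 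Your concluding remark that the $n-1=0$ boundary case is exactly what Remark \ref{rem_triv_Jac} validates is a useful explicitness that the paper leaves implicit.
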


\begin{rem}\label{rem_strict_Jac}
	As in Remark \ref{rem_strict_Euler}, the strictness assumption in the second statement is essential. 
\end{rem}

\begin{proof}[Proof of Corollary \ref{cor_Jac_res_Euler} admitting Proposition \ref{prop_Jac_different_nk}]
	The first claim follows from the $n=1$ case of Proposition \ref{prop_Jac_different_nk}, by noting that $\Jac_{G(0)_k}$ is the unit map. 
	The second claim follows from Claim \ref{claim_lf_vs_euler}. 
\end{proof}

The rest of this subsection is devoted to proving Proposition \ref{prop_Jac_different_nk}. 
It is in fact an easy corollary of the following proposition, which we also use in Section \ref{sec_duality} on the level-rank duality. 
\begin{prop}\label{prop_F_different_nk}
	Let $(\cG, \cH)$ be one of $(U, SU)$, $ (Sp, Sp)$ \colorbox{conjcolor}{, $ (O, Spin)$}. Consider the following diagram in $\Mod_\TMF$. 
	\begin{align}\label{diag_different_nk}
		\xymatrix@C=5em{
			&\TMF \ar[ld]_-{\mathcal{F}_{\cG(n-1)_k}} \ar[d]^-{\mathcal{F}_{\cG(n)_k}} \ar[rd]^-{\mathcal{F}_{\cG(n)_{k-1}}} & \\
			\TMF[(n-1)\overline{V}_{\cH(k)}-Nk]^{\cH(k)} \ar[r]^-{\chi(V_{\cH(k)})} \ar@{}[d]|{\otimes}& \TMF[n\overline{V}_{\cH(k)}]^{\cH(k)}\ar[r]^-{\res_{\cH(k)}^{\cH(k-1)}} \ar@{}[d]|{\otimes}&
			\TMF[n\overline{V}_{\cH(k-1)}]^{\cH(k-1)}\ar@{}[d]|{\otimes} \\
			\TMF[kV_{\cG(n-1)}+Nk]^{\cG(n-1)} & 
			\TMF[kV_{\cG(n)}]^{\cG(n)} \ar[l]^-{\res_{\cG(n)}^{\cG(n-1)}}&
			\TMF[(k-1)V_{\cG(n)}]^{\cG(n)} \ar[l]^-{\chi(V_{\cG(n)})}
		}
	\end{align}
	Here $N=2,  4, \highlight{1}$ for $(\cG, \cH) = (U, SU), (Sp, Sp), \highlight{(O, Spin)}$, respectively. 
	Then, the left and the right halves of the diagram \eqref{diag_different_nk} are {\it compatible}, in the sense of Section \ref{subsec_notations} \eqref{prelim_compatible}. 
	Equivalently, the following diagram commutes. 
	\begin{align}\label{diag_F'_different_nk}
		\xymatrix{
			\TMF[-n\overline{V}_{\cH(k-1)} - \Ad(\cH(k-1))]^{\cH(k-1)} \ar[d]^-{\tr_{\cH(k-1)}^{\cH(k)}}_-{\eqref{eq_tr_cofib}} \ar[rr]^-{\mathcal{F}'_{\cG(n)_{k-1}}}&& \TMF[(k-1)V_{\cG(n)}]^{\cG(n)} \ar[d]^-{\chi(V_{\cG(n)}) \cdot}\\
			\TMF[-n\overline{V}_{\cH(k)} - \Ad(\cH(k))]^{\cH(k)} \ar[rr]^-{\mathcal{F}'_{\cG(n)_k}}\ar[d]^-{\chi(V_{\cH(k)}) \cdot}&& \TMF[kV_{\cG(n)}]^{\cG(n)} \ar[d]^-{\res_{\cG(n)}^{\cG(n-1)}} \\
			\TMF[-(n-1)\overline{V}_{\cH(k)} +Nk- \Ad(\cH(k))]^{\cH(k)} \ar[rr]^-{\mathcal{F}'_{\cG(n-1)_k}} && \TMF[kV_{\cG(n-1)}+Nk]^{\cG(n-1)}
		}
	\end{align}
\end{prop}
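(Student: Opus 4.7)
The plan is to reduce both compatibility statements to the naturality of the equivariant sigma orientation, viewed as a natural isomorphism of functors on $\int_\cptLie \RString^{d=0}$ (Definition \ref{def_sigma_cat} and Fact \ref{fact_sigma}), combined with an elementary representation-theoretic decomposition of $V_\phi$ under a subgroup inclusion. Recall from Definition \ref{def_FD} that $\mathcal{F}_{\cG(n)_k} = \sigma(\Theta_{\cG(n)_k}, \fraks_{n,k}) \circ \chi(V_\phi^{(n,k)})$, where $V_\phi^{(n,k)} = V_{\cG(n)} \otimes_{\mathbb{F}} V_{\cH(k)}^{(*)}$ and $\Theta_{\cG(n)_k} = \overline{V}_{\cG(n)} \otimes_{\mathbb{F}} \overline{V}_{\cH(k)}$, with $\mathbb{F} = \C, \bH, \R$ in the three cases (and the quaternionic dual appearing only in $(Sp, Sp)$).

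For the right-hand square, I would consider the subgroup inclusion $\iota \colon \cG(n) \times \cH(k-1) \hookrightarrow \cG(n) \times \cH(k)$. Using $\res V_{\cH(k)} \simeq V_{\cH(k-1)} \oplus \underline{\mathbb{F}}$, a direct computation gives both
\[\res_\iota V_\phi^{(n,k)} \simeq V_\phi^{(n,k-1)} \oplus V_{\cG(n)}, \qquad \res_\iota \Theta_{\cG(n)_k} \simeq \Theta_{\cG(n)_{k-1}}.\]
The first isomorphism yields $\res_\iota \chi(V_\phi^{(n,k)}) = \chi(V_\phi^{(n,k-1)}) \cdot \chi(V_{\cG(n)})$, while the second, together with the uniqueness up to homotopy of the string structure in Propositions \ref{prop_key_U(n)}, \ref{prop_key_Sp(n)}, \ref{prop_key_spin}, gives $\res_\iota \fraks_{n,k} \simeq \fraks_{n,k-1}$. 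Naturality of $\widetilde{\sigma}$ along $\iota$ then identifies $\res_\iota \sigma(\Theta_{\cG(n)_k}, \fraks_{n,k})$ with $\sigma(\Theta_{\cG(n)_{k-1}}, \fraks_{n,k-1})$ acting trivially on the supplementary $V_{\cG(n)}$ summand, and assembling the pieces yields the right-half compatibility.

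The left-hand square goes through by the same pattern, now with the inclusion $\cG(n-1) \times \cH(k) \hookrightarrow \cG(n) \times \cH(k)$; the relevant decomposition is $\res V_\phi^{(n,k)} \simeq V_\phi^{(n-1,k)} \oplus V_{\cH(k)}^{(*)}$, and in the $(Sp,Sp)$ case one additionally invokes the orthogonal equivalence $V_{Sp(k)}^* \simeq V_{Sp(k)}$ in $\RO(Sp(k))$. Finally, the equivalence with the commutativity of \eqref{diag_F'_different_nk} is formal: dualizing the $\cH$-factor via \eqref{eq_twisted_dual} converts post-composition with $\res_{\cH(k)}^{\cH(k-1)}$ into pre-composition with its dual, which is identified with the transfer \eqref{eq_tr_cofib} by Proposition \ref{prop_selfdual_stabres}. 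The main technical point I anticipate is verifying that the uniqueness-up-to-homotopy of the string structures promotes to the higher coherence needed for the natural transformation $\widetilde{\sigma}$ to apply, but this is ensured by the vanishing of the relevant $H^3$ and $H^5$ of $B\cG(m) \times B\cH(\ell)$ invoked inside those propositions, so I expect no genuine obstruction.
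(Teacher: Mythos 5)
Your proof is correct and follows essentially the same route as the paper's: both hinge on the representation-theoretic decomposition $\res V_{\cH(k)} \simeq V_{\cH(k-1)} \oplus \underline{\mathbb{F}}$ (hence $\res_\iota V_\phi^{(n,k)} \simeq V_\phi^{(n,k-1)} \oplus V_{\cG(n)}$ and $\res_\iota \Theta_{\cG(n)_k} \simeq \Theta_{\cG(n)_{k-1}}$), the resulting factorization of the Euler class, and the functoriality/uniqueness of the equivariant sigma orientation. The paper presents this as a single big commuting diagram and asserts commutativity "by definition of all the morphisms," whereas you make the appeal to the naturality axiom of $\widetilde{\sigma}$ and the uniqueness of string structures more explicit — a mild gain in transparency but not a different argument.
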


\begin{proof}[Proof of Proposition \ref{prop_F_different_nk}]
	We focus on proving the compatibility in the left half of diagram \ref{diag_different_nk}, since the other half is proven in the exact same way. 
	Recall that the right half of diagram \ref{diag_different_nk} is in more detail written as
	\begin{align}
		\xymatrix{
			\TMF \ar[d]_-{\chi(V_{\cH(k)} \otimes_{\mathbb{K}} V^{(*)}_{\cG(n)})}  \ar[rd]^-{\chi(V_{\cH(k-1)} \otimes_{\mathbb{K}} V^{(*)}_{\cG(n)})} 
			\\
			\TMF[V_{\cH(k)} \otimes_{\mathbb{K}} V^{(*)}_{\cG(n)}]^{\cH(k) \times \cG(n)} \ar[d]^-{\simeq}_-{\sigma(\Theta_{k,n}, \fraks)}&
			\TMF[V_{\cH(k-1)} \otimes_{\mathbb{K}} V^{(*)}_{\cG(n)}]^{\cH(k-1) \times \cG(n)} \ar[d]^-{\simeq}_-{\sigma(\Theta_{k-1,n}, \fraks)}
			\\
			\TMF[n\overline{V}_{\cH(k)}]^{\cH(k)}\ar[r]^-{\res_{\cH(k)}^{\cH(k-1)}} \ar@{}[d]|{\otimes}&
			\TMF[n\overline{V}_{\cH(k-1)}]^{\cH(k-1)}\ar@{}[d]|{\otimes} \\ 
			\TMF[kV_{\cG(n)}]^{\cG(n)} &
			\TMF[(k-1)V_{\cG(n)}]^{\cG(n)} \ar[l]^{\chi(V_{\cG(n)}) \cdot}
		}
	\end{align}
	Here we set $\mathbb{K} := \C, \bH, \highlight{\R}$ for $(\cG, \cH) = (U, SU), (Sp, Sp) \highlight{, (O, Spin)}$, respectively, and we need ``$*$'' in the second row only for the case $(\cG, \cH) =(Sp, Sp)$ (see \eqref{def_dataSp}). 
	The compatibility we need to prove is the commutativity of the square corresponding to \eqref{diag_compatible_square}. This case we need to compare two morphisms $\TMF \to \TMF[n\overline{V}_{\cH(k-1)}]^{\cH(k-1)} \otimes_\TMF \TMF[kV_{\cG(n)}]^{\cG(n)} $. 
	To simplify the notation, we denote $V_a := V^{(*)}_{\cG(a)}$ and $V'_{b}:= V_{\cH(b)}$, $\cG_a := \cG(a)$, $\cH_b := \cH(b)$ and $\res_k^{k-1} :=\res_{\cH(k)}^{\cH(k-1)}$ below (only in this proof). We consider the diagram
	\begin{align}\label{diag_big}
		\scalebox{0.85}{
		\xymatrix{
			&\TMF \ar[ld]_-{\chi(V'_{k} \otimes_\mathbb{K} V_{n})\cdot} \ar[rd]^-{ \chi(V'_{k-1} \otimes_\mathbb{K} V_{n})\cdot} & \\
			\TMF[V'_{k} \otimes_\mathbb{K} V_{n}]^{\cH_k \times \cG_n} \ar[d]^-{\simeq}_-{\sigma(\Theta_{k,n}, \fraks)} \ar[r]_-{\res_{k}^{k-1}}&
			\TMF[(V'_{k-1} \oplus \underline{\mathbb{K}}) \otimes_\mathbb{K} V_n]^{\cH_{k-1} \times \cG_n} \ar[d]^-{\simeq}_-{\sigma(\Theta_{k-1,n}, \fraks)}&
			\TMF[V'_{k-1} \otimes_\mathbb{K} V_{n}]^{\cH_{k-1} \times \cG_n} \ar[d]^-{\simeq}_-{\sigma(\Theta_{k-1,n}, \fraks)} \ar[l]^-{\cdot \chi(V_{n})}\\
			\TMF[n\overline{V'_k} \oplus kV_{n}]^{\cH_k \times \cG_n} \ar[r]_-{\res_k^{k-1}} &
			\TMF[n\overline{V'_{k-1}} \oplus kV_{n}]^{\cH_{k-1} \times \cG_n}
			&\TMF[n\overline{V'_{k-1}} \oplus (k-1)V_{n}]^{\cH_{k-1} \times \cG_n} \ar[l]^-{\cdot \chi(V_{n})}
		}
		}
	\end{align}
	By definition of all the morphisms in the diagram \eqref{diag_big}, it is easy to verify that the diagram commutes. Since what we need to prove is the equivalence of the outer compositions in the diagram \eqref{diag_big}, this completes the proof of Proposition \ref{prop_F_different_nk}. 
\end{proof}

\begin{proof}[Proof of Proposition \ref{prop_Jac_different_nk}]
	The claimed compatibility easily follows from the commutativity of \eqref{diag_F'_different_nk} and the definition of topological elliptic genera. 
\end{proof}

\subsection{The relation with Ando-French-Ganter \cite{Ando_2008}}\label{subsec_vs_JacOri}

In \cite{Ando_2008}, Ando, French and Ganter construct, given any ring spectrum $E$ with a ring homomorphism $s \colon MU\langle 2m + 2 \rangle \to E$ for a positive integer $m$, a morphism
\begin{align}
	\delta s \colon MU \langle 2m \rangle \to \Map(\CP^\infty_{-\infty}, E), 
\end{align}
where $\CP^\infty_{-\infty} := \lim_{k \to \infty} \CP^\infty_{-k}$ with
\begin{align}
	\CP^{\infty}_{-k} := (\CP^{\infty})^{-kV_{U(1)}}.
\end{align}
For their construction, we do NOT need any equivariant structure on $E$. 
Applied to the case of $E = \TMF$ with the sigma orientation $\sigma \colon MU\langle 6 \rangle \to \TMF$, we get 
\begin{align}\label{eq_delta_sigma}
	\sigma^\sharp  \colon MSU \to \Map(\CP^\infty_{-\infty} , \TMF).
\end{align}
It is the universal version\footnote{Precisely speaking, \cite{Ando_2008} specifically treats the case of elliptic spectra associated to an elliptic curve over $\mathrm{Spec}(R)$ with $R$ being an ordinary ring, but we can cirtainly apply their construction to the universal elliptic spectrum $\TMF$. } of what was called the {\it Jacobi orientation} of elliptic cohomology theories in \cite{Ando_2008}. 
In this subsection, we explain that our $U(1)$-topological elliptic genera can be regarded as a {\it genuine} and {\it unstable} version of \eqref{eq_delta_sigma} (Proposition \ref{prop_JacOri_vs_TEG}).

In general, for a genuine $G$-equivariant spectrum $E$, we have the commutative diagram in $\Spectra$, which is a version of {\it Tate square} \cite[Section~17.3.5.4]{hill2020equivariant},
\begin{align}\label{diag_equiv_htpy_U(1)}
	\xymatrix{
		E[-\Ad(G)]_{hG} \simeq \left( EG_+ \wedge E \right)^{G} \ar[rr]^-{\rm Nm} \ar@{=}[d]&& E^{G} \ar[d]^-{\zeta} \ar[rr] && \left(  \widetilde{EG}  \wedge E\right)^{G}\ar[d]^-{\rho}\\
	E[-\Ad(G)]_{hG} \simeq \left(  EG_+ \wedge F(EG_+, E) \right)^{G}\ar[rr]^-{\rm Nm}  && E^{hG} \simeq\left( F(EG_+, E)\right)^{G}  \ar[rr] && E^{tG}
	}
\end{align}
where the rows are fiber sequences (isotropy separation sequence and the norm cofiber sequence). 
Here $E^{G}$, $E^{hG}$ and $E^{t G}$ denote the genuine, homotopy and Tate fixed point spectra, respectively, and $\widetilde{EG}:= \cofib(EG_+ \to S^0)$. The middle and right vertical arrows are understood as the generalized Atiyah-Segal completion maps. 
In the case of $E = \TMF \in \Spectra^{U(1)}$, since $U(1)$ acts trivially on the underlying spectrum, we have
$\TMF_{hU(1)} \simeq \TMF \otimes \CP^\infty_+$, $\TMF^{hU(1)} \simeq \lMap(\CP_+^\infty, \TMF)$ and 
$\TMF^{tU(1)} \simeq \lMap(\CP^\infty_{-\infty}, \TMF)$. 
Moreover, the Norm map in the upper row is given by taking the colimit $k \to \infty$ (with respect to the stabilization sequence \eqref{eq_building_seq_TJF}) of the first arrow in the fiber sequence \eqref{eq_prf_cellTJF}, 
\begin{align}
		\TMF \otimes \CP^{k-1}_+[-1] \to \TMF^{U(1)} \to \TJF_k . 
\end{align} 
This means that we have
\begin{align}
	(\TMF \wedge \widetilde{EU(1)})^{U(1)} \simeq \TJF_\infty \stackrel{\eqref{eq_TJFinfty}}{:=}\colim_k \left(\cdots \xrightarrow{\stab} \TJF_k \xrightarrow{\stab} \TJF_{k+1} \xrightarrow{\stab} \cdots \right) , 
\end{align}
and the diagram \eqref{diag_equiv_htpy_U(1)} is identified as
\begin{align}\label{diag_geom_tate_TMF}
		\xymatrix{
		\TMF \otimes \CP^\infty_+[-1] \ar[rr]^-{\rm Nm} \ar@{=}[d]&& \TMF^{U(1)} \ar[d]^-{\zeta} \ar[rr] && \TJF_\infty \ar[d]^-{\rho}\\
	\TMF \otimes \CP^\infty_+[-1] \ar[rr]^-{\rm Nm}  && \lMap(\CP^\infty_+, \TMF)  \ar[rr] && \lMap(\CP^\infty_{-\infty}, \TMF)
	}
\end{align}

Now we can state the relation between our topological elliptic genera and Ando-French-Gepner's Jacobi orientation. Recall that our $U(1)$-topological elliptic genera stabilize to give \eqref{eq_JacU(1)_infty}
\begin{align}
\Jac_{U(1)_\infty} \colon MSU \to \TJF_\infty
\end{align}

\begin{prop}\label{prop_JacOri_vs_TEG}
	The Jacobi orientation $\delta \sigma$ in \eqref{eq_delta_sigma} factors as
	\begin{align}
		\sigma^\sharp = \rho \circ \Jac_{U(1)_\infty}  \colon MSU \xrightarrow[\eqref{eq_JacU(1)_infty}]{\Jac_{U(1)_\infty}} \TJF_\infty \xrightarrow[\eqref{diag_geom_tate_TMF}]{\rho} \lMap(\CP^\infty_{-\infty}, \TMF). 
	\end{align}
\end{prop}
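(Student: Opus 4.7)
The plan is to reduce the statement to the compatibility of our genuinely $U(1)$-equivariant constructions with the underlying non-equivariant sigma orientation, after passage to the geometric-to-Tate fixed point comparison $\rho$.

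First I would use the alternative description of $\Jac_{U(1)_k}$ supplied by Proposition \ref{prop_alternative_Jac}, which expresses the topological elliptic genus using only $U(1)$-equivariance (and not $SU(k)$-equivariance) as the genuine $U(1)$-fixed points of a morphism built from the Pontryagin-Thom collapse, the Euler class $\chi(V_\phi)$, and the $U(1)$-equivariant sigma orientation $\sigma(\Theta, \fraks)$ with $\Theta = \overline{V}_{U(1)} \otimes_\C \overline{V}_{SU(k)}$ regarded as a $U(1)$-equivariant virtual bundle over $BSU(k)$. Taking the colimit $k \to \infty$ and using $\TJF_\infty \simeq \TMF^{\Phi U(1)}$, this realizes $\Jac_{U(1)_\infty}$ as a morphism $MSU \to \TMF^{\Phi U(1)}$ whose recipe differs from that of Ando-French-Ganter's $\delta\sigma$ only in that the sigma Thom class is used genuinely $U(1)$-equivariantly rather than non-equivariantly.

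Next I would compose with $\rho$ and track each ingredient. The compatibility of the equivariant sigma orientation with the underlying classical sigma orientation, namely the condition \ref{compatibility_AS_sigma} of Remark \ref{rem_sigma_concrete} encoded by the homotopy-coherent diagram \eqref{diag_big_sigma}, identifies after Atiyah-Segal completion the $U(1)$-equivariant Thom isomorphism $\sigma(\Theta, \fraks)$ with the classical sigma Thom isomorphism of $\Theta$ pulled back to $BU(1) \times BSU$; by naturality the same identification persists after the further localization $\TMF^{hU(1)} \to \TMF^{tU(1)}$. Under this chain the Euler class $\chi(V_\phi)$ is sent to its Borel-cohomology image and the pushforward $(BSU \to \pt)_*$ matches the one appearing in Ando-French-Ganter, so the composite $\rho \circ \Jac_{U(1)_\infty}$ reproduces term by term the recipe defining $\delta\sigma$.

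The main obstacle is reconciling the stabilization procedures on the two sides: our $\TJF_\infty$ is built as the colimit of $\TJF_k$ along the stabilization maps $\chi(V_{U(1)}) \cdot$ (Example \ref{ex_TJF_building}), whereas the target $\lMap(\CP^\infty_{-\infty}, \TMF)$ is indexed by the inverse limit $\CP^\infty_{-\infty} = \lim_k \CP^\infty_{-k}$. The key input that resolves this is that $\rho$ inverts the Euler class $\chi(V_{U(1)})$, which is precisely what converts a direct colimit along multiplication by $\chi(V_{U(1)})$ into the corresponding inverse limit after dualization; making this limit-colimit comparison compatible with the sigma Thom isomorphisms at each finite stage, using naturality in $k$ from Proposition \ref{prop_Jac_different_nk}, will complete the identification.
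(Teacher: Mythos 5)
Your overall plan is sound and fills in the comparison argument that the paper's one-line proof (a citation to Ando--French--Ganter, Section 8) leaves to the reader. The first two paragraphs are exactly right: using the alternative $U(1)$-equivariant-only description of $\Jac_{U(1)_k}$ from Proposition \ref{prop_alternative_Jac}, taking the colimit in $k$, applying $\rho$, and then invoking compatibility condition (\ref{compatibility_AS_sigma}) of Remark \ref{rem_sigma_concrete} to match the genuinely equivariant sigma Thom isomorphism (after passing to the Borel and then Tate setting) with the classical one used by Ando--French--Ganter reproduces their recipe term by term.

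The third paragraph, however, contains a misconception. The map $\rho \colon \TJF_\infty \simeq \TMF^{\Phi U(1)} \to \TMF^{tU(1)} \simeq \lMap(\CP^\infty_{-\infty}, \TMF)$ does \emph{not} invert the Euler class, and it does not perform a colimit-to-limit conversion through ``dualization.'' The Euler class $\chi(V_{U(1)})$ is already inverted on both sides: geometric and Tate fixed points both arise by smashing with $\widetilde{EU(1)}$. What $\rho$ actually does is complete, being the localization at the Euler class of the Atiyah-Segal completion map $\zeta \colon \TMF^{U(1)} \to \TMF^{hU(1)}$. The apparent ``stabilization mismatch'' you raise --- that $\TJF_\infty$ is a colimit while $\CP^\infty_{-\infty}$ is an inverse limit --- is a false obstacle: the identification $\TMF^{tU(1)} \simeq \lMap(\CP^\infty_{-\infty}, \TMF)$ is a standard fact already packaged into diagram \eqref{diag_geom_tate_TMF}, not something that $\rho$ produces. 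Once this is recognized, your second paragraph already finishes the proof and the third is unnecessary. This is worth correcting but does not threaten the overall argument.
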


\begin{proof}
	This directly follows from comparing our construction and that of \cite{Ando_2008}, especially Section 8 of their paper. 
\end{proof}

\section{The character formula}\label{sec_char_formula}

\noindent Note: The contents of Sections \ref{sec_char_formula}, \ref{sec_duality}, and \ref{sec_application} 
can be read independently of each other, and the reader may find it useful to skip to their section of interest.

In this section, we deduce the integration formula for the composition
\begin{align}
	e \circ \Jac_\cD \colon \pi_\bullet MT(H, \tau_H) \to \pi_\bullet \TMF[\tau_G]^G \xrightarrow{\eqref{eq_character}} \MF[\tau_G]^G|_{\deg  = \bullet}, 
\end{align}
producing $G$-equivariant integral Modular Forms (Definition \ref{def_JF_MFG}). 
We first derive the general formula in Proposition \ref{prop_formula_rational_general}, and specialize to the case of the $U$-and $Sp$-topological elliptic genera in Section \ref{sec_ex} to derive the concrete formula (Proposition \ref{prop_ch_formula_JacU(n)}, Proposition \ref{prop_formula_JacSp_rational}). In this section, we always assume that {\it $G$ and $H$ are connected and $\pi_1 G$ and $\pi_1 H$ are torsion-free}.
\begin{rem}\label{rem_variable}
	The formula we produce in this section is written in terms of functions in variables $z_i$ and $x_j$. In this subsection, we always use the convention that $z_i$ are associated to $G$ and $x_j$ are associated to $H$. They are defined after the choice of the maximal torus, and play the following a priori two distinct roles; 
	\begin{itemize}
		\item variables of equivariant Modular Forms as explained in Definition \ref{def_JF_MFG}, 
		\item generators of the ordinary cohomology ring of the maximal torus. 
	\end{itemize} 
	The two are canonically identified by the map \eqref{eq_CHD_C_U(1)}, but keeping track of the equivalence is essential in the following. 
\end{rem}

Recall that $\Jac_\cD$ is defined using the {\it coevaluation map} in Definition \ref{def_FD}, 
\begin{align}\label{eq_FD_charformula}
	   \mathcal{F}_{\mathcal{D}}  
	\colon \TMF
	\xrightarrow{\chi(V_\phi) \cdot} \TMF[V_\phi]^{G \times H} 
	\stackrel{\sigma(\Theta_\cD, \fraks)}{\simeq}\TMF[\tau_G ]^G \otimes_\TMF \TMF[\tau_H]^H . 
\end{align}
and used the canonical pairing between $\TMF[\tau_H]^{H}$ and $MT(H, \tau_H)$. 
At this point, it is convenient to convert \eqref{eq_FD_charformula} into equivariant Modular Forms. 
For simplicity, let us from now on assume at least one of $\pi_* \TMF_\Q[\tau_G]^G$ or $\pi_* \TMF_\Q[\tau_H]^H$ are flat over $\pi_* \TMF_\Q = \MF_\Q$\footnote{The formula in this section still works without this assumption. This assumption is satisfied for all the cases of the groups of our interest as we remarked in Remark \ref{rem_tensor_flat}.}
We have (here we denote $\MF_\Q[V]^G := \MF[V]^G \otimes \Q$ and so on)
\begin{align}
	\ch(\cF_\cD) \colon \MF \xrightarrow{\Phi_{V_{\phi}} \cdot }  \MF_\Q[V_\phi]^{G \times H} &=\MF_\Q[\res_G^{G \times H}\tau_G + \res_H^{G \times H} \tau_H]^{G \times H} \\
&= \MF_\Q[\tau_G ]^G \otimes_{\MF_\Q} \MF_\Q[\tau_H]^H . 
\end{align}
Here, we have used $\ch(\chi(V)) = \Phi_V$ in \eqref{eq_Phi_V=chiV} for the first arrow. For the second, recall that we have defined the ring of equivariant Modular Forms so that we have the {\it equality} between the source and target. The third equality uses 
The second arrow in \eqref{eq_FD_charformula} is converted into this equality because of Fact \ref{fact_sigma} 
\eqref{compatibility_AS_sigma}.

To get the integration formula in terms of characteristic polynomials, we need to translate the $\TMF$-valued characteristic classes to the rational ordinary cohomology. In our case, we denote the Chern-Dold character map for $\TMF$ by
\begin{align}
\CHD \colon \TMF \to H\MF_\Q, 
\end{align}
where $H\MF^\Q$ is the ordinary cohomology theory with coefficients in the $\Z$-graded abelian group $\MF_\Q $. 

Since we are assuming $H$ is connected, the element $\tau_H \in RO(H)$ is equipped with an ($SO$-)orientation $\mathfrak{o}$. 
Then we have the composition
\begin{align}\label{eq_CHD1}
\pi_*	\TMF[\tau_H]^H  \xrightarrow{\CHD} H^{-*}(BH^{-\tau_H}; \MF_\Q) \stackrel{\lambda(\tau_H, \frako)}{\simeq} H^{-*+\dim \tau_H}(BH; \MF_\Q), 
\end{align}
where we denote by $\lambda(\tau_H, \frako)$ the Thom isomorphism in the ordinary cohomology induced by the orientation $\frako$. 
Furthermore, if $H$ is connected, we choose a maximal torus $U(1)^r \simeq T \subset H$ with the Weyl group $W$ to identify
\begin{align}\label{eq_CHD2}
	H^{-*+\dim \tau_H}(BH; \MF_\Q)\simeq H^{-*+\dim \tau_H}(BU(1)^r; \MF_\Q)^W \simeq \left. \left( \MF_\Q[[x_1, x_2, \cdots, x_r]]\right)^W \right|_{\deg = *-\dim \tau_H}, 
\end{align}
where we have used the convention that $x \in H^2(BU(1); \Q)$ denotes the Thom class of the fundamental representation $V_{U(1)}$.
We denote the composition of \eqref{eq_CHD1} and \eqref{eq_CHD2} by
\begin{align}\label{eq_frakK}
\mathfrak{K}_H \colon	\pi_* \TMF[\tau_H]^H \to \left. \left( \MF_\Q[[x_1, x_2, \cdots, x_r]]\right) ^W \right|_{\deg = *-\dim \tau_H}
\end{align}
Since the second arrow in \eqref{eq_CHD1} canonically factors through $\MF[\tau_H]^H$, we can also define 
\begin{align}\label{eq_frK'}
	\mathfrak{K}'_H \colon  \left.\MF[\tau_H]^H \right|_{\deg = *} \to \left. \left( \MF_\Q[[x_1, x_2, \cdots, x_r]]\right) ^W \right|_{\deg = *-\dim \tau_H}
\end{align}
so that we have $\frK_H = \frK'_H \circ \ch$. 
Now we can state the general characteristic class formula. 
\begin{prop}[{Characteristic class formula for $\ch \circ \Jac_\cD$}]\label{prop_formula_rational_general}
	The characteristic polynomial associated to $\ch \circ \Jac_\cD \colon  \pi_\bullet MT(H, \tau_H) \to \MF[\tau_G]^G|_{\deg = \bullet}$ is given by 
	\begin{align}
	\left( \id_{\MF[\tau_G]^G} \otimes\mathfrak{K}'_H \right) 	\Phi_{V_\phi} \in \MF[\tau_G]^G \otimes \left( \Q[[x_1, \cdots, x_r]]\right)^W, 
	\end{align}
	where we are regarding $\Phi_{V_\phi} $ as an element in $\MF[\tau_G ]^G \otimes_\MF \MF[\tau_H]^H $. 
\end{prop}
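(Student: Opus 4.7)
The plan is to reduce the computation to a direct geometric calculation in ordinary rational cohomology by invoking the alternative/geometric description of $\Jac_\cD$ (the general version of Remark~\ref{rem_geom_JacU(1)} promised in Remark~\ref{rem_alternative_general}). For a class $[M, P, \psi] \in \pi_m MT(H, \tau_H)$, represented by a closed $m$-manifold $M$ with a principal $H$-bundle $P$ and stable identification $\psi \colon TM \simeq P \times_H \tau_H$, this description writes $\Jac_\cD[M, P, \psi]$ as the $G$-equivariant push-forward $(M \to \pt)_*$ applied to the composition
\begin{align}
    S^{\dim \tau_G - m} \xrightarrow{\coll} \Sigma^{\dim\tau_G - m} M^{-TM} \xrightarrow{\psi} M^{-\tau_{H,P}} \xrightarrow{\chi(V_P)} M^{V_P - \tau_{H,P}} \xrightarrow{\sigma(\Theta_P, \fraks_P)} M_+ \otimes S^{\tau_G},
\end{align}
where $V_P := P \times_H V_\phi$, and $\sigma(\Theta_P, \fraks_P)$ is induced from $\sigma(\Theta_\cD, \fraks)$ by pullback along the classifying map $M \to BH$.

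The next step is to apply the Chern-Dold character $\CHD$ and the character map $e$, translating each piece into ordinary rational cohomology. By Remark~\ref{rem_PhiV=chiV}, the $G \times H$-equivariant Euler class $\chi(V_\phi)$ maps to the section $\Phi_{V_\phi}\in\MF[V_\phi]^{G\times H}$. The equivariant sigma orientation $\sigma(\Theta_\cD, \fraks)$ translates at the level of Modular Forms into the canonical identification $\MF[V_\phi]^{G\times H} = \MF[\tau_G]^G \otimes_\MF \MF[\tau_H]^H$ (this is exactly what Fact~\ref{fact_sigma}~\eqref{compatibility_AS_sigma} together with Lemma~\ref{lem_AvsL} guarantees upon passing through the Atiyah-Segal map~$\zeta$). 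Hence the character image of the coevaluation $\cF_\cD$ is simply multiplication by $\Phi_{V_\phi}$, viewed as an element of $\MF[\tau_G]^G \otimes_\MF \MF[\tau_H]^H$.

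Finally, the push-forward $(M \to \pt)_*$ combined with the pullback along $M \to BH$ implementing the $H$-structure factors at the level of ordinary rational cohomology as integration over the fibers of $BH \to \pt$ after the Thom isomorphism for $\tau_H$. Restricting to the maximal torus and taking Weyl invariants, this is precisely the map $\frK'_H$ defined in~\eqref{eq_frK'}. Applying this to the $\MF[\tau_H]^H$-component of $\Phi_{V_\phi}$ yields the desired formula $(\id_{\MF[\tau_G]^G} \otimes \frK'_H)(\Phi_{V_\phi})$.

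The main obstacle will be the verification in the middle step: that under the Chern-Dold character, the genuinely $(G\times H)$-equivariant sigma orientation $\sigma(\Theta_\cD, \fraks)$ corresponds exactly to the algebraic identification $\MF[V_\phi]^{G\times H} = \MF[\tau_G]^G \otimes_\MF \MF[\tau_H]^H$ given by Definition~\ref{def_JF_MFG}. This requires a careful diagram chase through~\eqref{diag_big_sigma}, using the compatibility of $\widetilde\sigma$ with the Atiyah-Segal completion and the Ando-Hopkins-Strickland formula for the classical sigma Thom class, to rule out any ``correction factor'' arising from the equivariance. Once this identification is in place, the remaining steps reduce to the standard translation of $\TMF$-push-forward into the characteristic-class integration~$\frK'_H$.
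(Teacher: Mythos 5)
Your proof is correct and follows essentially the same route as the paper's: the paper's short proof simply unwinds $(\ch \otimes \frK_H)$ applied to the character of the coevaluation $\cF_\cD$, which was computed to be multiplication by $\Phi_{V_\phi}$ in the display just before the proposition, and then converts via \eqref{eq_CHD2}. The \emph{main obstacle} you flag --- that $\sigma(\Theta_\cD,\fraks)$ converts under $\ch$ to the equality $\MF[V_\phi]^{G\times H} = \MF[\tau_G]^G \otimes_\MF \MF[\tau_H]^H$ --- is exactly the content of Fact \ref{fact_sigma} \eqref{compatibility_AS_sigma}, which the paper invokes for precisely this purpose in the paragraph preceding the statement, so there is no gap.
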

\begin{proof}
	A priori, the characteristic class is obtained by the formula $$\left( \ch \otimes \frK_H \right) \circ \eqref{eq_CHD1}. $$
	Converting the equivariant Modular Forms from the beginning and using \eqref{eq_CHD2}, we get the result. 
\end{proof}

Let us work out how $\frK'_H$ looks like. First we work on the most fundamental case where $H=U(1)$ and $\tau_H = nV_{U(1)}$ for an integer $n$. The Thom isomorphism in the ordinary cohomology is identified as follows. 
\begin{align}\label{eq_x^n}
	\xymatrix{
H^*(BU(1)^{-nV_{U(1)}}; \Q) \ar[rr]^-{\lambda(nV_{U(1)}, \frako)} \ar@{=}[d] && H^*(BU; \Q)\ar@{=}[d] \\
x^{-n}\Q[[x]] \ar[rr]^-{x^n \cdot} && \Q[[x]]
}
\end{align}
The Chern-Dold character map is simply taking the formal expansion at the origin of the elliptic curves, 
\begin{align}\label{eq_CHD_C_U(1)}
\CHD_\C \colon \TMF[nV_{U(1)}]^{U(1)} \xrightarrow{} \Gamma(\cE_\C; \mathcal{O}(ne))
\xhookrightarrow{\ev_{\widehat{\cE}_\C}} x^{-n}\MF^{\C}[[x]] = H^*(BU(1)^{-nV_{U(1)}}; \MF_\C). 
\end{align}
where $x$ is regarded as the coordinate of the elliptic curve (which we had been denoted as $z$, but here we intentionally use a different letter). 
On the other hand, if we were to factor through Jacobi Forms, we should include the multiplication by the Theta function
\begin{align}\label{eq_CHDU1}
\CHD_\C \colon	\TMF[nV_{U(1)}]^{U(1)} \xrightarrow{e_\JF} \JF_n  \xrightarrow{a(x, \tau)^{-n}} \Gamma(\cE_\C; \mathcal{O}(ne))
	\xrightarrow{\ev_{\widehat{\cE}_\C}} x^{-n}\MF_{\C}[[x]] , 
\end{align}
by Lemma \ref{lem_AvsL}. Thus, composing the latter two arrows of \eqref{eq_CHDU1} and \eqref{eq_x^n}, the map $\frK'_{U(1)}$ \eqref{eq_frK'}  in this case becomes the composition
\begin{align}
	\frK'_{U(1)} \colon \MF[nV_{U(1)}]^{U(1)} = \JF_n \xrightarrow{\cdot \left( \frac{x}{a(x, \tau)} \right)^n } \MF_\Q[[x]]. 
\end{align}

In the case $H=SU(k)$, we follow the conventional approach that, rather than using the maximal torus of $SU(k)$, first regard $SU(k) \subset U(k)$ and use the maximal torus $U(1)^{k} \hookrightarrow U(k)$ to identify
\begin{align}\label{eq_H*BSU}
	H^*(BSU(k); \Q) =\left(  \Q[[x_1, x_2, \cdots, x_k]]/(x_1 + \cdots + x_k)\right) ^{\Sigma_k}. 
\end{align}
Then the map $\frK'_{SU(k)}$ for $\tau_H = nV_{SU(k)}$ becomes (see Example \ref{ex_SU(n)_JF})
\begin{align}
&	\frK'_{SU(k)} =\cdot \prod_{1 \le j \le k}  \left( \frac{x_j}{a(x_j, \tau)}\right) ^{k} \\
	& \quad \colon \MF[nV_{SU(k)}]^{SU(k)} \xrightarrow{\otimes \Q}\left( \frac{\bigotimes^{\MF_\Q}_{1 \le j \le k} \JF_n \otimes \Q}{(x_1+x_2+\cdots +x_k)} \right)^{\Sigma_k} 
	\to	\left(\frac{ \MF_\Q[[x_1, \cdots, x_k]]}{(x_1 + x_2 + \cdots +x_k)}\right)^{\Sigma_k}
\end{align}
where the tensor product is formed over the graded ring $\MF$.

\begin{prop}[{The formula for the characteristic polynomial of $\ch \circ \Jac_{SU(k)_n}$}]\label{prop_ch_formula_JacU(n)}
The characteristic polynomial $$\mathcal{K}_{U(n)_k} \in H^*(BSU(k); \Q) \otimes \MF[kV_{U(n)}]^{U(n)}$$ of the composition $\ch \circ \Jac_{U(n)_k} \colon \pi_\bullet MT(SU(k), n\overline{V}_{SU(k)})\to \MF[kV_{U(n)}]^{U(n)}|_{\deg = \bullet}$ is given by the formula
\begin{align}
\mathcal{K}_{U(n)_k} \left( \{x_j\}_{1 \le j \le k}\right) & := \prod_{1 \le i \le n, \ 1 \le j \le k}\frac{x_j\theta(z_i+x_j, q)}{\theta(x_j, q)} \\
&= \left( \prod_{i} e^{z_i/2}\right)^k \cdot
\left( \prod_{j} \frac{x_j }{1-e^{-x_j}}\right)^n 
\prod_{\substack{m \ge 1 ,\ i, j}} \frac{(1-q^{m}e^{z_i+x_j})(1-q^{m-1}e^{-z_i-x_j})}{(1-q^me^{x_j})(1-q^me^{-x_j})}. 
\end{align}
Here, $\{z_i\}_{i}$ are the variables of $U(n)$-equivariant Modular Forms given by the canonical choice of the maximal torus $U(1)^n \hookrightarrow U(n)$, and $\{x_j\}_{j}$ are the variables of $H^*(BSU(k); \Q)$ in \eqref{eq_H*BSU}. 
\end{prop}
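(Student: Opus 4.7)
The plan is to directly apply the general characteristic class formula of Proposition~\ref{prop_formula_rational_general} to the data $\cD = U(n)_k$ and to evaluate the two ingredients explicitly. That proposition gives
\[
\mathcal{K}_{U(n)_k} = (\id_{\MF[kV_{U(n)}]^{U(n)}} \otimes \frK'_{SU(k)})(\Phi_{V_\phi}),
\]
where $V_\phi = V_{U(n)} \otimes_\C V_{SU(k)}$ and $\Phi_{V_\phi}$ is the section of Lemma~\ref{lem_AvsL}, viewed as an element of $\MF[kV_{U(n)}]^{U(n)} \otimes_{\MF} \MF[n\overline{V}_{SU(k)}]^{SU(k)}$ via the sigma identification \eqref{eq_sigma_s}.

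First I would compute $\Phi_{V_\phi}$ by restricting to the maximal torus $U(1)^n \times U(1)^k \hookrightarrow U(n) \times SU(k)$, using the standard diagonal torus of $U(k) \supset SU(k)$. There $V_\phi$ decomposes as $\bigoplus_{i,j} L_{z_i + x_j}$, so the functoriality and monoidal compatibility of $\Phi$ from Lemma~\ref{lem_AvsL}, combined with the base case $\Phi_{V_{U(1)}} = a(z,q)$, yield $\Phi_{V_\phi} = \prod_{i,j} a(z_i + x_j, q)$. Next I would evaluate $\frK'_{SU(k)}$ via the explicit description in \eqref{eq_frK'}: on $\JF_n$ the map $\frK'_{U(1)}$ is multiplication by $(x/a(x,q))^n$, and pulling back through the standard torus of $SU(k)$ (noting $\dim(n\overline{V}_{SU(k)}) = 0$, so no additional Thom shift) gives multiplication by $\prod_j (x_j/a(x_j,q))^n$. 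Applying this yields
\[
\mathcal{K}_{U(n)_k} = \prod_{i,j} a(z_i+x_j,q) \cdot \prod_j \left(\frac{x_j}{a(x_j,q)}\right)^n = \prod_{i,j} \frac{x_j\, a(z_i+x_j, q)}{a(x_j, q)}.
\]

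Writing $a = \theta_{11}/\eta^3$ cancels all $\eta^3$ factors in each ratio, giving the first form of the claimed formula with $\theta = \theta_{11}$. Substituting the product expansion of $\theta_{11}$ and regrouping terms---the $m=0$ factor of $(1-q^m e^{-(z_i+x_j)})$ combining with the $(e^{\pi i x_j} - e^{-\pi i x_j})$ factor from $a(x_j,q)$ in the denominator to form the Todd-like $x_j/(1-e^{-x_j})$, and the residual half-integer-weight contributions from the numerators assembling into the prefactor $(\prod_i e^{z_i/2})^k$---then produces the second line of the statement.

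The main obstacle will be pure bookkeeping: tracking the additive/multiplicative variable conventions ($y \leftrightarrow e^{z}$ versus $e^{2\pi i z}$) and confirming that the shifts from the trivial summands in $\overline{V}_{SU(k)} = V_{SU(k)} - k\underline{\C}$ together with the Thom-isomorphism normalization cancel cleanly against the pole/zero orders of $a$ at the origin. The conceptual content is entirely forced by Lemma~\ref{lem_AvsL} and Proposition~\ref{prop_formula_rational_general}; only the final algebraic simplification into the Todd--theta product form demands careful indexing.
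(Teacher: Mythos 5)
Your proposal is correct and takes essentially the same approach as the paper: apply Proposition~\ref{prop_formula_rational_general}, compute $\Phi_{V_\phi} = \prod_{i,j} a(z_i+x_j,\tau)$ by restriction to the diagonal maximal torus using Lemma~\ref{lem_AvsL}, and compose with $\frK'_{SU(k)} = \prod_j (x_j/a(x_j,\tau))^n$. One small remark: you correctly use the exponent $n$ on the factor $\prod_j (x_j/a(x_j,\tau))^n$ — the paper's printed proof and its displayed formula for $\frK'_{SU(k)}$ both carry a typographical $k$ in that exponent, though the right-hand side of the final identity in the paper's proof forces $n$, agreeing with your computation. Also, your parenthetical ``so no additional Thom shift'' is slightly loose — the Thom isomorphism does contribute the factor $\prod_j x_j^n$ (rather than contributing nothing), and it is the rank being $0$ that makes the \emph{degree} shift trivial — but your final formula absorbs this correctly.
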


\begin{proof}
We have
\begin{align}
	\Phi_{V_{U(n) } \otimes V_{SU(k)}} = \prod_{i,j} a(z_i + x_j) \in \MF[V_{U(n) } \otimes V_{SU(k)}]^{U(n) \times SU(k)}
\end{align}
where we recall that $a(z, \tau) = \Phi_{V_{U(1)}}(z, \tau) = \theta(z, \tau)/\eta(\tau)^3$ \eqref{eq_notation_a}. 
The formula in Proposition \ref{prop_formula_rational_general} becomes
\begin{align}
	\prod_{j}  \left( \frac{x_j}{a(x_j, \tau)}\right) ^{k}   \cdot \prod_{i,j} a(z_i + x_j, \tau) = 
	\prod_{1 \le i \le n, \ 1 \le j \le k}\frac{x_j\theta(z_i+x_j, q)}{\theta(x_j, q)}. 
\end{align}
\end{proof}

By Proposition \ref{prop_ch_formula_JacU(n)} we get the following integration formula for the character of the $U$-topological elliptic genera, 
\begin{cor}[{The integration formula for $\ch \circ \Jac_{U(n)_k}$}]\label{cor_Jac_integration_formula_SU_U}
	For $[M, \psi] \in \Omega_{m}^{(BSU(k), nV_{SU(k)})}$ we have
	\begin{align}\label{eq_integration_formula_Jac_SU_U}
		 \ch \circ \Jac_{U(n)_k}  [M, \psi] 
		=  \left(\prod_{1 \le i \le n}y_i^{\frac12}\right)^k \cdot \int_M \mathrm{Todd}(TM)^n \wedge \Ch\left(\otimes_{1 \le i \le n}\mathbb{TM}_{q, y_i}\right) ,
	\end{align}
	where we used the variable $y_i = e^{2\pi \sqrt{-1} z_i}$, and set (in the formula below all the tensor/exterior products are over $\C$, )
	\begin{align}
		\mathbb{TM}_{q, y} := \bigotimes_{m \ge 0} \wedge_{-q^{m}y^{-1}}T^*M \otimes \bigotimes_{m \ge 1} \wedge_{-q^{m}y} TM \otimes  \bigotimes_{m \ge 1} \mathrm{Sym}_{q^{m}}T^*M \otimes \bigotimes_{m \ge 1} \mathrm{Sym}_{q^{m}} TM . 
	\end{align}
\end{cor}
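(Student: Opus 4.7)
The plan is to deduce Corollary \ref{cor_Jac_integration_formula_SU_U} as an immediate consequence of Proposition \ref{prop_ch_formula_JacU(n)} by identifying the three factors of $\mathcal{K}_{U(n)_k}$ with the weight prefactor, the Todd class, and the Chern character of $\otimes_i \mathbb{TM}_{q,y_i}$, respectively.

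First, I would unpack the meaning of the characteristic polynomial in this situation. An element $[M,\psi] \in \Omega_m^{(BSU(k), n\overline{V}_{SU(k)})}$ is represented by a closed $m$-dimensional manifold $M$ together with a stable isomorphism $TM \oplus \underline{\R}^N \simeq V_P^{\oplus n} \oplus \underline{\R}^L$, where $V_P = P \times_{SU(k)} V_{SU(k)}$ is a rank-$k$ complex bundle with vanishing first Chern class. Writing $x_1,\dots,x_k$ for the Chern roots of $V_P$ (subject to $\sum x_j = 0$), the Chern roots of $TM$ are the $x_j$ each with multiplicity $n$. The evaluation of the Chern–Dold character of $\Jac_{U(n)_k}[M,\psi]$ against the fundamental class of $M$ then reduces, by the naturality of the Thom isomorphism encoded in $\frK'_H$, to
\[
\ch \circ \Jac_{U(n)_k}[M,\psi] \;=\; \int_M \mathcal{K}_{U(n)_k}\bigl(\{x_j\}\bigr).
\]

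Next, I would match each factor of the product presentation of $\mathcal{K}_{U(n)_k}$ given in Proposition \ref{prop_ch_formula_JacU(n)} with the corresponding factor on the right-hand side of \eqref{eq_integration_formula_Jac_SU_U}. The prefactor $\bigl(\prod_i e^{z_i/2}\bigr)^k$ becomes $\bigl(\prod_i y_i^{1/2}\bigr)^k$ under the stated variable convention $y_i = e^{2\pi i z_i}$ (absorbing the standard $2\pi i$ factor). The factor $\bigl(\prod_j x_j/(1-e^{-x_j})\bigr)^n$ is, tautologically, $\Todd(TM)^n$ by the standard Chern-root formula for the Todd class applied to $TM$ whose Chern roots are $x_j$ with multiplicity $n$. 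Finally, the infinite product factor is to be identified with $\Ch\bigl(\otimes_i \mathbb{TM}_{q,y_i}\bigr)$ using the classical formulas $\Ch(\wedge_t E) = \prod_\ell (1 + t e^{\xi_\ell})$ and $\Ch(\mathrm{Sym}_t E) = \prod_\ell (1 - t e^{\xi_\ell})^{-1}$ for Chern roots $\xi_\ell$ of $E$, applied term-wise to $TM$ and $T^*M$ with Chern roots $\pm x_j$ each of multiplicity $n$, and tensored over $i = 1, \dots, n$.

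The main obstacle is bookkeeping rather than conceptual: one must carefully reconcile the normalization conventions relating the analytic theta-function $a(z,\tau) = \theta_{11}(z,\tau)/\eta(\tau)^3$ with its image as a characteristic class under the Chern–Dold character (in particular the conversion $e^{\pi i z} \rightsquigarrow e^{z/2}$ from \eqref{eq_CHDU1}), and one must isolate the $m=0$ contribution of $\wedge_{-q^m y^{-1}}T^*M$ from the $m\ge 1$ factors so that it combines with the Todd factor to produce the $(1 - q^{m-1} e^{-z_i-x_j})$ terms at $m=1$ on the nose. Once these conventions are aligned, the identity
\[
\mathcal{K}_{U(n)_k}\bigl(\{x_j\}\bigr) \;=\; \Bigl(\prod_i y_i^{1/2}\Bigr)^k \cdot \Todd(TM)^n \cdot \Ch\bigl(\otimes_i \mathbb{TM}_{q,y_i}\bigr)
\]
holds as an equality of formal power series in $\MF^\Q[[x_1,\dots,x_k]][[q]]((y_1,\dots,y_n))$, and Corollary \ref{cor_Jac_integration_formula_SU_U} follows by integrating both sides over $M$.
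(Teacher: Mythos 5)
Your overall plan is sound, and it is exactly the reduction the paper leaves implicit: Corollary~\ref{cor_Jac_integration_formula_SU_U} is stated as a direct consequence of Proposition~\ref{prop_ch_formula_JacU(n)} without further proof, and the intended argument is precisely the factor-by-factor matching you describe, including the recognition that the $m=0$ term of $\bigotimes_{m\geq 0}\wedge_{-q^m y^{-1}}T^*M$ absorbs the ``numerator'' of the Todd class to produce the $(1-q^{m-1}e^{-z_i-x_j})$ strand at $m=1$. Your handling of the $2\pi i$ / $e^{\pi i z}\leadsto y^{1/2}$ normalization is also right.

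However, there is an internal inconsistency in your Todd-factor identification, and it matters for $n\geq 2$. You correctly observe that a class $[M,\psi]\in\Omega^{(BSU(k),\,n\overline{V}_{SU(k)})}_m$ has $TM$ stably isomorphic to $V_P^{\oplus n}$, so that ``the Chern roots of $TM$ are the $x_j$ each with multiplicity $n$.'' But with that multiplicity, the Chern-root formula gives
\[
\Todd(TM)\;=\;\prod_{j}\Bigl(\frac{x_j}{1-e^{-x_j}}\Bigr)^{n}\;=\;\Bigl(\prod_{j}\frac{x_j}{1-e^{-x_j}}\Bigr)^{n},
\]
so the second factor of $\mathcal{K}_{U(n)_k}$ is $\Todd(TM)$, \emph{not} $\Todd(TM)^n$; your claim that it is ``tautologically $\Todd(TM)^n$'' is false under your own multiplicity count. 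The same off-by-$n$ issue would infect each strand of $\Ch(\mathbb{TM}_{q,y_i})$ if $\mathbb{TM}$ were built from the actual tangent bundle: every infinite-product factor would acquire an unwanted $n$-th power, and $\prod_i\Ch(\mathbb{TM}_{q,y_i})$ would overshoot the Proposition by a global $n$-th power. The consistent way to read the statement of the Corollary is that the symbol $TM$ there abbreviates the rank-$k$ complex bundle $V_P$ (as in the classical $n=1$ formula \eqref{eq_char_Jac_clas}, where $TM$ and $V_P$ literally coincide for a strict $SU(k)$-manifold); under that reading $\Todd(TM)^n=\bigl(\prod_j x_j/(1-e^{-x_j})\bigr)^n$ and the multiplicities in the Chern characters are $1$, so the counts match. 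Your proof should state this explicitly rather than asserting an identification that your own multiplicity computation contradicts.
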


When $n=1$, this specializes to the formula \eqref{eq_intro_Jac_clas} for the classical elliptic genera $\Jac_{\clas}$ of tangential $SU(k)$-manifolds. 

\begin{rem}[Comparison with other literatures]\label{ref_comparizon_ell}
	In some literatures including \cite{Ando_2008} and \cite{totaro2000}, the elliptic genus for a tangential $SU(k)$-manifold $M$ is defined to be 
	\begin{align}
		a^{-k} \cdot \Jac_{\clas}(M) \in \Gamma(\cE_\C; \cO(ke) \otimes \omega^\bullet). 
	\end{align}
	This is because they define elliptic genera as a map from the stable $SU$-bordism group. See Section \ref{subsec_vs_JacOri}. 
\end{rem}

The character formula for the $Sp$-topological elliptic genus directly folllows from the above result on the $U$-topological elliptic genus. This is because the map of equivariant Modular Forms,
\begin{align}\label{eq_MFSP_MFU}
{\res_{Sp(n)}^{U(n)}} \colon	\MF[kV_{Sp(n)}]^{Sp(n) } \to \MF[2kV_{U(n)}]^{U(n)}
\end{align}
is an injection, as we have seen in Example \ref{ex_Sp(n)}: the $Sp(n)$-equivariant Modular Forms are the $U(n)$-equivariant Modular Forms even in all the variables $z_i$.
By the above injectivity and the functoriality of the topological elliptic genera with respect to the external structure map, as in Proposition \ref{prop_Jac_compatibility_external}, we see that the composition
\begin{align}\label{eq_Sp_Jac_ch}
	\ch \circ \Jac_{Sp(n)_k} \colon MT(Sp(k), n\overline{V}_{Sp(k)}) \to \MF[kV_{Sp(n)}]^{Sp(n)}
\end{align}
is simply given by retaining the $SU(2k)$-structure underlying the $Sp(k)$-structure and applying the formula we have obtained for the $U$-topological elliptic genus. 
Thus we get the following. 

\begin{prop}[The formulas for $\ch \circ \Jac_{Sp(n)_k}$]\label{prop_formula_JacSp_rational}
	The restriction along $Sp(k) \hookrightarrow SU(2k)$ of the element $\mathcal{K}_{U(2n)_k}$ in Proposition \ref{prop_ch_formula_JacU(n)} is contained in $H^*(BSp(k); \Q) \otimes \MF[kV_{Sp(n)}]$, which gives the characteristic polynomial of the composition \eqref{eq_Sp_Jac_ch}, 
	\begin{align}
		\mathcal{K}_{Sp(n)_k} = \res_{SU(2k)}^{Sp(k)} \mathcal{K}_{U(n)_k} \in H^*(BSp(k); \Q) \otimes \MF[kV_{Sp(n)}]^{Sp(n)} \subset H^*(BSU(2k); \Q) \otimes \MF[2kV_{U(n)}]^{U(n)}
	\end{align}
	Here, we have used the injectivity of \eqref{eq_MFSP_MFU}. 
	In particular, for $[M, \psi] \in \Omega_m^{(BSp(k), nV_{Sp(k)})}$, the integration formula for $\ch \circ \Jac_{Sp(n)_k}[M, \psi]$ is simply given by retaining the $SU(2k)$-structure underlying the $Sp(k)$-structure and applying the formula \eqref{eq_integration_formula_Jac_SU_U}. 
\end{prop}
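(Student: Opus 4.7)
The plan is to deduce this proposition as a direct consequence of three ingredients already established in the paper: the external compatibility of topological elliptic genera (Proposition \ref{prop_Jac_compatibility_external}), the injectivity of the restriction map $\res_{Sp(n)}^{U(n)} \colon \MF[kV_{Sp(n)}]^{Sp(n)} \to \MF[2kV_{U(n)}]^{U(n)}$ noted in \eqref{eq_MFSP_MFU}, and the explicit formula for $\mathcal{K}_{U(n)_{2k}}$ from Proposition \ref{prop_ch_formula_JacU(n)}.

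First, I would combine Proposition \ref{prop_Jac_compatibility_external} with the naturality of the Chern-Dold character and the $G$-equivariant character map $e$ from \eqref{eq_character} to obtain the commutative diagram
\begin{align}
\xymatrix{
\pi_m MT(Sp(k), n\overline{V}_{Sp(k)}) \ar[d] \ar[r]^-{\ch \circ \Jac_{Sp(n)_k}} & \MF[kV_{Sp(n)}]^{Sp(n)}|_{\deg=m} \ar@{^{(}->}[d]^-{\res_{Sp(n)}^{U(n)}} \\
\pi_m MT(SU(2k), n\overline{V}_{SU(2k)}) \ar[r]^-{\ch \circ \Jac_{U(n)_{2k}}} & \MF[2kV_{U(n)}]^{U(n)}|_{\deg=m}.
}
\end{align}
Since the right vertical arrow is injective (as recalled in Example \ref{ex_Sp(n)}), $\ch \circ \Jac_{Sp(n)_k}$ is uniquely determined by its image in $\MF[2kV_{U(n)}]^{U(n)}$. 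Hence it suffices to show that the restriction of $\mathcal{K}_{U(n)_{2k}}$ along $BSp(k) \hookrightarrow BSU(2k)$ lies in the image of $\res_{Sp(n)}^{U(n)}$, and then to identify this preimage with $\mathcal{K}_{Sp(n)_k}$.

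Next, I would trace the effect of $BSp(k) \hookrightarrow BSU(2k)$ at the level of maximal tori. The standard diagonal maximal torus $U(1)^k \hookrightarrow Sp(k)$ of \eqref{def_dataSp} sits inside the maximal torus $U(1)^{2k} \hookrightarrow SU(2k)$ in such a way that the generators $(x_1, \ldots, x_{2k})$ of $H^*(BU(1)^{2k}; \Q)$ pull back to $(x_1, -x_1, x_2, -x_2, \ldots, x_k, -x_k)$ (modulo the relation $\sum x_j = 0$, which is automatic here). Substituting into the formula of Proposition \ref{prop_ch_formula_JacU(n)} and using $\theta(-x, q) = -\theta(x, q)$, I get
\begin{align}
\res^{Sp(k)}_{SU(2k)} \mathcal{K}_{U(n)_{2k}} = \prod_{\substack{1\le i\le n \\ 1\le j\le k}} \frac{x_j^{\,2}\, \theta(z_i+x_j, q)\,\theta(z_i-x_j, q)}{\theta(x_j, q)^2},
\end{align}
which is manifestly even in each $z_i$. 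By the description in Example \ref{ex_Sp(n)} of $Sp(n)$-equivariant Modular Forms as the $z_i$-even elements of the $U(n)$-equivariant ones, this expression lies in the image of $\res_{Sp(n)}^{U(n)} \otimes \id_{H^*(BSp(k);\Q)}$, and we may legitimately call the resulting element $\mathcal{K}_{Sp(n)_k}$.

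The main conceptual obstacle is not a calculation but keeping straight the two distinct roles played by the variables (Remark \ref{rem_variable}): $z_i$ as variables on the target elliptic moduli stack $\moduli_\C^{U(n)}$, and $x_j$ as degree-two generators of $H^*(BU(1)^k; \Q)$. The identification $BSp(k) \hookrightarrow BSU(2k)$ sends $x_j$ to $(x_j, -x_j)$, but does nothing to $z_i$; invariance under $W(Sp(n)) = (\Z/2)^n \rtimes \Sigma_n$ in the $z_i$ enters only through the target, and must be verified \emph{after} the restriction in the $x_j$. The displayed formula above makes both compatibilities transparent simultaneously, and the second sentence of the proposition—the integration formula—follows immediately from Corollary \ref{cor_Jac_integration_formula_SU_U} applied to the underlying $SU(2k)$-structure of $(M, \psi)$, combined once more with the injectivity of $\res_{Sp(n)}^{U(n)}$.
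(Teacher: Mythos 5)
Your proof is correct and follows the same strategy as the paper: combine the external compatibility diagram of Proposition \ref{prop_Jac_compatibility_external} with the injectivity of \eqref{eq_MFSP_MFU} to reduce everything to the $U(n)_{2k}$ character formula. Your explicit $\theta$-function computation showing that $\res_{SU(2k)}^{Sp(k)} \mathcal{K}_{U(n)_{2k}}$ is even in each $z_i$ (and invariant under $x_j \mapsto -x_j$) is a welcome concrete verification of a fact the paper leaves implicit, namely that the restricted polynomial automatically lies in the subspace $H^*(BSp(k);\Q)\otimes\MF[kV_{Sp(n)}]^{Sp(n)}$ as forced by the commutativity of the diagram.
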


\section{Level-rank duality isomorphisms in $\TMF$}\label{sec_duality}
As explained in Section \ref{subsec_construction}, in the general settings there, we get a composition of $\TMF$-module morphisms
\begin{align}
    \mathcal{F}_{\mathcal{D}} \colon \TMF \xrightarrow{\chi(V_\phi) } \TMF[ V_\phi]^{G \times H} \stackrel{\sigma(\Theta_\cD, \fraks) }{\simeq}\TMF[\tau_G]^G \otimes_{\TMF} \TMF[\tau_H]^H. 
\end{align}
In the setting of the trio we presented in Section \ref{sec_ex}, we expect the above map to be related to the {\it level-rank duality} in physics. While initially discovered in the context of affine Lie algebras and conformal field theory \cite{frenkel2006representations,nakanishi1992level}, the level-rank duality can be formulated in the closely related frameworks of Chern-Simons theories \cite{naculich1990group,mlawer1991group,naculich2007level,Hsin:2016blu} and tensor categories \cite{ostrik2014,ostrik2020symplectic}.
In this section, we verify mathematically that, indeed among our trio, in the cases of $(\cG, \cH) = (U, SU)$ and $(Sp, Sp)$, the map $\cF_\cD$ exhibits the duality in $\TMF$-module spectra:\footnote{{The authors acknowledge Du Pei and Lennart Meier for providing the idea of the contents in this section.}}
\begin{align}
	\TMF[kV_{Sp(n)}]^{Sp(n)} &\stackrel{\mbox{\tiny dual} }{\longleftrightarrow}	\TMF[n\overline{V}_{Sp(k)}]^{Sp(k)} \\
		\TMF[kV_{U(n)}]^{U(n)} &\stackrel{\mbox{\tiny dual  }}{\longleftrightarrow}	\TMF[n\overline{V}_{SU(k)}]^{SU(k)}  \quad \mbox{in }\Mod_\TMF. 
\end{align}

\begin{rem}\label{rem_levelrank}
	In this article, we do not go further into the level-rank duality itself, especially with physical explorations, though the authors certainly encourage explorations in this direction. Nevertheless, we include the relevant mathematical proofs here, since these results show that our generalized topological elliptic genera are highly nontrivial. 
\end{rem}

We heavily use the following fact, which will appear in an upcoming paper by Gepner-Meier \cite{GepnerMeierNEW}:
\begin{fact}[{Gepner-Meier, \cite{GepnerMeierNEW}}]\label{factSU}~
	\begin{enumerate}
		\item For any positive integer $k$, the restriction map provides an isomorphism, 
		\begin{align}
			\res_{SU(k)}^e \colon \TMF^{SU(k)} \simeq \TMF \label{eq_fact_SU} \\
		 \res_{Sp(k)}^e \colon \TMF^{Sp(k)} \simeq \TMF \label{eq_fact_Sp}
		\end{align}
		\item For any positive integer $k$, the restriction map along $\det \colon U(k) \to U(1)$ provides an isomorphism, 
		\begin{align}
			\TMF^{U(k)} \stackrel{\res_{\det}}{\simeq} \TMF^{U(1)} \stackrel{(\res_{U(1)}^e, \tr_{U(1)}^e)}{\simeq} \TMF \oplus \TMF[1]. 
		\end{align}
	\end{enumerate}
\end{fact}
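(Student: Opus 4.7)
Since Fact~\ref{factSU} is attributed to an upcoming paper of Gepner--Meier, I will sketch the proof I anticipate, using the spectral-algebraic framework of \cite{GepnerMeier}. The starting point is $\TMF^G \simeq \Gamma(\Ell(\bfB G); \mathcal{O})$, whose complex-analytic specialization $\Ell(\bfB G)^\heartsuit_\C \simeq \moduli_\C^G$ is the moduli stack of semistable $G$-bundles on the dual elliptic curve. By the theorem of Looijenga (with Friedman--Morgan--Witten for general simple simply-connected $G$), for $G = SU(k)$ this moduli is a $\mathbb{P}^{k-1}$-bundle over $\moduli_\C$, and for $G = Sp(k)$ a weighted projective space bundle with weights given by the marks of the affine $C_k$ Dynkin diagram. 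My plan is to induct on $k$ using the stabilization-restriction fiber sequence of Proposition~\ref{prop_stab_res_TMF}, upgraded to the spectral level.

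For Part~(1) with $G = SU(k)$, the base case $k=1$ is trivial. For the inductive step, apply Proposition~\ref{prop_stab_res_TMF} with $\cK = SU$, $i=k$, $j=0$ to obtain the fiber sequence
\begin{align*}
\TMF[-V_{SU(k)}]^{SU(k)} \longrightarrow \TMF^{SU(k)} \xrightarrow{\res} \TMF^{SU(k-1)}.
\end{align*}
By induction the right-hand term is $\TMF$, so it suffices to verify that the fiber $\TMF[-V_{SU(k)}]^{SU(k)}$ vanishes. Complex-analytically, $\cL_\C(-V_{SU(k)})$ restricts to $\mathcal{O}_{\mathbb{P}^{k-1}}(-1)$ along the fibers of $\moduli_\C^{SU(k)} \to \moduli_\C$, whose sheaf cohomology vanishes in every degree. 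The substance of the argument is to lift this to the claim that $\Gamma(\Ell(\bfB SU(k)); \cL(-V_{SU(k)})) \simeq 0$ spectrally. The $Sp(k)$ case runs in parallel, with base case $k=1$ already recorded as $\TEJF_0 \simeq \TMF$ in Example~\ref{ex_TEJF_building} (via $Sp(1) \simeq SU(2)$).

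For Part~(2), I would use the short exact sequence $SU(k) \to U(k) \xrightarrow{\det} U(1)$ to obtain a fibration $\Ell(\bfB U(k)) \to \Ell(\bfB U(1)) \simeq \cE$ whose derived fibers are $\Ell(\bfB SU(k))$. By Part~(1) the fiberwise pushforward of $\mathcal{O}_{\Ell(\bfB U(k))}$ is $\mathcal{O}_\cE$, so $\res_{\det} \colon \TMF^{U(k)} \to \TMF^{U(1)}$ is an equivalence. The remaining identification $\TMF^{U(1)} \simeq \TMF \oplus \TMF[1]$ follows from $Rp_* \mathcal{O}_\cE \simeq \mathcal{O}_\moduli \oplus \omega^{-1}[-1]$ (since elliptic curves have trivial canonical bundle), with the two summands realized by $\res^e_{U(1)}$ and $\tr^e_{U(1)}$ via Serre duality along $\cE \to \moduli$.

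The hard part is promoting the classical vanishing $H^\ast(\mathbb{P}^{k-1}; \mathcal{O}(-j)) = 0$ for $1 \le j \le k-1$ (and its weighted $Sp$-analogue) to the integral vanishing of $\Gamma(\Ell(\bfB G); \cL(-jV_G))$. This requires tight derived-geometric control: one must show that $\Ell(\bfB SU(k))$ (resp.\ $\Ell(\bfB Sp(k))$) is genuinely a ``spectral (weighted) projective bundle'' over $\moduli$, free of any extra higher derived content, so that the Koszul/projection-formula computation on projective space lifts integrally. I expect this verification to be the principal technical input in Gepner--Meier's forthcoming work.
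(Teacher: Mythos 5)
This Fact is not proved in the paper at all: it is cited verbatim from the forthcoming work of Gepner--Meier \cite{GepnerMeierNEW}, so there is no internal argument against which to compare your proposal. Taking it on its own terms, the reduction via the stabilization-restriction fiber sequence of Proposition~\ref{prop_stab_res_TMF} (with $\cK=SU$, $j=0$) is clean and consistent with how the paper itself runs analogous inductions, most notably the proof of Proposition~\ref{prop_base_case_Un}. You correctly localize the entire difficulty in the vanishing of $\TMF[-V_{SU(k)}]^{SU(k)}$ for $k \ge 2$ (and its $Sp$ analogue).

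Two cautionary remarks. First, this vanishing is not logically weaker than the Fact: in a fiber sequence $A \to B \xrightarrow{f} C$, $f$ being an equivalence is \emph{equivalent} to $A \simeq 0$, so the induction reformulates rather than shrinks the problem---any proof of the vanishing must already carry the full integral, spectral-level content that the Fact asserts. Second, one might be tempted to invoke the paper's own $\TEJF_{-2}=0$ (Proposition~\ref{prop_Sp(1)}) as the base vanishing $\TMF[-V_{SU(2)}]^{SU(2)}=0$, but the proof of that proposition itself cites Fact~\ref{factSU}(1) (via $\TMF[-\Ad_{Sp(1)}]^{Sp(1)} \simeq D(\TMF^{Sp(1)}) \simeq D(\TMF) \simeq \TMF$), so this would be circular. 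Your last paragraph names the right surviving strategy: show directly that $\Ell(\bfB SU(k))$ and $\Ell(\bfB Sp(k))$ are spectral (weighted) projective bundles over $\moduli$ and import the Koszul computation. On Part~(2), your Leray-style argument through $\Ell(\bfB U(k)) \to \cE$ with $Rp_\ast\mathcal{O}_\cE \simeq \mathcal{O}_\moduli \oplus \omega^{-1}[-1]$ is the correct mechanism; the weight shift from $\omega^{-1}$ combined with the cohomological degree-1 shift does land the second summand at $\TMF[1]$, though this bookkeeping deserves to be spelled out. In sum: a plausible reconstruction with the decisive gap correctly flagged, but not verifiably the Gepner--Meier argument (which this paper does not contain) and not yet self-contained.
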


The rest of this section is organized as follows. In Section \ref{subsec_duality_Sp} and \ref{subsec_duality_SU_U}, we show the duality statement for $(Sp, Sp)$ and $(SU, U)$, respectively. Section \ref{subsec_duality_lem} is devoted to the proof of a general lemma on the duality in symmetric monoidal categories, which we use in the proofs of main theorems in the earlier subsections. 

\subsection{The level-rank duality between $Sp(n)_k$ and $Sp(k)_n$}\label{subsec_duality_Sp}

First, we analyze the case of $\cD=Sp(n)_k$, where the argument is simpler than the case of $\cD=U(n)_k$. 
We show the following. 

\begin{thm}[{The level-rank duality between $Sp(n)_k$ and $Sp(k)_n$}]\label{thm_duality_Sp}
	Let $n$ and $k$ be nonnegative integers. The coevaluation map in Definition \ref{def_FD} applied to the data $\cD = Sp(n)_k$ in Definition \ref{def_Jac_trio}, 
	\begin{align}
		\mathcal{F}_{Sp(n)_k} \colon \TMF &\xrightarrow{\chi(V_{Sp(n)} \otimes_\bH V^*_{Sp(k)})} \TMF[V_{Sp(n)} \otimes_\bH V^*_{Sp(k)}]^{Sp(n) \times Sp(k)} \\ &\stackrel{\sigma(\Theta_{Sp(n)_k}, \fraks)}{\simeq} \TMF[kV_{Sp(n)}]^{Sp(n)} \otimes_\TMF \TMF[n\overline{V}_{Sp(k)}]^{Sp(k)},  
	\end{align}
exhibits
$\TMF[kV_{Sp(n)}]^{Sp(n)}$ and $\TMF[n\overline{V}_{Sp(k)}]^{Sp(k)}$ as duals to each other in $\Mod_\TMF$,
\end{thm}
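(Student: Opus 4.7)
The plan is to reduce the duality to an equivalence check and then establish it by induction. Since $\TMF[n\overline{V}_{Sp(k)}]^{Sp(k)}$ is dualizable in $\Mod_\TMF$ with dual $\TMF[-n\overline{V}_{Sp(k)} - \Ad(Sp(k))]^{Sp(k)}$ by Fact \ref{fact_dualizability_TMF} and \eqref{eq_twisted_dual}, the general categorical lemma to be proved in Section \ref{subsec_duality_lem} states that a coevaluation $\mathbf{1} \to A \otimes B$ with $B$ already dualizable exhibits $(A,B)$ as a dual pair if and only if its adjoint map $DB \to A$ is an equivalence. Applied to $\mathcal{F}_{Sp(n)_k}$, this reduces the theorem to the statement that the map $\mathcal{F}'_{Sp(n)_k}$ from Definition \ref{def_FD},
\[
\mathcal{F}'_{Sp(n)_k} \colon \TMF[-n\overline{V}_{Sp(k)} - \Ad(Sp(k))]^{Sp(k)} \longrightarrow \TMF[kV_{Sp(n)}]^{Sp(n)},
\]
is an equivalence in $\Mod_\TMF$.

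I would then prove this by induction on $n + k$. The base cases are $n = 0$ and $k = 0$: in either case $V_\phi = 0$ and the string structure is trivial, so both source and target of $\mathcal{F}'_{Sp(n)_k}$ reduce to copies of $\TMF$ via Fact \ref{factSU} (for the source when $n = 0$, dualize $\TMF^{Sp(k)} \simeq \TMF$; for the target when $k = 0$, apply Fact \ref{factSU} directly). Unwinding Definition \ref{def_FD} as in Remark \ref{rem_triv_Jac}, $\mathcal{F}'_{Sp(n)_k}$ is then directly identified with these equivalences.

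For the inductive step with $n, k \geq 1$, the right column of the compatibility diagram \eqref{diag_F'_different_nk} is precisely the stabilization-restriction fiber sequence of Proposition \ref{prop_stab_res_TMF} on the $Sp(n)$ side, while its left column is the dualized fiber sequence on the $Sp(k)$ side: Proposition \ref{prop_selfdual_stabres} identifies the connecting map there with the transfer $\tr_{Sp(k-1)}^{Sp(k)}$ that appears in \eqref{diag_F'_different_nk}. Hence \eqref{diag_F'_different_nk} presents a morphism of fiber sequences in $\Mod_\TMF$ whose horizontal arrows are $\mathcal{F}'_{Sp(n)_{k-1}}$, $\mathcal{F}'_{Sp(n)_k}$, and $\mathcal{F}'_{Sp(n-1)_k}$. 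By the inductive hypothesis the top and bottom arrows are equivalences, and the fiber-sequence five-lemma in the stable $\infty$-category $\Mod_\TMF$ then forces $\mathcal{F}'_{Sp(n)_k}$ to be one as well.

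The main obstacle lies in the bookkeeping needed to align the two columns of \eqref{diag_F'_different_nk} with honest dual fiber sequences in the sense of Proposition \ref{prop_selfdual_stabres}: using $\dim_{\R} V_{Sp(k)} = 4k$ to confirm identities like $-n\overline{V}_{Sp(k)} + V_{Sp(k)} = -(n-1)\overline{V}_{Sp(k)} + 4k$ in $\RO(Sp(k))$, and the restriction identity $\res_{Sp(k)}^{Sp(k-1)} \overline{V}_{Sp(k)} \simeq \overline{V}_{Sp(k-1)}$, to ensure the shifts match exactly across the diagram. Once this is set up correctly, the five-lemma step is formal.
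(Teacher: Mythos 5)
Your proposal is correct and takes essentially the same route as the paper. The paper packages the adjoint-translation-plus-five-lemma step into a self-contained Lemma~\ref{lem_duality} (applied to the unprimed compatibility diagram \eqref{diag_different_nk}), whereas you unwind the same step explicitly on the adjoint diagram \eqref{diag_F'_different_nk}; the paper also organizes the induction as a nested loop over $n$ then $k$ rather than on $n+k$, but both schedules cover the same reductions. One small misattribution to flag: what Section~\ref{subsec_duality_lem} actually proves is the fiber-sequence Lemma~\ref{lem_duality}, not the more basic fact that a coevaluation with one leg dualizable exhibits duality iff its adjoint is an equivalence — that fact is standard and is used inside the lemma's proof, but it is not the statement of the lemma.
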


\begin{proof}
	We prove the theorem by induction. First, as the base step, we check that the statement holds when either one of $n$ or $k$ is $0$; but this is simply implied by Fact \ref{factSU} (1), \eqref{eq_fact_Sp}. 
	
	Now, recall the following diagram for $n \ge 1$ and $k \ge 1$ in \eqref{diag_different_nk} specialized to our case. 
	\begin{align}\label{diag_proofduality_Sp}
		\xymatrix{
			&\TMF \ar[ld]_-{\mathcal{F}_{Sp(n-1)_k}} \ar[d]^-{\mathcal{F}_{Sp(n)_k}} \ar[rd]^-{\mathcal{F}_{Sp(n)_{k-1}}} & \\
			\TMF[(n-1)\overline{V}_{Sp(k)}-4k]^{Sp(k)} \ar[r]_-{\chi(V_{Sp(k)}) \cdot} \ar@{}[d]|{\otimes}& \TMF[n\overline{V}_{Sp(k)}]^{Sp(k)}\ar[r]_-{\res} \ar@{}[d]|{\otimes}&
			\TMF[n\overline{V}_{Sp(k-1)}]^{Sp(k-1)}\ar@{}[d]|{\otimes} \\
			\TMF[kV_{Sp(n-1)}+4k]^{Sp(n-1)} & 
			\TMF[kV_{Sp(n)}]^{Sp(n)} \ar[l]^-{\res}&
			\TMF[(k-1)V_{Sp(n)}]^{Sp(n)} \ar[l]^-{\chi(V_{Sp(n)})\cdot}
		}
	\end{align}
	
	 Here the second and third rows are the stabilization-restriction fiber sequences in Proposition \ref{prop_stab_res_TMF}. 
	By Proposition \ref{prop_F_different_nk}, both the left and the right half of the diagram \eqref{diag_proofduality_SU_U} is {\it compatible}, in the sense of Section \ref{subsec_notations} \eqref{prelim_compatible}. 
	Using this result and a general Lemma \ref{lem_duality} below, we prove the statement of Theorem by induction on $n$, and within that, we induct on $k$. The base case $n=0$ has already been checked above. 
	
	Now, suppose that we have verified the claim for all $(n, k) \in [0, N-1] \times [0, \infty)$. Then let us set $n=N$, and prove the statement inductively in $k \ge 0$. The base case $k=0$ has already been checked above. Assuming the case for $(n, k) = (N, K-1)$ is proven, we apply Lemma \ref{lem_duality} to the compatible diagram \ref{diag_proofduality_SU_U}, we get the desired statement for the case $(n, k) = (N, K)$. 
	This finishes the inductive step and completes the proof of Theorem \ref{thm_duality_Sp}. 
\end{proof}

\subsection{The level-rank duality between $U(n)_k$ and $SU(k)_n$}\label{subsec_duality_SU_U}
We now move on to the case of $\cD=U(n)_k$. 
The inductive strategy is exactly the same as the case of $\cD=Sp(n)_k$ proved above, but the verification of the base case is a little more complicated.

Before proceeding to prove the duality between $\TMF[kV_{U(n)}]^{U(n)}$ and $\TMF[n\overline{V}_{SU(k)}]^{SU(k)}$ for general $n$ and $k$, we start by proving the extreme case, which will be used as a part of base step in our inductive proof of the general statement (Theorem \ref{thm_duality_U_SU}). 

\begin{prop}[{The level-rank duality between $U(n)$ and $SU(1) = e$}]\label{prop_base_case_Un}
Let $n$ be a nonnegative integer. The map
\begin{align}
    \chi(V_{U(n)}) \cdot \colon \TMF \to \TMF[V_{U(n)}]^{U(n)}
\end{align}
is an equivalence in $\Mod_\TMF$. 
\end{prop}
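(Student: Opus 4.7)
The plan is to proceed by induction on $n$. The case $n = 0$ is trivial, and for the base case $n = 1$ I would appeal to the identification $\TJF_1 \simeq \TMF$ established in Appendix~\ref{subsec_cell_TJF_app} (the upcoming work of Bauer--Meier \cite{BauerMeierTJF}), under which the Euler class $\chi(V_{U(1)}) = a$ corresponds (up to a unit in $\pi_0\TMF$) to the unit $1 \in \pi_0\TMF$. Thus $\chi(V_{U(1)})\cdot \colon \TMF \to \TJF_1$ is an equivalence.

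For the inductive step, fix $n \geq 2$ and assume $\chi(V_{U(n-1)})\cdot$ is an equivalence. I would invoke Proposition~\ref{prop_stab_res_TMF} with $\cK = U$, $i = n$, $j = 1$, yielding the fibre sequence
\[
\TMF^{U(n)} \xrightarrow{\chi(V_{U(n)})\cdot} \TMF[V_{U(n)}]^{U(n)} \xrightarrow{\res^{U(n-1)}_{U(n)}} \TMF[V_{U(n-1)}+2]^{U(n-1)}.
\]
The inductive hypothesis identifies the right-hand term with $\TMF[2]$, while Fact~\ref{factSU}(2) identifies $\TMF^{U(n)} \simeq \TMF \oplus \TMF[1]$ via $\res_{\det_n}$. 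Proposition~\ref{prop_selfdual_stabres} then identifies the connecting map with the transfer $\tr_{U(n-1)}^{U(n)} \colon \TMF[V_{U(n-1)}+1]^{U(n-1)} \simeq \TMF[1] \to \TMF \oplus \TMF[1]$ in the sense of \eqref{eq_tr_cofib}. The inductive step reduces to showing this transfer is the inclusion of the $\TMF[1]$ summand (up to a unit), for then the cofibre is $\TMF$ and the composition $\TMF \xrightarrow{\mathrm{unit}} \TMF^{U(n)} \xrightarrow{\chi(V_{U(n)})\cdot} \TMF[V_{U(n)}]^{U(n)}$ becomes an equivalence onto the first summand.

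To identify the connecting map, I would exploit the commutative square
\[
\xymatrix{U(n-1) \ar@{^{(}->}[r] \ar[d]_-{\det_{n-1}} & U(n) \ar[d]^-{\det_n} \\ U(1) \ar@{=}[r] & U(1)}
\]
which ensures that, under the $\det$-induced equivalences of Fact~\ref{factSU}(2), the restriction $\res^{U(n-1)}_{U(n)}$ becomes the identity on $\TMF^{U(1)} \simeq \TMF \oplus \TMF[1]$. Dualising via $D(\res) = \tr$ and tensoring with the twist $\TMF[\Ad U(n)]^{U(n)}$ needed to convert the basic transfer into the form appearing in \eqref{eq_tr_cofib} will then, modulo careful bookkeeping, identify the connecting map with the desired summand inclusion. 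The main obstacle I anticipate is this bookkeeping: the basic transfer between $\TMF[-\Ad]$-twisted spectra and the transfer in \eqref{eq_tr_cofib} differ by a nontrivial tensoring, and one must verify that this operation preserves the summand decomposition in the correct way. Should this become delicate, a fallback is to restrict along the maximal torus $T = U(1)^n \hookrightarrow U(n)$, use $\TMF[V_T]^T \simeq \TJF_1^{\otimes n} \simeq \TMF$ with trivial residual $\Sigma_n$-action, and argue that $\chi(V_{U(n)})\cdot$ is detected faithfully after this restriction.
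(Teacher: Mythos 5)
Your fallback is actually the paper's proof, but you've left the key mechanism unstated, and your primary route has the genuine obstruction you anticipate.

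On the primary route: identifying the connecting map $\TMF[1]\to\TMF\oplus\TMF[1]$ with the inclusion of a summand via the self-duality of Proposition \ref{prop_selfdual_stabres} is a real hurdle, not mere bookkeeping. The transfer $\tr^{U(n)}_{U(n-1)}$ in the form \eqref{eq_tr_cofib} and the ``basic'' transfer $D(\res)\colon\TMF[-\Ad(U(n-1))]^{U(n-1)}\to\TMF[-\Ad(U(n))]^{U(n)}$ live between spectra twisted by $V_{U(\bullet)}$ and by $-\Ad(U(\bullet))$ respectively, and these two twists are genuinely distinct in $[BU(m),P^4BO]$ (the fundamental representation has $w_2=c_1\bmod 2\neq 0$ while the adjoint is spin). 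Untwisting between them uses the very duality statement you are in the middle of establishing, and the identification of the summand decomposition under this untwisting is circular unless done with considerable care. The paper avoids this entirely.

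On the fallback: restricting to the maximal torus is precisely what the paper does, but ``argue that $\chi(V_{U(n)})\cdot$ is detected faithfully after this restriction'' is a restatement of what must be proved, not an argument. The paper's mechanism is to prove by a \emph{second} induction that $\res_{\iota_n}\colon\TMF[V_{U(n)}]^{U(n)}\to\TMF[\bigoplus_i V_{\bT_i}]^{\bT^n}$ is an equivalence: one compares the stabilization--restriction fiber sequence for $U(n)$ (top row) with the one for $\bT^n=\bT^{n-1}\times\bT$ obtained by tensoring the $n=1$ split sequence \eqref{seq_TJF1} with $\TMF[\bigoplus_{i<n}V_{\bT_i}]^{\bT^{n-1}}$ (bottom row). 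The vertical maps are $\res_{\iota_n}$, $\res_{\iota_{n-1}}$, and on the fiber term the composite $\chi(V_\bT)^{\otimes(n-1)}\circ\res_{\det}$, which is an equivalence by Fact~\ref{factSU}(2) together with the already-established $n=1$ case; the inductive hypothesis gives $\res_{\iota_{n-1}}$ is an equivalence, so $\res_{\iota_n}$ is. The $\Sigma_n$-action plays no role, and ``detection'' is replaced by exhibiting an actual equivalence. Once $\res_{\iota_n}$ is known to be an equivalence, the commuting triangle through $\chi(V_\bT)^{\otimes n}\cdot\colon\TMF\xrightarrow{\simeq}\TMF[\bigoplus_i V_{\bT_i}]^{\bT^n}$ finishes the proof, with no need to identify the connecting map of the $U(n)$ sequence at all.
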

\begin{proof}
We already know the case of $n=1$ by Appendix~\ref{subsec_cell_TJF_app}. The stabilization-restriction fiber sequence in Proposition \ref{prop_stab_res_TMF} looks like
\begin{align}\label{seq_TJF1}
	\xymatrix{
		\TMF[1] \ar[r]^-{\tr_e^{U(1)}} \ar@{=}[d] &  \TMF^{U(1)} \ar[r]^-{\chi(V_{U(1)}) } \ar[d]^-{\simeq} & \TMF[V_{U(1)}]^{U(1)} \ar[r]^-{\res_{U(1)}^e}_-{0} & \TMF[2] \\
	\TMF[1]	&\TMF[1] \oplus \TMF \ar[r] \ar[l] & \TMF \ar[u]^-{\simeq}_-{\chi(V_{U(1)})\cdot } &
	}
\end{align}
i.e., split at $\TMF^{U(1)}$, and the third vertical arrow provides the isomorphism claimed in the proposition for $n=1$. 

Now, for each integer $n \ge 2$, consider the inclusion of the standard maximal torus $\iota_n \colon \bT^n \hookrightarrow U(n)$ where we denoted $\bT := U(1)$. It induces the restriction map
\begin{align}\label{eq_f_n}
   \res_{\iota_n} \colon \TMF[V_{U(n)}]^{U(n)} \to \TMF\left[\bigoplus_{i=1}^{n}V_{\bT_i}\right]^{\bT^{n}} 
\end{align}
Here, the we indicated the $i$-th copy of $\bT$ in the group $\bT^{n} $ by $\bT_i$. 
The following diagram commutes. 
\begin{align}
    \xymatrix{
    &\TMF \ar[ld]_-{\chi(V_{U(n)})\cdot} \ar[d]^-{\chi(V_{\bT})^{\otimes n}\cdot}_-{\simeq}  \\
    \TMF[V_{U(n)}]^{U(n)} \ar[r]^-{\res_{\iota_n}} & \TMF\left[\bigoplus_{i=1}^{n}V_{\bT_i}\right]^{\bT^{n}}
    }
\end{align}
and the right vertical arrow is an isomorphism because of the statement of the proposition already checked for $n=1$ above (i.e., \eqref{seq_TJF1}). Thus, to prove the proposition, it is enough to prove that \eqref{eq_f_n} is an isomorphism. We prove it by induction on $n$. 

The base case $n=1$ is trivial. So assume that \eqref{eq_f_n} for $n-1$ is isomorphism. We use the following commutative diagram, 
\begin{align}
    \xymatrix{
    \TMF^{U(n)} \ar[r]^-{\chi(V_{U(n)}) \cdot } \ar[d]_-{\simeq \text{\tiny  by Fact }\ref{factSU} (2)}^-{\res_{\det}} & \TMF[V_{U(n)}]^{U(n)} \ar[dd]^-{\res_{\iota_n}}\ar[rr]^-{\res_{U(n)}^{U(n-1)}} && \TMF[V_{U(n-1)}+2]^{U(n-1)} \ar[dd]^-{\res_{\iota_{n-1}}} \\
    \TMF^{\bT} \ar[d]_-{\simeq \text{\tiny  by \eqref{seq_TJF1}}}^-{\chi(V_{\bT})^{\otimes(n-1)} \cdot}  &&& \\
    \TMF\left[\bigoplus_{i=1}^{n-1}V_{\bT_i}\right]^{\bT^{n-1} \times \bT} \ar[r]_-{ \chi(V_{\bT})} &\TMF\left[\bigoplus_{i=1}^{n}V_{\bT_i}\right]^{\bT^{n}} \ar[rr]^-{\res_{\bT^n}^{\bT^{n-1}}}_-{0} && \TMF\left[\bigoplus_{i=1}^{n-1}V_{\bT_i} + 2\right]^{\bT^{n-1}}
     }
\end{align}
The top row is a fiber sequence by Proposition \ref{prop_stab_res_TMF}, and the bottom row is also a fiber sequence by tensoring $\TMF\left[\bigoplus_{i=1}^{n-1}V_{\bT_i}\right]^{\bT^{n-1}}$ to the sequence \eqref{seq_TJF1}. 
The left vertical arrows are equivalences as indicated. 
The right vertical arrow is an equivalence by the induction hypothesis. Thus the middle vertical arrow is an equivalence, and this completes the inductive step for $n$. This completes the proof of Proposition \ref{prop_base_case_Un}. 
    
\end{proof}

\begin{thm}[{The level-rank duality between $U$ and $SU$}]\label{thm_duality_U_SU}
Let $n$ and $k$ be integers with $n \ge 0$ and $k \ge 1$. Then
$\TMF[kV_{U(n)}]^{U(n)}$ and $\TMF[n\overline{V}_{SU(k)}]^{SU(k)}$ are duals to each other in $\Mod_\TMF$, and the coevaluation map in Definition \ref{def_FD} applied to the data $\cD = U(n)_k$ in Definition \ref{def_Jac_trio}, 
    \begin{align}
    \mathcal{F}_{U(n)_k} \colon \TMF &\xrightarrow{\chi(V_{U(n)} \otimes_\C V_{SU(k)})} \TMF[V_{U(n)} \otimes_\C V_{SU(k)}]^{U(n) \times SU(k)} \\ &\stackrel{\sigma(\Theta_{U(n)_k}, \fraks)}{\simeq} \TMF[kV_{U(n)}]^{U(n)} \otimes_\TMF \TMF[n\overline{V}_{SU(k)}]^{SU(k)},  
\end{align}
is the coevaluation map of the duality. 
\end{thm}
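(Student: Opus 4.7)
The plan is to mirror the inductive strategy used in the proof of Theorem~\ref{thm_duality_Sp}, performing a double induction on $(n,k)$ with $n \geq 0$ and $k \geq 1$. The key inputs are again the compatibility established in Proposition~\ref{prop_F_different_nk} together with the general duality lemma (Lemma~\ref{lem_duality}), and the only feature specific to the $U$–$SU$ pairing (as opposed to $Sp$–$Sp$) is that one of the base cases needs to be supplied by Proposition~\ref{prop_base_case_Un} rather than by a direct application of Fact~\ref{factSU}.

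First I would handle the two base cases. When $n=0$, both sides become equivalent to $\TMF$: the left is $\TMF[0 \cdot V_{U(0)}]^{U(0)} \simeq \TMF$ since $U(0)$ is trivial, and the right is $\TMF^{SU(k)} \simeq \TMF$ by Fact~\ref{factSU}(1), with $\cF_{U(0)_k}$ reducing to the composition described in Remark~\ref{rem_triv_Jac}, which exhibits the trivial self-duality of $\TMF$. When $k=1$, the right-hand side $\TMF[n\overline{V}_{SU(1)}]^{SU(1)} = \TMF^e = \TMF$, and Proposition~\ref{prop_base_case_Un} identifies $\TMF[V_{U(n)}]^{U(n)} \simeq \TMF$ via multiplication by $\chi(V_{U(n)})$; unwinding definitions, this is precisely the coevaluation $\cF_{U(n)_1}$, so duality holds at $k=1$ for all $n$.

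For the inductive step, given $n \geq 1$ and $k \geq 2$, apply Proposition~\ref{prop_F_different_nk} (specialized to $(\cG,\cH) = (U, SU)$ with $N=2$) to obtain the compatible diagram
\begin{align}
\xymatrix@C=2.5em{
    &\TMF \ar[ld]_-{\cF_{U(n-1)_k}} \ar[d]^-{\cF_{U(n)_k}} \ar[rd]^-{\cF_{U(n)_{k-1}}} & \\
    \TMF[(n{-}1)\overline{V}_{SU(k)}{-}2k]^{SU(k)} \ar[r]_-{\chi(V_{SU(k)}) \cdot } \ar@{}[d]|{\otimes}& \TMF[n\overline{V}_{SU(k)}]^{SU(k)}\ar[r]_-{\res} \ar@{}[d]|{\otimes}&
    \TMF[n\overline{V}_{SU(k{-}1)}]^{SU(k{-}1)}\ar@{}[d]|{\otimes} \\
    \TMF[kV_{U(n{-}1)}{+}2k]^{U(n{-}1)} &
    \TMF[kV_{U(n)}]^{U(n)} \ar[l]^-{\res}&
    \TMF[(k{-}1)V_{U(n)}]^{U(n)} \ar[l]^-{\chi(V_{U(n)}) \cdot }
}
\end{align}
whose two rows are the stabilization–restriction fiber sequences of Proposition~\ref{prop_stab_res_TMF}. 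Assuming inductively that the duality holds for the parameter pairs $(n{-}1, k)$ and $(n, k{-}1)$, Lemma~\ref{lem_duality} promotes the compatibility of the left and right halves of this diagram into the conclusion that $\cF_{U(n)_k}$ exhibits $\TMF[kV_{U(n)}]^{U(n)}$ and $\TMF[n\overline{V}_{SU(k)}]^{SU(k)}$ as duals in $\Mod_\TMF$, closing the induction.

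I do not anticipate a serious obstacle beyond what was already settled for the $Sp$ case; the genuine technical work particular to the $U$–$SU$ duality has been localized in Proposition~\ref{prop_base_case_Un}, whose proof required the separate identifications in Fact~\ref{factSU}(2) for $\TMF^{U(k)}$ and a secondary induction along the diagonal torus. Once that proposition provides the $k=1$ base case and Fact~\ref{factSU}(1) provides the $n=0$ base case, the double induction runs in precisely the same formal pattern as the proof of Theorem~\ref{thm_duality_Sp}.
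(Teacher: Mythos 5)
Your proof proposal is correct and follows precisely the paper's own argument: the same double induction on $(n,k)$, the same use of Proposition~\ref{prop_F_different_nk} to obtain the compatible diagram, Lemma~\ref{lem_duality} for the inductive step, with the base case $n=0$ supplied by Fact~\ref{factSU}(1) and the base case $k=1$ supplied by Proposition~\ref{prop_base_case_Un}. There is no substantive difference from the paper's proof.
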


\begin{proof}
    We use an inductive argument, which is exactly parallel to the proof of Theorem \ref{thm_duality_Sp}. 
    For this case, we use the following diagram for $n \ge 1$ and $k \ge 2$ in \eqref{diag_different_nk} specialized to our case. 
      \begin{align}\label{diag_proofduality_SU_U}
        \xymatrix{
        &\TMF \ar[ld]_-{\mathcal{F}_{U(n-1)_k}} \ar[d]^-{\mathcal{F}_{U(n)_k}} \ar[rd]^-{\mathcal{F}_{U(n)_{k-1}}} & \\
        \TMF[(n-1)\overline{V}_{SU(k)}-2k]^{SU(k)} \ar[r]_-{\chi(V_{SU(k)}) \cdot} \ar@{}[d]|{\otimes}& \TMF[n\overline{V}_{SU(k)}]^{SU(k)}\ar[r]_-{\res} \ar@{}[d]|{\otimes}&
        \TMF[n\overline{V}_{SU(k-1)}]^{SU(k-1)}\ar@{}[d]|{\otimes} \\
        \TMF[kV_{U(n-1)}+2k]^{U(n-1)} & 
        \TMF[kV_{U(n)}]^{U(n)} \ar[l]^-{\res}&
        \TMF[(k-1)V_{U(n)}]^{U(n)} \ar[l]^-{\chi(V_{U(n)})\cdot}
        }
    \end{align}

    Here the second and third rows are the stabilization-restriction fiber sequences in Proposition \ref{prop_stab_res_TMF}. 
    By Proposition \ref{prop_F_different_nk}, both the left and the right half of the diagram \eqref{diag_proofduality_SU_U} is {\it compatible}, in the sense of Section \ref{subsec_notations} \eqref{prelim_compatible}. 
Using this result and a general Lemma \ref{lem_duality} below, we prove the desired statement by induction on $n$, and within that, we induct on $k$. The base case $n=0$ easily follows by Fact \ref{factSU}. 

Now, suppose that we have verified the claim for all $(n, k) \in [0, N-1] \times [1, \infty)$. Then let us set $n=N$, and prove the statement inductively in $k \ge 1$. The base case $k=1$ is done by Proposition \ref{prop_base_case_Un}. Assuming the case for $(n, k) = (N, K-1)$ is proven, we apply Lemma \ref{lem_duality} to the compatible diagram \ref{diag_proofduality_SU_U}, we get the desired statement for the case $(n, k) = (N, K)$. 
This finishes the inductive step, and completes the proof of Theorem \ref{thm_duality_U_SU}. 

\end{proof}

\subsection{A lemma on duality}\label{subsec_duality_lem}
Here, we prove a general lemma which was used in our inductive proof of Theorem \ref{thm_duality_Sp} and Theorem \ref{thm_duality_U_SU} above. 
\begin{lem}\label{lem_duality}
    Let $R$ be an $E_\infty$ ring spectrum, and suppose that we are given two fiber sequences in $\Mod_R$, 
    \begin{align}
        a_1 \xrightarrow{\alpha} a_2 \xrightarrow{\beta} a_3, \\
        b_1 \xleftarrow{\gamma} b_2 \xleftarrow{\delta} b_3. \label{bbb}
    \end{align}
    Assume that $a_i$ and $b_i$ are dual to each other in $\Mod_R$ for $i = 1$ and $3$, with coevaluation maps
    \begin{align}
        \mathrm{coev}_i \colon R \to a_i \otimes_R b_i, \ i=1, 3. 
    \end{align}
    Furthermore, assume that we are given a morphism
    \begin{align}
        f \colon R \to a_2 \otimes_R b_2
    \end{align}
    with which both the left and the right half of the following diagram
    \begin{align}\label{diag_dualitylem}
        \xymatrix{
        &R \ar[ld]_-{\coev_1} \ar[d]^-{f} \ar[rd]^-{\coev_3}& \\
        a_1 \ar@{}[d]|{\otimes_R} \ar[r]^-{\alpha}& a_2 \ar@{}[d]|{\otimes_R} \ar[r]^-{\beta} & a_3 \ar@{}[d]|{\otimes_R}\\
        b_1 & b_2 \ar[l]^-{\gamma} & b_3\ar[l]^-{\delta}
        } 
    \end{align}
    is compatible in the sense of Section \ref{subsec_notations} \eqref{prelim_compatible}. 
    Then $a_2$ and $b_2$ are duals to each other (in particular they are dualizable), and $f$ is the coevaluation map associated to the duality. 
\end{lem}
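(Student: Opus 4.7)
The plan is to show that $(a_2, b_2)$ is a dual pair in $\Mod_R$ with $f$ as coevaluation, by converting $f$ into a map $\tilde f \colon D_R(a_2) \to b_2$ and proving this is an equivalence via a ladder of fiber sequences. First, I would observe that since $\Mod_R$ is a stable symmetric monoidal $\infty$-category and $a_1, a_3$ are dualizable (being halves of dual pairs), the fiber sequence $a_1 \xrightarrow{\alpha} a_2 \xrightarrow{\beta} a_3$ forces $a_2$ to be dualizable as well, and the dual functor yields a fiber sequence
\[
D_R(a_3) \xrightarrow{D_R(\beta)} D_R(a_2) \xrightarrow{D_R(\alpha)} D_R(a_1).
\]
In particular, each $a_i$ comes with its canonical evaluation $\ev_{a_i} \colon D_R(a_i) \otimes_R a_i \to R$ and coevaluation $\coev_{a_i} \colon R \to a_i \otimes_R D_R(a_i)$.

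Next, I would transform the given maps into candidate equivalences. Setting $f_2 := f$, $f_1 := \coev_1$, and $f_3 := \coev_3$, define
\[
\tilde f_i \colon D_R(a_i) \xrightarrow{\id \otimes f_i} D_R(a_i) \otimes_R a_i \otimes_R b_i \xrightarrow{\ev_{a_i} \otimes \id} b_i,
\]
so that $f_i = (\id_{a_i} \otimes \tilde f_i) \circ \coev_{a_i}$. By the universal property of duality, a map $g \colon R \to a_i \otimes_R b_i$ exhibits $(a_i, b_i)$ as a dual pair with coevaluation $g$ if and only if the associated $\tilde g \colon D_R(a_i) \to b_i$ is an equivalence; hence $\tilde f_1$ and $\tilde f_3$ are equivalences by hypothesis, and it suffices to show $\tilde f$ is too.

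The key step is to verify commutativity of the ladder
\[
\xymatrix{
D_R(a_3) \ar[r]^-{D_R(\beta)} \ar[d]^-{\tilde f_3}_-{\simeq} & D_R(a_2) \ar[r]^-{D_R(\alpha)} \ar[d]^-{\tilde f} & D_R(a_1) \ar[d]^-{\tilde f_1}_-{\simeq} \\
b_3 \ar[r]^-{\delta} & b_2 \ar[r]^-{\gamma} & b_1
}
\]
The right square reduces, via the naturality identity $\ev_{a_1} \circ (D_R(\alpha) \otimes \id_{a_1}) = \ev_{a_2} \circ (\id_{D_R(a_2)} \otimes \alpha)$, to the compatibility of the left half of \eqref{diag_dualitylem}, namely $(\alpha \otimes \id_{b_1}) \circ \coev_1 = (\id_{a_2} \otimes \gamma) \circ f$. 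The left square is analogous, using the compatibility of the right half and the naturality of $\ev$ with respect to $\beta$. Once this ladder commutes, both rows being fiber sequences and the outer vertical maps being equivalences force $\tilde f$ to be an equivalence; hence $(a_2, b_2)$ is a dual pair with $f$ serving as coevaluation.

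I do not anticipate a serious conceptual obstacle: the content reduces to a routine diagram chase of the middle ladder, using only the naturality of the canonical evaluation maps and the two compatibility hypotheses. The only minor care required is phrasing everything coherently in the $\infty$-categorical setting, but since being an equivalence is detected on the homotopy category and the hypotheses supply the homotopies making the compatibility squares commute, the argument goes through as stated.
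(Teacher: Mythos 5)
Your proof is correct and follows essentially the same approach as the paper: derive dualizability of the middle term from the fiber sequence, transpose the (co)evaluation maps across the duality adjunction to get a ladder of fiber sequences with equivalences on the outer verticals, check that the compatibility hypotheses are exactly the commutativity of the two squares, and conclude the middle vertical is an equivalence. The only cosmetic difference is that the paper dualizes the $b_i$'s (using dualizability of $b_1, b_3$ and the maps $\coev_i' \colon D_R(b_i) \to a_i$) whereas you dualize the $a_i$'s; this is a trivial symmetry of the argument.
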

\begin{proof}
    Since we know that $b_1$ and $b_3$ are dualizable and we have a fiber sequence \eqref{bbb}, we conclude that $b_2$ is also dualizable. For dualizable objects $x$, let us denote its dual by $D_R(x)$ and use notation $\Hom_{\Mod_R}(1, x \otimes_R y) \simeq \Hom_{\Mod_R}(D_R(x), y), \ g \mapsto g' $. 
    Then the compatibility of the diagram \eqref{diag_dualitylem} is equivalent to the commutativity of the following diagram. 
    \begin{align}
        \xymatrix{
        D_R(b_1) \ar[r] \ar[d]^-{\simeq}_{\coev_1'} & D_R(b_2) \ar[r] \ar[d]_{f'} &D_R(b_3) \ar[d]^-{\simeq}_{\coev_3'} \\
        a_1 \ar[r] & a_2 \ar[r] & a_3
        }
    \end{align}
    Since the rows are fiber sequences and $\coev_i'$ are equivalences for $i=1, 3$, we see that $f'$ is an equivalence, exhibiting the duality between $a_2$ and $b_2$. This completes the proof of Lemma \ref{lem_duality}. 
\end{proof}

\section{Applications}\label{sec_application}
In the Introduction, we explained why we can expect our genuinely equivariant topological elliptic genera to be more interesting than the classical elliptic genera. 
In this section, we give examples to show that all the expected interesting phenomena listed there indeed happen. 

\subsection{The first interesting example: the detection of the $2$-torsion in $\pi_{5}MSp$}\label{subsec_pi5MSp}
First, we give the easiest example which illustrates the various interesting aspects of our topological elliptic genera. Specifically, we construct an example which simultaneously realizes the following items in the Introduction: (1) detecting torsions, (3) detecting unstable elements, and (4) detecting the difference between $Sp$ and $SU$. 
%We deal with a family of $2$-torsion elements in $\pi_{8k-3}MSp$ constructed by Alexander \cite{AlexanderIndecomposable}. We start by explaining the case $k=1$ in detail.

The manifold we consider is the standard generator $\mu_1$ of $\pi_5 MSp$ \cite{ray1972symplectic} \cite{AlexanderIndecomposable}. It is represented by a $5$-sphere $S^5$ equipped with a nontrivial tangential $Sp(1)=SU(2)$-structure as follows.   
Recall that we have $\pi_5 BSp(1) \simeq \Z/2$. Take a representative $P \colon S^5 \to BSp(1)$ in the nontrivial class. We know that the composition
$J \circ P \colon S^5 \to BO$ is nullhomotopic, with a unique nullhomotopy $\psi$ up to homotopy. We trivialize the stable tangent bundle of $S^5$ in the usual way, so that the triple $(S^5, P, \psi)$ is a tangential $Sp(1)=SU(2)$-manifold. 

\begin{defn}
	We define $\widetilde{\mu}_1 \in \pi_5 MTSp(1) = \pi_5 MTSU(2)$ to be the element represented by the triple $(S^5, P, \psi)$ above. 
\end{defn}

Let us consider the following commutative diagram. 

\begin{align}\label{diag_mu1}
	\xymatrix@C=6em{
	MSU \ar[d]^-{\Jac_{U(1)_\infty}} & MTSU(2) \ar[l]_-{\stab_{MTSU}} \ar[d]^-{\Jac_{U(1)_2}} \ar@{=}[r] & MTSp(1) \ar[d]^-{\Jac_{Sp(1)_1}} \ar[r]^-{\stab_{MTSp}}  & MSp \ar[d]^-{\Jac_{Sp(1)_\infty}}  \\
	\TJF_\infty & \TJF_2 \ar[l]^-{\stab_\TJF}  & \TEJF_2 \ar[l]^-{\res_{Sp(1)}^{U(1)}}_-{\simeq \  \mbox{\tiny by Cor.}\ref{cor_TEJF2=TJF2}} \ar[r]_-{\stab_{\TEJF}} & \TEJF_\infty
	}
\end{align}

Here, four of the horizontal arrows are stabilization maps in the internal structure of the trio. 
Let us investigate the images of $\widetilde{\mu}_1$ in the $\pi_5$ of each component of the diagram \eqref{diag_mu1}. 
This element is known to show an interesting behavior in the bordism groups in the upper row, as follows. 

\begin{fact}[{Bordism classes of images of $\widetilde{\mu_1}$}]\label{fact_mu1_bordism}
	\begin{align}
			\widetilde{\mu}_1 \neq 0 &\in \pi_5 MTSp(1) = \pi_5 MTSU(2) \simeq \Z/2, \\
			\mu_1 :=	\stab_{MTSp} (\widetilde{\mu}_1) \neq 0 &\in \pi_5 MSp \simeq \Z/2, \\
					\stab_{MTSU} (\widetilde{\mu}_1) = 0 &\in \pi_5 MSU = 0
	\end{align}

\end{fact}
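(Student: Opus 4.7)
The plan is to reduce each of the three assertions to a classical bordism computation combined with the structural fiber sequences developed in Section \ref{subsubsec_internal}. I would begin by invoking the classical facts $\pi_5 MSU = 0$ and $\pi_5 MSp \simeq \Z/2$, with the latter generated by Ray's class $\mu_1$ \cite{ray1972symplectic}. The third assertion $\stab_{MTSU}(\widetilde{\mu}_1) = 0$ is then immediate, since its codomain vanishes.

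Next, I would compute $\pi_5 MTSp(1)$ via the stabilization-restriction fiber sequence of Proposition \ref{prop_stab_res_MT} with $\cH = Sp$, $i = j = 1$, $N = 4$, namely
\[
    S \longrightarrow MTSp(1) \xrightarrow{\chi(V_{Sp(1)}) \cdot } \Sigma^\infty BSp(1)_+[4].
\]
Using the stable splitting $\Sigma^\infty BSp(1)_+ \simeq \Sigma^\infty BSp(1) \vee S$ together with the $3$-connectedness of $BSp(1) \simeq \HP^\infty$, we obtain $\pi_1 \Sigma^\infty BSp(1)_+ \simeq \pi_1 S \simeq \Z/2$, generated by $\eta$. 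Since $\pi_5 S = \pi_4 S = 0$, the induced long exact sequence isolates $\pi_5 MTSp(1) \simeq \Z/2$.

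Then I would identify $\stab_{MTSp}(\widetilde{\mu}_1) \in \pi_5 MSp$ with Ray's $\mu_1$: both are represented by the pair $(S^5, P)$, where $P \to S^5$ is the nontrivial principal $Sp(1)$-bundle classified by the generator of $\pi_5 BSp(1) \simeq \pi_4 S^3 \simeq \Z/2$, equipped with the canonical stable $Sp$-structure. This yields $\stab_{MTSp}(\widetilde{\mu}_1) = \mu_1 \neq 0$, which in particular forces $\widetilde{\mu}_1 \neq 0$ in $\pi_5 MTSp(1)$.

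As a consistency check, and also as an alternative route to the nontriviality of $\widetilde{\mu}_1$, I would apply Proposition \ref{prop_geom_res} to compute the image $\chi(V_{Sp(1)}) \cdot \widetilde{\mu}_1 \in \pi_1 \Sigma^\infty BSp(1)_+$ directly: a generic section of $P \times_{Sp(1)} V_{Sp(1)}$ over $S^5$ cuts out a stably framed $1$-manifold that should represent the Hopf generator $\eta$. The main subtlety lies precisely in this matching step --- aligning the geometric construction of $\widetilde{\mu}_1$ with Ray's $\mu_1$ (or equivalently with $\eta$) at the level of stable $Sp$-bordism. This is ultimately a matter of carefully comparing conventions, but it is the step that will require the most care.
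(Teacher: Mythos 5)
The paper records Fact \ref{fact_mu1_bordism} as a citation of the classical literature on symplectic cobordism and gives no proof, so your argument supplies the derivation the paper omits. It is correct: the third assertion reduces to $\pi_5 MSU = 0$, the second to the classical identification of the generator by Ray \cite{ray1972symplectic}, and the computation $\pi_5 MTSp(1) \simeq \Z/2$ follows cleanly from the stabilization--restriction fiber sequence $S \to MTSp(1) \to \Sigma^\infty BSp(1)_+[4]$ of Proposition \ref{prop_stab_res_MT}, using $\pi_4 S = \pi_5 S = 0$ together with the $3$-connectedness of $BSp(1)$. One remark: the caveat you raise at the end --- aligning $(S^5, P, \psi)$ with Ray's representative --- is already dissolved by the paper's setup, which observes that the nullhomotopy $\psi$ of $J \circ P$ is unique up to homotopy (as $\pi_5 O = 0$), so $\widetilde{\mu}_1$ is unambiguously defined and coincides with Ray's class. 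Your alternative route via Proposition \ref{prop_geom_res}, computing the restriction as a framed zero-locus of a generic section, is consistent with what the paper does later in the proof of Proposition \ref{prop_mu1_JacSp}, where the same identification is confirmed via Alexander's computation of the relevant Landweber--Novikov operation on $\mu_1$ \cite{AlexanderIndecomposable}.
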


The goal of this subsection is to show that all the vertical arrows in \eqref{diag_mu1} are injective, so that the topological elliptic genera exactly detect this behavior (Proposition \ref{prop_mu1_JacSp} and Corollary \ref{cor_mu1}). 
First, the bottom row of \eqref{diag_mu1} is understood as follows. 

\begin{prop}\label{prop_mu1_codom}
	\begin{enumerate}
		\item The following restriction map is an isomorphism. 
		\begin{align}\label{eq_pi5TEJF2}
		\res_{Sp(1)}^{e} \colon \pi_5 \TEJF_2 \to  \pi_1 \TMF = \eta \cdot \pi_0 \MF/(2\eta). 
		\end{align}
		In this subsection, we denote by $\widehat{\eta} \in \pi_5 \TEJF_2$ the unique element that maps to $\eta$ under the isomorphism \eqref{eq_pi5TEJF2}. 
		\item We have
		\begin{align}
	 \widehat{\eta} \notin \ker\left( 	\stab_{\TEJF} \colon \pi_5 \TEJF_2 \to \pi_5 \TEJF_\infty \right) . 
		\end{align}
	\item We have $\pi_5 \TJF_\infty = 0$. 
		\end{enumerate}
\end{prop}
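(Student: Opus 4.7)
The plan for Part (1) is to apply the long exact sequence in homotopy to the stabilization-restriction fiber sequence of Example \ref{ex_TEJF_building},
\begin{align*}
\TEJF_0 \xrightarrow{\stab} \TEJF_2 \xrightarrow{\res_{Sp(1)}^e} \TMF[4].
\end{align*}
By Fact \ref{factSU}(1) we have $\TEJF_0 \simeq \TMF$, and the classical low-degree computation gives $\pi_4\TMF = \pi_5\TMF = 0$. The relevant segment of the LES then reads $0 \to \pi_5\TEJF_2 \to \pi_1\TMF \to 0$, yielding the asserted isomorphism and pinning down $\widehat\eta$.

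For Part (2), I would invoke the identification $\TEJF_{2k} \simeq \TMF \otimes \HP^{k+1}[-4]$ from Proposition \ref{prop_Sp(1)}, under which the stabilization $\TEJF_{2k} \to \TEJF_{2k+2}$ corresponds to the inclusion $\HP^{k+1} \hookrightarrow \HP^{k+2}$. Passing to the colimit gives $\TEJF_\infty \simeq \TMF \otimes \HP^\infty[-4]$, and the cofiber of $\stab_{\TEJF}\colon \TEJF_2 \to \TEJF_\infty$ is $\TMF \otimes (\HP^\infty/\HP^2)[-4]$. Since the lowest cell of $\HP^\infty/\HP^2$ sits in dimension $12$, this cofiber is $7$-connected, so both of its $\pi_5$ and $\pi_6$ vanish. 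The LES then forces $\pi_5\TEJF_2 \to \pi_5\TEJF_\infty$ to be injective, so $\widehat\eta$ survives stabilization.

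For Part (3), the strategy is to iterate the LES from the building fiber sequence \eqref{seq_res_stab_TJF}, $\TJF_{k-1} \to \TJF_k \to \TMF[2k]$, starting from $\pi_5\TJF_1 = \pi_5\TMF = 0$. The $k = 2$ case gives $\pi_5\TJF_2 \cong \pi_1\TMF = \Z/2$, analogous to Part (1). At $k = 3$ the LES reads
\begin{align*}
\pi_0\TMF \xrightarrow{\delta} \pi_5\TJF_2 \to \pi_5\TJF_3 \to \pi_{-1}\TMF = 0,
\end{align*}
and by Proposition \ref{prop_selfdual_stabres} the connecting map $\delta$ is a component of the transfer $\tr_e^{U(1)}$; showing $\delta$ is surjective yields $\pi_5\TJF_3 = 0$. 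For $k \geq 4$, both $\pi_5\TMF[2k] = \pi_{5-2k}\TMF$ and $\pi_6\TMF[2k] = \pi_{6-2k}\TMF$ vanish (since $\pi_n\TMF = 0$ for $n \in \{-1,\dots,-20\}$), so $\pi_5\TJF_k$ is constant for $k \geq 3$, giving $\pi_5\TJF_\infty = \pi_5\TJF_3 = 0$.

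The main technical obstacle is the surjectivity of $\delta$ in Part (3); this is a concrete transfer-map computation which I expect to follow either from the explicit cell-decomposition of $\TJF$ presented in Appendix \ref{app_TJF}, or alternatively by tracing the generator of $\pi_5\TJF_2$ through the identification $\TEJF_2 \simeq \TJF_2$ of Corollary \ref{cor_TEJF2=TJF2} and exploiting the cellular description of $\TEJF$ from Proposition \ref{prop_Sp(1)} to identify the relevant attaching map.
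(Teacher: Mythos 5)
Your Part (1) is fine and is essentially the paper's argument: the fiber sequence $\TEJF_0 \to \TEJF_2 \to \TMF[4]$ is the paper's identification $\TEJF_2 \simeq \TMF/\nu$, and the LES with $\pi_4\TMF = \pi_5\TMF = 0$ gives the isomorphism.

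Parts (2) and (3) both contain the same genuine gap: you treat $\TMF$ as if it were connective, and it is not. In Part (2) you argue that $\TMF\otimes(\HP^\infty/\HP^2)[-4]$ is $7$-connected because the lowest cell of $\HP^\infty/\HP^2[-4]$ is in dimension $8$. That conclusion is false. Since $\TMF$ is $576$-periodic, $\pi_q\TMF$ is nonzero for $q$ arbitrarily negative, so the Atiyah--Hirzebruch spectral sequence for $\TMF\otimes(\HP^\infty/\HP^2)[-4]$ has potential contributions to $\pi_5$ and $\pi_6$ from cells in high dimension paired against $\pi_q\TMF$ with $q \ll 0$, and one cannot conclude that those groups vanish by a connectivity count alone. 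The paper sidesteps this by inserting the connective cover $\tmf$: one forms the parallel tower $\tmf\otimes\HP^{k+1}[-4]$, where $\tmf$ \emph{is} connective so the inclusion $\HP^2\hookrightarrow\HP^\infty$ being $10$-connected really does force an isomorphism on $\pi_5$, and then one uses that the comparison map $\pi_1\tmf\to\pi_1\TMF$ is injective (it is an isomorphism on $\Z/2\cdot\eta$) to transport the nonvanishing back to $\TMF$. That transfer through $\tmf$ is exactly what your proof is missing.

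Part (3) has the identical issue in a different guise. Your iterated LES argument requires $\pi_{5-2k}\TMF = 0$ for every $k\ge 4$ to propagate the vanishing up the tower. But the gap in $\pi_*\TMF$ in negative degrees is finite (roughly $-20\le n\le -1$), again by periodicity, so the hypothesis fails once $5-2k$ drops below the gap (around $k=13$), and you cannot conclude that $\pi_5\TJF_k$ stays zero, let alone that the colimit $\pi_5\TJF_\infty$ does. Your identification of the $k=3$ step (the surjectivity of $\delta$, equivalently the fact that the attaching element $x(3)=\widehat\eta$ is nonzero) is correct and matches what the paper reads off from the cell decomposition $\TJF_3 \simeq \TMF\otimes(S^0\cup_\nu S^4\cup_\eta S^6)$. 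To finish the argument one again needs to stabilize in a connective setting (i.e., over $\tmf$ via the $P_k$-cell structures of Fact \ref{fact_TJF_cellstr}) rather than directly over the periodic $\TMF$; a naive induction over $k$ inside $\TMF$-modules does not close.
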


\begin{proof}
(1) follows from $\TEJF_2 \simeq \TMF/\nu$ in \eqref{eq_TEJF2_app}. 
	For (2), we use Proposition \ref{prop_Sp(1)} in Appendix which gives an identification
	\begin{align}
		\TEJF_{2k} \simeq \TMF \otimes \HP^{k+1}[-4], \quad \TEJF_\infty \simeq \TMF \otimes \HP^\infty[-4],  
	\end{align}
	By this identification, the stabilization map $\stab_\TEJF \colon \TEJF_{2k}  \to \TEJF_\infty$ corresponds to the map induced by the inclusion $i \colon \HP^{k+1} \hookrightarrow \HP^\infty$. 
	Consider the following commutative diagram. 
	\begin{align}\label{diag_tejf_TEJF}
	\xymatrix{
	\eta \cdot \pi_0 \MF/(2\eta) =	\pi_1 \TMF && \pi_5 \TEJF_2 \ar[ll]_-{\res_{Sp(1)}^e} ^-{\simeq} \ar[rr]^-{\stab_\TEJF}&& \pi_5 \TEJF_\infty \\
		\Z \eta /(2\eta) =	\pi_1 \tmf \ar@{^{(}->}[u]^-{[\Delta^{-24}]}  & & \pi_5 \tmf \otimes \HP^2[-4] \ar[ll]^-{(\HP^{2} \to S^8)_*}^-{\simeq} \ar[rr]^-{(\HP^2 \hookrightarrow \HP^\infty)_*}_-{\simeq} \ar@{^{(}->}[u]^-{[\Delta^{-24}]}   &&  \pi_5 \tmf \otimes \HP^{\infty}[-4]\ar@{^{(}->}[u]^-{[\Delta^{-24}]}  \\
	}
	\end{align}
	Here, the top left horizontal arrow is an isomorphism by (1) of this proposition, proved above. So the bottom left horizontal arrow is also an isomorphism. Moreover, the bottom right horizontal arrow is also an isomorphism, because the map $\HP^2 \hookrightarrow \HP^{\infty}$ is $10$-connected. The vertical arrows are injective. 
By this diagram and the definition of $\widehat{\eta}\in\pi_5 \TEJF_2$, we get (2). 
(3) follows from $\pi_5 \TJF_3 = 0$ which is easily checked by \eqref{eq_TJF_3}. This completes the proof of Proposition \ref{prop_mu1_codom}. 
\end{proof}

We can now specify the images of $\widetilde{\mu}_1$ in the bottom row of \eqref{diag_mu1}. 

\begin{prop}\label{prop_mu1_JacSp}
	We have
	\begin{align}\label{eq_prop_mu1_Sp1}
		\Jac_{Sp(1)_1}(\widetilde{\mu}_1) = \widehat{\eta}  \in   \pi_5 \TEJF_2 \simeq  \widehat{\eta}\cdot \pi_0 \MF/(2 \widehat{\eta}). 
	\end{align}
\end{prop}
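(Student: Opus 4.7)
The plan is to exploit the commutative square from Corollary~\ref{cor_Jac_res_Euler}, whose first assertion holds as a diagram of spectra independently of the ``strict structure'' hypothesis used in its Euler-number assertion. Specializing that diagram to $(\cG,\cH)=(Sp,Sp)$ and $k=1$ and applying $\pi_5$ yields
\begin{align*}
\xymatrix{
\pi_5 MTSp(1) \ar[r]^-{\Jac_{Sp(1)_1}} \ar[d]^-{\chi(V_{Sp(1)})\cdot} & \pi_5 \TEJF_2 \ar[d]^-{\res_{Sp(1)}^e}_-{\simeq} \\
\pi_1 S \ar[r]^-u & \pi_1 \TMF,
}
\end{align*}
where the right vertical is the isomorphism of Proposition~\ref{prop_mu1_codom}(1) and $u(\eta)$ is the chosen generator $\eta \in \pi_1 \TMF$. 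It therefore suffices to prove $\chi(V_{Sp(1)})\cdot\widetilde\mu_1 = \eta \in \pi_1 S$.

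For that I would invoke the stabilization-restriction fiber sequence of Proposition~\ref{prop_stab_res_MT} with $\cK=Sp$, $i=1$, $j=1$. Since $MT(Sp(0),0) \simeq S$, it reads
\begin{align*}
S \longrightarrow MTSp(1) \xrightarrow{\chi(V_{Sp(1)})\cdot} \Sigma^\infty BSp(1)_+[4].
\end{align*}
The relevant piece of its long exact sequence in homotopy is
\begin{align*}
\pi_5 S \longrightarrow \pi_5 MTSp(1) \longrightarrow \pi_1 \Sigma^\infty BSp(1)_+ \longrightarrow \pi_4 S,
\end{align*}
and $\pi_4 S = \pi_5 S = 0$ forces the middle map to be an isomorphism. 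Because $BSp(1)=\HP^\infty$ is $3$-connected, the augmentation identifies $\pi_1 \Sigma^\infty BSp(1)_+ \simeq \pi_1 S$, compatibly with the ``$S[4]$'' appearing in the corollary through Example~\ref{ex_res_MTH}. Hence the nontrivial generator $\widetilde\mu_1 \in \pi_5 MTSp(1) \simeq \Z/2$ (Fact~\ref{fact_mu1_bordism}) is sent to the generator $\eta \in \pi_1 S \simeq \Z/2$.

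Combining, $\res_{Sp(1)}^e\bigl(\Jac_{Sp(1)_1}(\widetilde\mu_1)\bigr) = u(\eta) = \eta$, and inverting the isomorphism $\res_{Sp(1)}^e$ yields $\Jac_{Sp(1)_1}(\widetilde\mu_1) = \widehat\eta$, establishing \eqref{eq_prop_mu1_Sp1}. I do not foresee a serious obstacle: the argument is essentially a diagram chase powered by the vanishing of $\pi_4 S$ and $\pi_5 S$ together with the $3$-connectedness of $BSp(1)$; no input beyond Corollary~\ref{cor_Jac_res_Euler} and Proposition~\ref{prop_stab_res_MT} is required.
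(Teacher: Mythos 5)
Your proposal is correct, and it replaces the one genuinely substantive step of the paper's proof with a cleaner, more self-contained argument. Both proofs reduce, via the commutative square of Corollary~\ref{cor_Jac_res_Euler} and the isomorphism $\res_{Sp(1)}^e$ of Proposition~\ref{prop_mu1_codom}(1), to showing that the left vertical $\chi(V_{Sp(1)})\cdot\colon\pi_5 MTSp(1)\to\pi_1 S$ sends $\widetilde\mu_1$ to $\eta$. The paper handles this either by appealing to the geometric description of Proposition~\ref{prop_geom_res} or by invoking Claim~\ref{claim_lf_vs_euler} together with Alexander's classical computation that the corresponding Landweber--Novikov operation sends $\mu_1$ to $\eta\in\pi_1 MSp$. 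Your argument instead notes that $\chi(V_{Sp(1)})\cdot$ sits in the stabilization--restriction fiber sequence $S \to MTSp(1) \to \Sigma^\infty BSp(1)_+[4]$ from Proposition~\ref{prop_stab_res_MT}, and that $\pi_4 S = \pi_5 S = 0$ together with the $3$-connectedness of $BSp(1)=\HP^\infty$ (so $\pi_1\Sigma^\infty BSp(1)_+ \simeq \pi_1 S$ via the augmentation) forces $\pi_5 MTSp(1)\to\pi_1 S$ to be an isomorphism of $\Z/2$'s; since $\widetilde\mu_1$ is nontrivial by Fact~\ref{fact_mu1_bordism}, it lands on $\eta$. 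What this buys: you avoid quoting Alexander's Landweber--Novikov identity and the somewhat informal geometric description, reducing the key input to low-dimensional stable stems and the connectivity of $BSp(1)$. As a bonus, your argument recovers $\pi_5 MTSp(1)\simeq\Z/2$ for free rather than taking it from Fact~\ref{fact_mu1_bordism}; you only use the nonvanishing of $\widetilde\mu_1$. You are also correct that the first (diagrammatic) assertion of Corollary~\ref{cor_Jac_res_Euler} does not require the strictness hypothesis, which is only needed for its Euler-number consequence.
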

\begin{proof}
We use Corollary \ref{cor_Jac_res_Euler}, which gives us the commutative diagram
\begin{align}
	\xymatrix{
		\pi_5	MTSp(1) \simeq \Z\widetilde{\mu}_1/(2 \widetilde{\mu}_1) \ar[rr]^-{\Jac_{Sp(1)_1}}  \ar[d]^-{\res = \chi(V_{Sp(1)}) \cdot}_-{\eqref{eq_res_MTG_j=1}}&& \pi_5 \TEJF_2 = \widehat{\eta}\cdot \pi_0 \MF/(2\widehat{\eta})  \ar[d]_-{\simeq}^-{\res_{Sp(1)}^e \colon \widehat{\eta} \mapsto \eta } \\
		\pi_1	S \simeq \Z\eta/(2\eta) \ar@{>->}[rr]^-{u} && \pi_1 \TMF = \eta \cdot \pi_0 \MF/(2\eta)\\
	}
\end{align}
The right vertical arrow is an isomorphism by Proposition \ref{prop_mu1_codom} (1), and maps $\widehat{\eta}$ to $\eta$. The isomorphism in the upper left corner used Fact \ref{fact_mu1_bordism}. 
Furthermore, we claim that the left vertical arrow maps $\widetilde{\mu}_1$ to $\eta \in \pi_1 S$: this is not difficult to prove directly using Proposition \ref{prop_geom_res}, but we may also use Claim \ref{claim_lf_vs_euler} and the classical result in \cite{AlexanderIndecomposable} that the corresponding Landweber-Novikov operation applied to $\mu_1 = \stab_{MTSp}(\widetilde{\mu}_1) \in \pi_5 MSp$ is the element $\eta \in \pi_1 MSp$. 
This means that the left vertical arrow is an isomorphism. This completes the proof of Proposition \ref{prop_mu1_JacSp}. 
\end{proof}

\begin{cor}\label{cor_mu1}
	We have
	\begin{align}
		\Jac_{U(1)_2}(\widetilde{\mu}_1) = \widehat{\eta} \neq 0  &\in   \pi_5 \TJF_2 \simeq  \widehat{\eta}\cdot \pi_0 \MF /(2 \widehat{\eta}), \\
		\Jac_{Sp(1)_\infty} (\mu_1) = \stab_\TEJF(\widehat{\eta}) \neq 0   &\in \pi_5 \TEJF_\infty , \\
	\Jac_{U(1)_\infty}  \circ	\stab_{MTSU} (\widetilde{\mu}_1) = 0 &\in \pi_5 \TJF_\infty = 0
	\end{align}
	In particular, all the vertical arrows of diagram \ref{diag_mu1} are injective. 
\end{cor}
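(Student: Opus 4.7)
The plan is to assemble the corollary from its three statements, each of which reduces to combining Proposition~\ref{prop_mu1_JacSp} with a compatibility statement from Section~\ref{sec_jacobi}, together with Proposition~\ref{prop_mu1_codom}.

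First I would establish statement (1), namely the identification $\Jac_{U(1)_2}(\widetilde{\mu}_1) = \widehat{\eta}$ in $\pi_5 \TJF_2$. Apply the external-structure compatibility in Proposition~\ref{prop_Jac_compatibility_external} to the identity $MTSp(1) = MTSU(2)$: this yields $\Jac_{U(1)_2}(\widetilde{\mu}_1) = \res^{U(1)}_{Sp(1)}\bigl(\Jac_{Sp(1)_1}(\widetilde{\mu}_1)\bigr) = \res^{U(1)}_{Sp(1)}(\widehat{\eta})$ by Proposition~\ref{prop_mu1_JacSp}. Since the map $\res^{U(1)}_{Sp(1)}\colon \TEJF_2 \to \TJF_2$ is the equivalence from Corollary~\ref{cor_TEJF2=TJF2} (noted in diagram~\eqref{diag_mu1}), the image $\widehat{\eta}\in\pi_5\TJF_2$ is nonzero, and the identification of $\pi_5\TJF_2 \simeq \widehat{\eta}\cdot\pi_0\MF/(2\widehat{\eta})$ follows from transporting the description of $\pi_5\TEJF_2$ in Proposition~\ref{prop_mu1_codom}(1).

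Next, for statement (2), I would use the internal-structure compatibility of Proposition~\ref{prop_Jac_different_nk}, which gives $\Jac_{Sp(1)_\infty}(\mu_1) = \Jac_{Sp(1)_\infty}\circ\stab_{MTSp}(\widetilde{\mu}_1) = \stab_{\TEJF}\circ\Jac_{Sp(1)_1}(\widetilde{\mu}_1) = \stab_{\TEJF}(\widehat{\eta})$, using Proposition~\ref{prop_mu1_JacSp} once more. The non-vanishing of this element is exactly Proposition~\ref{prop_mu1_codom}(2). Statement (3) is then immediate from Proposition~\ref{prop_mu1_codom}(3), which gives $\pi_5\TJF_\infty = 0$.

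Finally, for the injectivity of each vertical arrow in diagram~\eqref{diag_mu1} on $\pi_5$: since $\pi_5 MTSp(1) = \pi_5 MTSU(2) \simeq \Z/2$ is generated by $\widetilde{\mu}_1$ (Fact~\ref{fact_mu1_bordism}) and its image is nonzero by statement (1) (resp.\ Proposition~\ref{prop_mu1_JacSp}), the maps $\Jac_{U(1)_2}$ and $\Jac_{Sp(1)_1}$ are injective on $\pi_5$; likewise $\pi_5 MSp \simeq \Z/2$ is generated by $\mu_1$, whose image is nonzero by statement (2), giving injectivity of $\Jac_{Sp(1)_\infty}$; and $\Jac_{U(1)_\infty}$ is trivially injective on $\pi_5$ since $\pi_5 MSU = 0$. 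There is no serious obstacle in this argument: everything has been packaged into the propositions cited, and the only subtlety is to keep track of how the two compatibility results (external versus internal) both feed off the single computation in Proposition~\ref{prop_mu1_JacSp}.
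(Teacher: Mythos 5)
Your proposal is correct and follows the same route as the paper: combine Proposition~\ref{prop_mu1_JacSp} with the commutativity of diagram~\eqref{diag_mu1} (the squares being instances of Propositions~\ref{prop_Jac_compatibility_external} and~\ref{prop_Jac_different_nk}), use Proposition~\ref{prop_mu1_codom} to read off non-vanishing and vanishing in the codomains, and finish the injectivity claim with Fact~\ref{fact_mu1_bordism}. The paper simply states this compactly; you have spelled out which compatibility result underlies each square, which is exactly the content being invoked.
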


\begin{proof}
	The three equalities follow from Proposition \ref{prop_mu1_JacSp}, commutativity of the diagram \eqref{diag_mu1} and Proposition \ref{prop_mu1_codom}. The last claim uses Fact \ref{fact_mu1_bordism}. 
\end{proof}

\begin{rem}[The update on arXiv:v2]
	In the arXiv version 1 of this article, we claimed the generalization of the results of this subsection to a family of $2$-torsion elements $\mu_k \in \pi_{8k-3} MSp$ claimed to exist by the work of Alexander \cite{AlexanderIndecomposable}. However, we had not been aware that the original paper of Alexander contained an error and a correction was made in \cite{alexander1977correction}. Our previous analysis used the wrong construction of $Sp$-manifolds which actually do not work. Since the generalization was not essential for the main purpose of this paper, we chose to delete our analysis on the torsion elements in $\pi_{8k-3}MSp$. However, the authors believe that it is still interesting to analyze the images of topological elliptic genera for torsion elements in $\pi_* MSp$. We hope to adress it in future works. 
\end{rem}

\subsection{Divisibility constraints for Euler numbers}\label{subsec_divisibility}

In this subsection, we present a major application of our topological elliptic genus: novel divisibility constraints on Euler numbers. 
This corresponds to the items (2) and (5) in the Introduction.
The main result is Theorem \ref{thm_divisibility_constraints_concrete}, the idea behind which is to use the relation with Euler numbers and $U$ and $Sp$-topological elliptic genera as shown in Corollary \ref{cor_Jac_res_Euler} above.

\subsubsection{The divisibility constraints via the topological elliptic genera}\label{subsubsec_divisibility_top}
Recall the stabilization-restriction fiber sequence in Proposition \ref{prop_stab_res_TMF} for $U(1)$ and $Sp(1)$ \eqref{eq_building_seq_TJF}, \eqref{eq_building_seq_TEJF} :
\begin{align}\label{seq_building_trio}
		\xymatrix{
	  \TJF_{k} \ar[r]^-{\res}  &\TMF[2k] \ar[r]^-{x(k) \cdot} & \TJF_{k-1}[1]   \ar[r]^-\stab  & \TJF_k[1]\\
		 	\TEJF_{2k} \ar[r]^-{\res}  & \TMF[4k] \ar[r]^-{y(k) \cdot} &  \TEJF_{2k-2}[1]    \ar[r]^-\stab & \TEJF_{2k}[1]
	}
\end{align}
Here we have defined
\begin{align}\label{eq_xyz}
	x(k) \in \pi_{2k-1}\TJF_{k-1} , \quad y(k) \in \pi_{4k-1} \TEJF_{2k-2}
\end{align}
to be the element that specifies the cofibers of the restriction maps in \eqref{seq_building_trio}. We call them {\it attaching element} (c.f., Examples \ref{ex_TJF_building}, \ref{ex_TEJF_building}). 
Let us introduce the following notations. 
\begin{defn}\label{def_divisibility}
	For each positive integer $k$, define $d_{SU}(k), d_{Sp}(k)\in \Z_{>0} \cup \{\infty\}$ to be the order of the elements $x(k)$ and $y(k)$ in \eqref{eq_xyz}, respectively. 
\end{defn}

The integers $d_{SU}(k)$ and $d_{Sp}(k)$ capture information of the image of the first arrows in \eqref{seq_building_trio} as follows. 
\begin{prop}\label{prop_d_im}
For each positive integer $k$, we have the following. 
\begin{align}
d_{SU}(k) \cdot \Z =\mathrm{im}	\left( \res_{U(1)}^e \colon \pi_{2k}\TJF_{k} \to  \pi_0 \TMF \right) \bigcap \mathrm{im}\left(  u \colon \Z \hookrightarrow \pi_0 \TMF \right),\label{eq_d_im_SU} \\
d_{Sp}(k) \cdot \Z =\mathrm{im}	\left( \res_{Sp(1)}^e \colon \pi_{4k}\TEJF_{2k} \to  \pi_0 \TMF \right) \bigcap \mathrm{im}\left(  u \colon \Z \hookrightarrow \pi_0 \TMF \right). \label{eq_d_im_Sp}
\end{align}
\end{prop}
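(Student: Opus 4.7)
The plan is to deduce both equalities from the long exact sequence in homotopy associated with the stabilization-restriction fiber sequences displayed in \eqref{seq_building_trio}. Since the $Sp$ case is identical to the $SU$ case (with $2k$ replaced by $4k$ and $\TJF_\bullet$ by $\TEJF_{2\bullet}$), I will describe only the $SU$ case.

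First, I would recall that the rotated fiber sequence
\begin{equation*}
	\TJF_k \xrightarrow{\res} \TMF[2k] \xrightarrow{x(k)\cdot} \TJF_{k-1}[1]
\end{equation*}
exhibits $x(k) \in \pi_{2k-1}\TJF_{k-1}$ as the connecting map: concretely, $x(k)\cdot$ is a map of $\TMF$-module spectra $\TMF[2k] \to \TJF_{k-1}[1]$, and by the canonical adjunction this morphism is the same datum as an element of $\pi_{2k-1}\TJF_{k-1}$, namely $x(k)$ itself. More importantly, the induced map on $\pi_{2k}$ is precisely the $\TMF$-linear multiplication by $x(k)$,
\begin{equation*}
	x(k)\cdot \colon \pi_0 \TMF \longrightarrow \pi_{2k-1}\TJF_{k-1}, \qquad a \longmapsto a \cdot x(k).
\end{equation*}

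Applying $\pi_{2k}$ to the fiber sequence then yields the exact sequence
\begin{equation*}
	\pi_{2k}\TJF_k \xrightarrow{\res_{U(1)}^e} \pi_0 \TMF \xrightarrow{x(k)\cdot} \pi_{2k-1}\TJF_{k-1},
\end{equation*}
so $\mathrm{im}(\res_{U(1)}^e) = \ker(x(k)\cdot)$. Intersecting with the image of the unit $u \colon \Z \hookrightarrow \pi_0 \TMF$ (the latter inclusion being well known), we see that an integer $n$ lies in $\mathrm{im}(\res_{U(1)}^e) \cap \mathrm{im}(u)$ if and only if $n \cdot x(k) = 0$ in $\pi_{2k-1}\TJF_{k-1}$, which by Definition \ref{def_divisibility} is equivalent to $d_{SU}(k) \mid n$. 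This yields \eqref{eq_d_im_SU}. The same argument with the second fiber sequence in \eqref{seq_building_trio} gives \eqref{eq_d_im_Sp}.

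There is essentially no obstacle here; the only point requiring care is the identification of the connecting map in the long exact sequence with $\TMF$-module multiplication by $x(k)$, but this is an instance of the standard fact that for a $\TMF$-linear attaching map $\alpha \colon \TMF[n] \to E$ with cofiber $C$, the boundary $\pi_n C \to \pi_{n-1}E$ in the fiber sequence equals multiplication by the image of $1 \in \pi_0\TMF$ under $\alpha_*$.
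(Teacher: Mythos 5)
Your proof is correct and takes the same route as the paper, which simply invokes the fiber sequences in \eqref{seq_building_trio} and leaves the long-exact-sequence argument implicit; you have filled in exactly the details the paper omits (identifying the connecting map with multiplication by $x(k)$, extracting $\mathrm{im}(\res) = \ker(x(k)\cdot)$ on $\pi_{2k}$, and intersecting with $\mathrm{im}(u)$ to recover the order $d_{SU}(k)$).
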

\begin{proof}
	This is a direct consequence of the fact that the sequences in \eqref{seq_building_trio} are fiber sequences. 
\end{proof}

The following is the first main result of this subsection. 

\begin{thm}[{Genuine divisibility constraints of the Euler numbers}]\label{thm_divisibility_constraints_genuine}
	Let $k$ be any positive integer. 
	\begin{enumerate}
		\item For any closed manifold $M$ which admits a \emph{strict} tangential $SU(k)$-structure (Definition \ref{def_strict_str}; in particular we necessarily have $\dim_\R M = 2k$), its Euler number $\Eul(M)$ is divisible by $d_{SU}(k)$. 
		\item For any closed manifold $M$ which admits a \emph{strict} tangential $Sp(k)$-structure (so that $\dim_\R M = 4k$), $\Eul(M)$ is divisible by $d_{Sp}(k)$. 
		\end{enumerate}
\end{thm}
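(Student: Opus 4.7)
The plan is to combine two ingredients already in place: Corollary~\ref{cor_Jac_res_Euler}, which identifies the Euler number of a strictly tangentially $\cH(k)$-structured closed manifold with the image of its Pontryagin-Thom class under the composition $\res_{G(1)}^{e} \circ \Jac_{G(1)_k}$; and Proposition~\ref{prop_d_im}, which computes the intersection of the image of $\res_{G(1)}^{e}$ with the integers (embedded via the unit $u \colon \Z \hookrightarrow \pi_0\TMF$) as $d_{SU}(k)\cdot \Z$ or $d_{Sp}(k)\cdot \Z$, respectively.

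For part (1), I would proceed as follows. Given a closed $M$ with a strict tangential $SU(k)$-structure $\psi$ (so $\dim_\R M = 2k$), the Pontryagin-Thom construction (Fact~\ref{fact_PT}) produces a class $[M,\psi] \in \pi_{2k} MTSU(k)$. Applying Corollary~\ref{cor_Jac_res_Euler} (with $(\cG,\cH)=(U,SU)$, $n=1$), the composition
\[
\Omega_{2k}^{SU(k)} \xrightarrow{\PT} \pi_{2k}MTSU(k) \xrightarrow{\Jac_{U(1)_k}} \pi_{2k}\TJF_k \xrightarrow{\res_{U(1)}^{e}} \pi_0\TMF
\]
sends $[M,\psi]$ to $\Eul(M)$ viewed inside $\pi_0\TMF$ via $u$. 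Therefore $\Eul(M)$ lies in
\[
\mathrm{im}\bigl(\res_{U(1)}^{e} \colon \pi_{2k}\TJF_k \to \pi_0\TMF\bigr) \;\cap\; \mathrm{im}\bigl(u \colon \Z \hookrightarrow \pi_0\TMF\bigr),
\]
which by \eqref{eq_d_im_SU} is exactly $d_{SU}(k)\cdot\Z$. This yields $d_{SU}(k)\mid \Eul(M)$. Part (2) is obtained by the identical argument with $U(1)$ replaced by $Sp(1)$, $\TJF_k$ by $\TEJF_{2k}$, $\Jac_{U(1)_k}$ by $\Jac_{Sp(1)_k}$, and \eqref{eq_d_im_SU} by \eqref{eq_d_im_Sp}.

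I do not anticipate any substantive obstacle, since the theorem is essentially a clean assembly of two already-stated facts. The one point that must be handled carefully is the strictness hypothesis: Corollary~\ref{cor_Jac_res_Euler} only identifies the image of $[M,\psi]$ with $\Eul(M)$ when $\psi$ is strict (cf.\ Remark~\ref{rem_strict_Jac}), as the blackboard-versus-strict example for spheres in Remark~\ref{rem_strict_Euler} illustrates. Beyond that caveat, the proof is a one-line diagram chase, and the entire mathematical content has been pushed into the definitions of $d_{SU}(k)$ and $d_{Sp}(k)$ and the fiber sequences \eqref{seq_building_trio} that define them.
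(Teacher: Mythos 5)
Your proof is correct and coincides with the paper's own argument: both combine Corollary~\ref{cor_Jac_res_Euler} with Proposition~\ref{prop_d_im} in the way you describe, including the care taken with the strictness hypothesis. Nothing to add.
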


\begin{proof}
	The proof is exacty the same for both (1) and (2). Let $(\cG, \cH)$ be $(U, SU)$ and $(Sp, Sp)$ for the cases (1) and (2), and set $N = 2, 4$, respectively. Given an $Nk$-dimensional manifold $M$ with a strict tangential $\cH(k)$-structure $\psi$, by Corollary \ref{cor_Jac_res_Euler} we have
	\begin{align}
\Eul(M) =	\res_{G(1)}^e \circ	\Jac_{G(1)_k}[M, \psi]  \in \Z \hookrightarrow \pi_0 \TMF. 
	\end{align}
	In particular, we have
	\begin{align}
		\Eul(M) \in \mathrm{im}\left( \res_{G(1)}^e \colon \pi_{Nk}\TMF[kV_{G(1)}]^{G(1)} \to  \pi_0 \TMF \right) \bigcap \mathrm{im}\left(  u \colon \Z \hookrightarrow \pi_0 \TMF \right) . 
	\end{align}
From this Proposition \ref{prop_d_im}, we get the desired result. 
\end{proof}

This theorem allows us to deduce divisibility constraints of Euler numbers by analyzing the numbers $d_\cH(k)$, which is purely a question about the trio of equivariant TMF. 

T.Bauer has worked out the computation and determined the numbers $d_\Sp(k)$ and $d_\SU(k)$. The results are summarized in Propositions \ref{prop_y(k)_exact} and \ref{prop_x(k)_app}, and the proof will be given in \cite{BauerComputationTJF}. Especially, we have
\begin{align}
	d_\Sp(k)  = \frac{24}{ \gcd(k, 24)}. 
\end{align}
These results rely on the computation using spectral sequences. 
However, actually we can give an elementary proof of the lower estimate (Proposition \ref{prop_y(k)_app})
	\begin{align}\label{eq_dSp}
	\left.	\frac{24}{ \gcd(k, 24)} \right| d_{Sp}(k). 
\end{align}
This estimate is enough for the proof of the divisiblity result in Theorem \ref{thm_divisibility_constraints_concrete} (1). 
In order for the self-containedness, we include the proof of the result \eqref{eq_dSp} in Appendix \ref{subsec_cell_TEJF}. 
%
%For the $Sp$-case, T.Bauer has 
%in the Appendix, Proposition \ref{prop_y(k)_app}, we show that, for any positive integer $k$, we have
%	\begin{align}
%			\left.	\frac{24}{ \gcd(k, 24)} \right| d_{Sp}(k). 
%	\end{align}
%For the $SU$-case, the numbers $d_{SU}(k)$ are completely determined by Proposition \ref{prop_x(k)_app} in the Appendix. 

Thus we get the following concrete divisibility results. 

\begin{thm}[{Concrete divisibility constraints on the Euler numbers}]\label{thm_divisibility_constraints_concrete}
	~
	\begin{enumerate}
			\item Let $k$ be any positive integer. For any closed manifold $M$ which admits a strict tangential $Sp(k)$-structure (so that $\dim_\R M = 4k$), its Euler number $\Eul(M)$ satisfies
			\begin{align}
					\left.	\frac{24}{ \gcd(k, 24)} \,\right|\, \Eul(M). 
				\end{align}
		\item For any closed manifold $M$ which admits a {\it strict} tangential $SU(k)$-structure, its Euler number $\Eul(M)$ satisfies the following. 
		\begin{enumerate}
			\item If $k=1$, we have $\Eul(M)=0$. 
			\item If $k=2$, we have $24 \,|\, \Eul(M)$. 
			\item For $k \ge 2$, we have
			\begin{align}
			\left.	2^{\alpha(k)} \cdot 3^{\beta(k)} \,\right|\, \Eul(M)
			\end{align}
			with
			\begin{align}
				\alpha(k) = \begin{cases}
					3 & k \equiv 1, 2, 5 \pmod 8 \\
					2 & k \equiv 6, 7 \pmod 8 \\
					1 & k \equiv  3, 4 \pmod 8, \\
					0 & k \equiv  0 \pmod 8. 
				\end{cases}
				\quad 
				\beta(k) = \begin{cases}
					1 & k \equiv 1, 2 \pmod 3 \\
					0 & k \equiv 0 \pmod 3. 
				\end{cases}
			\end{align}
		\end{enumerate}
		\end{enumerate}
\end{thm}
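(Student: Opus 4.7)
The plan is to reduce everything to Theorem \ref{thm_divisibility_constraints_genuine}, whose conclusion gives $d_{SU}(k) \mid \Eul(M)$ and $d_{Sp}(k) \mid \Eul(M)$ under the respective hypotheses. The concrete statements then become purely questions about the orders $x(k) \in \pi_{2k-1}\TJF_{k-1}$ and $y(k) \in \pi_{4k-1}\TEJF_{2k-2}$ in \eqref{eq_xyz}, which are homotopy-theoretic invariants of the trio of genuinely equivariant $\TMF$s. These are precisely what Propositions \ref{prop_x(k)_app} and \ref{prop_y(k)_app} in the Appendix compute.

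For part (1), I would directly combine Theorem \ref{thm_divisibility_constraints_genuine} (2) with Proposition \ref{prop_y(k)_app}, which shows $\frac{24}{\gcd(k,24)} \mid d_{Sp}(k)$ for all $k \ge 1$. Since any closed strict tangential $Sp(k)$-manifold has its Euler number divisible by $d_{Sp}(k)$, the divisibility by $\frac{24}{\gcd(k,24)}$ follows at once.

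For part (2), I handle the three cases separately. Case $(a)$, $k=1$, is geometric rather than homotopical: a strict tangential $SU(1)$-structure is a trivialization $TM \simeq \underline{\C}$, so $M$ is a closed parallelizable real $2$-manifold, hence a disjoint union of tori, and $\Eul(M) = 0$. Case $(b)$, $k=2$, follows from part (1) with $k=1$ via the identification $SU(2) \simeq Sp(1)$ (under which the fundamental representations are identified as real representations), yielding $\frac{24}{\gcd(1,24)} = 24$. Case $(c)$, $k \ge 2$, is obtained by combining Theorem \ref{thm_divisibility_constraints_genuine} (1) with Proposition \ref{prop_x(k)_app}, which computes $d_{SU}(k)$ in terms of the $2$- and $3$-adic valuations that produce the formula with $\alpha(k)$ and $\beta(k)$.

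The main technical burden is therefore borne entirely by Propositions \ref{prop_x(k)_app} and \ref{prop_y(k)_app} in the Appendix. I expect the hard part to be a careful cell-by-cell analysis of $\TJF_{k-1}$ and $\TEJF_{2k-2}$, using the cell-attaching descriptions recorded in Examples \ref{ex_TJF_building} and \ref{ex_TEJF_building} (and in particular the identification $\TEJF_{2k} \simeq \TMF \otimes \HP^{k+1}[-4]$ of Proposition \ref{prop_Sp(1)}), together with detection of the relevant image of $\res_{G(1)}^{e}$ inside $\pi_0\TMF$ via familiar classes like $\eta$, $\nu$, $\Delta$, and the $24$-torsion responsible for the appearance of $24$ throughout. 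Once those appendix results are granted, the deduction of Theorem \ref{thm_divisibility_constraints_concrete} is a direct translation.
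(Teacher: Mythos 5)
Your proposal is correct and essentially coincides with the paper's proof, which simply combines Theorem \ref{thm_divisibility_constraints_genuine} with Propositions \ref{prop_x(k)_app} and \ref{prop_y(k)_app}. Your separate handling of cases (a) and (b) is a small but pleasant variation: the paper obtains $\Eul(M)=0$ for $k=1$ from $d_{SU}(1)=\infty$, whereas your direct argument (parallelizable closed surface, hence a disjoint union of tori) is more elementary; likewise your derivation of $24\,|\,\Eul(M)$ for $k=2$ via $SU(2)\simeq Sp(1)$ is a valid alternative to plugging $k=2$ into the formula for $d_{SU}(k)$ (indeed $\alpha(2)=3$, $\beta(2)=1$ gives $24$ anyway). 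One caveat you could not have seen: the paper's footnote to this theorem notes that the full Proposition \ref{prop_x(k)_app} is deferred to a later installment, and the self-contained estimate Claim \ref{claim_dSU_easy} is off by a factor of $2$ in the case $k\equiv 2\pmod 8$ with $k\ge 10$; your appeal to Proposition \ref{prop_x(k)_app} therefore carries the same dependency as the paper's.
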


\begin{proof}\footnote{The proof of Proposition \ref{prop_x(k)_app} about the exact determination of the numbers $d_{SU}(k)$ is deferred to the work by T.Bauer \cite{BauerComputationTJF} in preparation. However, in this paper, we have provided some estimates, with self-contained proof, of the numbers $d_{SU}(k)$ in Claim \ref{claim_dSU_easy}. This implies the divisibility result in Theorem \ref{thm_divisibility_constraints_concrete} (2) except for the case of $k \equiv 2$ (mod $8$) with $k \ge 10$. For those cases, Claim \ref{claim_dSU_easy} gives the estimate off by a factor of $2$ compared to the estimate in Theorem \ref{thm_divisibility_constraints_concrete}.    }
	This is obtained by combining Theorem \ref{thm_divisibility_constraints_genuine}, Propositions \ref{prop_y(k)_app} and \ref{prop_x(k)_app}. 
\end{proof}

\begin{rem}[{$K3$ and its Enriques involutions}]
	\label{rem_enriques}
The divisibility for $Sp(1) = SU(2)$-manifolds is saturated by the Euler number of $K3$ surfaces. A subset of $K3$ surfaces enjoy certain fixed-point-free Enriques involutions, the quotients by which give the surfaces $\frac12 K3$. 
While a $\frac12 K3$ is not an $SU$-manifold (the Enriques quotient does not preserve the complex structure of $K3$) and hence outside the domain of \eqref{eq_intro_Jac_clas}, the formula \eqref{eq_char_Jac_clas} was originally formulated for almost complex manifolds, and when applied to $\frac12 K3$ gives $\phi_{0,1}$. However, because $\TJF$ is defined as the genuinely $U(1)$-equivariant twisted $\TMF$, a local $U(1)$ action coming from a nonintegrable almost complex structure of $\frac12 K3$ is ``not good enough'', and indeed $\phi_{0,1}$ does not lie in the image of $e_\JF$ given in \eqref{eq_e_JF_intro}. What is true is that a double cover of the almost complex structure of $\frac12 K3$ gives a global $U(1)$ action, and so $\phi_{0,1}|_{y \to y^2}$ does lie in the image of $e_\JF$ with $k=4$ and $m=8$.
\end{rem}

\subsubsection{Comparison with classical divisibility constraints}\label{subsubsec_divisibility_naive}

The classical elliptic genera \eqref{eq_intro_Jac_clas} already imply divisibility constraints on Euler numbers, which we call the {\it classical divisibility constraints}. This section explains this and compares those constraints with our divisibility results in Section \ref{subsubsec_divisibility_top}. We show that indeed our divisibility constraints strictly refine the classical constraints, especially for strict tangential $SU(k)$-manifolds with $k \equiv 2$ (mod $8$) and strict tangential $Sp(k)$-manifolds for all $k$.

The classical elliptic genera $\Jac_{\clas}$ in \eqref{eq_intro_Jac_clas} take values in integral {\it weak} Jacobi forms \cite{Gritsenko:1999fk}. We use the notation $\jF_k$ introduced in Section \ref{subsec_notations} \eqref{notation_JF}. 
Let us define 
\begin{defn}
	For each nonnegative integer $k$,
	 define $d_{\clas}(k) \in \Z_{\ge 0}$ by 
	\begin{align}
		d_{\clas}(k) := \gcd  \left\{\mathrm{im}\left( \ev_{z=0} \colon  \jF_{k}|_{\deg = 2k} \to \mf|_{\deg = 0} = \Z \right) \right\}
	\end{align}
\end{defn}
Correspondingly to Theorem \ref{thm_divisibility_constraints_genuine}, we get
\begin{prop}[Classical divisibility constraints]\label{prop_naive}
	For any \underline{strict} tangential $SU(k)$-manifold $M$ (of real dimension $2k$), its Euler number $\Eul(M)$ is divisible by $d_{\clas}(k)$. 
\end{prop}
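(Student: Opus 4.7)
The plan is to extract the Euler number directly from the classical elliptic genus by specializing at $z=0$.

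First I would verify that, for a strict tangential $SU(k)$-manifold $M$ (so $\dim_{\mathbb{R}} M = 2k$), the classical elliptic genus $\Jac_{\clas}(M)$ defined by \eqref{eq_char_Jac_clas} in fact lies in $\jF_k|_{\deg=2k}$, i.e.~it is an integral \emph{weak} Jacobi form of weight $0$ and index $k/2$. Integrality and the Jacobi transformation law are classical (see e.g.~\cite{Gritsenko:1999fk}); the point to check is that only non-negative powers of $q$ occur. This is immediate from the definition of $\mathbb{TM}_{q,y}$: every $\wedge_{\pm q^m \ast}$ and $\Sym_{q^m \ast}$ factor contributes $q^{mj}$ with $m,j \ge 0$.

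Next I would compute $\ev_{z=0} \Jac_{\clas}(M) = \Jac_{\clas}(M)(y{=}1,q)$. For any complex vector bundle $V$ one has the Koszul-type identity $\Ch(\wedge_{-t} V)\cdot \Ch(\Sym_t V) = 1$, so the pairs of factors $\wedge_{-q^m} T^*M \otimes \Sym_{q^m} T^*M$ and $\wedge_{-q^m} TM \otimes \Sym_{q^m} TM$ (for $m \ge 1$) all collapse to $1$ in Chern character when $y=1$. Together with the prefactor $y^{k/2}|_{y=1}=1$, this leaves
\begin{equation*}
\Jac_{\clas}(M)(y{=}1,q) \;=\; \int_M \mathrm{Todd}(TM)\wedge \Ch\bigl(\wedge_{-1} T^*M\bigr).
\end{equation*}
By Hirzebruch--Riemann--Roch applied termwise to the Dolbeault complex, the right-hand side equals $\sum_{p}(-1)^p \chi(M,\Omega^p) = \Eul(M)$.

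Combining the two steps, $\Eul(M) = \ev_{z=0}\Jac_{\clas}(M)$ lies in the image of the evaluation map $\ev_{z=0}\colon \jF_k|_{\deg = 2k} \to \mf|_{\deg = 0} = \mathbb{Z}$. By definition $d_{\clas}(k)$ divides every element of that image, so $d_{\clas}(k)\,|\,\Eul(M)$.

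The proof is almost entirely formal once the two ingredients above are in place; the only step requiring any care is the character bookkeeping at $y=1$, and even that reduces to a one-line Koszul identity. I therefore expect no substantial obstacle, the main value of the proposition being as a baseline against which Theorem~\ref{thm_divisibility_constraints_concrete} can be shown to be strictly sharper.
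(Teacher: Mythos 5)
Your proof is correct and follows the same route as the paper: both reduce the claim to the classical identity $\Eul(M) = \ev_{z=0}\Jac_\clas(M)$ together with the fact that $\Jac_\clas(M)$ is a weak Jacobi form of index $k/2$ and weight $0$, and then the divisibility is immediate from the definition of $d_\clas(k)$. The only difference is that the paper cites \cite{Gritsenko:1999fk} for these two classical facts, whereas you reprove them (the weak-ness from the manifest non-negativity of $q$-exponents in $\mathbb{TM}_{q,y}$, and the $y=1$ specialization via the Koszul identity $\Ch(\wedge_{-t}V)\Ch(\Sym_t V)=1$ and Hirzebruch--Riemann--Roch); this is a reasonable unpacking and not a different argument.
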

\begin{proof}
	This follows from the classical fact \cite{Gritsenko:1999fk} that
	\begin{align}
		\Eul(M) = \ev_{z = 0} \circ \Jac_\clas(M). 
	\end{align}
\end{proof}

\begin{rem}[{No further classical divisibility for $Sp$} ]\label{rem_naive_Sp}
	Here it is important to remark that the classical elliptic genera do not give any further refinement of the divisibility results for $Sp$-manifolds, since for any $k \in \Z$, all the elements in $ \jF_{2k}|_{\deg = 4k}$ are even in the elliptic coordinate $z$. 
\end{rem}

The collection $\oplus_{k\in\Z_{\ge 0}} \left( \jF_k|_{\deg = 2k}\right) $ of jacobi forms with weight $0$ forms a $\Z$-graded subring of $\oplus_{k} \JF_k$, and expressed as, 
\begin{align}\label{eq_jF_generator}
	\oplus_{k} \left( 	\jF_{k}|_{\deg = 2k} \right)  = \Z[\phi_{0,1}, \phi_{0,\frac32}, \phi_{0,2}, \phi_{0,4}] /\left(4\phi_{0,4}=\phi_{0, 1} \phi_{0 ,\frac32}^2 - \phi_{0,2}^2\right)\subset \eqref{eq_JF_generators}, 
	\end{align}
where the lower indices of each generator correspond to its weight and index. 
The number $d_{\clas}(k)$ is explicitly computable by looking at the generators in \eqref{eq_jF_generator}. Under $\ev_{z=0}$, the generators are mapped as
\begin{align}\label{eq_ev_z=0}
	\phi_{0,1} \mapsto 12, \quad \phi_{0,\frac32} \mapsto 2, \quad \phi_{0,2} \mapsto 6, \quad \phi_{0,4} \mapsto 3,
\end{align}
We can deduce

\begin{prop}[{Computation of the classical divisibility constraints}]\label{prop_naive_constraint_concrete}
	We have
	\begin{enumerate}
		\item We have $d_{\clas} (1) = \infty$. 
		\item For even integer $k = 2k'$ with $k' \ge 1$, we have
		\begin{align}
		d_{\clas}(2k')  = 	\frac{12}{ \gcd(k', 12)}
		\end{align}
		\item For odd integer $k = 2k' + 3$ with $k' \ge 0$, we have
		\begin{align}
		d_{\clas}(2k'+3) =	\frac{24}{ \gcd(k', 12)} . 
		\end{align}
	\end{enumerate}
\end{prop}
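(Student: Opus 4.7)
The plan is to reduce the statement to a prime-by-prime gcd computation on monomials in the polynomial generators of $\bigoplus_k (\jF_k|_{\deg=2k})$. By \eqref{eq_jF_generator}, every weight-zero weak Jacobi Form is a $\Z$-linear combination of monomials $\phi_{0,1}^a \phi_{0,\frac32}^b \phi_{0,2}^c \phi_{0,4}^d$ subject to the index equation $a + \tfrac{3}{2} b + 2c + 4d = k/2$, and each such monomial evaluates at $z=0$ to $12^a \cdot 2^b \cdot 6^c \cdot 3^d$ by \eqref{eq_ev_z=0}. Thus $d_\clas(k)$ equals the $\gcd$ of these integers as $(a,b,c,d)$ ranges over nonnegative solutions. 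The case $k=1$ is immediate: doubling the index equation yields $2a+3b+4c+8d = 1$, which has no nonnegative integer solutions, so $\jF_1|_{\deg=2}=0$ and $d_\clas(1) = \infty$ by convention.

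For $k = 2k'$ and $k = 2k'+3$, the integrality of $a+\tfrac{3}{2}b+2c+4d$ forces $b$ to be even or odd, respectively. Writing $b = 2b'$ in the even case and $b = 2b'+1$ in the odd case, both reduce to the same Diophantine constraint $a + 3b' + 2c + 4d = k'$ on nonnegative integers. The evaluation factor splits off cleanly: monomials contribute $12^a \cdot 4^{b'} \cdot 6^c \cdot 3^d$ in the even case and $2 \cdot 12^a \cdot 4^{b'} \cdot 6^c \cdot 3^d$ in the odd case. Setting
\[
D(k') := \gcd\{12^a 4^{b'} 6^c 3^d : a + 3b' + 2c + 4d = k',\ a,b',c,d\in\Z_{\ge 0}\},
\]
this reduces parts (2) and (3) of the proposition to the single identity $D(k') = 12/\gcd(k',12)$.

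Since only the primes $2$ and $3$ appear in any monomial value, it suffices to compute $v_2(D(k'))$ and $v_3(D(k'))$ separately. The $3$-adic valuation of a monomial is $a+c+d$, minimized using $\phi_{0,\frac32}^{2}$ to absorb the constraint: it vanishes when $3\mid k'$ (take $b' = k'/3$, rest zero), and otherwise equals exactly $1$ (use one $\phi_{0,1}$ or one $\phi_{0,2}$ to fix the residue mod $3$; the lower bound follows since $a+c+d=0$ forces $3b'=k'$). This gives $v_3(D(k')) = \max(0,\,1-v_3(k'))$. The $2$-adic valuation is $2a + 2b' + c$, and a parallel case analysis on $k'\bmod 4$ (using $\phi_{0,4}^d$ when $4\mid k'$, one $\phi_{0,2}$ when $k'\equiv 2\pmod 4$, and one $\phi_{0,1}$ or two $\phi_{0,\frac32}$'s in the odd residue classes) yields $v_2(D(k')) = \max(0,\,2-v_2(k'))$. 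Combining the two valuations gives $D(k') = 12/\gcd(k',12)$, and substituting back gives the claimed formulas. The main, though still elementary, step is verifying the matching lower bound for $v_2$ in the non-trivial residue classes mod $4$, which reduces to short parity checks on the constraint equation (e.g., $2a+2b'+c \le 1$ forces either $k'=4d$ or $k'=4d+2$).
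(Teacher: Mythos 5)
Your proof is correct and fills in precisely the ``elementary computation using \eqref{eq_ev_z=0}'' that the paper's one-line proof leaves to the reader: you reduce to the gcd over monomials in $\phi_{0,1}, \phi_{0,\frac32}, \phi_{0,2}, \phi_{0,4}$ (legitimate since $\bigoplus_k \jF_k|_{\deg=2k}$ is a free polynomial algebra), factor out the parity constraint on the $\phi_{0,\frac32}$-exponent to reduce both parities to the single quantity $D(k')$, and then carry out the $p=2,3$ valuation minimization subject to the linear Diophantine constraint. The checks at the boundary ($k=1$ giving $\jF_1|_{\deg=2}=0$, and $k'=0$ for the odd case) come out right as well, so this is a complete and correct expansion of the paper's intended argument.
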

\begin{proof}
	This follows from elementary computation using \eqref{eq_ev_z=0}. Details are left to the reader. 
\end{proof}

Now let us compare the classical divisibility constraints $d_{\clas}$ in Proposition \ref{prop_naive_constraint_concrete} with our divisibility constraints in Theorem \ref{thm_divisibility_constraints_concrete} and \ref{thm_divisibility_constraints_genuine}. 
We observe that, for the $SU$-case, we have
\begin{align}
	d_{SU}(k) = \begin{cases}
		2 d_{\clas}(k) & k \equiv 2 \pmod 8, \\
		d_{\clas} (k) & k \not\equiv 2 \pmod 8. 
	\end{cases}
\end{align}
On the other hand, we also see that our result for the $Sp$-case strictly refines the divisibility constraints by the factor of $2$ for {\it all} $k$, (also see Remark \ref{rem_naive_Sp}). 
\begin{align}
	d_{Sp}(k) = 2 d_{\clas}(2k) \quad \forall k. 
\end{align}
To the best knowledge of the authors, this refined divisibility result was not known in the literature. 

\begin{rem}[{Gritsenko's results by \cite{Gritsenko:1999fk}} ] 
	In \cite[Theorem~2.4]{Gritsenko:1999fk} (see also the review article \cite[Proposition~3.1 and the text below]{gritsenko2020modified}), Gritsenko gives the following divisibility results by classical methods. 
	\footnote{Both \cite[Theorem~2.4]{Gritsenko:1999fk} and the review article \cite[Proposition~3.1 and the text below]{gritsenko2020modified} contained misprints, and the correct statement is presented in our main text. 
		The authors thank V. Gritsenko for confirming this.}
	For any almost complex manifold $M$ of even complex dimension $k=2k'$ with $k' \ge 1$, such that the rational first Chern class $c_1(M)_\Q \in H^2(M; \Q)$ vanishes, its Euler number $\Eul(M)$ satisfies
	\begin{align}
		\left.	\frac{12}{ \gcd(k', 12)} \,\right|\, \Eul(M).  
	\end{align}
	If furthermore $k \equiv 2$ (mod $8$) and the integral first Chern class $c_1(M) \in H^2(M; \Z)$ vanishes, making $M$ an $SU(k)$-manifold, then we further have \begin{align}
		8 \,|\, \Eul(M) \quad \mbox{if } k \equiv 2 \pmod 8
	\end{align}
	Restricted to the $SU(k)$-manifolds, we see that this divisibility result coincides with our statement in Theorem \ref{thm_divisibility_constraints_concrete} in the case $k$ even.
\end{rem}

\begin{rem}[Classical divisibility constraints for irreducible hyperk\"ahler manifolds of low dimensions]
	If we restrict ourselves to irreducible hyperk\"ahler manifolds, which furnish a very special class of strict tangential $Sp(k)$-manifolds, we can use the known divisibility constraints on the Hodge numbers to refine the classical divisibility constraints obtained in Proposition \ref{prop_naive_constraint_concrete}. We illustrate it by the case of $k=2$ and $k=3$. 
	As we will see, for those cases we achieve our divisibility constraints in Theorem \ref{thm_divisibility_constraints_concrete} (1). But the method is already complicated there, and gets more and more complicated as $k$ is increased. 
	Moreover, we emphasize that such analysis does not work for general strict tangential $Sp$-manifolds, since we cannot write Euler numbers in terms of an almost complex version of the Hodge numbers. 
	This should be compared with our simple and conceptual proof of the corresponding divisibility, treating general strict tangential $Sp$-manifolds all at once. The authors believe this illustrates the power of topological refinements of classical concepts. 

We begin with the following facts:
\begin{itemize}
	\item An irreducible hyperk\"ahler manifold $M$ of real dimension $4k$ has Hodge numbers \cite{enoki1990compact}
	\[
	h^{0,q} = \begin{cases} 1, & q \text{ is even and } 0 \le q \le 2k,
		\\
		0, & \text{otherwise},
	\end{cases}
	\]
	and symmetry $h^{p,q} = h^{q,p} = h^{p,2k-q}$. 
	\item The constant Fourier component (in $\tau$) of the elliptic genus of a complex manifold $M$ of real dimension $2k$ is related to its Hodge numbers by 
	\begin{align}
		\Jac_{\clas}(M) = \sum_{p=0}^k c_p y^{p-\frac{k}{2}} + \mathcal{O}(q),
		\quad
		c_p = \sum_{q=0}^k (-)^{p+q} h^{p,q}.
	\end{align}
\end{itemize}
\paragraph{The case of $k=2$} 
For $k=2$, there are three independent Hodge numbers $h^{1,1}, h^{1,2}, h^{2,2}$. The Betti and Euler numbers are
\ie
    b_0 = b_8 &= 1, \\
    b_1 = b_7 &= 0, \\
    b_2 = b_6 &= 2 + h^{1,1} \\
    b_3 = b_5 &= 2h^{1,2} \\
    b_4 &= 2 + 2h^{1,1} + h^{2,2}  \\
    \Eul(M) &= 2 + 2b_2 - 2b_3 + b_4 = 8 + 4h^{1,1} - 4h^{1,2} + h^{2,2}.
\fe
The elliptic genus is written as
\ie
    \Jac_{\clas}(M) = 3\phi_{0,1}^2 + \left( \frac{\Eul(M)}{6} - 72 \right) \phi_{0,2},
\fe
which by \eqref{eq_ev_z=0} already shows
\begin{align}\label{eq_hyp2_6}
6 \,|\, \Eul(M).
\end{align}
Expanding the elliptic genus in $q,y$, we find
\ie
    2h^{1,1}-h^{1,2} &= \left( \frac{\Eul(M)}{6} - 12 \right),
    \\
    2 - 2h^{1,2} + h^{2,2} &= \left( \frac{2\,\Eul(M)}{3} + 18 \right),
\fe
and eliminating $\Eul(M)$ gives
\ie
    h^{2,2} &= 64 + 8h^{1,1} - 2h^{1,2}.
\fe
By \cite[Corollary~8.1]{wakakuwa1958riemannian}, $4 \,|\, b_3 = 2h^{1,2}$, so $4 \,|\, \Eul(M)$. Combining this with \eqref{eq_hyp2_6}, we deduce
\begin{align}
	12 \,|\, \Eul(M). 
\end{align}
This divisibility result coincides with our divisibility result in Theorem \ref{thm_divisibility_constraints_concrete} (1). 

\paragraph{The case of $k=3$} For $k=3$, we compute
\ie
    b_0 = b_{12} &= 1, \\
    b_1 = b_{11} &= 0, \\
    b_2 = b_{10} &= 2 + h^{1,1} \\
    b_3 = b_9 &= 2h^{1,2} \\
    b_4 = b_8 &= 2 + 2h^{1,3} + h^{2,2} \\
    b_5 = b_7 &= 2h^{1,2} + 2h^{2,3} \\
    b_6 &= 2 + 2h^{1,1} + 2h^{2,2} + h^{3,3} \\
    \Eul(M) &= 2 + 2b_2 - 2b_3 + 2b_4 - 2b_5 + b_6 \\
    &= 12 + 4 h^{1,1} - 8 h^{1,2} + 4 h^{2,2} + 4 h^{1,3} - 4 h^{2,3} + h^{3,3}.
\fe
The elliptic genus is parameterized by $A \in \Z$ as
\ie
    \Jac_{\clas}([M]) = 4\phi_{0,1}^3 + A \phi_{0,1} \phi_{0,2} + \left( \frac{\Eul(M)}{4} - 18A - 1728 \right) \phi_{0,3}. 
\fe
Expanding the elliptic genus in $q,y$, we find
\ie
    2h^{1,1} - 2h^{1,2} + h^{1,3} &= 120 + A, \\
    2 - 2h^{1,2} + 2h^{2,2} - h^{2,3} &= - 516 - 4A + \frac{\Eul(M)}{4}, \\
    2h^{1,3} - 2h^{2,3} + h^{3,3} &= 784 + 6A + \frac{\Eul(M)}{2}.
\fe
Eliminating $A$ using the last two equations gives
\ie
    -14 - 6h^{1,2} + 6h^{2,2} - 3h^{2,3} + 4h^{1,3} - 4h^{2,3} + 2h^{3,3} &= \frac{7\,\Eul(M)}{4}.
\fe
Furthermore, we have $4 \,|\, b_3, b_5$ by \cite[Corollary~8.1]{wakakuwa1958riemannian}, which implies $2 \,|\, h^{2,3}$.
This shows we have

\begin{align}
8 \,|\, \Eul(M). 
\end{align}
This divisibility result coincides with our divisibility result in Theorem \ref{thm_divisibility_constraints_concrete} (1). 

\end{rem}

\appendix

\section{A user guide to $\TJF$}\label{app_TJF}
The theory of {\it Topological Jacobi Forms} is developed in an upcoming work by Bauer-Meier \cite{BauerMeierTJF}. It is defined as the genuinely $U(1)$-equivariant twisted $\TMF$, and regarded as a spectral refinement of the classical ring of integral Jacobi Forms. 
It is an essential tool for us, being the domain of the $U(1)$-topological elliptic genus $\Jac_{U(1)_k} \colon MTSU(k) \to \TJF_k$. 
In this section, we collect the necessary results on $\TJF_k$, which we heavily use in the main text. 

\subsection{Definition and basic properties}\label{subsec_TJF_defprop_app}

We employ the following as the definition of Topological Jacobi Forms. 
\begin{defn}[{$\TJF_k$}]\label{def_TJF}
	Let $k$ be any integer. We define
	\begin{align}
	\TJF_k := \TMF[kV_{U(1)}]^{U(1)} , 
	\end{align}
	where $V_{U(1)}$ is the fundamental representation of $U(1)$. 
\end{defn}
As explained in Section \ref{subsec_preliminary_TMF_G}, the right hand side is by definition identified as
\begin{align}
	\TMF[kV_{U(1)}]^{U(1)} = \Gamma\left( \cE, \cL(-kV_{U(1)})\right) ,  
\end{align}
where $\cE \to \moduli$ is the universal oriented elliptic curve (in the spectral algebro-geometric sense), and $\cL(-kV_{U(1)}) \in \QCoh(\cE)^\times$ is the result of applying the $U(1)$-equivariant elliptic cohomology functor \eqref{eq_EllG} to the corresponding representation sphere. 
As explained in \cite{BauerMeierTJF}, it is easy to verify that we have a canonical isomorphism $\cL(-kV_{U(1)}) \simeq \cO_\cE(ke)= \cO_\cE(e)^{\otimes k} \in \QCoh(\cE)^\times$, where $\cO_{\cE}(e)$ is the (SAG-version of the) sheaf of meromorphic functions on $\cE$ having poles of order at most $1$ at the zero section $e \colon \moduli \to \cE$. Thus we have a canonical identification
\begin{align}\label{eq_TJF_app_ke}
	\TJF_k := \TMF[kV_{U(1)}]^{U(1)} \simeq \Gamma(\cE, \cO_{\cE}(ke)), 
\end{align}

The equivariant Euler class of the fundamental representation,
\begin{align}
	\chi(V_{U(1)}) \in \pi_0 \TJF_1
\end{align}
is of particular importance. As we will see in Section \ref{subsec_JF_app} \eqref{eq_a=chi_app}, this element corresponds to one of the generators $a :=\phi_{-1, \frac12} = \theta_{11}(z, q)/\eta(q)^3$ of the integral Jacobi Forms.\footnote{Also of physical importance because it is expected to correspond to the complex Majorana fermion. }
It is also important to note that we have the multiplication map
\begin{align}
	\cdot \colon \TJF_k \otimes_\TMF \TJF_m \to \TJF_{k+m}, 
\end{align}
so that $\oplus_k \TJF_k$ can be regarded as a $\Z$-graded ring object in $\Mod_\TMF$. 

Recall that we have seen in our main text (Section \ref{subsubsec_internal} Example \ref{ex_TJF_building}) that the {\it internal structure maps} for the trio of equivariant $\TMF$ specialize to produce the {\it stabilization sequence} of $\TJF$ (see Section \ref{subsec_notations} \eqref{notation_chi} for the notation $\chi(V_{U(1)})$), 
\begin{align}\label{eq_building_seq_TJF_app}
	\xymatrix@C=3em{
		\TJF_{-1} \ar[r]^-{\stab}_-{\chi(V_{U(1)}) \cdot} \ar@{=}[d]&	\TJF_0 \ar[r]^-{\stab}_-{\chi(V_{U(1)}) \cdot} \ar[d]^-{\res_{U(1)}^e}& \TJF_1 \ar[r]^-{\stab}_-{\chi(V_{U(1)}) \cdot} \ar[d]^-{\res_{U(1)}^e}& \TJF_2 \ar[r]^-{\stab}_-{\chi(V_{U(1)}) \cdot} \ar[d]^-{\res_{U(1)}^e}& \TJF_3 \ar[r]^-{\stab}_-{\chi(V_{U(1)}) \cdot}  \ar[d]^-{\res_{U(1)}^e}& \cdots \\
		\TMF[1]&	\TMF &	\TMF[2]   & \TMF[4] & \TMF[6]   & \cdots
	}, 
\end{align}
where each consecutive pair of horizontal and vertical arrows forms a fiber sequence which we call the {\it stabilization-restriction fiber sequence}. 
This sequence is regarded as building $\TJF_{k}$ by attaching even dimensional $\TMF$-cells one by one. 
In view of the identification \ref{eq_TJF_app_ke}, the algebro-geometric meaning of this sequence is nicely understood by the following commutative diagram, 
\begin{align}\label{seq_res_stab_TJF_app}
	\xymatrix{
		\TJF_{k-1} \ar[r]^-{\stab}_-{\chi(V_{U(1)})\cdot} \ar@{=}[d]& \TJF_{k} \ar[r]^-{\res_{U(1)}^e} \ar@{=}[d] & \TMF[2k] \ar@{=}[d] \ar[r]^-{x(k)}& \TJF_{k-1}[1]\\
		\Gamma(\cE, \cO_{\cE}((k-1)e)) \ar[r] & \Gamma(\cE, \cO_{\cE}(ke)) \ar[r] & \Gamma(\moduli, \omega^{-k}) &, 
	}
\end{align}
where the first bottom horizontal arrow comes from the canonical map $\cO_\cE((k-1)e) \to \cO_\cE(ke)$, and the second one is the residue pairing. 
We have defined, in \eqref{eq_xyz}, the {\it attaching element}
\begin{align}\label{eq_x(k)_app}
	x(k) \in \pi_{2k-1} \TJF_{k-1}
\end{align}
to be the cofiber of $\res_{U(1)}^e$ in \eqref{seq_res_stab_TJF_app}. This is the attaching map of the top $\TMF$-cell of $\TJF_k$, which can also be identified with the transfer map $\tr_e^{U(1)}$ (see \eqref{diag_selfdual_TJF} below). The analysis of this element played a key role in our application to Euler numbers in Section \ref{subsec_divisibility}. 
Moreover, it is important to note that the stabilization-restriction sequence is {\it dual} to that of $k$ replaced by $1-k$, in the sense that the following diagram commutes by Proposition \ref{prop_selfdual_stabres}. 
\begin{align}\label{diag_selfdual_TJF}
	\xymatrix@C=5em{
	\TJF_{k-1} \ar[r]^-{\stab}_-{\chi(V_{U(1)})\cdot} \ar[d]^{\simeq}& \TJF_{k} \ar[r]^-{\res}\ar[d]^{\simeq}& \TMF[2k] \ar@{=}[d] \ar[r]_-{x(k)}^-{\tr} & \TJF_{k-1}[1] \ar[d]^{\simeq}\\
	D(\TJF_{1-k})[1] \ar[r]^-{D(\stab)}_-{\chi(V_{U(1)})\cdot} & D(\TJF_{-k})[1] \ar[r]_-{D(x(-k) )}^-{D(\tr)} & \TMF[2k] \ar[r]^-{D(\res)} & D(\TJF_{1-k})[2]\\
	}
\end{align}
Here we have used the dualizability of equivariant $\TMF$ in \eqref{eq_twisted_dual}. 
In particular, the commutativity of the right square allows us to identify the top right horizontal arrow with the transfer map as indicated in the diagram. 

\subsection{The cell structure}\label{subsec_cell_TJF_app}
The following explicit knowledge of the cell structure of $\TJF$ is key to our analysis. 

\begin{fact}\label{fact_TJF_cellstr}
	Let $k \ge 1$ be any positive integer. 
	Let $\tr \colon \Sigma \bC P^\infty_+ \to S^0$ be the $U(1)$-equivariant transfer map. Define
	\begin{align}\label{eq_def_Pm}
		P_k := \cofib\left( \tr|_{\Sigma \CP^{k-1}} \colon \Sigma \bC P^{k-1} \to S^0\right)
	\end{align}
	Then we have an isomorphism of $\TMF$-modules
	\begin{align}\label{eq_TJF_cellstr}
		\TJF_k \simeq \TMF \otimes P_k.
	\end{align}
	Moreover, the isomorphism \eqref{eq_TJF_cellstr} is compatible with the stabilization-restriction fiber sequence in \eqref{seq_res_stab_TJF_app} in the sense that the following diagram commutes, 
	\begin{align}\label{eq_attaching_P}
		\xymatrix@C=3em{
			\TJF_{k-1} \ar[r]^-{\stab}_-{\chi(V_{U(1)})\cdot} & \TJF_{k} \ar[r]^-{\res_{U(1)}^e} & \TMF[2k]  \ar[r]^-{x(k)} & \TJF_{k-1}[1]\\
			P_{k-1} \ar[u]^-{\TMF \otimes - } \ar@{^{(}->}[r] & P_k \ar@{->>}[r] \ar[u]^-{\TMF \otimes - } & S^{2k} \ar[u]^-{\TMF \otimes - } \ar[r]& P_{k-1}[1]\ar[u]^-{\TMF \otimes - }
		}
	\end{align}
	where the bottom row is the cofiber sequence induced by the standard inclusion $\CP^{k-2} \hookrightarrow \CP^{k-1}$.
\end{fact}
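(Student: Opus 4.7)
The plan is to proceed by induction on $k \ge 1$, constructing $\phi_k \colon \TMF \otimes P_k \xrightarrow{\simeq} \TJF_k$ and simultaneously verifying the commutativity of the displayed diagram.

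For the base case $k=1$, since $\CP^0$ is a point, the reduced suspension $\Sigma \CP^0$ is contractible, hence $P_1 \simeq S^0$. On the other hand, under the Gepner--Meier identification $\TJF_1 \simeq \Gamma(\cE, \cO_\cE(e))$, and since $\cO_\cE(e)$ restricts to a degree-one line bundle on each elliptic fiber of $p \colon \cE \to \moduli$, fiberwise Riemann--Roch gives $p_* \cO_\cE(e) \simeq \cO_\moduli$ with vanishing $R^1 p_*$. Thus $\TJF_1 \simeq \TMF$, and $\phi_1$ is the canonical equivalence.

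For the inductive step, the octahedral axiom applied to the composition $\Sigma \CP^{k-2} \hookrightarrow \Sigma \CP^{k-1} \xrightarrow{\tr} S^0$ produces a cofiber sequence
\[ S^{2k-1} \xrightarrow{\tilde\alpha_k} P_{k-1} \to P_k, \]
exhibiting $P_k$ as $P_{k-1}$ with a single $2k$-cell attached along $\tilde\alpha_k \in \pi_{2k-1}P_{k-1}$. Smashing with $\TMF$ and composing with the inductive equivalence $\phi_{k-1}$ yields the bottom row of the diagram mapped into the top row. Constructing $\phi_k$ so as to fit into the diagram then reduces, by the universal property of the cofiber $\TJF_k$ in the stabilization--restriction sequence, to the attaching-class identification
\[ \phi_{k-1} \circ (\id_\TMF \otimes \tilde\alpha_k) \;=\; x(k) \quad\text{in}\quad \pi_{2k-1}\TJF_{k-1}. \]

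The main obstacle will be this last identification. The proposed strategy is to interpret both sides through their canonical universal origins: under the identification $\TJF_{k-1} \simeq \Gamma(\cE, \cO_\cE((k-1)e))$, the class $x(k)$ is the connecting map of the short exact sequence of coherent sheaves
\[ 0 \to \cO_\cE((k-1)e) \to \cO_\cE(ke) \to e_* \omega^{-k} \to 0 \]
associated to the divisor $ke \subset \cE$, while $\tilde\alpha_k$ is built from the classical CW filtration of $\CP^{k-1}$ together with the $U(1)$-equivariant transfer. To match them, one applies the Atiyah--Segal completion map $\zeta \colon \TJF_{k-1} \to \TJF_{k-1}^{hU(1)} \simeq \lMap(\CP^\infty_{-(k-1)}, \TMF)$ and computes both classes in the homotopy fixed points, using the Thom spectrum structure of $\CP^\infty$ and the Thom isomorphism furnished by the sigma orientation. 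Agreement there, combined with injectivity of $\zeta$ in the relevant degree (which follows from the finite cellular structure of $\TJF_{k-1}$ established inductively), yields the desired equality in $\pi_{2k-1}\TJF_{k-1}$ and closes the induction.
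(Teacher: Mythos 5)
Your proposal takes an inductive route, attaching one $\TMF$-cell at a time via the octahedral axiom, whereas the paper's argument is a single step: smash the cofiber sequence of pointed $U(1)$-spaces
\[
S(kV_{U(1)})_+ \to S^0 \xrightarrow{\chi(kV_{U(1)})} S^{kV_{U(1)}}
\]
with $\TMF$, take genuine $U(1)$-fixed points, and identify the resulting fiber sequence using the Adams isomorphism $\bigl(\TMF \otimes S(kV_{U(1)})_+\bigr)^{U(1)} \simeq \TMF \otimes \Sigma\CP^{k-1}_+$ together with the known splitting $\TMF^{U(1)} \simeq \TMF \oplus \TMF[1]$. That produces the equivalence $\TJF_k \simeq \TMF \otimes P_k$ for all $k$ at once, and the compatibility with the stabilization--restriction sequence comes for free from the naturality of the construction in the inclusions $S((k-1)V_{U(1)}) \hookrightarrow S(kV_{U(1)})$, without ever having to explicitly identify the attaching class $x(k)$.

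The genuine gap in your argument is the inductive step. You correctly reduce it to the identification $\phi_{k-1}\circ(\id_\TMF \otimes \tilde\alpha_k) = x(k)$ in $\pi_{2k-1}\TJF_{k-1}$, but the method you propose for establishing it --- map both sides under the Atiyah--Segal completion $\zeta$ into $\pi_{2k-1}\lMap(\CP^\infty_{-(k-1)},\TMF)$, check agreement there, and invoke injectivity of $\zeta$ --- leaves all three sub-steps unjustified. You do not actually carry out the computation of either class in the homotopy fixed points, nor do you argue why they would agree there. More seriously, the injectivity of $\zeta$ on $\pi_{2k-1}$ does not follow from ``the finite cellular structure of $\TJF_{k-1}$'': the kernel of the genuine-to-homotopy fixed point map is controlled by the isotropy separation / Tate square for the $U(1)$-spectrum $\TMF[(k-1)V_{U(1)}]$ (compare \eqref{diag_equiv_htpy_U(1)} and \eqref{diag_geom_tate_TMF}), not by the $\TMF$-cell structure of the fixed-point spectrum $\TJF_{k-1}$. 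Knowing $\TJF_{k-1}$ is a finite $\TMF$-module tells you nothing \emph{a priori} about the failure of completion. Without injectivity, agreement after $\zeta$ does not determine the class in $\pi_{2k-1}\TJF_{k-1}$, so the diagram cannot be closed and the induction does not go through.

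Two smaller points. First, the base case: your appeal to ``fiberwise Riemann--Roch'' to get $p_*\cO_\cE(e) \simeq \cO_\moduli$ and $R^1p_*\cO_\cE(e) = 0$ in spectral algebraic geometry needs more care than stated; the paper instead deduces $\TJF_1 \simeq \TMF$ (via $\chi(V_{U(1)})$) directly from the same cofiber sequence, using $\TMF^{U(1)} \simeq \TMF \oplus \TMF[1]$ from Fact~\ref{factSU}(2). Second, even if the equality $\phi_{k-1}(\tilde\alpha_k) = x(k)$ were known, you should spell out why the two equivalences agree on the nose (not merely up to automorphism of $\TJF_{k-1}$) so that the compatibility square \eqref{eq_attaching_P} commutes with the \emph{same} $\phi_{k-1}$ used in the previous stage of the induction; the paper's one-shot construction makes this automatic.
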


$\TJF_{k}$ for negative $k$ is also understood by using the above Fact \ref{fact_TJF_cellstr}. We have (here $D$ denotes the dual in $\Mod_\TMF$ and $D_S$ denotes the dual in $\Spectra$, see Notation \ref{subsec_notations} \eqref{notation_dual})
\begin{align}
	\TJF_k \simeq D(\TJF_{-k})[1] \simeq \TMF \otimes D_S(P_{-k})[1]
\end{align}
by the dualizability of equivariant $\TMF$ in \eqref{eq_twisted_dual}. 

Here, we give the sketch of the proof of this fact, in order to make the meaning of this cell structure clear, and also to prepare for the analogous argument showing the cell structure of $\TEJF$ in Proposition \ref{prop_Sp(1)} below.  

\begin{proof}[Sketch of Proof of Fact \ref{fact_TJF_cellstr} \cite{BauerMeierTJF}]
	Consider the following cofiber sequence of pointed $U(1)$-spaces, 
	\begin{align}
		S(kV_{U(1)})_+ \to S^{0} \xrightarrow{\chi(kV_{U(1)})} S^{kV_{U(1)}}. 
	\end{align}
	Apply the $U(1)$-equivariant $\TMF$-homology functor $\left( \TMF \otimes - \right)^{U(1)}$ to get the fiber sequence
	\begin{align}\label{eq_prf_cellTJF}
	\TMF \otimes \Sigma \CP^{k-1}_+ \to \TMF^{U(1)} \to \TJF_k 
	\end{align} 
	where we have used that $S(kV_{U(1)})$ is a free $U(1)$-space with $\CP^{k-1} = S(kV_{U(1)})/U(1)$, so that we can apply the Adams isomorphism 
	\begin{align}
		\left( \TMF \otimes S(kV_{U(1)})_+\right)^{U(1)}  \simeq \TMF \otimes \Sigma \CP^{k-1}_+. 
	\end{align}
	We know by \cite{GepnerMeier} (also see Fact \ref{factSU} (2)), that we have $\TMF^{U(1)} \simeq \TMF \oplus \TMF[1]$, 
	and we can verify that the first arrow in \eqref{eq_prf_cellTJF} is given by tensoring $\TMF$ to the map
	\begin{align}
		\Sigma\CP^{k-1}_+ = \Sigma \CP^{k-1} \sqcup S^1  \xrightarrow{\tr \sqcup \id_{S^1}} S^0 \sqcup S^1. 
	\end{align}
	This gives the desired result. 
\end{proof}

The cell complex $P_k$ looks identical to $\Sigma \CP^{k-1}$, except for the lower dimensional cells. The stable attaching maps of $\CP^{k-1}$ can be read off from \cite[Theorem 5.2]{Mosher}. The cell diagram of $\TJF_k$ for $-1 \le k \le 6$ is depicted in Figure \ref{celldiag_TJF}. 
Each dot labeled by an integer $n$ denotes one $\TMF$-cell in degree $n$. 

\begin{figure}[H]
	\centering
	\begin{tikzpicture}[scale=0.5]
		
			\begin{scope}[shift={(-8, 0)}]
			\begin{celldiagram}
				\n{1}
				\foreach \y in {1} {
					\node [left] at (-0.5, \y) {$\y$};
				}
			\end{celldiagram}
			\node [] at (0, -2) {$\TJF_{-1}$};
		\end{scope}
		
			\begin{scope}[shift={(-4, 0)}]
			\begin{celldiagram}
				\n{0} \n{1}
				\foreach \y in {0,1} {
					\node [left] at (-0.5, \y) {$\y$};
				}
			\end{celldiagram}
			\node [] at (0, -2) {$\TJF_{0}$};
		\end{scope}
		
			\begin{scope}[shift={(0, 0)}]
			\begin{celldiagram}
				\n{0} 
				\foreach \y in {0} {
					\node [left] at (-0.5, \y) {$\y$};
				}
			\end{celldiagram}
			\node [] at (0, -2) {$\TJF_{1}$};
		\end{scope}
		
			\begin{scope}[shift={(4, 0)}]
			\begin{celldiagram}
				\nu{0}
				\n{0} \n{4} 
				\foreach \y in {0, 4} {
					\node [left] at (-0.5, \y) {$\y$};
				}
			\end{celldiagram}
			\node [] at (0, -2) {$\TJF_{2}$};
		\end{scope}
		\node [right, blue] at (5, 2) {$\nu$};
		
			\begin{scope}[shift={(8, 0)}]
			\begin{celldiagram}
				\nu{0} \eta{4}
				\n{0} \n{4} \n{6} 
				\foreach \y in {0, 4, 6} {
					\node [left] at (-0.5, \y) {$\y$};
				}
			\end{celldiagram}
			\node [] at (0, -2) {$\TJF_{3}$};
		\end{scope}
			\node [right, blue] at (9, 2) {$\nu$};
				\node [right, red] at (8, 5) {$\eta$};
				
					\begin{scope}[shift={(12, 0)}]
					\begin{celldiagram}
						\nu{0} \eta{4} \nu{4}
						\n{0} \n{4} \n{6} \n{8}
						\foreach \y in {0, 4, 8} {
							\node [left] at (-0.5, \y) {$\y$};
						}
					\end{celldiagram}
					\node [] at (0, -2) {$\TJF_{4}$};
				\end{scope}
				\node [right, blue] at (13, 2) {$\nu$};
				\node [right, red] at (12, 5) {$\eta$};
					\node [left, blue] at (11, 6) {$2\nu$};
		
		\begin{scope}[shift={(16, 0)}]
		\begin{celldiagram}
			\nu{0} \eta{4} \nu{4} \eta{8} \nu{6}
			\n{0} \n{4} \n{6} \n{8} \n{10}
			\foreach \y in {0, 4, 8, 10} {
				\node [left] at (-0.5, \y) {$\y$};
			}
		\end{celldiagram}
		\node [] at (0, -2) {$\TJF_{5}$};
	\end{scope}
	\node [right, blue] at (17, 2) {$\nu$};
	\node [right, red] at (16, 5) {$\eta$};
	\node [left, blue] at (15, 6) {$2\nu$};
		\node [left, red] at (16, 9) {$\eta$};
			\node [right, blue] at (17, 8) {$\nu$};
			
				\begin{scope}[shift={(20, 0)}]
				\begin{celldiagram}
					\nu{0} \eta{4} \nu{4} \eta{8} \nu{6} \nu{8}
					\n{0} \n{4} \n{6} \n{8} \n{10} \n{12}
					\foreach \y in {0, 4, 8, 12} {
						\node [left] at (-0.5, \y) {$\y$};
					}
				\end{celldiagram}
				\node [] at (0, -2) {$\TJF_{6}$};
			\end{scope}
			\node [right, blue] at (21, 2) {$\nu$};
			\node [right, red] at (20, 5) {$\eta$};
			\node [left, blue] at (19, 6) {$2\nu$};
			\node [left, red] at (20, 9) {$\eta$};
			\node [right, blue] at (21, 8) {$\nu$};
		\node [left, blue] at (19, 6) {$2\nu$};
			\node [left, blue] at (19, 10) {$3\nu$};
		
	\end{tikzpicture}
	\caption{The cell diagram of $\TJF_k$}\label{celldiag_TJF}
\end{figure}

This means that for example, we have
\begin{align}
	\TJF_0 &\simeq \TMF \oplus \TMF[1], \label{eq_TJF0}\\
	\TJF_1 &\simeq \TMF \ \mbox{where the isom is given by } \chi(V_{U(1)}) \colon \TMF \to \TJF_1 ,  \label{eq_TJF1}  \\
	\TJF_2 &\simeq \TMF \otimes (S^0 \cup_\nu S^4)= \TMF / \nu  \label{eq_TJF_2}, \\
	\TJF_3 &\simeq \TMF \otimes (S^0 \cup_\nu S^4 \cup_\eta S^6), \label{eq_TJF_3} 
\end{align}
and we can read off the attaching elements $x(k) \in \pi_{2k-1} \TJF_{k-1}$ in \eqref{eq_x(k)_app} as
\begin{align}
	x(1) &= (0, 1) \in \pi_1 \TJF_0 \simeq \pi_1 \TMF \oplus \pi_0 \TMF, \label{eq_x(1)}\\
	x(2) &= \nu  \in \pi_3 \TMF \stackrel{\chi(V_{U(1)}) \cdot}{\simeq} \pi_3 \TJF_1 \label{eq_x(2)}\\
	x(3) &= \widehat{\eta} \in \pi_5 \TJF_2, 
\end{align}
where $\widehat{\eta} \in \pi_5 \TJF_2$ is the element appeared in Section \ref{subsec_pi5MSp}, which is the unique element which maps to $\eta \in \pi_1 \TMF$ by the restriction map $\res_{U(1)}^e \colon \TJF_2 \to \TMF[4]$.

If we invert the prime $2$, we get the following simple result. 
\begin{prop}[{The structure of $\TJF_k$ after inverting $2$ \cite{LinTominagaYamashita}}]\label{prop_structure_TJF_2inverted}
	After inverting $2$, the $\TMF$-module structure of $\TJF_k$ is identified as follows.
	\begin{enumerate}
		\item The stabilization-restriction sequence \eqref{seq_res_stab_TJF_app} for $k=3$ splits at $\TJF_3$, 
		\begin{align}
			\xymatrix{
				\TJF_2 \ar[r]^-{\stab} & \TJF_3 \ar[r]^-{\res_{U(1)}^e} & \TMF[6] \ar@/^1ex/[l]^-{\mathfrak{b}}. 
			}
		\end{align}
		Here we have denoted $\frb \in \pi_6 \TJF_3$ which gives a splitting. This element is characterized by the Jacobi Form image as
		\begin{align}
			e_\JF(\frb) = \frac{1}{2}\phi_{0, \frac32}. 
		\end{align}
		\item For $k \ge 4$, setting $k' := \lfloor (m-1)/3 \rfloor$, there is an isomorphism of $\TMF$-modules,
		\begin{align}\label{eq_deco_TJFm}
			\TJF_k \simeq \TJF_{k-3k'}[6k'] \oplus \bigoplus_{i=0}^{k'-1} \TMF_1(2) [6i] .
		\end{align}
	\end{enumerate} 
\end{prop}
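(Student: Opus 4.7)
The plan combines the cell structure of Fact \ref{fact_TJF_cellstr} (Figure \ref{celldiag_TJF}) with the vanishing of $\eta$-multiples after inverting $2$, and then leverages the multiplicative structure $\TJF_a \otimes_\TMF \TJF_b \to \TJF_{a+b}$.

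For part (1), I would first verify that the attaching element $x(3) = \widehat{\eta} \in \pi_5 \TJF_2$ is $2$-torsion. The map $\res_{U(1)}^e$ identifies $\pi_5 \TJF_2$ with $\pi_1 \TMF \simeq \Z/2$: injectivity follows from $\pi_5 \TMF = 0$ (so the image of $\stab \colon \pi_5 \TJF_1 \to \pi_5 \TJF_2$ is zero), and surjectivity follows from $\eta \nu = 0$ in $\pi_4 \TMF$ (so $\eta$ lies in the kernel of the connecting map $x(2) \cdot = \nu \cdot$). Hence after inverting $2$ the cofiber sequence \eqref{seq_res_stab_TJF_app} for $k = 3$ splits, producing an element $\mathfrak{b} \in \pi_6 \TJF_3[1/2]$ with $\res_{U(1)}^e(\mathfrak{b}) = 1$. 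The character map $e_\JF$ intertwines $\res_{U(1)}^e$ with $\ev_{z=0}$, and since $\ev_{z=0}(\phi_{0, 3/2}) = 2$ by \eqref{eq_ev_z=0}, the Jacobi Form image of $\mathfrak{b}$ is forced to be $\tfrac{1}{2}\phi_{0, 3/2}$.

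For part (2), I would induct on $k'$. The element $\mathfrak{b}$ together with the multiplicative structure produces, after inverting $2$, a $\TMF$-module map
\[
m_\mathfrak{b} \colon \TJF_{k-3}[6] \longrightarrow \TJF_k.
\]
The main claim is that this is a split injection whose cofiber is equivalent to $\TMF_1(2)[1/2][6(k'-1)]$ (for $k$ in the range $3k' \le k \le 3k' + 2$). Applying the inductive hypothesis to $\TJF_{k-3}$ would then yield the stated decomposition \eqref{eq_deco_TJFm}.

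The main obstacle is identifying the cofiber of $m_\mathfrak{b}$ with a shift of $\TMF_1(2)[1/2]$. I expect the cleanest proof to be geometric: the section of $\cO_\cE(3e)$ corresponding to $\mathfrak{b}$ has its divisor, after inverting $2$, supported on the nontrivial $2$-torsion points of the universal elliptic curve, so algebraically quotienting by $\mathfrak{b}$ restricts sections to the complement, which is essentially $\moduli_1(2)$, identifying the cofiber with a shift of $\TMF_1(2)[1/2]$. A purely computational alternative would be to compare $q$-expansions using the generators $a = \phi_{-1, 1/2}$ and $\mathfrak{b}$ of the image of $e_\JF$ over $\MF[1/2]$, matching with the classical presentation of $\Gamma_1(2)$-modular forms over $\Z[1/2]$.
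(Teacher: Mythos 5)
Your part (1) is essentially correct in outline, but the last sentence is not quite a proof: the identity $\ev_{z=0}(e_\JF(\frb))=1$ pins $e_\JF(\frb)$ down only modulo $a\cdot\JF_2[1/2]|_{\deg=6}$, which is the kernel of $\ev_{z=0}$. To nail it down you should first observe that $\pi_6\TJF_2$ is $2$-torsion (from $\TJF_2\simeq\TMF/\nu$ together with $\pi_6\TMF=0$ and $\pi_2\TMF\simeq\Z/2$), hence vanishes after inverting $2$; so $\frb$ is the \emph{unique} element of $\pi_6\TJF_3[1/2]$ with $\res_{U(1)}^e(\frb)=1$, and its Jacobi-form image is a well-posed question, not merely a characterization. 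The actual computation $e_\JF(\frb)=\tfrac12\phi_{0,3/2}$ then still requires an argument beyond the $\ev_{z=0}$ constraint.

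For part (2) there are two problems. First, a degree-bookkeeping error: the cells of $\TJF_{k-3}[6]$ sit in degrees $\{6,10,12,\dots,2k\}$ while those of $\TJF_k$ sit in $\{0,4,6,8,\dots,2k\}$, so the cofiber of $m_\frb$ is concentrated in degrees $\{0,4,8\}$ and should be $\TMF_1(2)[1/2]$ with \emph{no} shift; the shifts $[6i]$ in \eqref{eq_deco_TJFm} come from feeding the split-off $\TJF_{k-3}[6]$ into the inductive hypothesis, not from the one-step cofiber. Second, and more seriously, you acknowledge the cofiber identification as ``the main obstacle'' and only gesture at two routes without carrying either out. The geometric route---$\mathrm{div}(\frb)$ equals the nontrivial $2$-torsion of $\cE$, so the cofiber is sections over something like $\moduli_1(2)$---is conceptually attractive, but to turn it into a proof you would need to determine the precise line bundle on $\moduli_1(2)$ that appears (this fixes the grading) and justify the derived/spectral version of the divisor exact sequence; neither is automatic. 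The proof the paper cites (to \cite{LinTominagaYamashita}, with the analogous $\TEJF$ case sketched in Proposition~\ref{prop_TEJF_p=3}) is more pedestrian and avoids all of this: one writes down an explicit candidate splitting map built from $\frb\,\cdot$ and iterated stabilizations, and verifies it is an equivalence by induction on $k$, using compatibility with the stabilization--restriction fiber sequences \eqref{seq_res_stab_TJF_app} and the five lemma, with the base cases read off the cell diagrams after killing the $\eta$-attaching maps when $2$ is inverted. Your geometric picture would ``explain'' where $\TMF_1(2)$ comes from, but as written it is a plan rather than a proof.
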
 
We have more to say on this decomposition after our analysis of the relation between $\TEJF$ and $\TJF$ in Section \ref{subsec_app_TJFvsTEJF}.

The more detailed computation of homotopy groups of $\TJF_k$ will appear in upcoming works by Bauer-Meier and Tominaga \cite{BauerMeierTJF} \cite{Tominaga}. 
In Section \ref{subsec_divisibility} of this paper, we use the following computational result by Bauer \cite{BauerComputationTJF} on the order of the attaching element $x(k) \in \pi_{2k-1}\TJF_{k-1}$ introduced in \eqref{eq_x(k)_app}. 

\begin{prop}[{\cite{BauerComputationTJF}}]\label{prop_x(k)_app}
	The order $d_{SU}(k)$ (Definition \ref{def_divisibility}) of the The attaching element $x(k) \in \pi_{2k-1} \TJF_{k-1}$ in \eqref{eq_x(k)_app} is given as follows. 
	We have
	\begin{align}
		d_{SU}(1) = \infty,  
	\end{align}
	and for $k \ge 2$, we have
	\begin{align}\label{eq_dSU_app}
	d_{SU}(k) = 2^{\alpha(k)} \cdot 3^{\beta(k)}
 	\end{align}
 	with
 	\begin{align}
 		\alpha(k) = \begin{cases}
 			 3 & k \equiv 1, 2, 5 \pmod 8 \\
 			 2 & k \equiv 6, 7 \pmod 8 \\
 			 1 & k \equiv  3, 4 \pmod 8, \\
 			 0 & k \equiv  0 \pmod 8. 
 		\end{cases}
 		\quad 
 		\beta(k) = \begin{cases}
 			1 & k \equiv 1, 2 \pmod 3 \\
 			0 & k \equiv 0 \pmod 3. 
 		\end{cases}
 	\end{align}
\end{prop}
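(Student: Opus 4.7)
The plan is to reduce the computation of $d_{SU}(k)$ to an analysis of stable attaching elements via the cell structure $\TJF_k \simeq \TMF \otimes P_k$ from Fact~\ref{fact_TJF_cellstr}, and then to work prime by prime.

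First, I handle $k=1$ directly. From the splitting $\TJF_0 \simeq \TMF \oplus \TMF[1]$ in \eqref{eq_TJF0} and the identification $x(1) = (0,1)$ in \eqref{eq_x(1)}, the second coordinate is a generator of $\pi_0\TMF = \Z$, so $x(1)$ has infinite order and $d_{SU}(1) = \infty$.

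For $k \ge 2$, the compatibility square \eqref{eq_attaching_P} shows that $x(k)$ is the image under $\TMF \otimes (-)$ of the stable attaching map $\alpha_k \in \pi_{2k-1}P_{k-1}$ of the top $(2k)$-cell of $P_k$. Since $P_k$ has cells only in degrees $0$ and $4, 6, \ldots, 2k$, the map $\alpha_k$ is determined, through the cofiber $P_k = \cofib(\Sigma\CP^{k-1} \to S^0)$, by the classical stable cell structure of $\CP^\infty$ given in \cite[Theorem 5.2]{Mosher}. I will then compute the order of the image of $\alpha_k$ in $\pi_{2k-1}(\TMF \otimes P_{k-1})$ separately at the primes $2$ and $3$; every other prime makes $\TMF$-module attaching maps trivial in the relevant range, since $\pi_*\TMF$ has no $p$-torsion for $p \ge 5$ in the low degrees that appear here.

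At the prime $3$, Proposition~\ref{prop_structure_TJF_2inverted}(2) yields a decomposition of $\TJF_k$ that is compatible with the stabilization-restriction sequence, reducing the $3$-primary computation to the residue class $k \bmod 3$. In the cases $k \equiv 1, 2 \pmod 3$, the top-cell attaching map at $3$ reduces to a $\nu$-type element generating the $\Z/3$ summand of $\pi_3\TMF_{(3)}$, giving $\beta(k) = 1$; when $k \equiv 0 \pmod 3$, the top cell splits off $3$-locally by Proposition~\ref{prop_structure_TJF_2inverted}(2), so $\beta(k) = 0$.

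At the prime $2$, I work directly with the cell diagram (Figure~\ref{celldiag_TJF}) together with the known attaching maps of $\CP^\infty$, which at the prime $2$ are built out of $\eta$, $\nu$, and integer multiples. The mod-$8$ periodicity in $\alpha(k)$ reflects a cyclic pattern of attaching behavior combined with the relations $2\nu = \eta^3 = 0$ in $\pi_*\TMF$ and the $v_1^{4}$-periodicity of $2$-local $\TMF$. A careful Toda-bracket analysis across eight consecutive cells then identifies the $2$-primary order of $\alpha_k$ in $\pi_{2k-1}(\TMF \otimes P_{k-1})$ with the eight values $2^{\alpha(k)}$ listed in \eqref{eq_dSU_app}. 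This $2$-primary Toda-bracket computation is the main obstacle: once the mod-$8$ periodicity is pinned down at the $\TMF$-Adams $E_2$ page, the verification of each of the eight representative cases becomes a routine cell-by-cell check, which is the content of \cite{BauerMeierTJF}.
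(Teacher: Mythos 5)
The paper does not itself prove Proposition~\ref{prop_x(k)_app}: the footnote in the proof of Theorem~\ref{thm_divisibility_constraints_concrete} explicitly defers the exact determination of $d_{SU}(k)$ to a later installment of the series (and to \cite{BauerMeierTJF}), and what the paper actually establishes here is only the lower-bound estimate of Claim~\ref{claim_dSU_easy}, via exactly your reduction $\TJF_k\simeq\TMF\otimes P_k$ from Fact~\ref{fact_TJF_cellstr} together with Mosher's description of the stable attaching maps of truncated $\CP^n$. Your high-level plan (reduce through the compatibility square~\eqref{eq_attaching_P} to the stable attaching element of $P_k$, then work $2$- and $3$-locally) coincides with the paper's partial argument, and the $3$-primary step via the splitting of Proposition~\ref{prop_structure_TJF_2inverted} is the same as the paper's.

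The gap is in the $2$-primary step, which is the entire content of the proposition beyond what Claim~\ref{claim_dSU_easy} already gives, and your write-up does not carry it out: it asserts that ``a careful Toda-bracket analysis across eight consecutive cells'' produces the table $\alpha(k)$ and then cites \cite{BauerMeierTJF}, which is not a proof. Worse, the heuristics you use to justify the mod-$8$ pattern are wrong. It is false that $2\nu=\eta^3=0$ in $\pi_*\TMF$: in $\pi_3\TMF\simeq\Z/24$ the class $\nu$ has order $24$ (so $2\nu\ne0$), and $\eta^3=12\nu\ne0$---this very relation is what the paper uses in the proof of Claim~\ref{claim_dSU_easy} to pin down orders. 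It is also false that $2$-local $\TMF$ has $v_1^4$-periodicity; that is a feature of $\KO$, not of $\TMF$, whose $2$-local periodicity is $192$. The mod-$8$ pattern in $\alpha(k)$ comes from the mod-$8$ periodicity of the stable attaching maps of $\CP^n$ (an image-of-$J$/Adams-operations phenomenon), not from any $8$-fold periodicity of $\TMF$, and the hard point is precisely how those attaching maps interact with the nonzero classes $2\nu$ and $\eta^3$ in $\pi_*\TMF$. Without that computation, what you have is a lower bound of the same strength as Claim~\ref{claim_dSU_easy}, which is off by a factor of $2$ for $k\equiv2\pmod 8$, $k\ge10$, not the exact formula~\eqref{eq_dSU_app}.
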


\begin{rem}[{Easy estimates of $d_{SU}(k)$}]\label{rem_estimate_dSU}
	The proof of Proposition \ref{prop_x(k)_app} depends on spectral sequence computations. But without such effort, we can give a substantial lower bound to $d_{SU}(k)$'s in an elementary way, using our knowledge of the cell structures of $\TJF$. Here we present such a result with a proof. 
	\begin{claim}[{Easy estimates of $d_{SU}(k)$}]\label{claim_dSU_easy}~
		We have
		\begin{align}\label{eq_dSU1_2}
			d_{SU}(1) = \infty, \quad d_{SU}(2) = 24 ,
		\end{align}
		and for $k \ge 3$, we have
		\begin{align}
	\left.	2^{\alpha'(k)} \cdot 3^{\beta'(k)} \right| d_{SU}(k) , 
		\end{align}
		with 
			\begin{align}
			\alpha(k) = \begin{cases}
				3 & k \equiv 1, 5 \pmod 8 \\
				2 & k \equiv 2, 6, 7 \pmod 8 \\
				1 & k \equiv  3, 4 \pmod 8, \\
				0 & k \equiv  0 \pmod 8. 
			\end{cases}
			\quad 
			\beta(k) = \begin{cases}
				1 & k \equiv 1, 2 \pmod 3 \\
				0 & k \equiv 0 \pmod 3. 
			\end{cases}
		\end{align}
	\end{claim}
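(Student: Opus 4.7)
The plan is to combine the cellular identification $\TJF_{k-1} \simeq \TMF \otimes P_{k-1}$ of Fact~\ref{fact_TJF_cellstr} with prime-by-prime detection arguments. The base cases $k \le 2$ are immediate from the text: for $k = 1$, the element $x(1) = (0, 1) \in \pi_1 \TMF \oplus \pi_0 \TMF$ projects to the canonical generator of the $\Z$-summand and so has infinite order; for $k = 2$, one has $x(2) = \nu \in \pi_3 \TJF_1 \simeq \pi_3 \TMF$, and the classical fact that the unit map $\pi_3 S \to \pi_3 \TMF$ is an isomorphism $\Z/24 \xrightarrow{\simeq} \Z/24$ (generated by $\nu$) yields $d_{SU}(2) = 24$.

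For $k \ge 3$, the compatibility diagram \eqref{eq_attaching_P} identifies $x(k) = u_* \tilde{x}(k)$, where $\tilde{x}(k) \in \pi_{2k-1} P_{k-1}$ is the top-cell attaching map of $P_k$, which in turn is read off from the classical stable attaching maps of $\CP^{k-1}$ (see, e.g., \cite{Mosher}). A lower bound on $d_{SU}(k) = \mathrm{ord}(x(k))$ will be extracted by postcomposing with suitable cellular collapses $P_{k-1} \to Q$, where $Q$ retains only the top few cells of $P_{k-1}$, and then detecting the image of $u_* \tilde{x}(k)$ in $\pi_*(\TMF \otimes Q)$. The factor $3^{\beta'(k)}$ (nontrivial when $k \not\equiv 0 \pmod 3$) is extracted at the prime $3$ using the $2$-inverted $\TMF$-module decomposition of Proposition~\ref{prop_structure_TJF_2inverted}: the relevant attaching element $\alpha_1 \in \pi_3 S_{(3)}$ has order $3$ and survives the unit map into $\TMF_{(3)}$. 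The factor $2^{\alpha'(k)}$ is extracted at the prime $2$ from the $\eta$- and $\nu$-attachments visible in Figure~\ref{celldiag_TJF}; its dependence on $k \pmod 8$ reflects the $8$-fold Bott periodicity of $KO$-theory, through which the unit $S \to \TMF$ detects the image-of-$J$ part of the stable stems in the relevant dimensions.

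The anticipated main difficulty is the residue class $k \equiv 2 \pmod 8$, where the sharp value $\alpha(k) = 3$ of Proposition~\ref{prop_x(k)_app} cannot be seen by the elementary cellular detection sketched above, which only captures $\alpha'(k) = 2$. Recovering the missing factor of two there requires the full Adams spectral sequence calculation deferred to \cite{BauerMeierTJF}. We deliberately accept the weaker Claim~\ref{claim_dSU_easy}, since it already suffices for every divisibility statement of Theorem~\ref{thm_divisibility_constraints_concrete} except the cases $k \equiv 2 \pmod 8$ with $k \ge 10$, and the argument can then be carried out uniformly across the remaining residue classes using only the ingredients recalled in Section~\ref{subsec_TJF_defprop_app} and in Proposition~\ref{prop_structure_TJF_2inverted}.
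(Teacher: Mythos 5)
Your base cases and overall strategy are aligned with the paper's: use the cell structure $\TJF_{k-1}\simeq\TMF\otimes P_{k-1}$, collapse onto a low-dimensional quotient, and detect $x(k)$ via the unit map. You also correctly localize the deficiency at $k\equiv 2\pmod 8$. But as written, the proposal never performs the detection, and the route you sketch for the prime $3$ diverges from what the paper does in a way that would need substantial extra work.

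The paper's proof does not split by prime at all. It splits by the parity of $k$, collapses $\TJF_{k-1}$ onto the \emph{two}-cell quotient $\TJF_{k-1}/\TJF_{k-3}$ (after tensoring with $\TMF$ this is $\TMF/\eta[4k'-3]$ for $k=2k'$, respectively $\TMF[4k'+5]\oplus\TMF[4k'+3]$ for $k=2k'+3$), identifies that quotient with a suspended truncated projective space $\Sigma^3\,\CP^{\bullet}/\CP^{\bullet}$, and then reads off the stable attaching map from Mosher's tables: it is $k'\nu$ on the bottom cell (with an $\eta$ on top for even $k$, and $(\eta,k'\nu)$ or $(\eta,2k'\nu)$ for odd $k$). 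The order of the resulting element in $\pi_3(\TMF/\eta)$ is then computed using the relation $\eta^2=12\nu$ in $\pi_3\TMF$, and \emph{this single computation} yields both the $2$-primary and $3$-primary factors in $d_{SU}(k)$ simultaneously, with the answer $\tfrac{12}{\gcd(k',12)}$ or $\tfrac{24}{\gcd(k',12)}$. Your proposal names Mosher but never pins down which collapse $Q$ to use, never writes the attaching map, and never invokes $\eta^2=12\nu$; that is precisely the content of the proof, so without it the lower bound is asserted rather than proved.

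Your separate $3$-primary route via Proposition~\ref{prop_structure_TJF_2inverted} is a genuine departure and I would caution against presenting it as a drop-in replacement. The decomposition there does isolate a small $\TJF_j[6j']$ summand carrying the odd-degree homotopy (since $\pi_*\TMF_1(2)$ is torsion-free and concentrated in even degrees), but you would still need to chase $x(k)$ through the splitting isomorphism \eqref{eq_deco_TJFm} and then compute its order in $\pi_{2k-1-6j'}\TJF_j$, which is not done. Calling $\alpha_1\in\pi_3 S_{(3)}$ ``the relevant attaching element'' is also not accurate as stated: the cell attachments of $P_{k-1}$ are multiples of $\nu$ (and $\eta$), and the $3$-part $\alpha_1$ of $\nu$ shows up only after the Mosher identification plus a localization of the order computation, not as an a priori attaching map of $P_{k-1}$. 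Finally, the appeal to $8$-fold Bott periodicity of $KO$ and the image of $J$ is a correct heuristic for \emph{why} the answer is $8$-periodic, but it is not an alternative proof step: the paper does not pass through $KO$, and the periodicity it uses is the one already internal to Mosher's description of $\CP^n/\CP^m$. To turn your sketch into a proof you would need to (i) fix the two-cell quotient $P_{k-1}/P_{k-3}$ explicitly, (ii) cite the specific Mosher attaching map for each parity class, and (iii) carry out the order computation in $\pi_*\TMF$ using $\eta^2=12\nu$; once you do, you will find that the parity split already gives you both primes at once, making the separate $3$-primary detour unnecessary.
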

	Note that, compared to the result in Proposition \ref{prop_x(k)_app}, the estimate in Claim \ref{claim_dSU_easy} is sharp except for the case $k \equiv 2$ (mod $8$) for $k \ge 10$, and off by the factor $2$ for those cases. 
	
	\begin{proof}[Proof of Claim \ref{claim_dSU_easy}]
		We use the knowledge of cell structures of $\TJF$ explained in Section \ref{subsec_cell_TJF_app}. 
		\eqref{eq_dSU1_2} follows by \eqref{eq_x(1)} and \eqref{eq_x(2)}. 
		Let us prove the case for $k \ge 3$. 
		First, the estimate on the $3$-valuation is easily obtained by Proposition \ref{prop_structure_TJF_2inverted}. 
		So let us focus on the $2$-valuation. We separate the case of $k$ even and odd. 
		
		First, let $k := 2k'$ be an even integer for $k' \ge 2$.
		Consider the composition
		\begin{align}\label{eq_divisibility_SU_proof_1}
			\TMF[4k'] \xrightarrow{x(2k') \cdot} \TJF_{2k'-1}[1] \xrightarrow{} \TJF_{2k'-1} / \TJF_{2k'-3}[1] \simeq \TMF/\eta[4k'-3], 
		\end{align}
		where we have used Fact \ref{fact_TJF_cellstr} that $\TJF_m \simeq \TMF \otimes P_m$ with $P_m := \mathrm{cofib}(\Sigma \CP^{m-1} \xrightarrow{\tr} S^0)$, and the composition \eqref{eq_divisibility_SU_proof_1} is obtained by tensoring $\TMF$ with the following, 
		\begin{align}\label{eq_divisibility_SU_proof_2}
			S^{4k'} \xrightarrow{} \Sigma P_{2k' - 1} \xrightarrow{} \Sigma P_{2k' - 1}/P_{2k'-3} \simeq \Sigma^3 \CP^{2k'-2} / \CP^{2k'-4} \simeq S^{4k' - 1} \cup_\eta S^{4k'-3}. 
		\end{align}
		By the commutativity of \eqref{eq_attaching_P}, we see that the composition \eqref{eq_divisibility_SU_proof_2} is the stable attaching map of the top cell of the truncated complex projective space $ \CP^{2k'-1}/\CP^{2k'-4}$. It is known \cite[Theorem 5.2 and its proof]{Mosher} (also see the cell diagrams in Figure \ref{celldiag_TJF}) that this map factors through the bottom cell as,
		\begin{align}
			S^{4k'} \xrightarrow{k'\nu}S^{4k'-3} \xrightarrow{} S^{4k' - 1} \cup_\eta S^{4k'-3}, 
		\end{align}
		after tensoring $\TMF$, it is easy to see by using $\eta^2 = 12\nu$ in $\pi_3 \TMF$ that the composition \eqref{eq_divisibility_SU_proof_1} represents an element of order exactly $\frac{12}{\gcd(k', 12)}$ in $\pi_{3}\TMF/\eta$. 
		This means that the order of the element $x(k)$ is divisible by this number, proving the case of $k$ even. 
		
		Finally let us prove the case of odd $k = 2k' + 3$ with $k' \ge 0$. Similarly to the above proof for the even case, let us consider the composition
		\begin{align}\label{eq_divisibility_SU_proof_3}
			\TMF[4k'+6] \xrightarrow{x(2k'+3) \cdot} \TJF_{2k'+2}[1] \xrightarrow{} \TJF_{2k'+2} / \TJF_{2k'}[1] \simeq \TMF[4k'+5]\oplus \TMF[4k'+3], 
		\end{align}
		where, as before using $\TJF_m \simeq \TMF \otimes P_m$, we see that \eqref{eq_divisibility_SU_proof_3} is obtained by tensoring $\TMF$ to
		\begin{align}\label{eq_divisibility_SU_proof_4}
			S^{4k'+6} \xrightarrow{} \Sigma P_{2k' +2} \xrightarrow{} \Sigma P_{2k' +2}/P_{2k'} \simeq \Sigma^3 \CP^{2k'+1} / \CP^{2k'-1}\simeq S^{4k' +5} \oplus S^{4k'+3}. 
		\end{align}
		Again, \eqref{eq_divisibility_SU_proof_4} is the stable attaching map of the top cell of $\CP^{2k'+2}/\CP^{2k'-1}$. It is known \cite[Theorem 5.2 and its proof]{Mosher} (also see the cell diagrams in Figure \ref{celldiag_TJF}) that this map represents the class $(\eta, k'\nu) \in \pi_1 S \oplus \pi_3 S$ for $k' \not\equiv 0 \pmod 4$ and $(\eta, 2k'\nu) \in \pi_1 S \oplus \pi_3 S$ for $k' \equiv 0 \pmod 4$. 
		After tensoring $\TMF$, we see that the composition \eqref{eq_divisibility_SU_proof_3} represesnts the class of order exactly $\frac{24}{\gcd(12, k')}$. This means that the order of the element $x(2k'+3) = x(k)$ is divisible by this number, proving the case of $k$ odd. This completes the proof of Claim \ref{claim_dSU_easy}. 
	\end{proof}
\end{rem}

\subsection{The relation with (classical) Jacobi Forms}\label{subsec_JF_app}
As explained in Section \ref{subsubsec_TMF_C}, Jacobi Forms are identified with the $U(1)$-equivariant Modular Forms. 
Recall our notation $\JF_k$ in \eqref{eq_def_JFk} (Note that we are imposing integrality, and note also for our degree convention). 
We have multiplication maps $\JF_k \otimes_\MF \JF_m \to \JF_{k+m}$, which makes $\JF_\bullet := \oplus_{k} \JF_k$ into a $\Z$-graded $\MF$-module ring. 
Concretely, we have
By \cite[Theorem 2.7]{gritsenko2020modified} we have the generator-relation expression, 
\begin{align}\label{eq_JF_generators}
	\JF_\bullet =  \MF[a:= \phi_{-1, \frac12}, \phi_{0,1}, \phi_{0, \frac32},\phi_{0,2}, \phi_{0,4}, E_{4,1}, E_{4,2}, E_{4,3}, E_{6,1}, E_{6,2}, E'_{6,3}]/\sim, 
\end{align}
where for the relation we refer to \cite{gritsenko2020modified}.
The notation $f_{w, i}$ denotes an elements of weight $w$ and index $i$, so that $f_{w, i} \in \JF_{2i}|_{\deg = 2w+4k}$, and we have employed the notation $a := \phi_{-1, \frac12}$ as introduced in \eqref{eq_notation_a}.

The generator $a= \phi_{-1, \frac12} \in \JF_1|_{\deg=0}$ in \eqref{eq_notation_a} is of particular importance. 
It vanishes at order $1$ at the zero section of the universal elliptic curve, and nowhere vanishing outside. This means that the multiplication by $a$ gives an isomorphism of line bundles
\begin{align}\label{eq_isom_Loo_pole}
a \cdot  \colon \cO_{\cE_\C}(e) \simeq  \cA(\xi) \otimes \omega^{-1} \quad  \mbox{ in } \Pic(\cE_\C), 
\end{align}
where $\xi = [\overline{V}_{U(1)}] \in [BU(1), P^4 BO] \simeq \Z$ is the generator and $\cA(\xi)$ is the corresponding Looijenga's Line bundle (Definition \ref{def_Loo}). 
Thus, for each nonnegative integer $k$ we have an isomorphism
\begin{align}
a^k \cdot \colon	\Gamma(\cE_{\C}; \cO_{\cE_\C}(ke)) \simeq \JF_k^\C . 
\end{align}

Now we can introduce the connection with Topological Jacobi Forms. We have a canonical map $e_\JF \colon \pi_\bullet \TJF_k \to  \JF_k|_{\deg = \bullet}$ which fits into the commutative diagram,  
\begin{align}
	\xymatrix{
	\pi_\bullet \TJF_k \ar[rr]^-{e_\JF}  \ar@{=}[d] && \JF_k |_{\deg = \bullet} \ar@{_{(}->}[d]^-{a^{-k}}\\
	\pi_\bullet \Gamma(\cE, \cO_\cE(ke)) \ar[rr]^-{(\cE_\C \to \cE)^*} &&\Gamma(\cE_{\C}; \cO_{\cE_\C}(ke) \otimes \omega^{\bullet/2})
	}
\end{align}
This allows us to regard $\oplus_k \TJF_k$ as a spectral refinement of the graded ring of integral Jacobi Forms. 

The stabilization-restriction fiber sequence \eqref{seq_res_stab_TJF_app} fits into the following commutative diagram, 

\begin{align}\label{diag_stabres_JF_TJF_app}
	\xymatrix{
\pi_\bullet	\TJF_{k-1} \ar[r]^-{\stab}_-{\chi(V_{U(1)})\cdot} \ar[d]^-{e_\JF}&\pi_\bullet \TJF_{k} \ar[rrr]^-{\res_{U(1)}^e} \ar[d]^-{e_\JF}&&&\pi_{\bullet-2k} \TMF \ar[d]^-{e_\MF} \\
\JF_{k-1}|_{\deg = \bullet} \ar[r]^-{a \cdot} \ar@{_{(}->}[d]^-{a^{-k+1}}& \JF_{k}|_{\deg = \bullet} \ar[rrr]^-{\ev_{z=0} \colon \phi(z, q) \mapsto \phi(0, q)} \ar@{_{(}->}[d]^-{a^{-k}}&&&\MF|_{\deg = \bullet - 2k} \ar@{_{(}->}[d]\\
	\Gamma(\cE_\C, \cO_{\cE_\C}((k-1)e)\otimes \omega^{\bullet/2}) \ar@{^{(}->}[r] & \Gamma(\cE_\C, \cO_{\cE_\C}(ke)\otimes \omega^{\bullet/2}) \ar[rrr]^-{\ev_{z=0} \circ (a^k \cdot - )} &&& \Gamma(\moduli_\C, \omega^{\bullet/2-k}) , 
}
\end{align}
where the bottom left arrow is the canonical inclusion. 
In particular, we have 
\begin{align}\label{eq_a=chi_app}
	a = \phi_{-1, \frac12} = e_\JF\left( \chi(V_{U(1)})\right)  \in \JF_1|_{\deg = 0}. 
\end{align}
This is a special case of \eqref{eq_Phi_V=chiV}. 

\section{On $\TEJF$:= the $Sp(1) = SU(2)$-equivariant twisted $\TMF$}\label{app_TEJF}
In the main body of this article, the spectrum $\TEJF_{2k}$, which is defined to be $Sp(1)$-equivariant twisted $\TMF$ and called {\it Topological Even Jacobi Forms}, appeared as the domain of the $Sp(1)$-topological elliptic genus $\Jac_{Sp(1)_k} \colon MTSp(k) \to \TEJF_{2k}$. 
In this section, we study this spectrum, which itself is of independent interest. 

\begin{rem}
The content of this Appendix \ref{app_TEJF} is an original new result of this paper. 
\end{rem}

\subsection{The definition}

\begin{defn}[{$\TEJF_{2k}$}]\label{def_TEJF_app}
	Let $k$ be any integer. We define
	\begin{align}\label{eq_def_TEJF_app}
		\TEJF_{2k} := \TMF[kV_{Sp(1)}]^{Sp(1)} , 
	\end{align}
	where $V_{Sp(1)}$ is the fundamental representation of $Sp(1)$. 
\end{defn}

\begin{rem}\label{rem_no_odd_TEJF}
	Note that we do NOT define $\TEJF_{m}$ for odd $m$. 
\end{rem}
We employ this terminology because $\TEJF_{2k}$ is regarded as refining the module of integral {\it even} Jacobi Forms. 
Recall that we have defined in Example \ref{ex_EJF} the sub-$\MF$-module $\EJF_{2k} \subset \JF_{2k}$ by
\begin{align}
	\EvJF_{2k}|_{\deg = m} &:= \{\phi(z, \tau)\in \JF_{2k}|_{\deg = m} \ | \ \phi(z, \tau) = \phi(-z, \tau) \} = (\pi_m \JF_{2k})^{\Z/2}, \\
		& = \begin{cases}
				 \JF_{2k}|_{\deg = m} & m \equiv 0 \pmod 4  \\
			0 & m \not\equiv 0 \pmod 4. 
		\end{cases}
\end{align}
where $\Z/2$ acts on $\JF_{2k}$ by $\phi(z, \tau) \mapsto \phi(-z, \tau)$. 
$\EJF_{2k}$ is identified as integral $Sp(1)$-equivariant Modular Forms as explained in Example \ref{ex_EJF}. 
We get the $Sp(1)$-equivariant character map $e_\EJF$ in \eqref{eq_char_EJF}, 
which is compatible with the character map for $\TJF$, so that the following diagram commutes. 
\begin{align}\label{diag_TEJF_EJF_TJF_JF_app}
	\xymatrix{
\pi_m	\TEJF_{2k} \ar[d]^-{e_\EJF} \ar[rrrr]^-{\res_{Sp(1)}^{U(1)}}&&&& \pi_m \TJF_{2k} \ar[d]^-{e_\JF}\\ \EJF_{2k}|_{\deg = m}\ar@{^{(}->}[rrrr]^-{\id \  \mbox{\tiny   for } m \equiv 0 \pmod 4}_-{0 \  \mbox{\tiny   for } m \not\equiv 0 \pmod 4} &&&&  \JF_{2k}|_{\deg  = m}
	}
\end{align}

As indicated above, the map from $\EJF_{2k}$ to $\JF_{2k}$ is just the inclusion of the direct summand. 
However, as we will see in Section \ref{subsec_app_TJFvsTEJF} below, the upper horizontal arrow in \eqref{diag_TEJF_EJF_TJF_JF_app} does not split; rather, we show that it fits into a fiber sequence involving another copy of $\TEJF$ (Proposition \ref{prop_TEJF_TJF_TEJF6}). This creates nontrivial torsion elements in cokernels of the upper horizontal arrow in \eqref{diag_TEJF_EJF_TJF_JF_app}, which is exactly the origin of the refined divisibility result of Euler numbers for $Sp$-manifolds in the main text Section \ref{subsec_divisibility}. 

As explained in Section \ref{subsubsec_twist}, the $RO(G)$-graded $\TMF$ are special cases of twisted genuinely $G$-equivariant $\TMF$. In general, it is expected that genuinely $G$-equivariant $\TMF$ can be twisted by a map $BG \to P^4 BO$. In the case $G=Sp(1)$, we are lucky enough that the representations $kV_{Sp(1)} \in \RO(Sp(1))$ exhaust all the expected twists as follows.
\begin{lem}\label{lem_twist_Sp(1)}
	We have 
	\begin{align}\label{eq_BSp1_P4BO}
		[BSp(1), P^4BO] \simeq H^4(BSp(1); \Z) \simeq \Z,
	\end{align}
	and the element $\tw(\overline{V}_{Sp(1)}) \in [BSp(1), P^4 BO]$
	represents a generator of \eqref{eq_BSp1_P4BO}. 
\end{lem}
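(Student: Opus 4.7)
The plan has two parts, corresponding to the two claims in the lemma.

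For the first identification $[BSp(1), P^4BO] \simeq H^4(BSp(1); \Z) \simeq \Z$, I would use the Postnikov tower of $P^4 BO$ together with the fact that $BSp(1) = \bH P^\infty$ is $3$-connected (since $Sp(1) = S^3$). The homotopy groups of $P^4 BO$ are $\pi_1 = \Z/2$, $\pi_2 = \Z/2$, $\pi_3 = 0$, $\pi_4 = \Z$. There is a fiber sequence $F \to P^4 BO \to K(\Z, 4)$ where $F = P^3 BO$ has homotopy concentrated in degrees $\le 2$. Since $BSp(1)$ is $3$-connected, $[BSp(1), F] = 0$ and $[BSp(1), \Omega K(\Z, 4)] = H^3(BSp(1); \Z) = 0$, so the long exact sequence of pointed sets collapses to give $[BSp(1), P^4 BO] \simeq [BSp(1), K(\Z, 4)] = H^4(BSp(1); \Z)$. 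The latter is $\Z$ from the standard computation $H^*(\bH P^\infty; \Z) = \Z[u]$ with $|u|=4$.

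For the second claim, I would unpack the definition of $\tw$ from Section \ref{subsec_notations} \eqref{notation_tw}. Since $\dim(\overline{V}_{Sp(1)}) = 0$, the element $\tw(\overline{V}_{Sp(1)})$ is just the homotopy class of the composition $BSp(1) \xrightarrow{\overline{V}_{Sp(1)}} BO \to P^4 BO$, which under the identification above corresponds to the pullback of the tautological class in $H^4(K(\Z, 4); \Z)$ along this composition. Equivalently it is the image in $H^4(BSp(1); \Z) \simeq \text{Hom}(\pi_4 BSp(1), \Z)$ (this identification being Hurewicz, valid because $BSp(1)$ is $3$-connected) of the induced map on $\pi_4$.

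The key verification is therefore that $\pi_4 BSp(1) \to \pi_4 BO$ is an isomorphism of copies of $\Z$. This reduces to showing that the composition $Sp(1) = S^3 \xrightarrow{\simeq} Spin(3) \to SO(3) \to SO \to O$ induces an isomorphism on $\pi_3$. The first factor is the identity, the second is the double cover $Spin(3) \to SO(3)$ which is an iso on $\pi_i$ for $i \ge 2$, and for $SO(3) \to SO$ one checks via the tautological class on $S^4 = \bH P^1$: its associated $Sp(1)$-bundle, which generates $\pi_3 Sp(1)$, has $c_2 = \pm 1$ hence $p_1 = \mp 2$, so $p_1/2 = \mp 1$, exhibiting the generator of stable $\pi_3 SO$. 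Taking absolute values and forgetting signs, the composition is an iso on $\pi_3$, so $\tw(\overline{V}_{Sp(1)})$ represents a generator of $[BSp(1), P^4 BO] \simeq \Z$.

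The whole argument is structural with no real obstacle — the only point requiring care is the sign-and-normalization bookkeeping for the generator of $\pi_3 SO$ relative to $p_1/2$, but since we only need a generator (not a specific sign), this poses no real difficulty.
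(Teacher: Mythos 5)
Your overall strategy is the same as the paper's: reduce $[BSp(1),P^4BO]$ to $H^4(BSp(1);\Z)\simeq\Z$ using the $3$-connectivity of $BSp(1)=\bH P^\infty$, and then identify the class $\tw(\overline{V}_{Sp(1)})$ with $\pm$ a generator via a characteristic-class computation (the paper phrases this as ``$c_2(V_{Sp(1)})$ generates $H^4$''). Your passage through $\pi_4$ via Hurewicz is an unpacking of the same calculation.

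However, there is a genuine error in the second half. You identify the composite classifying map as $Sp(1)=S^3\xrightarrow{\simeq}Spin(3)\to SO(3)\to SO\to O$. That composite is the classifying map of the \emph{adjoint} representation of $Sp(1)$, not of the defining representation $V_{Sp(1)}=\bH$ that the lemma is about. The defining representation is the $4$-dimensional real (= $1$-dimensional quaternionic, = $2$-dimensional complex) representation, and the relevant homomorphism is $Sp(1)\to SO(4)\to SO$. This is not a cosmetic distinction: the adjoint representation $\mathrm{Ad}\colon Sp(1)\to SO(3)$ has $\mathrm{Ad}\otimes\C\simeq \mathrm{Sym}^2\C^2$, which gives $p_1(\mathrm{Ad})=-4c_2(V_{Sp(1)})$ and hence $\frac{p_1}{2}(\mathrm{Ad})=-2c_2$, i.e., \emph{twice} a generator. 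If one follows the chain of homomorphisms you actually wrote, one concludes (incorrectly) that $\tw$ of the twist is twice a generator. Your numerical computation ($c_2=\pm 1$, $p_1=-2c_2$, hence $p_1/2=\mp 1$) is the one for the defining representation and is correct; it is the group-theoretic identification that is off. Replace $Spin(3)\to SO(3)\to SO$ by the defining map $Sp(1)\to SO(4)\to SO$ and the argument is sound.

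A smaller point: you assert a fiber sequence $P^3BO\to P^4BO\to K(\Z,4)$, which amounts to the claim that the relevant Postnikov $k$-invariant of $BO$ vanishes; that is not obviously true and you don't use it. Running the standard Postnikov fibration $K(\Z,4)\to P^4BO\to P^3BO$ instead gives the same conclusion ($[BSp(1),P^3BO]=\ast$ and $[BSp(1),\Omega P^3BO]=\ast$ by $3$-connectivity, hence $[BSp(1),P^4BO]\simeq H^4(BSp(1);\Z)$), so this only requires rephrasing, not a new idea.
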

\begin{proof}
	The first claim follows from $Sp(1)$ being a compact connected simply connected simple Lie group, and the second claim follows from the fact that the second Chern class of $V_{Sp(1)}$ is the generator of $H^4(BSp(1); \Z)$. 
\end{proof}
This entitles us to say that $\TEJF_{2k}$ completes the list of all the geometrically twisted $Sp(1)$-equivariant $\TMF$.
\begin{rem}\label{rem_twist_convention}
	Concretely, $\TEJF_{2k}$ is identified with what is often called ``$Sp(1)$-equivariant $\TMF$ twisted by $k \tau \in H^4(BSp(1); \Z)$'' where $\tau$ is a generator of $H^4(BSp(1); \Z) \simeq \Z$. The integer $k$ is also often called the ``level''. But we need to be careful about the degree, since $\TEJF_{2k} =\left(  \TMF \otimes S^{kV_{Sp(1)}}\right)^{Sp(1)} $ and $S^{kV_{Sp(1)}}$ is of dimension $4k$; for example, to compare with the degree convention of \cite{tachikawa2023anderson}, we have
	\begin{align}
		\pi_m \TEJF_{2k}  = \TMF_{Sp(1)}^{4k-m+k\tau} \mbox{ in  \cite{tachikawa2023anderson}.} 
	\end{align}
\end{rem}

\subsection{Basic properties}

The structure of $\TEJF_\bullet$ is parallel $\TJF_\bullet$, reviewed in Section \ref{subsec_TJF_defprop_app}. 
The Euler class of the fundamental representation\footnote{
For a physical interpretation of the genuinely equivariant Euler class $\chi(V) \in \TMF[V]^G$, see Remark \ref{rem_PhiV=chiV}. In particular, the element $\chi(V_{Sp(1)})$ is supposed to be physically interpreted as ``quaternionic $1$-dimensional Majorana fermion''. 
}
\begin{align}
	\chi(V_{Sp(1)}) \in \pi_0 \TEJF_2
\end{align}
restricts by $\res_{Sp(1)}^{U(1)}$ to $\chi(V_{U(1)})^2 \in \pi_0 \TJF_2$ , so that we have
\begin{align}
e_\EJF\left( 	\chi(V_{Sp(1)}) \right) = a^2 = \left( \phi_{-1, \frac12} \right)^2 =  \left( \theta_{11}(z, \tau)/\eta(\tau)^3\right)^2 \in  \EJF_2|_{\deg = 0}. 
\end{align}
where we are using the notation \ref{eq_notation_a}. 
We have the multiplication map
\begin{align}
	\cdot \colon \TEJF_{2k} \otimes_\TMF \TEJF_{2m} \to \TEJF_{2k+2m}
\end{align}
so that $\oplus_{k} \TEJF_{2k}$ can be regarded as a evenly graded ring object in $\Mod_\TMF$. 

The {\it internal structure map} relating our trio in the main text introduced in Section \ref{subsubsec_internal} specializes to give the following {\it stabilization sequence} of $\TEJF$ (see Example \ref{ex_TEJF_building}),

\begin{align}\label{eq_building_seq_TEJF_app}
	\xymatrix@C=4em{
		\TEJF_0 \ar[r]^-{\stab}_-{ \chi(V_{Sp(1)})\cdot } \ar[d]^-{\res_{Sp(1)}^e}_-{\substack{\simeq \\ \mbox{\tiny Fact} \ \ref{factSU}}}& \TEJF_2 \ar[r]^-{\stab}_-{ \chi(V_{Sp(1)})\cdot }  \ar[d]^-{\res_{Sp(1)}^e}& \TEJF_4 \ar[r]^-{\stab}_-{ \chi(V_{Sp(1)})\cdot } \ar[d]^-{\res_{Sp(1)}^e}& \TEJF_6 \ar[r]^-{\stab}_-{ \chi(V_{Sp(1)})\cdot }  \ar[d]^-{\res_{Sp(1)}^e}& \cdots \\
		\TMF &	\TMF[4]   & \TMF[8] & \TMF[12]   & \cdots
	}, 
\end{align}
where each pair of consecutive horizontal and vertical arrows form a fiber sequence which we call the {\it stabilization-restriction fiber sequence} which fits into the following commutative diagram (c.f. \eqref{diag_stabres_JF_TJF_app}), 
\begin{align}\label{seq_res_stab_TEJF_app}
		\xymatrix@C=4em{
		\TEJF_{2k-2} \ar[r]^-{\stab}_-{\chi(V_{Sp(1)})\cdot} \ar@{~>}[d]^-{e_{\EJF}\circ \pi_*}& \TEJF_{2k} \ar[r]^-{\res_{Sp(1)}^e}\ar@{~>}[d]^-{e_{\EJF}\circ \pi_*}& \TMF[4k] \ar@{~>}[d]^-{e_{\MF}\circ \pi_*}\ar[r]^-{y(k)} & \TEJF_{2k-2}[1]\\
		\EJF_{2k-2} \ar[r]^-{a^2 \cdot } & \EJF_{2k}\ar[r]^-{ev_{z=0}} & \MF[4k]&, 
	}
\end{align}
The sequence \eqref{eq_building_seq_TEJF_app} is regarded as building $\TEJF_{2k}$ by attaching $4k$-dimensional $\TMF$-cells one by one. 
We have defined, in \eqref{eq_xyz}, the {\it attaching element}
\begin{align}\label{eq_y(k)_app}
	y(k) \in \pi_{4k-1} \TEJF_{2k-2}
\end{align}
to be the cofiber of $\res_{Sp(1)}^e$ in \eqref{seq_res_stab_TEJF_app}. This is the attaching map of the top $\TMF$-cell of $\TEJF_{2k}$, which can also be identified with the transfer map $\tr_e^{Sp(1)}$ (see \eqref{diag_selfdual_TEJF} below). The analysis of this element played a key role in our application to Euler numbers in Section \ref{subsec_divisibility}. 

We also use the following duality result: 

\begin{lem}\label{lem_negative_TEJF}
	\begin{enumerate}
		\item The virtual representation 
		$\theta := \Ad(Sp(1)) - 2V_{Sp(1)} \in \RO(Sp(1))$ admits a $BU\langle 6 \rangle$-structure $\fraks$, and the choice is unique up to contractible choice. 
		
		\item For any integer $k$, the composition
		\begin{align}\label{eq_duality_pairing_TEJF_app}
			\TEJF_{2k} \otimes_\TMF \TEJF_{-2k-4}[5]  \xrightarrow{ \cdot } \TEJF_{-4}[5] \stackrel{ \sigma(\theta, \fraks)}{\simeq}\TMF[-\Ad(Sp(1))]^{Sp(1)} \xrightarrow{\tr_{Sp(1)}^e} \TMF
		\end{align}
		exhibits the following duality in $\Mod_\TMF$ (Here $D$ denotes the dual in $\Mod_\TMF$), 
		\begin{align}\label{eq_TEJF_duality_app}
				\TEJF_{2k} \simeq D(\TEJF_{-2k-4})[-5]. 
		\end{align}
		Here, the equivalence $\sigma(\theta, \fraks)$ in \eqref{eq_duality_pairing_TEJF_app} is the $Sp(1)$-equivariant Thom isomorphism (Fact \ref{fact_sigma}) induced by the $BU\langle 6 \rangle$-structure in (1). 
	\end{enumerate}

\end{lem}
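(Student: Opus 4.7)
For part (1), I would exploit that $BSp(1) = \HP^\infty$ is $3$-connected with $H^*(BSp(1);\Z) = \Z[c]$, $|c|=4$, so $w_1(\theta)$ and $w_2(\theta)$ vanish automatically. To verify the string obstruction $\tfrac{1}{2}p_1(\theta) = 0 \in H^4(BSp(1);\Z)$, I would identify $V_{Sp(1)}$ with the complex fundamental representation $W$ of $SU(2)=Sp(1)$ (so $c_2(W)=c$): since all $SU(2)$-irreps are self-dual, $V_{Sp(1)}\otimes_\R\C \simeq W\oplus W$, giving $p_1(V_{Sp(1)}) = -c_2(W\oplus W) = -2c$; whereas $\Ad\otimes_\R\C \simeq \mathrm{Sym}^2 W$ has Chern roots $2a,0,-2a$ (with $a^2 = -c$), giving $p_1(\Ad) = -c_2(\mathrm{Sym}^2 W) = 4a^2 = -4c$. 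Hence $p_1(\theta) = -4c - 2(-2c) = 0$, producing the string structure required for the sigma orientation of Fact \ref{fact_sigma}. Uniqueness up to homotopy follows from $H^3(BSp(1);\Z) = 0$.

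For part (2), the plan is to combine the general dualizability \eqref{eq_twisted_dual} with the string structure from part (1). First, \eqref{eq_twisted_dual} yields
$$D(\TEJF_{2k}) \simeq \TMF[-kV_{Sp(1)}-\Ad(Sp(1))]^{Sp(1)},$$
with duality pairing given by multiplication followed by $\tr_{Sp(1)}^{e}$, as in \eqref{eq_coev_duality_TMF}. Second, since $\bar\theta := \theta + 5\cdot\underline{\R}$ is a string virtual representation of virtual dimension $0$, the equivariant sigma orientation produces an equivalence $\sigma(\theta,\fraks)\colon \TMF[\bar\theta] \simeq \TMF$ in $\Spectra^{Sp(1)}$. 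Rewriting $-kV_{Sp(1)}-\Ad = -(k+2)V_{Sp(1)} - \bar\theta + 5$, tensoring $\sigma(\theta,\fraks)$ with $\TMF[-(k+2)V_{Sp(1)}+5]$, and taking genuine $Sp(1)$-fixed points gives
$$\TMF[-kV_{Sp(1)}-\Ad]^{Sp(1)} \;\simeq\; \TMF[-(k+2)V_{Sp(1)}+5]^{Sp(1)} \;=\; \TEJF_{-2k-4}[5].$$
Combining these identifies $D(\TEJF_{2k}) \simeq \TEJF_{-2k-4}[5]$, equivalent to $\TEJF_{2k}\simeq D(\TEJF_{-2k-4})[-5]$.

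Finally, I would check that under this identification the abstract duality pairing \eqref{eq_coev_duality_TMF} becomes the concrete pairing \eqref{eq_duality_pairing_TEJF_app} asserted in the lemma. This reduces to verifying the commutativity of a diagram comparing the multiplication $\TEJF_{2k}\otimes_\TMF\TEJF_{-2k-4}\to \TEJF_{-4}$ with $\TMF[kV_{Sp(1)}]^{Sp(1)}\otimes_\TMF\TMF[-kV_{Sp(1)}-\Ad]^{Sp(1)}\to \TMF[-\Ad]^{Sp(1)}$ via $\sigma(\theta,\fraks)$. The expected (essentially routine) main obstacle is this coherence check, which I would handle using the monoidal compatibility of the sigma orientation in $\RO(G)$ (item (2) of Remark \ref{rem_sigma_concrete}): since $\sigma(\theta,\fraks)$ commutes with tensoring by the identity of $\TMF[kV_{Sp(1)}]^{Sp(1)}$, the two pairings agree on the nose, and non-degeneracy then comes for free from the general duality \eqref{eq_twisted_dual}.
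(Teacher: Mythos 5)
Your argument matches the paper's proof exactly in spirit: for (1), compute $p_1$ using the complexified representations $\mathrm{Sym}^2 W$ and $W\oplus W$ of $SU(2)=Sp(1)$ (the paper says simply ``check the second Chern class''), and for (2), invoke the general $\TMF$-module dualizability \eqref{eq_twisted_dual} and translate the twist via the equivariant sigma orientation.

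One discrepancy is worth flagging. The lemma asserts a $BU\langle 6\rangle$-structure on $\theta = \Ad(Sp(1)) - 2V_{Sp(1)}$, but your verification only produces a \emph{String} structure: you check $w_1 = w_2 = \tfrac12 p_1 = 0$ and uniqueness from $H^3 = 0$. A $BU\langle 6\rangle$-structure additionally requires lifting $\overline{\theta}\colon BSp(1)\to BO$ through $BU$, i.e., a stable complex structure, and this seems genuinely problematic for $\theta$: in $\RO(Sp(1))$ the coefficient of the real-type irreducible $\Ad$ is odd, so $\theta$ is not in the image of $\RU(Sp(1))\to\RO(Sp(1))$, and a filtration argument in $\widetilde{\KO}^0(\HP^\infty)$ suggests $[\overline\theta]$ (whose leading term is in filtration $8$) is not in the image of realification $\widetilde{\KU}^0(\HP^\infty)\to\widetilde{\KO}^0(\HP^\infty)$ either, since $r(w^2) = 2[\overline\theta]$. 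So the lemma as stated likely overclaims and should read ``String structure'' rather than ``$BU\langle 6\rangle$-structure.'' This is harmless for the use in part (2), which only needs the sigma orientation from Fact \ref{fact_sigma}, so your proof does establish everything that the rest of the paper relies on; but you should either (a) note explicitly that you are proving the weaker, sufficient statement, or (b) point out that the lemma's phrasing appears to be a misstatement.

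Your part (2) is otherwise fine. The algebraic identity $-kV_{Sp(1)} - \Ad(Sp(1)) = -(k+2)V_{Sp(1)} - \overline{\theta} + 5$ in $\RO(Sp(1))$ is verified correctly, the appeal to \eqref{eq_twisted_dual} and \eqref{eq_coev_duality_TMF} is exactly what the paper intends, and your observation that the remaining coherence (matching the abstract duality pairing with the explicit composition \eqref{eq_duality_pairing_TEJF_app}) reduces to the monoidal compatibility of the equivariant sigma orientation in $\RO(G)$ is the right closing step.
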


\begin{proof}
	(1) follows by checking the second Chern class. 
	(2) follows from the general duality statement of equivariant $\TMF$ in \eqref{eq_twisted_dual}. 
\end{proof}

At this point, we note that the stabilization-restriction sequence in \eqref{seq_res_stab_TEJF_app} is {\it dual} to that of $k$ replaced by $-k-1$, in the sense that the following diagram commutes by Proposition \ref{prop_selfdual_stabres}. 
\begin{align}\label{diag_selfdual_TEJF}
	\xymatrix@C=4em{
		\TEJF_{2k-2} \ar[r]^-{\stab}_-{\chi(V_{Sp(1)})\cdot} \ar[d]^{\simeq}& \TEJF_{2k} \ar[r]^-{\res}\ar[d]^{\simeq}& \TMF[4k] \ar@{=}[d] \ar[r]_-{y(k)}^-{\tr} & \TEJF_{2k-2}[1] \ar[d]^{\simeq}\\
		D(\TEJF_{-2k-2})[-5] \ar[r]^-{D(\stab)}_-{\chi(V_{Sp(1)})\cdot} & D(\TEJF_{-2k-4})[-5] \ar[r]_-{D(y(-k-2) )}^-{D(\tr)} & \TMF[4k] \ar[r]^-{D(\res)} & D(\TEJF_{-2k-2})[-4]\\
	}
\end{align}
Here we have used Lemma \ref{lem_negative_TEJF}. 
In particular, the commutativity of the right square allows us to identify the top right horizontal arrow with the transfer map as indicated in the diagram. 

\subsection{The cell structure of $\TEJF_{2k}$}\label{subsec_cell_TEJF}
In this subsection, we determine the structure of $\TEJF_{2k}$ as a $\TMF$-module. 
As we will see, $\TEJF_{2k}$ turns out to have a surprizingly simple structure;

\begin{prop}\label{prop_Sp(1)}
	\begin{enumerate}
		\item
	For any integer $k \ge -1$, we have an equivalence of $\TMF$-modules, 
	\begin{align}\label{eq_TEJF_cellstr}
	\TEJF_{2k}:=	\TMF[kV_{Sp(1)}]^{Sp(1)} \simeq \TMF \otimes \HP^{k+1}[-4]
	\end{align}
	Here we note that we are using $\HP^{k+1}$, NOT $\HP^{k+1}_+$. In particular, for $k=-1$ we have
	\begin{align}\label{eq_TEJF-2=0}
		\TEJF_{-2} = 0.  
	\end{align}
	
		Moreover, the isomorphism \eqref{eq_TEJF_cellstr} is compatible with the stabilization-restriction fiber sequence in \eqref{seq_res_stab_TEJF_app} in the sense that the following diagram commutes, 
	\begin{align}\label{eq_attaching_HP_app}
		\xymatrix@C=3em{
			\TEJF_{2k-2} \ar[r]^-{\stab}_-{\chi(V_{Sp(1)})\cdot} & \TEJF_{2k} \ar[r]^-{\res_{Sp(1)}^e} & \TMF[4k]  \ar[r]^-{y(k)} & \TEJF_{2k-2}[1]\\
			\HP^k[-4] \ar[u]^-{\TMF \otimes - } \ar@{^{(}->}[r] & \HP^{k+1}[-4] \ar@{->>}[r] \ar[u]^-{\TMF \otimes - } & S^{4k} \ar[u]^-{\TMF \otimes - } \ar[r]^-{\widetilde{y}(k)}& \HP^{k}[-3]\ar[u]^-{\TMF \otimes - }
		}
	\end{align}
	where the bottom row is the cofiber sequence induced by the standard inclusion $\HP^{k} \hookrightarrow \HP^{k+1}$, and we have denoted by $\widetilde{y}(k)$ the stable attaching map of the top cell of $\HP^{k+1}$.
	\item 
	For $k \le -2$, we have
	\begin{align}
		\TEJF_{2k} \stackrel{\eqref{eq_TEJF_duality_app}}{\simeq} D(\TEJF_{-2k-4})[-5] \stackrel{\eqref{eq_TEJF_cellstr}}{\simeq} \TMF \otimes D_S(\HP^{-k-1})[-1]. 
	\end{align}
	\end{enumerate}
\end{prop}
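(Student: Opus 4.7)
The plan is to prove part~(1) by mimicking the Bauer--Meier argument for $\TJF_k$ sketched in the proof of Fact~\ref{fact_TJF_cellstr}, and then to deduce part~(2) formally from the duality in Lemma~\ref{lem_negative_TEJF}.

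First, I would apply the functor $(\TMF \otimes -)^{Sp(1)}$ to the cofiber sequence of pointed $Sp(1)$-spaces
\[
S(kV_{Sp(1)})_+ \to S^0 \xrightarrow{\chi(kV_{Sp(1)})} S^{kV_{Sp(1)}}.
\]
Since $S(kV_{Sp(1)})$ is a free $Sp(1)$-space with orbit space $\HP^{k-1}$, the Adams isomorphism (with the shift by $\dim Sp(1) = 3$ arising from the adjoint representation) gives
\[
(\TMF \otimes S(kV_{Sp(1)})_+)^{Sp(1)} \simeq \TMF \otimes (\HP^{k-1})_+[3],
\]
and combining this with $\TMF^{Sp(1)} \simeq \TMF$ from Fact~\ref{factSU} yields a fiber sequence in $\Mod_\TMF$,
\[
\TMF \otimes (\HP^{k-1})_+[3] \xrightarrow{\phi_k} \TMF \to \TEJF_{2k}.
\]
This realizes $\TEJF_{2k}$ as a $\TMF$-linear cofiber built cell-by-cell from $(\HP^{k-1})_+$.

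The core step is then to identify this cofiber with $\TMF \otimes \HP^{k+1}[-4]$, where $\HP^{k+1}$ is understood as pointed at $\HP^0$ (so its reduced suspension spectrum has stable cells only in dimensions $4, 8, \ldots, 4(k+1)$). My approach is induction on $k \ge -1$, running the stabilization--restriction fiber sequence~\eqref{seq_res_stab_TEJF_app} in parallel with the cofiber sequence $\HP^k \hookrightarrow \HP^{k+1} \to S^{4(k+1)}$ of pointed cell complexes. The base cases are immediate: for $k=-1$, Fact~\ref{factSU} makes $\res_{Sp(1)}^e \colon \TEJF_0 \to \TMF$ an equivalence, so the stabilization--restriction sequence forces $\TEJF_{-2}=0$; for $k=0$, $\TEJF_0 \simeq \TMF \simeq \TMF \otimes S^4[-4]$. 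In the inductive step, assuming $\TEJF_{2k-2} \simeq \TMF \otimes \HP^k[-4]$, producing the equivalence for $\TEJF_{2k}$ amounts to supplying the filler making~\eqref{eq_attaching_HP_app} commute at level $k$; equivalently, one has to show that the attaching element $y(k) \in \pi_{4k-1}\TEJF_{2k-2}$ cut out by~\eqref{seq_res_stab_TEJF_app} coincides, under the inductive equivalence, with the $\TMF$-linear image of the classical stable attaching map $\widetilde{y}(k)$ of the top cell of $\HP^{k+1}$ onto $\HP^k$.

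I expect this matching of attaching maps to be the main obstacle. The cleanest route I foresee is to strengthen the induction and track a compatible system of equivalences arising from the standard $Sp(1)$-equivariant cell filtration of $S(\infty V_{Sp(1)})_+$, whose orbit space is the cell filtration $\HP^0 \subset \HP^1 \subset \cdots$ of $\HP^\infty$. Because $(\HP^{k-1})_+/(\HP^{k-2})_+ \simeq \HP^{k-1}/\HP^{k-2} \simeq S^{4(k-1)}$, applying the maps $\phi_k$ coherently to this filtration yields a commutative ladder of cofiber sequences whose bottom row is the stabilization--restriction fiber sequence of $\TEJF$; comparing this ladder step-by-step with the standard cell filtration of $\HP^{k+1}[-4]$ forces the attaching elements to agree and simultaneously delivers the compatibility in~\eqref{eq_attaching_HP_app}. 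Finally, part~(2) for $k \le -2$ follows at once by substituting the formula just proven for $\TEJF_{-2k-4}$ into the duality isomorphism~\eqref{eq_TEJF_duality_app} of Lemma~\ref{lem_negative_TEJF} and converting the Spanier--Whitehead dual on the right-hand side cellwise.
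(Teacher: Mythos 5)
Your application of the Adams isomorphism is incorrect, and this is a genuine gap that breaks the whole approach. For a free $G$-spectrum $E$, the Adams isomorphism gives $E^G \simeq (E[\Ad G])_{hG}$, so for $E = \TMF \otimes S(kV_{Sp(1)})_+$ you get
\begin{align}
(\TMF \otimes S(kV_{Sp(1)})_+)^{Sp(1)} \simeq \TMF \otimes \left(S(kV_{Sp(1)})_+ \wedge S^{\Ad Sp(1)}\right)_{hSp(1)} \simeq \TMF \otimes (\HP^{k-1})^{\mathfrak{ad}},
\end{align}
the $\TMF$-Thom spectrum of the \emph{adjoint bundle} $\mathfrak{ad}$ of the quaternionic Hopf bundle over $\HP^{k-1}$, not $\TMF \otimes (\HP^{k-1})_+[3]$. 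Because $Sp(1)$ is non-abelian, $\Ad Sp(1)$ is a nontrivial representation and $\mathfrak{ad}$ has $p_1 \ne 0$ (indeed $[\overline{\Ad_{Sp(1)}}] = 2[\overline{V_{Sp(1)}}]$ in $[BSp(1), P^4BO]$), so it is not string and there is no $\TMF$-Thom isomorphism collapsing this to a naive shift by $3$. The naive shift only works for abelian $G$ — that is precisely why Bauer--Meier's argument you are imitating works for $U(1)$ — and you have silently carried over a fact that fails for $Sp(1)$.

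The paper sidesteps this by tensoring the cofiber sequence with $S^{-\Ad_{Sp(1)}}$ \emph{before} taking genuine fixed points, which cancels the adjoint twist exactly so that the Adams isomorphism produces the untwisted $\TMF \otimes \HP^{k+1}_+$ (after starting from $S((k+2)V_{Sp(1)})$ rather than $S(kV_{Sp(1)})$ so the index works out). It then uses the dualizability of equivariant $\TMF$ (Fact \ref{fact_dualizability_TMF}) to identify $\TMF[-\Ad_{Sp(1)}]^{Sp(1)} \simeq D(\TMF^{Sp(1)}) \simeq \TMF$, and the relation $[\overline{\Ad_{Sp(1)}}] = 2[\overline{V_{Sp(1)}}]$ together with $\dim_\R \Ad_{Sp(1)} = 3$ to rewrite $\TMF[(k+2)V_{Sp(1)} - \Ad_{Sp(1)}]^{Sp(1)} \simeq \TEJF_{2k}[5]$. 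With those identifications, the first arrow of the resulting fiber sequence $\TMF \otimes \HP^{k+1}_+ \to \TMF \to \TEJF_{2k}[5]$ is pinned down as the basepoint evaluation by naturality from the $k=-1$ case, which gives the cell structure immediately and also yields the compatibility~\eqref{eq_attaching_HP_app} without any step-by-step matching of attaching maps. Your proposed induction on attaching maps is therefore not only built on a false base, it is also unnecessary once the adjoint shift is handled correctly. Part~(2) is fine.
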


\begin{rem}[{Various equivalent cell structures for $\TEJF_{2k}$ (remark added for arXiv v2)}]\label{rem_various_cell}
    In the upcoming work by T.Bauer and the second author \cite{BauerYamashitaTEJF}, we show that actually there are other, a priori different-looking $\TMF$-cell complexes which turns out to be all equivalent to $\TEJF_{2k}$: 
    \ie
\TEJF_{2k} \simeq \TMF \otimes \HP^{k+1}[-4] \simeq \TMF \otimes D_S(\HP_{-k}^0), 
    \fe
    as well as
    \ie
\TEJF_{\infty} \simeq \TMF \otimes \HP^{\infty}[-4] \simeq \TMF \otimes D_S(\HP_{-\infty}^0) \simeq \TMF \sslash \nu
    \fe
    where the $\sslash$ denotes the $\mathbb{E}_1$-quotient, apearing in \cite{devalapurkar:hodge}. 
    In the paper \cite{BauerYamashitaTEJF}, we will give independent proof of the all the isomorphisms above, without using the proof provided here. In particular, \cite{BauerYamashitaTEJF} contains a proof for Proposition~\ref{prop_Sp(1)} above, which is conceptually better than the one provided here. 
\end{rem}

\begin{proof}
	The proof is parallel to the proof of Fact \ref{fact_TJF_cellstr} by Bauer-Meier sketched there. 
	Let $k \ge -1$. Consider the following cofiber sequence of pointed $Sp(1)$-spaces, 
	\begin{align}
		S((k+2)V_{Sp(1)})_+ \to S^0 \to S^{(k+2)V_{Sp(1)}}. 
	\end{align}
	Tensoring $S^{-\Ad_{Sp(1)}}$ to the above sequence and applying the $Sp(1)$-equivariant $\TMF$-homology functor $\left(\TMF \otimes (-)\right)^{Sp(1)} $, we get a fiber sequence
	\begin{align}\label{seq_Sp1}
		\left(\TMF \otimes S^{-\Ad_{Sp(1)}} \otimes S((k+2)V_{Sp(1)})_+ \right)^{Sp(1)} 
		\to \TMF[-\Ad_{Sp(1)}]^{Sp(1)} \to \TMF[(k+2)V_{Sp(1)}-\Ad_{Sp(1)}]^{Sp(1)}. 
	\end{align}
	By the Adams isomorphism and the fact that $S((k+2)V_{Sp(1)})_+ / Sp(1) \simeq \HP^{k+1}$, we get
	\begin{align}\label{eq_adams_Sp1}
		\left(\TMF \otimes S^{-\Ad_{Sp(1)}} \otimes S((k+2)V_{Sp(1)})_+ \right)^{Sp(1)} \simeq \TMF \otimes \HP_+^{k+1}
	\end{align}
	On the other hand, we claim that we have
	\begin{align}\label{eq_Sp1_duality}
		\TMF[-\Ad_{Sp(1)}]^{Sp(1)} \simeq D(\TMF^{Sp(1)}) \simeq D(\TMF) \simeq \TMF. 
	\end{align}
	Here $D(-)$ denotes the dual object in $\Mod_\TMF$. The first equivalence follows from Fact \ref{fact_dualizability_TMF} and the second equivalence follows from Fact \ref{factSU} (1). 
	Moreover, we have
	\begin{align}\label{eq_Sp1ad}
		\TMF[(k+2)V_{Sp(1)}-\Ad_{Sp(1)}]^{Sp(1)} \simeq \TMF[kV_{Sp(1)}+5]^{Sp(1)}, 
	\end{align}
	since $[\overline{\Ad_{Sp(1)}}] = 2 \cdot [\overline{V_{Sp(1)}}] \in [BSp(1), BO\langle 0, \cdots, 4 \rangle]$ and $\dim_\R \Ad_{Sp(1)} = 3$. 
	Rewriting the fiber sequence \eqref{seq_Sp1} by the isomorphisms \eqref{eq_adams_Sp1}, \eqref{eq_Sp1_duality} and \eqref{eq_Sp1ad}, we get a fiber sequence
	\begin{align}
		\TMF \otimes \HP_+^{k+1} \xrightarrow{\rm{ev}_+} \TMF \to \TMF[kV_{Sp(1)}+5]^{Sp(1)}
	\end{align}
	Here the first arrow can be identified by the evaluation at the basepoint because of the following observation\footnote{The arXiv v1 of this article contained a mistake in the logic here, but the result is unchanged.}:  
    In general, for any ring spectrum $A$ and $M \in \Mod_A$, suppose an $A$-module morphism
    \ie
f \colon A \oplus M \to A
    \fe
    satisfies that
    \ie \label{eq_section}
\id_A = \left(A \xhookrightarrow{\id_A \oplus 0} A \oplus M \xrightarrow{f} A \right). 
    \fe
    Then $f$
    is equivalent, by an $A$-module automorphism of $A \oplus M$, to the morphism $\id_A \oplus 0$. 
    We apply this observation to $A = \TMF$ and $M = \TMF \oplus \HP^{k+1}$ with the morphism $f$ being the first map of \eqref{seq_Sp1}. This map satisfies the above condition \eqref{eq_section} by the case $k=-1$. 
    % it factors through the case for $k=-1$. 
    This implies the first statement of Proposition \ref{prop_Sp(1)} (1). The second statement of (1) follows directly from our construction of the isomorphism \eqref{eq_TEJF_cellstr}. (2) is obtained by combining the duality statement in Lemma \ref{lem_negative_TEJF} and (1) of this proposition which we have just proved. 
	This completes the proof of Proposition \ref{prop_Sp(1)} 
\end{proof}

The stable attaching map $\widetilde{y}(k)$ in \eqref{eq_attaching_HP_app} of $\HP^{k+1}$ is classically known (e.g., \cite{Mukai}), and not difficult to prove, to satisfy the following. 
\begin{fact}\label{fact_HP_attaching}
	For each positive integer $k$, the composition
	\begin{align}
		S^{4k+3} \xrightarrow{\widetilde{y}(k)} \HP^{k} \to \HP^{k}/\HP^{k-1} \simeq S^{4k}
	\end{align}
	stably represents the element $k\nu \in \pi_3 S = \Z \nu / (24\nu)$. 
\end{fact}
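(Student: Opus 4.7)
The plan is to prove this classical fact by identifying $\widetilde{y}(k)$ as a Hopf projection, doing the base case directly, and handling the general case either inductively or via the Adams $e$-invariant.

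\textbf{Step 1 (identification of the map).} The stable attaching map $\widetilde{y}(k)\colon S^{4k+3}\to\HP^k$ is realized concretely as the quaternionic Hopf projection: $S^{4k+3}\subset\bH^{k+1}$ is the unit sphere, and the map sends $(q_0,\ldots,q_k)\mapsto[q_0:\cdots:q_k]$. Equivalently, this is the underlying map of the $Sp(1)$-principal bundle $S^{4k+3}\to\HP^k$. Under the collapse $q\colon\HP^k\to\HP^k/\HP^{k-1}\simeq S^{4k}$, the composition $q\circ\widetilde{y}(k)$ lies in $\pi_{4k+3}(S^{4k})$, whose stable image is $\pi_3 S\simeq\Z/24$ generated by $\nu$.

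\textbf{Step 2 (base case $k=1$).} Here $\HP^0=*$ and $\HP^1\simeq S^4$, so the collapse $q$ is the identity. The map $\widetilde{y}(1)\colon S^7\to S^4$ is the quaternionic Hopf map, whose stable class is $\nu\in\pi_3 S$ by definition. This matches $1\cdot\nu$.

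\textbf{Step 3 (inductive step).} For $k\ge 2$, consider the cofiber sequence $\HP^{k-1}\xhookrightarrow{\iota}\HP^k\xrightarrow{q}S^{4k}$. Restrict $\widetilde{y}(k)$ along the subsphere $\{q_k=0\}\cap S^{4k+3}=S^{4k-1}\subset S^{4k+3}$; this restriction factors as $S^{4k-1}\xrightarrow{\widetilde{y}(k-1)}\HP^{k-1}\xhookrightarrow{\iota}\HP^k$, so $q\circ\iota\circ\widetilde{y}(k-1)=0$. The complementary attaching contribution, coming from extending across the $4k$-cell of $S^{4k+3}/S^{4k-1}$, produces stably one copy of $\nu$ via the Hopf construction applied to the quaternionic multiplication. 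Combined with the inductive hypothesis, this gives $q\circ\widetilde{y}(k)=(k-1)\nu+\nu=k\nu$.

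\textbf{Main obstacle and cross-check.} The delicate point is showing the complementary contribution is exactly $+\nu$ (with the right sign and multiplicity). The cleanest way to verify this is by computing the Adams $e$-invariant directly. Since $e\colon\pi_3 S\hookrightarrow\Q/\Z$ is injective with $e(\nu)=1/24$, it suffices to show $e(q\circ\widetilde{y}(k))=k/24\pmod 1$. A canonical null-cobordism is provided by the inclusion $\HP^k\hookrightarrow\HP^{k+1}$: identifying $S^{4k+3}=\partial D^{4k+4}\subset\bH^{k+1}$ and pushing down through the Hopf projection realizes the pair $(\HP^{k+1},\HP^k)$ as a cobordism filling $q\circ\widetilde{y}(k)$. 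Applying the Chern (or Pontryagin) character to the tautological bundle $\gamma\to\HP^{k+1}$ and integrating against the fundamental class yields $k/24\pmod 1$ from the $k$-th coefficient of the character expansion combined with the denominator $24$ arising from the degree-$4$ Todd/Pontryagin class. This confirms the inductive step and completes the proof.
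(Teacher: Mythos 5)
The paper states this as a classical fact (citing Mukai) without giving a proof, so I am evaluating your argument on its own.

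Steps 1 and 2 are fine; Step 3 has a genuine gap. You correctly note that restricting $\widetilde{y}(k)$ to $S^{4k-1}$ lands in $\HP^{k-1}$, so the collapse $q\colon\HP^k\to S^{4k}$ kills that restriction. But this means the inductive hypothesis --- which is about $\widetilde{y}(k-1)$ followed by the \emph{different} collapse $\HP^{k-1}\to S^{4(k-1)}$ --- never actually enters your computation: you have shown that the ``old'' contribution is $0$, not $(k-1)\nu$, and the assertion that the ``complementary contribution'' is exactly one copy of $\nu$ is stated, not argued. Read literally, Step 3 would yield $q\circ\widetilde{y}(k)=0+\nu=\nu$ for all $k$, which is false.

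Your Step 4 is the right strategy and is logically independent of Step 3, but it needs two corrections. First, the cofiber of $q\circ\widetilde{y}(k)$ is $\HP^{k+1}/\HP^{k-1}$, whose reduced $K$-theory is spanned by $(\gamma-2)^k$ and $(\gamma-2)^{k+1}$ for the tautological bundle $\gamma$; with $\Ch(\gamma-2)=y+\tfrac{1}{12}y^2+\cdots$ one finds $\Ch\bigl((\gamma-2)^k\bigr)=y^k+\tfrac{k}{12}y^{k+1}$ in $\HP^{k+1}$, so the \emph{complex} $e$-invariant is $k/12$, not $k/24$. Since $e_\C(\nu)=1/12$ and $e_\C$ has kernel of order $2$ on $\pi_3 S\simeq\Z/24$, this only pins down $q\circ\widetilde{y}(k)$ modulo $2\nu$; to identify it exactly you must pass to the real ($KO$-theoretic, or $\hat A$-genus) $e$-invariant $e_\R$, which is injective here. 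You gesture at ``Pontryagin,'' but that calculation has to be carried out rather than asserted. Second, the ``null-cobordism'' framing is not how the $e$-invariant is defined or computed: one reads it directly off the Chern/Pontryagin character of the two-cell complex $\HP^{k+1}/\HP^{k-1}$, with no bordism intervening, and phrasing it as a cobordism only obscures the argument.
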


This gives us the following result, which is the key to our application for the divisibility of Euler numbers in Section \ref{subsec_divisibility}. 
\begin{prop}\label{prop_y(k)_app}
	The attaching element $y(k) \in \pi_{4k-1} \TEJF_{2k-2}$ in \eqref{eq_y(k)_app} satisfies
	\begin{align}
	\res_{Sp(1)}^e (y(k)) = k\nu \in \pi_3 \TMF \simeq \Z\nu / (24\nu). 
	\end{align}
	In particular, the order $d_{Sp}(k)$ (Definition \ref{def_divisibility}) of the element $y(k)$ satisfies
	\begin{align}\label{eq_dSp_app}
	\left.	\frac{24}{ \gcd(k, 24)} \right| d_{Sp}(k). 
\end{align}
\end{prop}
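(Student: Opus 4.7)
The plan is to translate the statement entirely into a question about the stable homotopy of $\HP^{k+1}$ by using the cell-structure identification of Proposition~\ref{prop_Sp(1)}, and then invoke Fact~\ref{fact_HP_attaching} essentially verbatim. The key geometric content has already been done; what remains is careful bookkeeping of shifts and of how the various maps correspond under the identification $\TEJF_{2m}\simeq \TMF\otimes\HP^{m+1}[-4]$.

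First, I would apply the diagram \eqref{eq_attaching_HP_app} of Proposition~\ref{prop_Sp(1)} with its index $k$ unchanged to identify
\begin{align*}
y(k)\colon \TMF[4k]\to \TEJF_{2k-2}[1]
\end{align*}
with $\id_\TMF\otimes \widetilde y(k)\colon \TMF\otimes S^{4k}\to \TMF\otimes \HP^k[-3]$, where $\widetilde y(k)\in \pi_{4k+3}\HP^k$ is the stable attaching map of the top cell of $\HP^{k+1}$. Next, I would apply the same diagram with $k$ replaced by $k-1$ to identify
\begin{align*}
\res_{Sp(1)}^e\colon \TEJF_{2k-2}\longrightarrow \TMF[4k-4]
\end{align*}
with $\id_\TMF$ tensored with the collapse map $\HP^k\to \HP^k/\HP^{k-1}\simeq S^{4k}$ (suitably shifted). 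Composing these identifications, the element $\res_{Sp(1)}^e(y(k))\in \pi_3\TMF$ corresponds under the unit $u\colon S\to \TMF$ to the stable class of the composition
\begin{align*}
S^{4k+3}\xrightarrow{\widetilde y(k)} \HP^k \longrightarrow \HP^k/\HP^{k-1}\simeq S^{4k}.
\end{align*}
By Fact~\ref{fact_HP_attaching} this composition represents $k\nu\in\pi_3 S$, and naturality of the unit gives $\res_{Sp(1)}^e(y(k))=k\nu\in \pi_3\TMF$, establishing the first claim.

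For the divisibility claim, I would observe that $\res_{Sp(1)}^e$ is a homomorphism of abelian groups, so the order of any element is divisible by the order of its image. Since $\pi_3\TMF\simeq \Z/24$ is generated by $\nu$, the order of $k\nu$ equals $24/\gcd(k,24)$, giving
\begin{align*}
\frac{24}{\gcd(k,24)}\,\Bigl|\,d_{Sp}(k)
\end{align*}
by Definition~\ref{def_divisibility}. The only subtle point anywhere in the argument is to keep the degree shifts consistent: $y(k)$ lives in $\pi_{4k-1}\TEJF_{2k-2}$, and the $[-4]$ shift in the equivalence $\TEJF_{2m}\simeq \TMF\otimes \HP^{m+1}[-4]$ is what converts $\widetilde y(k)\in \pi_{4k+3}\HP^k$ into an element of degree $4k-1$. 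There is no real obstacle; the work has already been concentrated into Proposition~\ref{prop_Sp(1)} and Fact~\ref{fact_HP_attaching}, and this proposition is essentially just reading off the consequence.
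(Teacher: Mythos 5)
Your proof is correct and follows exactly the route the paper intends: the paper states Proposition~\ref{prop_y(k)_app} as an immediate consequence of the commutativity of diagram~\eqref{eq_attaching_HP_app} together with Fact~\ref{fact_HP_attaching}, and your argument is precisely a careful unwinding of that one-line deduction, with the degree shifts correctly tracked. The one small stylistic point is that you could streamline slightly by observing that both identifications you need (of $y(k)$ and of $\res_{Sp(1)}^e$ on $\TEJF_{2k-2}$) amount to the single statement that, under $\TEJF_{2m}\simeq\TMF\otimes\HP^{m+1}[-4]$, the whole stabilization-restriction sequence is $\TMF\otimes(\text{the cell sequence of }\HP^{m+1})$, so the composition $\res\circ y(k)$ is literally $\TMF\otimes(\text{collapse}\circ\widetilde y(k))$; but this does not change the substance.
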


\begin{proof}
	This follows directly from Fact \ref{fact_HP_attaching} and the commutativity of diagram \eqref{eq_attaching_HP_app}. 
\end{proof}

Actually, T.Bauer has proved that our estimate in Proposition \ref{prop_y(k)_app} is actually exact. The result with proof will appear in \cite{BauerComputationTJF}. We do not use this stronger statement in the main body of this paper, but for completeness we present the result here: 

\begin{prop}[{\cite{BauerComputationTJF}}]\label{prop_y(k)_exact}
	For each positive integer $k$, we have
	\begin{align}
		d_{Sp}(k) = \frac{24}{ \gcd(k, 24)}
	\end{align}
\end{prop}

So the cell diagram looks as shown in Figure \ref{celldiag_TEJF} for lower $k$. For example, we have
\begin{align}
	\TEJF_{-6} &\simeq \TMF\otimes (S^{-9} \cup_\nu S^{-5}) = \TMF/\nu[-9]\\
	\TEJF_{-4} &\simeq \TMF\otimes S^{-5} = \TMF[-5], \\
	\TEJF_{-2} &= 0, \\
	\TEJF_{0} & \simeq \TMF, \\
	\TEJF_{2} &\simeq \TMF\otimes (S^0 \cup_\nu S^4) = \TMF/\nu, \label{eq_TEJF2_app}\\
	\TEJF_{4} &\simeq \TMF \otimes \left( S^0 \cup_\nu S^4 \cup_{2\nu} S^8\right) . \label{eq_TEJF4}
\end{align}

By Proposition \ref{prop_d_im} and \eqref{eq_dSp_app}, we get
\begin{align}
\mathrm{im}	\left( \res_{Sp(1)}^e \colon \pi_{4k}\TEJF_{2k} \to  \pi_0 \TMF \right) \bigcap \mathrm{im}\left(  u \colon \Z \hookrightarrow \pi_0 \TMF \right) \subset 	\frac{24}{ \gcd(k, 24)} \Z. 
\end{align}
This is used in deducing the divisibility constraints of Euler numbers of tangential $Sp$-manifolds (Theorem \ref{thm_divisibility_constraints_concrete}). 

\begin{figure}[H]
	\centering
	\begin{tikzpicture}[scale=0.5]
			\begin{scope}[shift={(-16, 0)}]
			\begin{celldiagram}
				\nu{-9} \nu{-13}
				\n{-5} \n{-9} \n{-13}
				\foreach \y in {-5, -9, -13} {
					\node [left] at (-0.5, \y) {$\y$};
				}
			\end{celldiagram}
			\node [] at (0, -2) {$\TEJF_{-8}$};
			\node [blue] at (0, -7) {$\nu$};
			\node [blue] at (0, -11) {$2\nu$};
		\end{scope}
		
			\begin{scope}[shift={(-12, 0)}]
			\begin{celldiagram}
				\nu{-9}
				\n{-5} \n{-9}
				\foreach \y in {-5, -9} {
					\node [left] at (-0.5, \y) {$\y$};
				}
			\end{celldiagram}
			\node [] at (0, -2) {$\TEJF_{-6}$};
				\node [blue] at (0, -7) {$\nu$};
		\end{scope}
		
	\begin{scope}[shift={(-8, 0)}]
	\begin{celldiagram}
		\n{-5} 
		\foreach \y in {-5} {
			\node [left] at (-0.5, \y) {$\y$};
		}
	\end{celldiagram}
	\node [] at (0, -2) {$\TEJF_{-4}$};
\end{scope}
			\begin{scope}[shift={(-4, 0)}]
		\node [] at (0, 0) {$\varnothing $};
			\node [] at (0, -2) {$\TEJF_{-2} $};
		\end{scope} 
		
		\begin{scope}[shift={(0, 0)}]
			\begin{celldiagram}
				\n{0} 
				\foreach \y in {0} {
					\node [left] at (-0.5, \y) {$\y$};
				}
			\end{celldiagram}
			\node [] at (0, -2) {$\TEJF_{0}$};
		\end{scope}
		
		\begin{scope}[shift={(4, 0)}]
			\begin{celldiagram}
				\nu{0}
				\n{0} \n{4} 
				\foreach \y in {0, 4} {
					\node [left] at (-0.5, \y) {$\y$};
				}
			\end{celldiagram}
			\node [] at (0, -2) {$\TEJF_{2}$};
		\end{scope}
		\node [blue] at (4, 2) {$\nu$};
		
		\begin{scope}[shift={(8, 0)}]
			\begin{celldiagram}
				\nu{0} \nu{4}
				\n{0} \n{4} \n{8} 
				\foreach \y in {0, 4, 8} {
					\node [left] at (-0.5, \y) {$\y$};
				}
			\end{celldiagram}
			\node [] at (0, -2) {$\TEJF_{4}$};
				\node [blue] at (0, 2) {$\nu$};
			\node [ blue] at (0, 6) {$2\nu$};
			
		\end{scope}
		
			\begin{scope}[shift={(12, 0)}]
			\begin{celldiagram}
				\nu{0} \nu{4} \nu{8} 
				\n{0} \n{4} \n{8} \n{12} 
				\foreach \y in {0, 4, 8, 12} {
					\node [left] at (-0.5, \y) {$\y$};
				}
			\end{celldiagram}
			\node [] at (0, -2) {$\TEJF_{6}$};
			\node [blue] at (0, 2) {$\nu$};
		\node [ blue] at (0, 6) {$2\nu$};
			\node [blue] at (0, 10) {$3\nu$};
		\end{scope}
		
			\begin{scope}[shift={(16, 0)}]
			\begin{celldiagram}
				\nu{0} \nu{4} \nu{8} \nu{12}
				\n{0} \n{4} \n{8} \n{12} \n{16}
				\foreach \y in {0, 4, 8, 12, 16} {
					\node [left] at (-0.5, \y) {$\y$};
				}
			\end{celldiagram}
			\node [] at (0, -2) {$\TEJF_{8}$};
			\node [blue] at (0, 2) {$\nu$};
		\node [ blue] at (0, 6) {$2\nu$};
		\node [blue] at (0, 10) {$3\nu$};
			\node [blue] at (0, 14) {$4\nu$};
		\end{scope}

	\end{tikzpicture}
	\caption{The cell diagram of $\TEJF_{2k}$}\label{celldiag_TEJF}
\end{figure}

\subsubsection{$\TEJF$ at odd primes}\label{subsubsec_TEJF_p=odd}
If we invert the prime $2$, $\TEJF_k$'s look even more simple. First, if we localize at a prime $p \ge 5$, Proposition \ref{prop_Sp(1)} simply gives a decomposition
\begin{align}
	\left( \TEJF_{2k}\right)_{(p)} \simeq \bigoplus_{i = 0}^{k}\TMF_{(p)}[4i]. \quad p \ge 5. 
\end{align}

Now consider the case including $p = 3$. We get, for each odd prime $p$, 
\begin{align}\label{eq_TEJF4=TMF12}
	\left( \TEJF_{4}\right) _{(p)} \stackrel{\eqref{eq_TEJF4}}{\simeq} \TMF \otimes (S^0 \cup_\nu S^4 \cup_{2\nu} S^{8})_{(p)} \simeq \TMF_1(2), 
\end{align}
where $\TMF_1(2)$ is the $\TMF$ with level-$2$ structure \cite{HillLawson}. 
We know that $\pi_* \TMF_1(2)$ is non-torsion, concentrated in $* \equiv 0$ (mod $4$). 
In particular, the connecting element $y(3) \in \pi_{11}\left( \TEJF_4\right) _{(p)}$ in the stabilization sequence \eqref{seq_res_stab_TEJF_app} for $k=3$ is zero, so the sequence splits at $\TEJF_{6}$, 
\begin{align}\label{eq_TEJF_6_split}
	\xymatrix@C=5em{
		\left( \TEJF_{4}\right) _{(p)} \ar[r]^-{\stab}_-{\chi(V_{Sp(1)})\cdot} & \left( \TEJF_{6} \right) _{(p)}\ar[r]^-{\res_{Sp(1)}^e}& \TMF_{(p)}[12] \ar@/^1ex/[l]^-{\mathfrak{c}}
	}
\end{align}
here we denoted an element $\mathfrak{c} \in \pi_{12} \left( \TEJF_6\right) _{(p)}$ which gives a splitting. Note that the character $e_\EJF(\frc) \in \EJF_6|_{\deg = 12}$ of this element should satisfy
\begin{align}
\ev_{z=0} \circ	e_\EJF(\frc) = 1. 
\end{align} 
By inspecting the generators of $\EJF_6|_{\deg = 12}$ and using \eqref{eq_ev_z=0}, we find that we neccesarily have
\begin{align}
	e_\EJF (\frc) =\left(  \frac{\phi_{0, \frac32}}{2}\right)^2.  
\end{align}

\begin{prop}[{$\TEJF$ localized at prime $3$}]\label{prop_TEJF_p=3}
	For each integer $k \ge 3$, we have the follwoing decomposition of $\left( \TEJF_{2k}\right) _{(3)}$ as a $\TMF_{(3)}$-module: Setting $k' := \lfloor (k+1)/3 \rfloor$, the map
	\begin{align}\label{eq_TEJFp=3}
( \frc^{k'}\cdot) \oplus  \ \bigoplus_{i=0}^{k'-1} \left( ( \stab)^{k-3i-2} \circ \frc^i \cdot \right)   \colon \left( \TEJF_{2(k-3k')}\right) _{(3)}[12k'] \oplus \bigoplus_{i=0}^{k'-1} \left( \TEJF_4\right) _{(3)}[12i] \to \left( \TEJF_{2k}\right) _{(3)}. 
	\end{align}
	is an equivalence of $\TMF$-modules. 
	Here, the map consists of the multiplication by the element $\frc \in \pi_{12} \TEJF_6$ given in \eqref{eq_TEJF_6_split}. 
	This means that, using \eqref{eq_TEJF4=TMF12}, we have an isomorphism of $\TMF_{(3)}$-modules, 
	\begin{align}
	\left( 	\TEJF_{2k}\right) _{(3)} \simeq \bigoplus_{i=0}^{k'-1} \TMF_1(2)[12i] \bigoplus \begin{cases}
		\TMF_{(3)}[12k'] & k \equiv 0 \pmod 3, \\
		\left( \TMF/\nu\right) _{(3)} [12k'] & k \equiv 1 \pmod 3, \\
		0 & k \equiv 2 \pmod 3. 
	\end{cases}
	\end{align}
	In particular, the torsions in the homotopy groups are given by
	\begin{align}
\left( 	\pi_\bullet 	\left( 	\TEJF_{2k}\right) _{(3)} \right) _{\rm tors} \simeq \begin{cases}
\left( \pi_{\bullet - 12k'}	\TMF_{(3)} \right)_{\rm tors}  & k \equiv 0 \pmod 3, \\
\left( \pi_{\bullet - 12k'}	\left( \TMF/\nu\right) _{(3)} \right) _{\rm tors} & k \equiv 1 \pmod 3, \\
	0 & k \equiv 2 \pmod 3. 
\end{cases}
	\end{align}
\end{prop}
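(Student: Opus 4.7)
My plan is to establish the decomposition \eqref{eq_TEJFp=3} by analyzing the cell structure of $\TEJF_{2k}$ at prime $3$, using the identification $\TEJF_{2k} \simeq \TMF \otimes \HP^{k+1}[-4]$ from Proposition \ref{prop_Sp(1)}. By Fact \ref{fact_HP_attaching}, the primary stable attaching map between the cells at dimensions $4(j-1)$ and $4j$ in $\HP^{k+1}$ (for $j = 2, \ldots, k+1$) projects to $(j-1)\nu \in \pi_3 S$. After tensoring with $\TMF_{(3)}$, since $\nu \in \pi_3 \TMF_{(3)} \cong \Z/3$, this attachment vanishes precisely when $j \equiv 1 \pmod 3$. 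Consequently, $\TMF_{(3)} \otimes \HP^{k+1}$ should split as a wedge of ``$3$-cell clusters'' (cells at dimensions $12i+4$, $12i+8$, $12i+12$, with internal attachments $\nu$ and $2\nu$) followed by a possibly shorter terminal cluster of $(k+1) \bmod 3$ cells, starting at dimension $12k'+4$.

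Next, each full $3$-cluster is identified via \eqref{eq_TEJF4=TMF12}: the complex $S^0 \cup_\nu S^4 \cup_{2\nu} S^8$ smashed with $\TMF_{(3)}$ equals $\TEJF_4 \simeq_{(3)} \TMF_1(2)$, so after the global $[-4]$ shift the $i$-th cluster contributes $\TEJF_4[12i]$. The tail of $0$, $1$, or $2$ cells (for $k \equiv 2, 0, 1 \pmod 3$, respectively) matches $\TEJF_{2(k-3k')}[12k']$ via $\TEJF_{-2} = 0$, $\TEJF_0 = \TMF$, and $\TEJF_2 = \TMF/\nu$ from \eqref{eq_TEJF2_app} and \eqref{eq_TEJF-2=0}.

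I would then match these cluster-inclusion maps to the explicit maps in \eqref{eq_TEJFp=3}. The multiplicativity of the ring $\oplus_k \TEJF_{2k}$ makes $\frc \cdot \colon \TEJF_{2j}[12] \to \TEJF_{2j+6}$ a $\TMF$-module map, and iterating gives $\frc^i \cdot$ that shifts the cluster index by $i$. The condition that $\res_{Sp(1)}^e(\frc)$ is a unit in $\pi_{12}\TMF_{(3)}$ (used in obtaining \eqref{eq_TEJF_6_split}) ensures that $\frc^i$ carries the bottom $\TMF$-cell of the source onto the bottom cell of the $i$-th cluster. Meanwhile $\stab = \chi(V_{Sp(1)}) \cdot$ traverses within a cluster, so $\stab^{k-3i-2} \circ \frc^i \cdot$ maps $\TEJF_4[12i]$ isomorphically onto the $i$-th cluster summand of $\TEJF_{2k}$, and $\frc^{k'} \cdot$ identifies the tail. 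Thus \eqref{eq_TEJFp=3} is a cellular isomorphism, hence an equivalence of $\TMF_{(3)}$-modules. The torsion statement then follows because $\pi_* \TMF_1(2)_{(3)}$ is concentrated in degrees divisible by $4$, so all torsion in $\pi_\bullet (\TEJF_{2k})_{(3)}$ lives in the tail summand $\TEJF_{2(k-3k')}[12k']$.

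The main obstacle is rigorously justifying the wedge decomposition of $\TMF_{(3)} \otimes \HP^{k+1}$ into clusters, i.e.\ controlling higher-order attaching obstructions that might prevent a splitting at $j \equiv 1 \pmod 3$. This I would handle inductively via the stabilization-restriction fiber sequence \eqref{seq_res_stab_TEJF_app}: the inductive hypothesis on $\TEJF_{2k-2}$ constrains $\pi_{4k-1}\TEJF_{2k-2}$ (since $\pi_{\mathrm{odd}} \TMF_1(2)_{(3)} = 0$, any odd-degree class must sit in the tail summand), and combined with $\res_{Sp(1)}^e(y(k)) = k\nu$ from Proposition \ref{prop_y(k)_app} one deduces that $y(k) = 0$ at $3$ whenever $k \equiv 0 \pmod 3$, producing a new separated cluster, while in the remaining cases $y(k)$ is forced to realize the expected intra-cluster attaching map of $\nu$ or $2\nu$.
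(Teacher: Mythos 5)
Your proposal is correct and follows essentially the same approach as the paper: both rest on an induction driven by the stabilization-restriction fiber sequence \eqref{seq_res_stab_TEJF_app}, with $\pi_{\mathrm{odd}}\TMF_1(2)_{(3)}=0$ and $\res_{Sp(1)}^e(y(k))=k\nu$ determining the attaching element $y(k)$ at each step, and the cell-structure picture from Proposition \ref{prop_Sp(1)} and Fact \ref{fact_HP_attaching} serving as the geometric scaffold. The only difference is cosmetic: the paper isolates the two-term splitting $\mathfrak{c}\cdot\oplus\stab^{k-2}\colon(\TEJF_{2k-6})_{(3)}[12]\oplus(\TEJF_4)_{(3)}\to(\TEJF_{2k})_{(3)}$ and iterates it, whereas you build the full cluster decomposition in one inductive pass (and you should note the small imprecision that $\res_{Sp(1)}^e(\mathfrak{c})$ is the unit $1\in\pi_0\TMF$, not an element of $\pi_{12}\TMF_{(3)}$).
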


\begin{proof}
	The proof is analogous to the proof of Proposition \ref{prop_structure_TJF_2inverted} in \cite[Appendix A]{LinTominagaYamashita}, so we only give a sketch here and leave the details to the reader. We claim that the map
	\begin{align}\label{eq_prf_TEJF_p=3}
	\frc \cdot \oplus \left( \stab \right)^{k-2}  \colon \left( \TEJF_{2k-6}\right) _{(3)}[12] \oplus \left( \TEJF_4 \right) _{(3)}\to \left( \TEJF_{2k}\right)_{(3)} 
	\end{align}
	is an equivalence for any $k \ge 2$. This claim is shown by the induction on $k$, using the fact that \eqref{eq_prf_TEJF_p=3} is compatible with the stabilization-restriction fiber sequence \eqref{seq_res_stab_TEJF_app}. The first statement of the Proposition follows by applying this claim repeatedly. The remaining statements follow from the fact that $\pi_* \TMF_1(2)$ is torsion-free. 
\end{proof}

\subsection{Comparison to $\TJF$}\label{subsec_app_TJFvsTEJF}

In this subsection, as promised in the paragraph after the diagram \eqref{diag_TEJF_EJF_TJF_JF_app}, we study the restriction map
\begin{align}
	\res_{Sp(1)}^{U(1)} \colon \TEJF_{2k} \to \TJF_{2k}. 
\end{align}
The statement uses the Euler class of the adjoint representation of $Sp(1)$, 
\begin{align}\label{eq_chiAd}
	\chi\left(\Ad(Sp(1))\right) \in \pi_0 \TMF[\Ad(Sp(1))]^{Sp(1)} \simeq  \pi_{5} \TEJF_{4}, 
\end{align}
where we have used the string orientation of $\Ad(Sp(1)) - 2V_{Sp(1)}$ and the $Sp(1)$-equivariant sigma orientation.

\begin{prop}\label{prop_TEJF_TJF_TEJF6}
	For each integer $n \in \Z$, we have the following fiber sequence of $\TMF$-modules. 
	\begin{align}\label{seq_TEJF_TJF_TEJF6}
	\TEJF_{2n-4}[5] \xrightarrow{\chi(\Ad(Sp(1))) \cdot }	\TEJF_{2n} \xrightarrow{\res_{Sp(1)}^{U(1)}} \TJF_{2n}  \to \TEJF_{2n-4}[6]
	\end{align}
\end{prop}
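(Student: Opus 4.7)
My approach is to identify $Sp(1)/U(1)$ with the unit sphere $S(\Ad(Sp(1)))$ of the $3$-dimensional adjoint representation of $Sp(1) = SU(2)$, producing the fundamental cofiber sequence of pointed $Sp(1)$-spaces
\[
Sp(1)/U(1)_+ \to S^0 \xrightarrow{\chi(\Ad(Sp(1)))} S^{\Ad(Sp(1))}.
\]
Applying the contravariant functor $\lMap_{Sp(1)}(-, \TMF[nV_{Sp(1)}])^{Sp(1)}$ converts this into a fiber sequence of $\TMF$-modules whose middle term is $\TEJF_{2n}$ by definition. The right-hand term $\lMap_{Sp(1)}(Sp(1)/U(1)_+, \TMF[nV_{Sp(1)}])^{Sp(1)}$ simplifies via the $(\ind_{U(1)}^{Sp(1)} \dashv \res_{Sp(1)}^{U(1)})$-adjunction and $Sp(1)/U(1)_+ \simeq \ind_{U(1)}^{Sp(1)}(S^0)$ to $\TMF[\res_{Sp(1)}^{U(1)} nV_{Sp(1)}]^{U(1)} = \TJF_{2n}$, using that $\res_{Sp(1)}^{U(1)} V_{Sp(1)} = 2V_{U(1)}$ in $\RO(U(1))$ (the defining representation restricts to $V_{U(1)} \oplus V_{U(1)}^{-1}$, and $V_{U(1)}^{-1} \simeq V_{U(1)}$ as real representations). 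The left-hand term is $\TMF[nV_{Sp(1)} - \Ad(Sp(1))]^{Sp(1)}$; the $BU\langle 6\rangle$-structure on $\theta = \Ad(Sp(1)) - 2V_{Sp(1)}$ from Lemma~\ref{lem_negative_TEJF} (noting $\dim\theta = -5$), combined with the $Sp(1)$-equivariant sigma orientation of Fact~\ref{fact_sigma}, identifies it with $\TMF[(n-2)V_{Sp(1)}+5]^{Sp(1)} = \TEJF_{2n-4}[5]$.

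It remains to identify the two maps as the ones stated. The middle map $\TEJF_{2n} \to \TJF_{2n}$ is the restriction $\res_{Sp(1)}^{U(1)}$ because the collapse $Sp(1)/U(1)_+ \to S^0$ is precisely the counit of the $(\ind_{U(1)}^{Sp(1)} \dashv \res_{Sp(1)}^{U(1)})$-adjunction at $S^0$, and the induced map on $\lMap_{Sp(1)}(-, \TMF[nV_{Sp(1)}])^{Sp(1)}$ is the restriction by naturality. The first map $\TEJF_{2n-4}[5] \to \TEJF_{2n}$ is multiplication by $\chi(\Ad(Sp(1))) \in \pi_5 \TEJF_4$ because, before the string-theoretic identification, it arises as precomposition with the Euler class $\chi(\Ad(Sp(1))) \colon S^0 \to S^{\Ad(Sp(1))}$, which corresponds to cup product with the Euler class. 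Rotating the fiber sequence then yields the connecting map $\TJF_{2n} \to \TEJF_{2n-4}[6]$. The main technical point requiring care is the bookkeeping of the shift $[5]$ arising from $\dim\theta = -5$ through the sigma orientation in Lemma~\ref{lem_negative_TEJF}; all other ingredients are formal consequences of the adjunction framework laid out in Section~\ref{subsec_preliminary_SpG}, and the proof uses only the equivariant sigma orientation for $Sp(1) \in \frS$ (Fact~\ref{fact_sigma}), so the result is unconditional.
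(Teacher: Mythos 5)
Your proof is correct and follows essentially the same route as the paper: both start from the identification $Sp(1)/U(1) \simeq S(\Ad(Sp(1)))$, apply the resulting cofiber sequence to $Sp(1)$-equivariant (twisted) $\TMF$, and use the $\ind\dashv\res$ adjunction together with the sigma orientation/twist identification for $\Ad(Sp(1)) - 2V_{Sp(1)}$ to recognize the three terms. The only cosmetic difference is that you map into $\TMF[nV_{Sp(1)}]$ directly, whereas the paper first smashes the cofiber sequence with $S^{-nV_{Sp(1)}}$ and then maps into $\TMF$; these are the same computation.
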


\begin{proof}
	We follow a similar strategy as the proof of Proposition \ref{prop_stab_res_TMF}. 
	First observe that we have an isomorphism of $Sp(1)$-spaces,  
	\begin{align}
		Sp(1)/U(1) \simeq S(\Ad(Sp(1))). 
	\end{align}
	Thus we have the following cofiber sequence of pointed $Sp(1)$-spaces, 
	\begin{align}
		(Sp(1)/U(1))_+ \to S^0 \xrightarrow{\chi(\Ad(Sp(1)))} S^{\Ad(Sp(1))}. 
	\end{align}
	Taking the smash product with $S^{-nV_{Sp(1)}}$, we get the following fiber sequence of $Sp(1)$-spectra, 
	\begin{align}\label{eq_Sp/U_proof}
		(Sp(1)/U(1))_+\wedge S^{-nV_{Sp(1)}} \to S^{-nV_{Sp(1)}} \xrightarrow{\chi(\Ad(Sp(1)))} S^{-nV_{Sp(1)}+ \Ad(Sp(1))}. 
	\end{align}
	By \eqref{eq_IndRes} we have an isomorphism of $Sp(1)$-spectra, 
	\begin{align}
	 \Ind_{U(1)}^{Sp(1)}\left(S^{\res_{Sp(1)}^{U(1)} (nV_{Sp(1)} )}\right) \simeq (Sp(1)/U(1))_+\wedge S^{-nV_{Sp(1)}}, 
	\end{align}
	Thus, applying $Sp(1)$-equivariant $\TMF$-cohomology to \eqref{eq_Sp/U_proof}, we get a fiber sequence
	\begin{align}
		\xymatrix{
			\TMF\left[nV_{Sp(1)} -\Ad(Sp(1))\right]^{Sp(1)}  \ar[rr]^-{\chi(\Ad(Sp(1)))\cdot }  \ar[d]^{\simeq} && \TMF\left[nV_{Sp(1)}\right]^{Sp(1)} \ar@{=}[d] \ar[rr]^-{\res_{Sp(1)}^{U(1)}} && \TMF\left[\res_{Sp(1)}^{U(1)} (nV_{Sp(1)} )\right]^{U(1)} \ar[d]^-{\simeq} \\
			\TEJF_{2n-4}[5] \ar[rr]^-{\chi(\Ad(Sp(1)))\cdot } && \TEJF_{2n} \ar[rr]^-{\res_{Sp(1)}^{U(1)}}& &\TJF_{2n}
		}
	\end{align}
\end{proof}

\begin{cor}\label{cor_TEJF2=TJF2}
	The restriction map
	\begin{align}
		\res_{Sp(1)}^{U(1)} \colon \TEJF_2 \to \TJF_2 
	\end{align}
	gives an isomorphism between $\TEJF_2 \simeq \TMF/\nu$ in \eqref{eq_TEJF2_app} and $\TJF_2 \simeq \TMF/\nu$ in \eqref{eq_TJF_2}. 
\end{cor}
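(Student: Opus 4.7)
The plan is to derive the corollary immediately from the fiber sequence of Proposition~\ref{prop_TEJF_TJF_TEJF6} specialized to $n=1$, together with the vanishing $\TEJF_{-2}=0$ recorded in \eqref{eq_TEJF-2=0}. Specifically, I would substitute $n=1$ into \eqref{seq_TEJF_TJF_TEJF6} to obtain the fiber sequence
\begin{align}
\TEJF_{-2}[5] \xrightarrow{\chi(\Ad(Sp(1))) \cdot } \TEJF_{2} \xrightarrow{\res_{Sp(1)}^{U(1)}} \TJF_{2} \to \TEJF_{-2}[6].
\end{align}

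Since both the fiber and the cofiber of $\res_{Sp(1)}^{U(1)}$ are (shifts of) the zero spectrum $\TEJF_{-2}$, the middle arrow is an equivalence of $\TMF$-modules. This gives the claim directly, and in particular identifies the two incarnations $\TEJF_2 \simeq \TMF/\nu$ from \eqref{eq_TEJF2_app} and $\TJF_2 \simeq \TMF/\nu$ from \eqref{eq_TJF_2} with one another via the canonical restriction.

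There is no real obstacle here; the work has already been done in Proposition~\ref{prop_TEJF_TJF_TEJF6} (which constructs the $Sp(1)/U(1)$ fiber sequence from the cofiber sequence $(Sp(1)/U(1))_+ \to S^0 \to S^{\Ad(Sp(1))}$ of $Sp(1)$-spaces) and in Proposition~\ref{prop_Sp(1)}, which yields $\TEJF_{-2}=0$ via the identification $\TEJF_{2k}\simeq \TMF\otimes \HP^{k+1}[-4]$ applied to $k=-1$, where $\HP^0 = \mathrm{pt}$ and hence the \emph{unpointed} suspension spectrum is trivial. If desired, one can double-check the compatibility of the two independent cell-level descriptions $\TEJF_2 \simeq \TMF\otimes(S^0\cup_\nu S^4)$ and $\TJF_2 \simeq \TMF\otimes(S^0\cup_\nu S^4)$ by noting that $\res_{Sp(1)}^{U(1)}$ must send the bottom cell (the unit) to the unit and so is forced to be the identity on the cell structure; but this verification is not logically necessary given the fiber sequence argument.
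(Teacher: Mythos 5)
Your proof is correct and is exactly the paper's argument: specialize the fiber sequence of Proposition~\ref{prop_TEJF_TJF_TEJF6} to $n=1$ and use $\TEJF_{-2}=0$ from Proposition~\ref{prop_Sp(1)} to conclude that $\res_{Sp(1)}^{U(1)}$ has trivial fiber and cofiber, hence is an equivalence. The closing remark about cell structures is a reasonable sanity check but, as you note, not logically needed.
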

\begin{proof}
	This follows from Proposition \ref{prop_TEJF_TJF_TEJF6} applied to $k = 1$ and the fact that $\TEJF_{-2} = 0$ in Proposition \ref{prop_Sp(1)}. 
\end{proof}

\begin{cor}\label{cor_TEJF_TJF_invert2}
	If we invert the prime $2$, the fiber sequence \eqref{seq_TEJF_TJF_TEJF6} splits at $\TJF_{2k}$, 
so that we have an isomorphism of $\TMF$-modules, 
\begin{align}\label{eq_TEJF_TJF_dec_odd}
\TEJF_{2k} \oplus \TEJF_{2k-4}[6] \simeq \TJF_{2k} . 
\end{align}
\end{cor}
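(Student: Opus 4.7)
The plan is to reduce the splitting claim to showing that, after inverting $2$, the first arrow
\begin{align}
\chi(\Ad(Sp(1)))\cdot \colon \TEJF_{2k-4}[5] \to \TEJF_{2k}
\end{align}
of the fiber sequence \eqref{seq_TEJF_TJF_TEJF6} becomes null-homotopic. Given this, rotating the fiber triangle $\TEJF_{2k} \to \TJF_{2k} \to \TEJF_{2k-4}[6] \to \TEJF_{2k}[1]$ immediately produces the desired decomposition of $\TJF_{2k}$.

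First I would identify this map as $\TMF$-module multiplication by a single universal class. By the construction of Proposition~\ref{prop_TEJF_TJF_TEJF6}, the first arrow in \eqref{seq_TEJF_TJF_TEJF6} is multiplication by the equivariant Euler class $\chi(\Ad(Sp(1))) \in \pi_0 \TMF[\Ad(Sp(1))]^{Sp(1)}$, coming from the inclusion of the zero section $S^0 \hookrightarrow S^{\Ad(Sp(1))}$. Using the $BU\langle 6 \rangle$-structure on the virtual representation $\Ad(Sp(1)) - 2 V_{Sp(1)}$ supplied by Lemma~\ref{lem_negative_TEJF}~(1), together with the $Sp(1)$-equivariant sigma orientation (Fact~\ref{fact_sigma}), we obtain a $\TMF$-module equivalence $\TMF[\Ad(Sp(1))]^{Sp(1)} \simeq \TEJF_4[-5]$, placing $\chi(\Ad(Sp(1)))$ canonically in $\pi_5 \TEJF_4$ as declared in \eqref{eq_chiAd}. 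Since the arrow is multiplication by this class on the $\TMF$-module $\TEJF_{2k-4}$, it suffices to show that this class vanishes after inverting $2$.

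Next I would invoke the cell-structure identification \eqref{eq_TEJF4=TMF12}: at any odd prime $p$, the stably attached cells of $\TEJF_4 \simeq \TMF \otimes (S^0 \cup_\nu S^4 \cup_{2\nu} S^8)$ match those in the standard cell description of $\TMF_1(2)$, yielding $(\TEJF_4)_{(p)} \simeq (\TMF_1(2))_{(p)}$. Since $\TMF_1(2)$ is an even elliptic spectrum whose homotopy groups are concentrated in degrees divisible by $4$, we have $\pi_5 \TMF_1(2) = 0$. Hence $\chi(\Ad(Sp(1)))$ maps to $0$ in $\pi_5(\TEJF_4)[1/2]$, so the map in question is null after inverting $2$ and the splitting follows.

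The only point requiring care, and what I would call the ``main obstacle'' (though a modest one), is the degree bookkeeping: verifying that the multiplication in the fiber sequence corresponds precisely to the class $\chi(\Ad(Sp(1)))$ viewed in $\pi_5 \TEJF_4$ via the string-orientation shift from Lemma~\ref{lem_negative_TEJF}. Once this identification is in place, the proof reduces entirely to the elementary vanishing $\pi_5 \TMF_1(2) = 0$ together with \eqref{eq_TEJF4=TMF12}.
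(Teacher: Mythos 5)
Your proposal is correct and follows essentially the same route as the paper's own proof: both identify the first map of \eqref{seq_TEJF_TJF_TEJF6} as multiplication by $\chi(\Ad(Sp(1)))\in\pi_5\TEJF_4$ and then deduce vanishing after inverting $2$ from the identification $(\TEJF_4)_{(p)}\simeq \TMF_1(2)$ at each odd prime $p$, whose homotopy is concentrated in degrees divisible by $4$. Your version spells out the degree bookkeeping from Lemma~\ref{lem_negative_TEJF} more explicitly, but the mathematical content is the same.
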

\begin{proof}
This is because, after inverting $2$, we have $\pi_5 \TEJF_4 =0$ by Section \ref{subsubsec_TEJF_p=odd}. Thus the element \eqref{eq_chiAd} vanishes and get the desired splitting. 
\end{proof}

\begin{rem}
	Propositions \ref{prop_TEJF_p=3} and Corollary \ref{cor_TEJF_TJF_invert2} explain the decomposition of $\TJF$ at odd prime in Proposition \ref{prop_structure_TJF_2inverted} in a nice way. Namely, the $\TMF_1(2)$'s appearing in the decomposition \eqref{eq_deco_TJFm} is most naturally regarded as $\TEJF_2$. The components labeled by even $i$ correspond to the first direct summand $\TEJF_{2k}$ in \eqref{eq_TEJF_TJF_dec_odd}, and those labeled by odd $i$ correspond to the second direct summand $\TEJF_{2k-4}[6]$.
\end{rem}

\section{A toy model: The topological $\mathbb{G}_m$-genera}\label{sec_app_toymodel}

In this section, we give a toy model of the construction of the main body of this article\footnote{The authors thank Thomas Schick for suggesting this toy model.}. 
We replace the genuinely equivariant $\TMF$ with the genuinely equivariant $\KO$-theory with the standard equivariance. The construction here should be regarded as being obtained by replacing elliptic curves by the multiplicative group $\mathbb{G}_m$ in the construction, so we name them as {\it topological $\mathbb{G}_m$-genera}. 
We construct a morphism of spectra of the form
\begin{align}
	\Jac^{\KO}_{\cD^\KO} \colon MT(H, \tau_H) \to \KO[\tau_G]^{G}, 
\end{align}
where $\cD^\KO$ is a set of data introduced below, and $G, H, \tau_G, \tau_H$ are included as ingredients of the data $\cD^\KO$. 

\subsection{The definition of $\Jac^{\KO}$}\label{subsec_def_JacKO}
We start with the main construction of this section, which is compared to Section \ref{subsec_construction} in the main part. 
Assume we are given a set of data, which we symbolically denote by $\mathcal{D}^\KO$. 
\begin{itemize}
	\item Fix compact Lie groups $G$ and $H$, together with $\tau_G \in \RO(G)$ and $\tau_H \in \RO(H)$. 
	\item Fix an integer $d$ and a group homomorphism $\phi \colon G \times H \to O(d)$. We denote the corresponding $d$-dimensional orthogonal representation by $V_\phi \in \Rep_{O}(G \times H)$. 
	\item We assume that $\dim \tau_H = 0$ and $d = \dim \tau_G$\footnote{This assumption is technical. In general, we can just add trivial representations to $\tau_G$ or $\tau_H$ to reduce to this case. }. 
	\item Fix a {\it spin} structure $\mathfrak{s}$ on the virtual representation
	\begin{align}
		\Theta := V_\phi - \res_G^{G \times H}(\tau_G)- \res_H^{G \times H}(\tau_H) \in \RO(G \times H). 
	\end{align}
	I.e., we assume that the composition
	\begin{align}
		BG \times BH \xrightarrow{\Theta} BO \to P^2 BO
	\end{align}
	is nullhomotopic, and $\mathfrak{s}$ is a choice of its nullhomotopy.\footnote{The Postnikov truncation $P^2 BO$ of $BO$ is the obstruction space of spin structure. We have a fibration
		\begin{align}
			BSpin \to BO \to P^2BO. 
		\end{align}
	}
\end{itemize}
We can regard $\Theta$ as a vector bundle over $BH$ with a $G$-action, where the space $BH$ is equipped with the trivial $G$-action. Then the spin structure $\fraks$ above induces the $G$-equivariant spin structure on the virtual vector bundle $\Theta$ on $BH$. The $G$-equivariant Atiyah-Bott-Shapiro orientation gives us the following equivalence of $G$-equivariant $\KO$-module spectra, 
\begin{align}\label{eq_ABS_Theta}
\ABS(\Theta, \fraks)\colon	\KO \otimes BH^{V_\phi - \tau_H} \simeq \KO \otimes BH_+ \otimes S^{\tau_G}. 
	\end{align}

\begin{defn}[{Definition of $\Jac^{\KO}_{\cD^\KO}$}]\label{def_Jac_KO_general}
	Assume we are given a set of data $\cD^{\KO}$ listed above. 
	Consider the following map in $\Spectra^{G}$: 
	\begin{align}\label{eq_JacKO_def_1}
		MT(H, \tau_H) = BH^{-\tau_H} \xhookrightarrow{\chi(V_\phi) \cdot } BH^{V_\phi-\tau_H}. 
	\end{align}
	Here, $MT(H, \tau_H)$ is regarded as a spectrum with trivial $G$-equivariance, and $V_\phi$ is regarded as a $G$-equivariant vector bundle over $BH$. The map is given by the inclusion of the zero section of $V_\phi$. 
	After tensoring $\KO \in \Spectra^{G}$, we get, again in $\Spectra^{G}$, 
	\begin{align}\label{eq_JacKO_def_2}
		\eqref{eq_JacKO_def_1} \xrightarrow{u \otimes \id}	 \KO \otimes BH^{V_\phi-\tau_H} \stackrel{\ABS(\Theta, \fraks)}{\simeq}  \KO \otimes BH_+ \otimes S^{\tau_G}, 
	\end{align} 
	by \eqref{eq_ABS_Theta}.
	Take the genuine $G$-fixed point of the composition of \eqref{eq_JacKO_def_1} and \eqref{eq_JacKO_def_2}, and define $\Jac^\KO_{\cD^{\KO}}$ to be the following composition. 
	\begin{align}\label{eq_JacKO_def_3}
		\xymatrix@C=5em{
			MT(H,\tau_H)\ar[rd]_-{\Jac^\KO_{\cD^{\KO}}} \ar[r]^-{\eqref{eq_JacKO_def_2}  \circ \eqref{eq_JacKO_def_1} }  & \left( \KO\otimes BH \otimes S^{\tau_G}\right)^{G}
			\ar[d]^-{(BH \to \pt)_*}  \\
			&\TMF[\tau_G]^{G}. 
		}
	\end{align}
\end{defn}

\begin{rem}
	Actually, Definition \ref{def_Jac_KO_general} above is the analogy of the ``alternative definition'' of topological elliptic genera, given in Proposition \ref{prop_alternative_Jac} and Remark \ref{rem_alternative_general}. 
	Note that we cannot give the analog of Definition \ref{def_general_Jac} since that definition relies on the dualizability of genuinely equivariant $\TMF$ in Fact \ref{fact_dualizability_TMF}. As noted after Fact \ref{fact_dualizability_TMF}, we do not have such a dualizability in equivariant $\KO$-theory. 
\end{rem}

\subsection{Example: The $U$-and $O$-topological $\mathbb{G}_m$-genera}

Here we introduce a {\it twin} of examples---$(U, U)$, $(O, SO)$---where the general construction of Section \ref{subsec_def_JacKO} applies. The content of this subsection is compared to Section \ref{sec_ex} in the main body of the article, where we construct {\it trio} of examples of topological elliptic genera. 

\subsubsection{Definitions}\label{subsubsec_def_twin}

\begin{defn}[{The topological $\mathbb{G}_m$ genera $\JacKO_{U(n)_k}$ and $\Jac^{\KO}_{O(n)_k}$}]\label{def_Jac_KO_twin}
	For each $k, n \in \Z_{\ge 1}$, we define the morphisms
	\begin{align}
	\Jac^\KO_{U(n)_k} &\colon MT(U(k), n\overline{V}_{U(k)}) \to \KO[kV_{U(n)}]^{U(n)}, \label{eq_def_Jac_KO_U}, \\
	\Jac^\KO_{O(n)_k} &\colon MT(SO(k), n\overline{V}_{SO(k)}) \to \KO[kV_{O(n)}]^{O(n)}, \label{eq_def_Jac_KO_O}, 
	\end{align}
by applying the general construction in Definition \ref{def_Jac_KO_general} to the following data. Here, for each group $K$ appearing below, the notation $V_{K} \in \RO(K)$ denotes the fundamental (a.k.a. defining, or vector) representation. 
\begin{itemize}
	\item For \eqref{eq_def_Jac_KO_U}, the data $\cD^\KO_{U(n)_k}$ consists of 
	\begin{align}
		G:=U(n), \  H:= U(k), \ \tau_G := kV_{U(n)}, \ \tau_H := n\overline{V}_{U(k)}, \ V_\phi := V_{U(n)} \otimes_\C V_{U(k)} 
	\end{align} 
	so that $\Theta_{U(n)_k} =\overline{V}_{U(n)} \otimes_{\C} \overline{V}_{U(k)}\in\RO(U(n) \times U(k))$, with its spin structure $\fraks$ obtained by Proposition \ref{prop_key_KO_U} below. 
	\item For \eqref{eq_def_Jac_KO_O}, the data $\cD^\KO_{O(n)_k}$ consists of 
	\begin{align}
		G:=O(n), \  H:= SO(k), \ \tau_G := kV_{O(n)}, \ \tau_H := n\overline{V}_{SO(k)}, \ V_\phi := V_{O(n)} \otimes_\R V_{SO(k)} 
	\end{align} 
	so that $\Theta_{O(n)_k} =\overline{V}_{O(n)} \otimes_{\R} \overline{V}_{SO(k)}\in\RO(U(n) \times SO(k))$, with its spin structure $\fraks$ obtained by Proposition \ref{prop_key_KO_O} below. 
\end{itemize}

\end{defn}

A particularly important case is $n=1$, where we get
\begin{align}
		\Jac^\KO_{U(1)_k} &\colon MTU(k) \to \KO[kV_{U(n)}]^{U(n)}, \label{eq_def_Jac_KO_U(1)}, \\
	\Jac^\KO_{O(1)_k} &\colon MTSO(k) \to \KO[kV_{O(n)}]^{O(n)}, \label{eq_def_Jac_KO_O(1)}. 
\end{align}

Here the necessary spin structures are provided by the following. For the case of $\JacKO_{U(n)_k}$, we have
\begin{prop}\label{prop_key_KO_U}
 The virtual representation
		\begin{align}\label{eq_key_KO_U(n)}
			\Theta_{U(n), U(k)} = \overline{V}_{U(n)} \otimes_{\C} \overline{V}_{U(k)}\in\RO(U(n) \times U(k))
		\end{align}
		has an $SU$-structure $\mathfrak{s}_{U, U}$, thus in particular a spin structure. Moreover, it is unique up to homotopy. 
\end{prop}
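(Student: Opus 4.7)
The plan is to work directly with the natural complex structure on $\Theta_{U(n),U(k)}$ coming from its definition as a tensor product over $\C$, so that an $SU$-structure amounts to a lift of the classifying map $\overline{V}_{U(n)}\otimes_\C \overline{V}_{U(k)}\colon B(U(n)\times U(k))\to BU$ through the fibration $BSU\to BU$. The obstruction to such a lift is precisely the first Chern class $c_1(\Theta_{U(n),U(k)})\in H^2(B(U(n)\times U(k));\Z)$, and the resulting space of lifts, when nonempty, is a torsor over $\mathrm{Map}(B(U(n)\times U(k)), BU(1))$.

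For existence, I would compute $c_1$ by the splitting principle. Letting $z_1,\ldots,z_n$ be the Chern roots of $V_{U(n)}$ and $x_1,\ldots,x_k$ those of $V_{U(k)}$, the tensor product $V_{U(n)}\otimes_\C V_{U(k)}$ has Chern roots $z_i + x_j$, so
\begin{align}
c_1\bigl(V_{U(n)}\otimes_\C V_{U(k)}\bigr) \;=\; \sum_{i,j}(z_i+x_j) \;=\; k\,c_1(V_{U(n)}) + n\,c_1(V_{U(k)}).
\end{align}
Expanding $\Theta_{U(n),U(k)} = V_{U(n)}\otimes_\C V_{U(k)} - k\,V_{U(n)} - n\,V_{U(k)} + nk\,\underline{\C}$ and summing $c_1$'s term-by-term produces $k\,c_1(V_{U(n)}) + n\,c_1(V_{U(k)}) - k\,c_1(V_{U(n)}) - n\,c_1(V_{U(k)}) = 0$, so the obstruction vanishes and an $SU$-structure exists.

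For uniqueness, the set of homotopy classes of lifts is a torsor over $H^1(B(U(n)\times U(k));\Z)$. Since $\pi_1 BU(m) = \pi_0 U(m) = 0$, both $BU(n)$ and $BU(k)$ are simply connected, and the K\"unneth theorem (together with the vanishing of $H^1$ of each factor) yields $H^1(B(U(n)\times U(k));\Z)=0$. Hence the $SU$-structure $\fraks_{U,U}$ is unique up to homotopy.

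The argument is essentially formal once one sets it up correctly, so there is no genuine obstacle; the only point requiring care is the conceptual distinction between ``vanishing of $c_1$'' and an actual $SU$-structure, which is handled cleanly by the cohomological torsor description of lifts and the vanishing of $H^1$.
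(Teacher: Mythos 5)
Your proof is correct and takes essentially the same route as the paper: existence is checked by computing that $c_1(\Theta_{U(n),U(k)}) = 0$, and uniqueness follows from $H^1(BU(n)\times BU(k);\Z) = 0$. Your version just spells out the splitting-principle computation of $c_1$ and the torsor description of lifts, which the paper leaves implicit.
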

\begin{proof}
	The existence of an $SU$-structure is verified by the vanishing of the first Chern class. The uniqueness follows from $H^1(U(n) \times U(k); \Z) =0$.
\end{proof}
		
In order to state the proposition regarding the case of $\JacKO_{O(n)_k}$, we need a little preparation. Consider the following group homomorphisms, 
\begin{align}
	\alpha_G &\colon O(n) \hookrightarrow U(n), \\
	\beta_H &\colon U\left(\lfloor k/2\rfloor \right) \hookrightarrow SO(2\lfloor k/2\rfloor) \hookrightarrow SO(k), 
\end{align}
where $\alpha_G$ is induced by $\R \hookrightarrow \C$, and $\beta_H$ is induced by forgetting the complex structure of $\C^{\lfloor k/2\rfloor}$ to regard it as the real vector space $\R^{2\lfloor k/2\rfloor}$, and the second arrow is nontrivial only for $k$ odd. 
Then we can easily verify that
\begin{lem}\label{lem_res_O_U_app}
	We have the following canonical isomorphism in $\RO\left(O(n) \times U\left(\lfloor k/2\rfloor \right)\right)$,  
	\begin{align}\label{eq_res_O_U_app}
		\res_{\id \times \beta_H} \left( \overline{V}_{O(n)}  \otimes_\R \overline{V}_{SO(k)} \right)
		\simeq \res_{\alpha_G \times \id}\left( \overline{V}_{U(n)}  \otimes_\C \overline{V}_{U\left(\lfloor k/2\rfloor \right)} \right). 
	\end{align}
\end{lem}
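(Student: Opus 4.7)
The plan is to unpack both restriction maps separately and then identify the resulting tensor products using the standard compatibility between complexification and realification. First I would analyze $\res_{\beta_H}(V_{SO(k)})$. Since $\beta_H$ factors as $U(\lfloor k/2\rfloor) \hookrightarrow SO(2\lfloor k/2\rfloor) \hookrightarrow SO(k)$, where the first map is realification of a hermitian complex representation and the second is the standard stabilization (identity for $k$ even, adjoining a trivial summand for $k$ odd), we have
\begin{align}
\res_{\beta_H}(V_{SO(k)}) \simeq r(V_{U(\lfloor k/2\rfloor)}) \oplus \epsilon_k,
\end{align}
where $r$ is the forgetful functor $\Rep_O^{\C} \to \Rep_O^{\R}$ and $\epsilon_k$ is the zero representation for $k$ even and $\underline{\R}$ for $k$ odd. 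Both sides have real dimension $k$, so after passing to reduced representations the trivial summand disappears uniformly and we get
\begin{align}
\res_{\id\times\beta_H}(\overline{V}_{SO(k)}) \simeq r(\overline{V}_{U(\lfloor k/2\rfloor)}) \quad \text{in } \RO(U(\lfloor k/2\rfloor)).
\end{align}

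Next I would analyze $\res_{\alpha_G}(V_{U(n)})$ along the inclusion $\alpha_G \colon O(n) \hookrightarrow U(n)$. A real orthogonal matrix acting on $\C^n$ is the $\C$-linear extension of its action on $\R^n$, so the complex $O(n)$-representation underlying $V_{U(n)}$ is precisely the complexification $c(V_{O(n)}) := V_{O(n)} \otimes_\R \C$. Reducing dimensions gives $\res_{\alpha_G\times\id}(\overline{V}_{U(n)}) \simeq c(\overline{V}_{O(n)})$.

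Combining the two computations, the two sides of the claimed isomorphism become, as elements of $\RO(O(n)\times U(\lfloor k/2\rfloor))$,
\begin{align}
\text{LHS} &\simeq \overline{V}_{O(n)} \otimes_\R r(\overline{V}_{U(\lfloor k/2\rfloor)}), \\
\text{RHS} &\simeq r\!\left(c(\overline{V}_{O(n)}) \otimes_\C \overline{V}_{U(\lfloor k/2\rfloor)}\right),
\end{align}
where on the RHS we forget the residual complex structure to land in the real representation ring. To conclude I would invoke the canonical natural isomorphism $r(c(V) \otimes_\C W) \simeq V \otimes_\R r(W)$ of real representations, which is immediate from the associativity computation $(V \otimes_\R \C) \otimes_\C W \simeq V \otimes_\R W$; applied termwise in the virtual representation, this identifies the two sides. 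The naturality in $(O(n), U(\lfloor k/2\rfloor))$ produces the \emph{canonical} isomorphism asserted by the lemma.

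The argument is essentially formal; the only point requiring care is the parity bookkeeping in the first step, since the intermediate decomposition $\res_{\beta_H}(V_{SO(k)}) \simeq r(V_{U(\lfloor k/2\rfloor)}) \oplus \epsilon_k$ distinguishes the two parities of $k$ but must collapse to a single uniform statement after reduction. Matching real dimensions on both sides before passing to $\overline{(-)}$ is what makes this collapse work.
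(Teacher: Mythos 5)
The paper does not actually supply a proof of this lemma; it is introduced with ``we can easily verify that'' and left to the reader. Your derivation is the correct verification and is essentially the intended one: unpack $\res_{\beta_H}(V_{SO(k)})$ and $\res_{\alpha_G}(V_{U(n)})$ separately, then identify the two resulting virtual representations via the natural compatibility $(V \otimes_\R \C) \otimes_\C W \simeq V \otimes_\R W$ between complexification, realification, and tensor products. The one step genuinely worth flagging is the one you flagged: the parity bookkeeping in the identity $\res_{\beta_H}(V_{SO(k)}) \simeq r(V_{U(\lfloor k/2\rfloor)}) \oplus \epsilon_k$ is exactly compensated by the subtraction of $k\underline{\R}$ when passing to $\overline{V}_{SO(k)}$, so that the two parities give the same reduced class $r(\overline{V}_{U(\lfloor k/2\rfloor)})$; your dimension count confirms this correctly. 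The argument is complete.
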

Now we can state the proposition. 
\begin{prop}\label{prop_key_KO_O}
	The virtual representation
		\begin{align}
			\Theta_{O(n), SO(k)} = \overline{V}_{O(n)}  \otimes_\R \overline{V}_{SO(k)} \in \RO(O(n) \times SO(k)) 
		\end{align}
		admits a spin structure, and there is, up to homotopy, a unique choice $\fraks_{O, SO}$ which admits the following equivalence of spin structures when restricted to $O(n) \times SO(k)$,
		\begin{align}
			\res_{\id \times \beta_H}(\fraks_{O, SO}) \simeq\res_{\alpha_G \times \id}(\fraks_{U, U}). 
		\end{align}
		Here we are using Lemma \ref{lem_res_O_U_app}, and the string structure $\fraks_{U, U}$ on $\Theta_{U(n), U(\lfloor k/2 \rfloor)}$ is the one in Proposition \ref{prop_key_KO_U}. 
\end{prop}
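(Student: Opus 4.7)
The plan is to mirror the argument of Proposition \ref{prop_key_spin}, replacing the string-theoretic obstruction $p_1/2 \in H^4(-;\Z)$ by the spin-theoretic obstruction $w_2 \in H^2(-;\F_2)$, and reducing to the already-established spin (in fact $SU$) structure of Proposition \ref{prop_key_KO_U} through the restriction map $\id \times \beta_H$ together with Lemma \ref{lem_res_O_U_app}.

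First I would establish existence of a spin structure on $\Theta_{O(n),SO(k)}$ by verifying $w_1=w_2=0$ in $H^{\le 2}(BO(n)\times BSO(k);\F_2)$. By Lemma \ref{lem_res_O_U_app}, the $(\id\times\beta_H)^*$-pullback of $\Theta_{O(n),SO(k)}$ is isomorphic to the $(\alpha_G\times\id)^*$-pullback of $\Theta_{U(n),U(\lfloor k/2\rfloor)}$, which admits an $SU$-structure (hence spin) by Proposition \ref{prop_key_KO_U}, so its $w_1$ and $w_2$ vanish. The key claim is then that $(\id\times\beta_H)^* \colon H^i(BO(n)\times BSO(k);\F_2) \to H^i(BO(n)\times BU(\lfloor k/2\rfloor);\F_2)$ is injective for $i\le 2$. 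By K\"unneth this reduces to showing $\beta_H^*$ is injective on $H^{\le 2}(-;\F_2)$: from the fibration $SO(2k')/U(k') \to BU(k') \to BSO(2k')$ (for $k=2k'$, composed with the highly connected inclusion $BSO(2k')\hookrightarrow BSO(2k'+1)$ when $k$ is odd), the fiber is $1$-connected because $\pi_1(U(k'))=\Z$ surjects onto $\pi_1(SO(2k'))=\Z/2$; so $\beta_H$ is $2$-connected, and a dimension count (both $H^2$'s are rank $1$, spanned respectively by $w_2(V_{SO(2k')})$ and $c_1(V_{U(k')})\bmod 2$) upgrades injectivity to an isomorphism on $H^{\le 2}(-;\F_2)$.

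Next I would establish uniqueness of the spin structure restricting to $\fraks_{U,U}$. The set of homotopy classes of spin structures on $\Theta_{O(n),SO(k)}$ is, when nonempty, a torsor over $H^0\oplus H^1(BO(n)\times BSO(k);\F_2)\cong \F_2\oplus \F_2$, the first factor accounting for the orientation data. Pullback along $\id\times\beta_H$ induces a map of torsors whose underlying group map is the identity on $H^0$ and an isomorphism on $H^1$, since $H^1(BSO(k);\F_2)=H^1(BU(\lfloor k/2\rfloor);\F_2)=0$ forces both groups to be $\F_2\langle w_1(V_{O(n)})\rangle$ with restriction the identity. Hence the pullback is a bijection of torsors, producing a unique $\fraks_{O,SO}$ with $\res_{\id\times\beta_H}(\fraks_{O,SO}) \simeq \res_{\alpha_G\times\id}(\fraks_{U,U})$.

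The main obstacle will be the low-degree $\F_2$-cohomology computation for $\beta_H$, especially the analysis of $SO(2k')/U(k')$ through the long exact sequence of homotopy groups. One must handle small cases ($k\le 3$, where $SO(2k')/U(k')$ degenerates to a point or a low-dimensional sphere) separately, and confirm that the connectivity of the stabilization $BSO(2k')\hookrightarrow BSO(2k'+1)$ suffices in the odd case. All such verifications reduce to standard Lie-theoretic facts, so after taking care of the edge cases the proof goes through uniformly in $k$.
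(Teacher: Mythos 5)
Your proof is correct and follows essentially the same strategy as the paper: reduce to the $(U,U)$ case of Proposition \ref{prop_key_KO_U} via the map $\id\times\beta_H$, then transfer spin structures using its connectivity. One small point in your favor: your careful computation gives $2$-connectivity for $\beta_H$ (indeed, $\pi_2\bigl(SO(2k')/U(k')\bigr)\cong\Z$ for $k'\ge 2$, so the map is \emph{not} $3$-connected as the paper's proof asserts), and your explicit check—iso on $H^{\le 1}(-;\F_2)$, injection on $H^2(-;\F_2)$—confirms that $2$-connectivity is exactly what is needed to transfer both the existence and the uniqueness of spin structures.
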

\begin{proof}
	The existence of a spin structure is checked by the vanishing of the first and second Stiefel-Whitney classes. 
		The second claim follows by the fact that the map
	\begin{align}
		BO(n) \times BU(k') \xrightarrow{\id \times \beta_H} BO(n) \times BSO(2k')
	\end{align}
	for any $k' \ge 1$ is $3$-connected, so that giving a spin structure on $\Theta_{O(n), SO(k)}$ is equivalent to giving a spin structure on $ \res_{\id \times \beta_H}(\Theta_{O(n), SO(k)})$. 
\end{proof}

\subsubsection{Structures in the twins}
The families of examples constructed in Section \ref{subsubsec_def_twin} get unified via the {\it structure maps} relating each other. 
They consist of {\it external} and {\it internal} structure maps. 

\paragraph{The external structure: relating $(U, U)$ and $(O, SO)$}

The external structure relates $U$-and $O$-topological $\Gm$-genera. In this case, we simply have the following statement; 

\begin{prop}[{Compatibility of $\JacKO_{U(n)_k}$ and $\JacKO_{O(n)_k}$}]\label{prop_Jac_compatibility_external_KO}
	The $U$ and $O$-topological $\Gm$-genera are compatible in the sense that the following diagram commutes. 
	\begin{align}\label{diag_U_O_JacKO}
				\xymatrix{
					MT(U(k), n\overline{V}_{U(k)}) \ar[d]^-{(U(k) \hookrightarrow SO(2k))_*} \ar[rr]^-{\JacKO_{U(n)_k}} & &\TMF[kV_{U(n)}]^{U(n)} \ar[d]^-{\res^{O(n)}_{U(n)}}\\
					MT(SO(2k), n\overline{V}_{SO(2k)} )\ar[rr]^-{\JacKO_{O(n)_{2k}}} & &\TMF[2kV_{O(n)}]^{O(n)}. 
				}
	\end{align}

\end{prop}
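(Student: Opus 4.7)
The plan is to mimic the proof of Proposition \ref{prop_Jac_compatibility_external} from the main body, transporting the notion of morphism between defining data $\mathcal{D} \to \mathcal{D}'$ (from Section \ref{subsubsec_functoriality}) to the $\KO$-setting. Specifically, I would first exhibit a morphism
\begin{align}
\alpha^{\KO} \colon \mathcal{D}^{\KO}_{U(n)_k} \longrightarrow \mathcal{D}^{\KO}_{O(n)_{2k}}
\end{align}
consisting of the group homomorphisms $\alpha_G \colon O(n) \hookrightarrow U(n)$ and $\alpha_H \colon U(k) \hookrightarrow SO(2k)$ from \eqref{eq_alphaU0}, together with the canonical identifications of representations $\res_{\alpha_G}(kV_{U(n)}) \simeq 2kV_{O(n)}$, $\res_{\alpha_H}(2k\overline{V}_{SO(2k)}) \simeq 2k\overline{V}_{U(k)}$ (after noting that $\overline{V}_{U(k)}$ contributes an extra $k$ trivial real summand which cancels the shift between $n\overline{V}_{U(k)}$ and $n\overline{V}_{SO(2k)}|_{U(k)}$), and an equivalence $\alpha_\phi$ identifying the two $V_\phi$'s via complexification. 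The crucial ingredient is the data $\alpha_{\fraks}$ of compatibility of spin structures on $\Theta$, and this is precisely what Proposition \ref{prop_key_KO_O} supplies: the spin structure $\fraks_{O, SO}$ on $\Theta_{O(n), SO(k)}$ was defined so that $\res_{\id \times \beta_H}(\fraks_{O, SO}) \simeq \res_{\alpha_G \times \id}(\fraks_{U, U})$.

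Next I would chase through Definition \ref{def_Jac_KO_general} step by step. The maps \eqref{eq_JacKO_def_1} and \eqref{eq_JacKO_def_2} are functorial in the data $\mathcal{D}^{\KO}$: the inclusion of zero section $\chi(V_\phi) \cdot$ is compatible with restriction of representations by naturality of Euler classes, and the equivariant ABS orientation $\ABS(\Theta, \fraks)$ is natural with respect to restriction of compatible spin structures (this is a standard functoriality property of the $G$-equivariant ABS orientation). The final projection $(BH \to \mathrm{pt})_*$ in \eqref{eq_JacKO_def_3} is natural with respect to the map $\alpha_H \colon BU(k) \to BSO(2k)$.

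Concretely, I would assemble these into a large commutative diagram in $\Spectra^{O(n)}$ (after restricting along $\alpha_G$): the top row traces $\Jac^{\KO}_{U(n)_k}$ viewed as an $O(n)$-equivariant statement via $\res_{\alpha_G}$, and the bottom row is $\Jac^{\KO}_{O(n)_{2k}}$. The vertical maps are the functorial $BU(k) \to BSO(2k)$ induced structure maps. Commutativity of the leftmost square uses the Euler-class identification $\res_{\alpha_G \times \id}(\chi(V_{U(n)}\otimes_\C V_{U(k)})) = \chi(\res_{\alpha_G\times \id}(V_{U(n)}\otimes_\C V_{U(k)})) = \chi(V_{O(n)} \otimes_\R V_{SO(2k)}|_{O(n)\times U(k)})$; commutativity of the middle square is the functoriality of ABS applied to $\alpha_{\fraks}$; and the rightmost square commutes trivially.

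The main obstacle, as in the elliptic case, is really just the bookkeeping for the ABS compatibility: one must verify that the equivariant ABS orientations for $\Theta_{U(n),U(k)}$ and $\Theta_{O(n),SO(2k)}$ agree after restriction along $\alpha_G \times \id$, which comes down to Proposition \ref{prop_key_KO_O}. All the other ingredients (Euler classes, Thom isomorphisms in ordinary cohomology, projection formulas) are formal naturality statements. Thus the proof reduces to invoking Proposition \ref{prop_key_KO_O} together with the functoriality package, analogously to how Proposition \ref{prop_functoriality} implies Proposition \ref{prop_Jac_compatibility_external} in the main body.
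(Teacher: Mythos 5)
Your approach is correct and coincides with what the paper actually does; after the statement, the paper's text simply says the proof is "analogous to the corresponding Proposition~\ref{prop_Jac_compatibility_external}" and notes that "the choice of spin structure in Proposition~\ref{prop_key_KO_O} is made precisely to make this compatibility result hold." Your fleshing-out of that analogy is the right one: you correctly observe that Definition~\ref{def_Jac_KO_general} is the $\KO$-analogue of the \emph{alternative} definition (Proposition~\ref{prop_alternative_Jac}), so you must chase naturality of the zero-section map, the equivariant ABS Thom isomorphism, and the pushforward directly rather than invoke the coevaluation-map functoriality (which would require dualizability of equivariant $\KO$ that the paper explicitly declines to assume); and you correctly identify Proposition~\ref{prop_key_KO_O} as the one nontrivial input, since the spin structure $\fraks_{O,SO}$ is defined there precisely by its restriction being $\fraks_{U,U}$ along $\alpha_G\times\id$ and $\id\times\beta_H$.

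One small bookkeeping slip worth flagging: there is no ``extra $k$ trivial real summand'' to cancel in identifying $\tau_H$'s. Since both $V_{U(k)}$ and $V_{SO(2k)}$ have real dimension $2k$, the reduced classes satisfy
$\res_{\alpha_H}\bigl(\overline{V}_{SO(2k)}\bigr)=\overline{V}_{U(k)}$
on the nose, so $\res_{\alpha_H}\bigl(n\overline{V}_{SO(2k)}\bigr)=n\overline{V}_{U(k)}$ tautologically (and you wrote $2k\overline{V}_{SO(2k)}$ where $n\overline{V}_{SO(2k)}$ was meant). The parenthetical remark about a cancellation should be deleted; the identification $\alpha_{\tau_H}$ requires no argument at all. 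This does not affect the validity of the proof.
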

The proof is analogous to the corresponding Proposition \ref{prop_Jac_compatibility_external}. 
Note that the choice of spin structure in Proposition \ref{prop_key_KO_O} is made precisely to make this compatibility result hold.

\paragraph{The internal structure: relating different $(n, k)$}
Now we introduce the {\it internal} structures in the twin, which relates different pairs of parameters $(n, k)$. Fixing $(\cG, \cH)$ to be any one of $(U, U)$, $(O, SO)$, and introduce the structure maps for the domains and codomains of $\JacKO$, respectively. 
Actually, for the domain, the structure maps for $MT(\cG(k), n\overline{V}_{\cG(k)}) $'s are exactly the same as the one we introduced in Section \ref{subsubsec_internal}, forming a {\it stabilization-restriction fiber sequence} of tangential bordism spectra (Proposition \ref{prop_stab_res_MT}). 
So here we focus on the structure maps for the codomain, the twisted equivariant $\KO$-theories. 

As we have remarked in Remark \ref{rem_internal_general}, we can apply the definition \eqref{eq_TMF_stab} and \eqref{eq_TMF_res} replacing $\TMF$ to $\KO$. 
Let us set $\cG$ to be one of $U$ or $O$, and $N=2, 1$, respectively. We get the maps
\begin{align}
\stab :=	\chi(V_{\cG(n)}) \cdot & \colon \KO[(k-1)V_{\cG(n)}]^{\cG(n)} \to \KO[kV_{\cG(n)}]^{\cG(n)}, \label{eq_KO_stab}\\
	\res_{\cG(n)}^{\cG(n-1)} &\colon \KO[kV_{\cG(n)}]^{\cG(n)} \to \KO[kV_{\cG(n-1)} + Nk]^{\cG(n-1)} \label{eq_KO_res}
\end{align}
We call the maps \eqref{eq_KO_stab} and \eqref{eq_KO_res} the {\it stabilization} and {\it restriction} maps, respectively. 
We get the following. 

\begin{prop}[{The stabilization-restriction fiber sequence of equivariant $\KO$ }]\label{prop_stab_res_KO}
	The maps \eqref{eq_TMF_stab} and \eqref{eq_TMF_res} form a fiber sequence of $\KO$-module spectra, 
	\begin{align}
		\KO[(k-1)V_{U(n)}]^{U(n)} \xrightarrow[\stab]{ \chi(V_{U(n)}) \cdot }\KO[kV_{U(n)}]^{U(n)} \xrightarrow[\res]{ \res_{U(n)}^{U(n-1)} }\KO[kV_{U(n-1)} + 2k]^{U(n-1)} ,\\
			\KO[(k-1)V_{O(n)}]^{O(n)} \xrightarrow[\stab]{ \chi(V_{O(n)}) \cdot }\KO[kV_{O(n)}]^{O(n)} \xrightarrow[\res]{ \res_{O(n)}^{O(n-1)} }\KO[kV_{O(n-1)} + k]^{O(n-1)}
	\end{align}
\end{prop}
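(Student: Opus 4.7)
The plan is to follow the proof of Proposition \ref{prop_stab_res_TMF} verbatim, replacing $\TMF$ by $\KO$. The key point, already emphasized in Remark \ref{rem_internal_general}, is that the argument used there does not invoke any special property of $\TMF$ beyond its being a genuinely $\cG(n)$-equivariant commutative ring spectrum; in particular it makes no use of the Gepner--Meier formalism, the dualizability of equivariant $\TMF$, or the equivariant sigma orientation. So the statement is really a formal consequence of the geometry of the inclusion $\cG(n-1)\hookrightarrow\cG(n)$, which exists equally well in the $\KO$-equivariant context.

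Concretely, for $\cG = U$ or $O$, the inclusion $\cG(n-1)\hookrightarrow\cG(n)$ identifies the homogeneous space $\cG(n)/\cG(n-1)$ with the unit sphere $S(V_{\cG(n)})$ of the fundamental $\cG(n)$-representation (of real dimension $N(n-1)$, where $N=2$ or $1$ respectively, so that $V_{\cG(n)}\simeq V_{\cG(n-1)}\oplus\underline{\R}^N$ upon restriction). This yields the cofiber sequence of pointed $\cG(n)$-spaces
\begin{align}
\cG(n)/\cG(n-1)_+ \longrightarrow S^0 \xrightarrow{\chi(V_{\cG(n)})} S^{V_{\cG(n)}}.
\end{align}
Smashing with $S^{-kV_{\cG(n)}}$ and using the Wirthm\"uller/induction identification $\mathrm{Ind}_{\cG(n-1)}^{\cG(n)}\circ\res_{\cG(n)}^{\cG(n-1)}(X)\simeq (\cG(n)/\cG(n-1))_+\otimes X$ for any $\cG(n)$-spectrum $X$, one obtains a cofiber sequence of $\cG(n)$-spectra
\begin{align}
\mathrm{Ind}_{\cG(n-1)}^{\cG(n)}\bigl(S^{-kV_{\cG(n-1)}-Nk}\bigr) \longrightarrow S^{-kV_{\cG(n)}} \xrightarrow{\chi(V_{\cG(n)})\wedge\id} S^{(-k+1)V_{\cG(n)}}.
\end{align}

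Applying the functor $\lMap_{\cG(n)}(-,\KO)^{\cG(n)}$ converts this cofiber sequence into a fiber sequence of $\KO$-modules. By the induction--restriction adjunction, the first term is sent to $\KO[kV_{\cG(n-1)}+Nk]^{\cG(n-1)}$; the other two become $\KO[kV_{\cG(n)}]^{\cG(n)}$ and $\KO[(k-1)V_{\cG(n)}]^{\cG(n)}$, and the rightmost map, after rotating the fiber sequence, is exactly multiplication by the Euler class $\chi(V_{\cG(n)})$. This produces precisely the two fiber sequences claimed in the proposition. There is no real obstacle: every step is a formal consequence of $\RO(\cG(n))$-graded equivariant stable homotopy theory applied to $\KO$, so the proof is complete modulo this translation.
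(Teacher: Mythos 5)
Your proof is correct and takes the same route as the paper: the argument for Proposition \ref{prop_stab_res_TMF} is formal in the underlying equivariant spectrum (as the paper flags in Remark \ref{rem_internal_general}), and you simply rerun it with $\KO$ in place of $\TMF$, using the identification $\cG(n)/\cG(n-1)\simeq S(V_{\cG(n)})$, the induced cofiber sequence, and the induction--restriction adjunction after applying $\lMap_{\cG(n)}(-,\KO)^{\cG(n)}$. One small slip in your parenthetical: $V_{\cG(n)}$ has real dimension $Nn$ and the sphere $S(V_{\cG(n)})\simeq\cG(n)/\cG(n-1)$ has dimension $Nn-1$, not $N(n-1)$; this does not affect the argument.
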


Thus we get the diagram consisting of the structure maps, 
\begin{align}\label{seq_building_KO_general}
	\scalebox{0.85}{
		\xymatrix@C=0.5em{
			\ar[r]^-{\stab} &	\KO[(k-1)V_{\cG(n)}]^{\cG(n)}  \ar[r]^-{\stab} \ar[d]^-{\res}   & \KO[kV_{\cG(n)}]^{\cG(n)}  \ar[r]^-{\stab} \ar[d]^-{\res} & \KO[(k+1)V_{\cG(n)}]^{\cG(n)}  \ar[d]^-{\res} \ar[r]^-{\stab} &  \\
			&	\KO[(k-1)(V_{\cG(n-1)} + N)]^{\cG(n-1)} & \KO[k(V_{\cG(n-1)} + N)]^{\cG(n-1)}  & \KO[(k+1)(V_{\cG(n-1)} + N)]^{\cG(n-1)} &
		}
	}
\end{align}
where each pair of consecutive horizontal and vertical arrows form a fiber sequence. 
Particularly important case is the case of $n=1$. Here let us focus on the case $\cG=O$. The stabilization-restriction fiber sequence becomes
\begin{align}\label{eq_w}
	\KO[(k-1)V_{O(1)}]^{O(1)} \xrightarrow{\stab} \KO[kV_{O(1)}]^{O(1)} \xrightarrow{\res} \KO[k] \xrightarrow{w(k) \cdot} \KO[(k-1)V_{O(1)}+1]^{O(1)}, 
\end{align}
where we defined $w(k) \in \pi_{k-1} \KO[(k-1)V_{O(1)}]^{O(1)}$ by the above fiber sequence of $\KO$-modules. We call it the {\it attaching element}, by analogy of the corresponding elements \eqref{eq_xyz} in the main body. 

We can identify the case $\cG(n)= O(1)$ with a familiar sequence connecting $\KO$ and $\KU$, as follows.  

\begin{prop}[{stabilzation-restriction fiber sequence for $O(1)$-equivariant $\KO$}]\label{prop_stabres_KO_O(1)}
	\begin{enumerate}
		\item We have $[B\Z/2, P^2BO] \simeq \Z/4$, and the class $[\overline{V}_{O(1)}] \in [B\Z/2, P^2 BO]$ of the fundamental representation represents a generator. In particular, we have
		\begin{align}
			\KO[(k+4)V_{O(1)}]^{O(1)} \simeq \KO[kV_{O(1)} + 4]^{O(1)}. 
		\end{align}
		\item Let us use the identification
		$$\KO^{O(1)} \simeq \KO \oplus \KO$$ corresponding to the decomposition $\pi_0 \KO^{O(1)} \simeq \RO(O(1)) = \Z[\underline{\R}] \oplus \Z[{V}_{O(1)}]$. 
	The diagram \eqref{seq_building_KO_general} of $\KO$-modules for $n=1$ and $\cG = O$ is identified as follows. 
	\begin{align}\label{seq_building_KO_O(1)}
			\scalebox{0.9}{
			\xymatrix{
		\KO \oplus \KO \ar[d]^-{\simeq} \ar[r]^-{\id \oplus ( -\id )}& \KO \ar[d]^-{\simeq} \ar[r]^-{\beta \circ c} & \KU[2] \ar[d]^-{\simeq} \ar[r]^-{R\circ \beta} & \KO[4] \ar[d]^-{\simeq} \ar[r]^{ \id \oplus ( -\id)} & \KO[4] \oplus \KO[4]\ar[d]^-{\simeq}\\
	\KO^{O(1)}	 \ar[r]^-{\stab} \ar[d]^-{\res}   & \KO[V_{O(1)}]^{O(1)}  \ar[r]^-{\stab} \ar[d]^-{\res} & \KO[2V_{O(1)}]^{O(1)}   \ar[d]^-{\res} \ar[r]^-{\stab} &  \KO[3V_{O(1)}]^{O(1)}   \ar[d]^-{\res} \ar[r]^-{\stab} & \KO[4V_{O(1)}]^{O(1)} \simeq \KO[4]^{O(1)}  \ar[d]^-{\res} \ar[d]^-{\res}\\
					\KO & \KO[1] & \KO[2]  & \KO[3]  & \KO[4]
			}
		}
	\end{align}
	Here, $c \colon \KO \to \KU$ is the complexification, $\beta \colon \KU \simeq \KU[2]$ is the Bott periodicity, $R \colon \KU \to \KO$ is the realification. 
	\item For each integer $m \in \Z_{\ge 0}$, attaching elements \eqref{eq_w} are identified as follows. 
	\begin{align}
		y(4m) &= 0 \in \pi_{-1}\KO[-V_{O(1)}]^{O(1)} \simeq \pi_{-1}\KO, \\
		y(4m+1) &= (0, 1) \in \pi_0 \KO^{O(1)} \simeq \pi_0 \KO \oplus \pi_0\KO, \\
		y(4m+2) &= \eta \in \pi_1 \KO[V_{O(1)}]^{O(1)} \simeq \pi_1 \KO, \\
		y(4m+3) &= 1 \in \pi_2 \KO[2V_{O(1)}] \simeq \pi_0 \KU. 
	\end{align}
	\end{enumerate}
\end{prop}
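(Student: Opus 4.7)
My plan is to prove the three parts in order, with the bulk of the work concentrated in part (2).

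For part (1), I would compute $[B\Z/2, P^2 BO]$ using obstruction theory applied to the Postnikov fiber sequence $K(\Z/2, 2) \to P^2 BO \to K(\Z/2, 1)$, whose $k$-invariant is $\mathrm{Sq}^2$. Since $H^\bullet(B\Z/2; \Z/2) = \Z/2[\alpha]$ with $|\alpha| = 1$, and $\mathrm{Sq}^2 \alpha = 0$ by instability, the long exact sequence of pointed sets for $[B\Z/2, -]$ produces a short exact sequence
\begin{align}
0 \to H^2(B\Z/2; \Z/2) \to [B\Z/2, P^2 BO] \to H^1(B\Z/2; \Z/2) \to 0,
\end{align}
so the target has order $4$. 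Computing Stiefel-Whitney classes $w_1(k \overline{V}_{O(1)}) = k \alpha$ and $w_2(k \overline{V}_{O(1)}) = \binom{k}{2} \alpha^2$ shows that $[\overline{V}_{O(1)}]$ has order exactly $4$, forcing $[B\Z/2, P^2 BO] \simeq \Z/4$ with $[\overline{V}_{O(1)}]$ as a generator. The periodicity $\KO[(k+4) V_{O(1)}]^{O(1)} \simeq \KO[k V_{O(1)} + 4]^{O(1)}$ then follows since $4 \overline{V}_{O(1)}$ is null in $P^2 BO$, meaning $4 V_{O(1)}$ is $O(1)$-equivariantly spin up to $4$ trivial real dimensions.

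For part (2), I would apply the functor $(-)^{O(1)} \circ (\KO \wedge -)$ to the cofiber sequence $O(1)_+ \to S^0 \xrightarrow{\chi(V_{O(1)})} S^{V_{O(1)}}$ in $\cS^{O(1)}_*$, smashed with $S^{k V_{O(1)}}$ for each $k$. The Adams isomorphism identifies the contribution of $O(1)_+ \wedge S^{k V_{O(1)}}$ as $\KO \wedge S^k = \KO[k]$, matching the bottom row and the restriction map $\res_{O(1)}^e$. The individual spectra in the top row are then identified as follows: $\KO^{O(1)} \simeq \KO \oplus \KO$ by the Atiyah-Segal-style splitting indexed by $RO(O(1)) = \Z \oplus \Z$; next, $\KO[V_{O(1)}]^{O(1)} \simeq \KO$ follows because the transfer $\KO \to \KO^{O(1)} \simeq \KO \oplus \KO$ equals the diagonal $(\id, \id)$ via the rep-ring formula $\mathrm{tr}_e^{O(1)}(1) = 1 + [V_{O(1)}]$, and the cofiber of this diagonal is $\KO$ via the difference map; the key identification $\KO[2V_{O(1)}]^{O(1)} \simeq \KU[2]$ comes from Wood's cofiber sequence $\Sigma \KO \xrightarrow{\eta} \KO \xrightarrow{c} \KU \xrightarrow{R\beta} \Sigma^2 \KO$, since the stabilization $\chi(V_{O(1)}) \colon \KO \to \KO[2V_{O(1)}]^{O(1)}$ is identified (by a transfer computation) with $\beta c \colon \KO \to \KU[2]$; and finally $\KO[3V_{O(1)}]^{O(1)} \simeq \KO[4]$ follows by continuation of Wood, while $\KO[4V_{O(1)}]^{O(1)} \simeq \KO[4]^{O(1)} \simeq \KO[4] \oplus \KO[4]$ is supplied by the periodicity of (1). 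The top-row maps $\id \oplus (-\id)$, $\beta c$, $R \beta$, $\id \oplus (-\id)$ are then forced by compatibility with the stabilization-restriction cofiber sequences in Proposition \ref{prop_stab_res_KO}.

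For part (3), the attaching elements $y(4m+j)$ are read off from the identified cofiber sequence in (2): $y(4m) = 0$ because the difference map $\KO \oplus \KO \to \KO$ is split surjective on homotopy groups, forcing the connecting map to vanish; $y(4m+1) = (0,1)$ since, under the identification $\KO^{O(1)} \simeq \KO \oplus \KO$, the restriction map projects onto the first factor, so the fiber is the inclusion of the second; $y(4m+2) = \eta$ because this is precisely the connecting map in Wood's cofiber sequence rotated appropriately; and $y(4m+3) = 1 \in \pi_0 \KU$ because this is the Bott element realizing the attaching map $\KO[3] \to \KU[2]$ in the rotated Wood sequence. The main technical obstacle is the equivariant identification $\KO[2V_{O(1)}]^{O(1)} \simeq \KU[2]$, which encodes the classical relationship between real and complex $K$-theory governed by Wood's theorem within the $O(1)$-equivariant framework; once this identification is established, both the remaining identifications in (2) and all four attaching elements in (3) follow essentially formally.
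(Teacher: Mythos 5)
Your part (1) is fine: the obstruction-theory argument via $K(\Z/2,2) \to P^2BO \to K(\Z/2,1)$ with $k$-invariant $\mathrm{Sq}^2$, together with the Stiefel--Whitney computation showing that $2[\overline{V}_{O(1)}]$ has $w_2 \neq 0$ while $4[\overline{V}_{O(1)}]$ is null, is a legitimate way to pin down $\Z/4$. The paper just dismisses this as classical. Your derivations of the outer two squares in part (2) — identifying $\tr_e^{O(1)}$ as the diagonal from the rep-ring formula, and invoking periodicity for $\KO[4V_{O(1)}]^{O(1)}$ — are also in essence what the paper does (the paper additionally invokes the self-duality of stab--res sequences to organize this).

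The real gap is the middle-left square. You write that $\KO[2V_{O(1)}]^{O(1)} \simeq \KU[2]$ ``comes from Wood's cofiber sequence \ldots since the stabilization $\chi(V_{O(1)}) : \KO \to \KO[2V_{O(1)}]^{O(1)}$ is identified (by a transfer computation) with $\beta c$.'' But this is circular as stated: to identify $\stab$ with $\beta\circ c$ you would already need to know the target is $\KU[2]$ and, moreover, knowing both rows are cofiber sequences with the same fiber ($\KO[1] \xrightarrow{\eta} \KO$ vs.\ $\KO[1]\to\KO$) does not by itself uniquely identify the cofibers unless you have additionally matched the attaching data, which is exactly the content of $w(2)=\eta$ you are trying to prove in part (3). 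No ``transfer computation'' is spelled out, and it is not clear what formal transfer argument could supply the needed equivalence. This is precisely where the paper brings in an independent input: the twisted-group-algebra model for finite-group twisted equivariant $\KO$, which gives $\R_\omega[\Z/2] \simeq \C$ for the twist $\omega = [2\overline{V}_{O(1)}]$ and hence $\KO[2\overline{V}_{O(1)}]^{O(1)} \simeq \KU$ directly, with the restriction $\res^e_{O(1)}$ identified as realification $R$. After that, Wood's sequence $\KO[1]\xrightarrow{\eta}\KO\xrightarrow{\beta c}\KU[2]\xrightarrow{R}\KO[2]$ plus the stab--res fiber sequence gives the middle-left square, and the self-duality of the stab--res sequence (Proposition~\ref{prop_selfdual_stabres}) transports this to the middle-right square. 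Your phrase ``by continuation of Wood'' for the middle-right square has the same problem: it is not a justification by itself. You should either supply the twisted-group-algebra identification or replace it with some other genuinely independent computation of $\KO[2V_{O(1)}]^{O(1)}$; the formal cofiber-sequence bookkeeping alone cannot determine this $\KO$-module.

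Part (3) then follows formally from (2), as you say, modulo the minor imprecision in your derivation of $w(4m+1)=(0,1)$: the paper's stated splitting of $\KO^{O(1)}$ is $\Z[\underline{\R}]\oplus\Z[V_{O(1)}]$, under which $\res^e_{O(1)}$ is the sum map, not a projection onto the first factor; what actually forces $w(4m+1)=(0,1)$ is that $\stab : \KO\oplus\KO\to\KO$ is split surjective (so $\res : \KO[V_{O(1)}]^{O(1)}\to\KO[1]$ is null and $w(4m+1)$ is split injective), landing on the generator $[V_{O(1)}]$.
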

\begin{proof}
	(1) is a classical result which is not difficult to check directly. So we omit the detail here. Let us prove (2). 
	First, let us consider the rightmost square of \eqref{seq_building_KO_O(1)}. 
	The restriction map gives
	\begin{align}
	\res=	\id \oplus \id \colon \KO^{O(1)} \simeq \KO \oplus \KO \to \KO,
	\end{align}
	so by Proposition \ref{prop_stab_res_KO} we get the isomorphism $\KO[4] \simeq \KO[3V_{O(1)}]^{O(1)}$ and the commutativity of the rightmost square of \eqref{seq_building_KO_O(1)}. 
	
	For the leftmost square, we use the self-duality of stabilization-restriction fiber sequence. It is the $\KO$-version of Proposition \ref{prop_selfdual_stabres}. There we have used self-duality result of equivariant $\TMF$ in Fact \ref{fact_dualizability_TMF}, but since we are dealing with the finite group $O(1)$, the $O(1)$-equivariant $\KO$-theory is also self-dual and the analogous statement holds. In particular, we get the fiber sequence
	\begin{align}
		\KO \xrightarrow{\tr_{e}^{O(1)}} \KO^{O(1)} \xrightarrow{\stab} \KO[V_{O(1)}]^{O(1)}. 
	\end{align}
	We know that, under the identification $\KO^{O(1)} \simeq \KO \oplus \KO$ as above, the transfer map is identified as $\id \oplus \id \colon \KO \to \KO \oplus \KO$. 
	This gives an isomorphism $\KO \simeq \KO[V_{O(1)}]^{O(1)}$ with the commutativity of the leftmost square of \eqref{seq_building_KO_O(1)}. 
	
	Now let us prove the commutativity of the middle-left square of \eqref{seq_building_KO_O(1)}. In order for this, we use the model of twisted equivariant $\KO$-theory in terms of twisted group algebras. See \cite{Gomi} for details. 
	In general, for a discrete group $G$, an element $\omega \in [BG, P^2BO]$ defines a {\it $\Z/2$-graded twisted group algebra} $\R_\omega[G]$. The corresponding $\omega$-twisted $G$-equivariant $\KO$-spectrum is realized by the space of Fredholm operators on $\Z/2$-graded Hilbert spaces with an action of $\R_\omega[G]$. 
	In the case where the element $\omega$ lifts to an element $\omega \in H^2(BG; \Z/2)$, the algebra $\R_\omega[G]$ has the trivial $\Z/2$-grading, and explicitly given by twisting the multiplication by $\omega$ as $g \cdot h = \omega(g, h) gh$, where we are regarding $\omega$ as a $\pm 1$-valued group $2$-cocycle on $G$. 
	
	In our case, the middle twist $\omega := [2\overline{V}_{O(1)}] \in [BO(1), P^2BO]$ is the image of the nontrivial generator of $H^2(BO(1), \Z/2)$. We immediately see that we actually have an isomorphism $\R_{\omega}[O(1)] \simeq \C$ of algebras over $\R$. 
	 Thus we get the identification $\KO[2\overline{V}_{O(1)}]^{O(1)} \simeq \KU$, together with the commutative diagram
	 \begin{align}\label{eq_prf_building_KO_O(1)}
	 	\xymatrix{
	 	\KO[2\overline{V}_{O(1)}]^{O(1)}  \ar[r] \ar[d]^-{\res_{O(1)}^e}& \KU \ar[d]^-{R} \\
	 	\KO \ar@{=}[r] & \KO
	 	}
	 \end{align}
	 Now we invoke of the classical fact that the following is a fiber sequence (e.g., \cite{bruner2012postnikov}), 
	  \begin{align}\label{eq_eta_c_R}
	 \KO[1] \xrightarrow{\cdot \eta} \KO \xrightarrow{\beta \circ c} \KU[2] \xrightarrow{R} \KO[2]. 
	 \end{align}
Combining this fiber sequence and commutativity of \eqref{eq_prf_building_KO_O(1)} and Proposition \ref{prop_stab_res_KO} gives the commutativity of the middle-left square of \eqref{seq_building_KO_O(1)}. 

For the remaining middle-right square in \eqref{seq_building_KO_O(1)}, we again use the self-duality argument. We know that the two stabilization maps neighbouring $\KO[2V_{O(1)}]^{O(1)}$ in \eqref{seq_building_KO_O(1)} are $\KO$-linear dual to each other, up to degree shift by $4$. On the other hand, we observe that the fiber sequence \eqref{eq_eta_c_R} is also self-dual in $\Mod_\KO$, where $\beta \circ c$ is identified as the dual to $R$. This means that the commutativity of the middle-right square in \eqref{seq_building_KO_O(1)} follows from that of the middle-left square which we have already proved. This completes the proof of Proposition \ref{prop_stabres_KO_O(1)} (2). 

(3) follows directly from the analysis so far in the proof of (2). $y(0)$ and $y(1)$ are obvious. The identifications of $y(2)$ follow from the fiber sequence \eqref{eq_eta_c_R}. This finishes the proof of Proposition \ref{prop_stabres_KO_O(1)}. 
\end{proof}

Going back to the general situation, the compatibility of the topological $\Gm$-genera and the internal structure maps is stated as follows. 
\begin{prop}[{Compatibility of $\JacKO$ and internal structure maps}]\label{prop_compatibility_internal_JacKO}
		Let $(\cG, \cH)$ be either one of $(U, U)$ and $ (O, SO)$. 
	The following diagram commutes. 
	\begin{align}
		\xymatrix{
			MT(\cH(k-1),n\overline{V}_{\cH(k-1)} )  \ar[d]^-{\stab} \ar[rrr]^-{\JacKO_{\cG(n)_{k-1}}} & &&\KO[(k-1)V_{\cG(n)}]^{\cG(n)} \ar[d]^-{\chi(V_{\cG(n)}) \cdot}_-{\stab} \\
			MT(\cH(k),n\overline{V}_{\cH(k)} )   \ar[d]^-{\chi(V_{\cH(k)}) \cdot}_-{\res} \ar[rrr]^-{\JacKO_{\cG(n)_k}} &&& \KO[kV_{\cG(n)}]^{\cG(n)} \ar[d]^-{\res_{\cG(n)}^{\cG(n-1)}} \\
			MT(\cH(k),(n-1)\overline{V}_{\cH(k)} ) [Nk] \ar[rrr]^-{\JacKO_{\cG(n-1)_k}} & &&\KO[kV_{\cG(n-1)}+Nk]^{\cG(n-1)}
		}
	\end{align}
\end{prop}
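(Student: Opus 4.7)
The plan is to mimic the proof strategy used for Proposition \ref{prop_Jac_different_nk} in the elliptic case. Since the topological $\Gm$-genus is defined directly via Definition \ref{def_Jac_KO_general}, without appealing to any $\KO$-module duality of $\KO^{\cH(k)}$ (which fails in general), we carry out the diagram chase at the level of the construction \eqref{eq_JacKO_def_3} itself.

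First I would establish a $\KO$-analog of Proposition \ref{prop_F_different_nk}, namely the compatibility of the ``coevaluation-type'' maps
\begin{align}
\cF^{\KO}_{\cG(n)_k} \colon \KO \xrightarrow{\chi(V_\phi) \cdot} \KO[V_\phi]^{\cG(n) \times \cH(k)} \stackrel{\ABS(\Theta_{\cG(n)_k}, \fraks)}{\simeq} \KO[kV_{\cG(n)}]^{\cG(n)} \otimes_\KO \KO[n\overline{V}_{\cH(k)}]^{\cH(k)},
\end{align}
where $V_\phi = V_{\cG(n)} \otimes_\mathbb{K} V_{\cH(k)}$ with $\mathbb{K} = \C, \R$ in the two cases. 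The claim to verify is the commutativity of the $\KO$-analog of diagram \eqref{diag_different_nk}: restricting one tensor factor (either along $\cH(k-1) \hookrightarrow \cH(k)$ or $\cG(n-1) \hookrightarrow \cG(n)$) and multiplying by the corresponding Euler class in the other factor recovers $\cF^{\KO}$ with shifted indices. This is a direct verification that the following three operations commute pairwise: (i) insertion of the Euler class $\chi(V_\phi)$, (ii) restriction along a subgroup inclusion, and (iii) the $\ABS$ Thom isomorphism. Operation (i) commutes with (ii) by naturality of Euler classes; (i) commutes with (iii) by multiplicativity of $\ABS$, using the equivariant splitting $V_{\cH(k)} \simeq V_{\cH(k-1)} \oplus \underline{\mathbb{K}}$ over $\cH(k-1)$; and (ii) commutes with (iii) by functoriality of the $\ABS$ orientation under subgroup inclusion, combined with the uniqueness clauses in Propositions \ref{prop_key_KO_U} and \ref{prop_key_KO_O} identifying the two induced spin structures on the restricted representation.

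Once this auxiliary compatibility is in place, Proposition \ref{prop_compatibility_internal_JacKO} follows by the same formal deduction as in the proof of Proposition \ref{prop_Jac_different_nk}. The stabilization map on $MT$-spectra is induced by the inclusion $\cH(k-1) \hookrightarrow \cH(k)$, and the restriction map $\chi(V_{\cH(k)}) \cdot$ is Euler-class insertion in the $\cH$-direction; both are matched through $\cF^{\KO}$ with the corresponding maps on the $\KO$-side by the compatibility just established. Concretely, one unfolds Definition \ref{def_Jac_KO_general}, pre-composes with the unit $u \colon S \to \KO$, and applies the pushforward $(BH \to \pt)_*$, observing that each step is natural in the subgroup $\cH(k-1) \subset \cH(k)$ and in the Euler class $\chi(V_{\cH(k)})$.

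The main obstacle will be carefully checking naturality of the $\ABS$ Thom isomorphism under both types of group change; in the $(O, SO)$ case in particular, one must confirm that the spin structure $\fraks_{O, SO}$ selected in Proposition \ref{prop_key_KO_O} is preserved under restrictions along $SO(k-1) \hookrightarrow SO(k)$ and along $O(n-1) \hookrightarrow O(n)$. Happily, the uniqueness of these spin structures up to homotopy (Propositions \ref{prop_key_KO_U}, \ref{prop_key_KO_O}) makes the check automatic: both sides evidently carry \emph{some} spin structure on the restricted virtual representation, and any two are canonically equivalent. The remaining verification is a direct diagrammatic unfolding that parallels the proof of Proposition \ref{prop_F_different_nk}, with $\TMF$ replaced by $\KO$ and the equivariant sigma orientation of Fact \ref{fact_sigma} replaced by the genuine equivariant $\ABS$ orientation.
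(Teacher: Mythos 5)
Your overall plan is sound, and the verification steps you enumerate in the final paragraph (naturality of Euler classes under subgroup inclusion, multiplicativity and functoriality of the equivariant $\ABS$ orientation, and the uniqueness up to homotopy of the spin structures supplied by Propositions \ref{prop_key_KO_U} and \ref{prop_key_KO_O}) are exactly the content needed. However, the detour through a $\KO$-coevaluation map $\cF^\KO_{\cG(n)_k}$ introduces two obstacles that you do not address, and that do not arise in the direct route you also sketch.

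First, the target of $\cF^\KO_{\cG(n)_k}$ is written as $\KO[kV_{\cG(n)}]^{\cG(n)} \otimes_\KO \KO[n\overline{V}_{\cH(k)}]^{\cH(k)}$. For $\TMF$ the identification of $\TMF[\res V_G \oplus \res V_H]^{G\times H}$ with the external tensor product over $\TMF$ of the two factors is supplied by the Gepner--Meier construction and is stated explicitly in Section \ref{subsec_preliminary_TMF_G}; for genuinely equivariant $\KO$ the analogous K\"unneth decomposition of genuine fixed points is not established in this paper and is not automatic (genuine fixed points do not commute with smash products in general). Second, and more importantly, the deduction of Proposition \ref{prop_Jac_different_nk} from Proposition \ref{prop_F_different_nk} in the $\TMF$ case passes through the dualized coevaluation $\cF'_\cD$ and the norm map (Definition \ref{def_general_Jac}), and this requires the dualizability of equivariant $\TMF$ from Fact \ref{fact_dualizability_TMF}. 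You correctly note at the outset that the corresponding duality fails for equivariant $\KO$, so the ``same formal deduction'' from a $\cF^\KO$-compatibility to the proposition cannot be copied over. The intended argument, which matches what the paper's terse ``analogous'' points to, is instead to run the verification entirely at the level of the alternative definition: the composite in Definition \ref{def_Jac_KO_general} is a chain of maps of $G$-equivariant spectra (Euler-class inclusion of zero section, tensoring with $\KO$ and applying $\ABS(\Theta, \fraks)$, genuine $G$-fixed points, pushforward along $BH \to \pt$), and each of these four steps is natural both in the inclusion $\cH(k-1) \hookrightarrow \cH(k)$ and under multiplication by $\chi(V_{\cH(k)})$, so one gets a $\KO$-analog of the big diagram \eqref{diag_big} without ever forming $\KO[V_\phi]^{\cG(n) \times \cH(k)}$ or invoking duality. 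This is precisely what your third paragraph describes, and it already closes the proof; the $\cF^\KO$ detour should be dropped.
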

The proof is analogous to the corresponding Proposition \ref{prop_Jac_different_nk}. 

As a corollary, we get the statement corresponding to Corollary \ref{cor_Jac_res_Euler}. In particular, we get the following relation with the Euler numbers: 

\begin{cor}[{The restriction of $\JacKO_{G(1)}$ is the Euler number}]\label{cor_Jac_res_Euler_KO}
	Let $(\cG, \cH)$ be either one of $(U, SU)$ and $(O, SO)$. Correspondingly we set $N = 2, 1$, respectively. For any closed manifold $M$ with a {\it strict} tangential $\cH(k)$-structure $\psi$ (Definition \ref{def_strict_str}---so that in particular $\dim_\R M = Nk$), the composition
	\begin{align}
		\Omega_{Nk}^{\cH(k)} \stackrel{\PT}{\simeq} \pi_{Nk}MT\cH(k) \xrightarrow{\JacKO_{G(1)_k}}\KO[kV_{G(1)}]^{G(1)}
		\xrightarrow{\res_{G(1)}^e}  \pi_0 \KO. 
	\end{align}
	sends the class $	[M, \psi] \in  \Omega_{Nk}^{\cH(k)}$ to the Euler number $\Eul(M) \in \Z = \pi_0S \xhookrightarrow{u} \pi_0\KO$. 
\end{cor}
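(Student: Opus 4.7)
The proof will closely parallel that of Corollary \ref{cor_Jac_res_Euler} in the main text, simply substituting the $\KO$-theoretic ingredients. The plan proceeds in three steps, glued by the compatibility Proposition \ref{prop_compatibility_internal_JacKO} applied in the extreme case $n=1$.

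First, I would specialize Proposition \ref{prop_compatibility_internal_JacKO} to $n=1$ to obtain the commutative square
\begin{align*}
\xymatrix{
MT\cH(k) \ar[d]_{\chi(V_{\cH(k)})\cdot} \ar[rr]^{\JacKO_{G(1)_k}} && \KO[kV_{G(1)}]^{G(1)} \ar[d]^{\res_{G(1)}^e} \\
MT(\cH(k), 0)[Nk] \ar[rr]^{\JacKO_{G(0)_k}} && \KO[Nk].
}
\end{align*}
The bottom-left term canonically identifies with $\Sigma^\infty B\cH(k)_+[Nk]$, so the left vertical arrow is exactly the restriction map $\res$ from Example \ref{ex_res_MTH} whose Pontryagin--Thom translation appears in Claim \ref{claim_Euler}.

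Second, I would identify the bottom horizontal arrow. The data $\cD^{\KO}_{G(0)_k}$ has $G = \cG(0) = e$ and all representations trivial, so the $\KO$-analogue of Remark \ref{rem_triv_Jac} (which is a direct unwinding of Definition \ref{def_Jac_KO_general}: the map \eqref{eq_JacKO_def_1} becomes the identity, the orientation \eqref{eq_JacKO_def_2} is trivial, and the genuine fixed points collapse to non-equivariant spectra) gives the factorization
\[
\JacKO_{G(0)_k} : \Sigma^\infty B\cH(k)_+[Nk] \xrightarrow{(B\cH(k) \to \pt)_*} S[Nk] \xrightarrow{u} \KO[Nk].
\]
I expect this routine unwinding to be the main (mild) obstacle, since one must track through the orientation $\ABS(\Theta, \fraks)$ and verify it reduces to the identity when the data is trivial; once this is in hand, the commutative square above reads
\[
\res_{G(1)}^e \circ \JacKO_{G(1)_k} \;=\; u \circ (B\cH(k) \to \pt)_* \circ (\chi(V_{\cH(k)})\cdot).
\]

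Finally, given a closed manifold $M$ of dimension $Nk$ with a \emph{strict} tangential $\cH(k)$-structure $\psi$, Claim \ref{claim_Euler} (applied with $\cK = \cH$, $i = k$) asserts precisely that the composition $\Omega_{Nk}^{\cH(k)} \xrightarrow{\chi(V_{\cH(k)})} \Omega_0^{fr} = \Z$ sends $[M, \psi]$ to $\Eul(M)$. Combining this with the factorization above and the unit $u$ identifying $\Z = \pi_0 S \hookrightarrow \pi_0 \KO$, the composition in the corollary sends $[M, \psi]$ to $\Eul(M) \in \pi_0 \KO$, as required. The strictness hypothesis is essential at exactly this last step, for the reasons already spelled out in Remarks \ref{rem_strict_Euler} and \ref{rem_strict_Jac}.
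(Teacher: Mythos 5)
Your proposal is correct and follows essentially the same route the paper sketches (the paper simply says ``analogous to Corollary~\ref{cor_Jac_res_Euler}'' without spelling out the details): specialize the compatibility Proposition~\ref{prop_compatibility_internal_JacKO} to $n=1$, identify $\JacKO_{G(0)_k}$ as the unit map along the lines of Remark~\ref{rem_triv_Jac}, and finish with the geometric Euler-number identification. One small remark: the paper's own proof of Corollary~\ref{cor_Jac_res_Euler} cites Claim~\ref{claim_lf_vs_euler} (the Landweber--Novikov formulation) for the last step, whereas you invoke Claim~\ref{claim_Euler} directly; both are immediate consequences of Proposition~\ref{prop_geom_res}, and your route is arguably more direct. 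Note only that Claim~\ref{claim_Euler} is phrased for $\cK \in \{U, SU, Sp, O, Spin\}$, so when $\cH = SO$ you should observe (as the paper implicitly does throughout Appendix~\ref{sec_app_toymodel}) that the same Poincar\'e--Hopf argument via $SO(i)/SO(i-1) \simeq S^{i-1}$ and Proposition~\ref{prop_geom_res} applies verbatim.
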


\subsection{Application: Divisibility constraints of Euler numbers for oriented manifolds}
In the main body of this paper, we derive interesting divisibility results of Euler numbers out of topological elliptic genera. 
Now that we got the relation between $\JacKO$ and Euler numbers in Corollary \ref{cor_Jac_res_Euler_KO}, we can think about a similar application to derive the divisibility of Euler numbers. 
We see below that this indeed gives a neat divisibility result, still provable by another elementary method. Here let us focus on the case of $O(1)$-topological $\Gm$-genera. 

Let us introduce the following notation. 
\begin{defn}\label{def_divisibility_KO}
	For each positive integer $k$, define $d^\KO_{SO}(k)$ to be the order of the element $w(k) \in \pi_{k-1}\KO[(k-1)V_{O(1)}]^{O(1)}$ in \eqref{eq_w}.
\end{defn}
Here, by Proposition \ref{prop_stabres_KO_O(1)}, we explicitly know
\begin{align}\label{eq_dKO_explicit}
	d^\KO_{SO}(k) = \begin{cases}
	1 & k \equiv 0 \pmod 4, \\
		\infty & k \equiv 1, 3 \pmod 4, \\
		2 & k \equiv 2 \pmod 4. 
	\end{cases}
\end{align}
The divisibility argument is based on the observation that, by the long exact sequence associated to the stabilization-restriction fiber sequence \eqref{eq_w}, we have
\begin{align}\label{eq_dKO=im}
	d^\KO_{SO}(k) \cdot \Z =\mathrm{im}	\left( \res_{O(1)}^e \colon \pi_{k}\KO[kV_{O(1)}] \to  \pi_0 \KO \simeq \Z \right) 
\end{align}
On the other hand, by Corollary \ref{cor_Jac_res_Euler_KO}, we know that, for any oriented closed manifold $M$ with $\dim M = k$, the Euler number $\Eul(M)$ is contained in the right hand side of \eqref{eq_dKO=im}. This implies that we have
\begin{align}
\left.	d^\KO_{SO}(k)  \,\right|\, \Eul(M). 
\end{align}
Combining \eqref{eq_dKO_explicit}, we deduce

\begin{prop}[{A divisibility constraint of Euler numbers from $\JacKO$}]\label{prop_divisibility_JacKO}
	For any oriented closed manifold $M$ with dimension $\dim M \equiv 2 $ (mod $4$), we have
	\begin{align}
		2 \,|\, \Eul(M). 
	\end{align}
\end{prop}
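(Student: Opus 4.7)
The proof amounts to assembling the pieces developed in Section~\ref{subsubsec_def_twin} and the paragraphs immediately preceding the statement, so the plan is essentially to make the outlined argument precise. First, I would observe that any closed oriented manifold $M$ of dimension $k$ carries a tautological \emph{strict} tangential $SO(k)$-structure, namely the tangent bundle together with its orientation (Definition~\ref{def_strict_str}). This brings $M$ into the domain of $\JacKO_{O(1)_k} \colon MTSO(k) \to \KO[kV_{O(1)}]^{O(1)}$.

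Next, I would apply Corollary~\ref{cor_Jac_res_Euler_KO} with $(\cG,\cH)=(O,SO)$ and $n=1$, which identifies
\begin{equation*}
\Eul(M)=\res^e_{O(1)}\circ \JacKO_{O(1)_k}[M,\psi]\in \pi_0\KO\simeq \Z.
\end{equation*}
Thus $\Eul(M)$ lies in the image of the restriction map $\res^e_{O(1)}\colon \pi_k \KO[kV_{O(1)}]^{O(1)}\to \pi_0\KO$. From the stabilization--restriction fiber sequence~\eqref{eq_w} specialized to $n=1$, the long exact sequence on homotopy yields
\begin{equation*}
\mathrm{im}\bigl(\res^e_{O(1)}\colon \pi_k\KO[kV_{O(1)}]^{O(1)}\to \pi_0\KO\bigr)=d^{\KO}_{SO}(k)\cdot \Z,
\end{equation*}
where $d^{\KO}_{SO}(k)$ is the order of the attaching element $w(k)\in\pi_{k-1}\KO[(k-1)V_{O(1)}]^{O(1)}$ from Definition~\ref{def_divisibility_KO}.

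Finally, I would invoke Proposition~\ref{prop_stabres_KO_O(1)}(3), which explicitly computes $w(k)$ in all four residue classes mod $4$. In the case $k\equiv 2 \pmod 4$, this gives $w(k)=\eta\in \pi_1\KO$; since $\eta$ has order exactly $2$, we conclude $d^{\KO}_{SO}(k)=2$, whence $2\mid \Eul(M)$. Every step here is already established in the excerpt, so there is no genuine obstacle; the only point requiring a little care is checking that the tautological $SO(k)$-structure on an oriented $k$-manifold is strict in the sense of Definition~\ref{def_strict_str} so that Corollary~\ref{cor_Jac_res_Euler_KO} applies, but this is immediate from the definition.
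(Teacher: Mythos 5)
Your proposal is correct and takes essentially the same route as the paper: apply Corollary~\ref{cor_Jac_res_Euler_KO} (after noting the tautological strict $SO(k)$-structure on an oriented $k$-manifold), observe via the stabilization--restriction fiber sequence that the image of $\res_{O(1)}^e$ is $d^\KO_{SO}(k)\cdot\Z$, and read off $d^\KO_{SO}(k)=2$ for $k\equiv 2\pmod 4$ from the computation of the attaching element $w(4m+2)=\eta$ in Proposition~\ref{prop_stabres_KO_O(1)}(3). The paper additionally records an elementary direct proof (using the skew-symmetry of the intersection form in dimension $\equiv 2\pmod 4$), but its primary proof is exactly the $\JacKO$-based argument you gave.
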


This result itself can be proved directly, as follows. We have
\begin{align}
	\Eul(M) \equiv \sum_{i=0}^{\dim M} \dim_\R H^i(M; \R) \pmod 2. 
\end{align}
If $M$ is oriented Riemannian and of dimension $2$ mod $4$, the intersection pairing gives a skew-symmetric nondegenerate pairing on $\oplus_i H^i(M; \R)$. Equivalently, we have a complex structure on this vector space. This means that the total dimension should be even, proving that $\Eul(M) = 0$. 

Actually, this direct proof is essentially related to our proof using $O(1)$-topological $\Gm$-genera. Namely, the right hand side of \eqref{eq_dKO=im} for $k \equiv 2$ (mod $4$) is identified with
\begin{align}
	\mathrm{im} \left( R \colon \pi_0 \KU \to \pi_0 \KO \right)
\end{align}
by our analysis in the proof of Proposition \ref{prop_stabres_KO_O(1)}. This is $2\Z$ because complex vector spaces have even real dimension. 

\def\arxivconsists of font{\rm}
\bibliographystyle{ytamsalpha}
\bibliography{ref}

\end{document}